\newtheorem{theorem}{Theorem}[section]
\newtheorem{lemma}[theorem]{Lemma}
\newtheorem{prop}[theorem]{Proposition}
\newtheorem{remark}[theorem]{Remark}
\numberwithin{equation}{section}
\newcommand{\R}{\mathbb{R}}
\newcommand{\Q}{\mathbb{Q}}
\newcommand{\N}{\mathbb{N}}
\newcommand{\T}{\mathbb{T}}
\renewcommand{\S}{{\mathbb S}}
\newcommand{\func}[3]{#1 : #2 \longrightarrow #3}
\newcommand{\disp}{\displaystyle}
\newcommand{\abs}[1]{\left|#1\right|}
\newcommand{\eps}{\varepsilon}
\newcommand{\norm}[1]{\left\|#1\right\|}
\renewcommand{\leq}{\leqslant}
\renewcommand{\geq}{\geqslant}
\renewcommand{\bar}{\overline}
\renewcommand{\tilde}{\widetilde}
\newcommand{\pa}[1]{\left(#1\right)}
\newcommand{\cro}[1]{\left[#1\right]}
\newcommand{\br}[1]{\left\{#1\right\}}
\newcommand\restr[2]{{
  \left.\kern-\nulldelimiterspace 
  #1 
  \right|_{#2} 
  }}
\def\signmb{\bigskip \begin{center} {\sc
Marc Briant\par\vspace{3mm}
Sorbonne Universit\'es, UPMC Univ. Paris 06/ CNRS\par
UMR 7598, Laboratoire Jacques-Louis Lions,\par
F-75005, Paris, France\par
\vspace{3mm}
e-mail:} \tt{briant.maths@gmail.com} \end{center}}
\def\signed{\bigskip \begin{center} {\sc
Esther S. Daus\par\vspace{3mm}
Vienna University of Technology\par
Institute for Analysis and Scientific Computing\par
Wiedner Hauptstrasse 8-10,
1040 Vienna, Austria\par
\vspace{3mm}
e-mail:} \tt{esther.daus@tuwien.ac.at} \end{center}}
\begin{document} 

\title[Multi-species Boltzmann equation]{The Boltzmann equation for a multi-species mixture close to global equilibrium}
\author{Marc Briant}
\author{Esther S. Daus}
\thanks{The first author was partly supported by the $150^{th}$ Anniversary Postdoctoral Mobility Grant of the London Mathematical Society and the Division of Applied Mathematics at Brown University.}
\thanks{The second author acknowledges partial support from the Austrian Science Fund (FWF), grants
P24304, P27352, and W1245, and the Austrian-French Program of the Austrian Exchange Service (\"OAD)}

\begin{abstract}
We study the Cauchy theory for a multi-species mixture, where the different species can have different masses, in a perturbative setting on the $3$-dimensional torus. The ultimate aim of this work is to obtain existence, uniqueness and exponential trend to equilibrium of solutions to the multi-species Boltzmann equation in $L^1_vL^\infty_x(m)$, where $m\sim (1+\abs{v}^k)$ is a polynomial weight. We prove the existence of a spectral gap for the linear multi-species Boltzmann operator allowing different masses, and then we establish a semigroup property thanks to a new explicit coercive estimate for the Boltzmann operator. Then we develop an $L^2-L^\infty$ theory \textit{\`a la Guo} for the linear perturbed equation. Finally, we combine the latter results with a decomposition of the multi-species Boltzmann equation in order to deal with the full equation. We emphasize that dealing with different masses induces a loss of symmetry in the Boltzmann operator which prevents the direct adaptation of standard mono-species methods (\textit{e.g.} Carleman representation, Povzner inequality). Of important note is the fact that all methods used and developed in this work are constructive. Moreover, they do not require any Sobolev regularity and the $L^1_vL^\infty_x$ framework is dealt with for any $k>k_0$, recovering the optimal physical threshold of finite energy $k_0=2$ in the particular case of a multi-species hard spheres mixture with same masses.
\end{abstract}

\maketitle

\vspace*{10mm}

\textbf{Keywords:} Multi-species mixture; Boltzmann equation; Spectral gap; Perturbative theory; Convergence to equilibrium; $L^2-L^\infty$ theory, Carleman representation, Povzner inequality. 

%

\smallskip
\textbf{Acknowledgements:} The second author wants to thank Ansgar J\"{u}ngel for his valuable help.

\tableofcontents

\section{Introduction} \label{sec:intro}
The present work establishes existence, uniqueness, positivity and exponential trend to equilibrium for the multi-species Boltzmann equation close to equilibrium, which is used in physics and biology to model the evolution of a dilute gaseous mixture with different masses. The physically most relevant space for such a Cauchy theory is the space of density functions that only have finite mass and energy, which are the first and second moments in the velocity variable. This present article proves the result in the space $L^1_vL^\infty_x(1+\abs{v}^{k})$  for any $k>k_0$, where $k_0$ is an explicit threshold depending heavily on the differences of the masses, recovering the physically optimal threshold $k_0=2$ when all the masses of the mixture are the same and the particles are approximated to be hard spheres.

\par We are thus interested in the evolution of a dilute gas on the torus $\T^3$ composed of $N$ different species of chemically non-reacting mono-atomic particles, which can be modeled by the following system of Boltzmann equations, stated on $\R^+\times\T^3\times\R^3$,

\begin{equation}\label{multiBE}
\forall\: 1\leq i \leq N, \quad \partial_tF_i(t,x,v) + v\cdot \nabla_x F_i(t,x,v) = Q_i(\mathbf{F})(t,x,v)
\end{equation}
with initial data
$$\forall\: 1\leq i \leq N,\:\forall (x,v)\in \T^3\times\R^3, \quad F_i(0,x,v) = F_{0,i}(x,v).$$
Note that the distribution function of the system is given by the vector $\mathbf{F} = (F_1,\dots,F_N),$ with $F_i$ describing the $i^{th}$ species at time $t$, position $x$ and velocity $v$.

\bigskip

The Boltzmann operator $\mathbf{Q}(\mathbf{F})=(Q_1(\mathbf{F}),\ldots, Q_N(\mathbf{F}))$ is given for all $i$ by 
$$Q_i(\mathbf{F}) = \sum\limits_{j=1}^N Q_{ij}(F_i,F_j),$$
where $Q_{ij}$ describes interactions between particles of either the same ($i=j$) or of different ($i\neq j$) species and are local in time and space.
$$Q_{ij}(F_i,F_j)(v) =\int_{\R^3\times \mathbb{S}^{2}}B_{ij}\left(|v - v_*|,\mbox{cos}\:\theta\right)\left[F_i'F_j^{'*} - F_iF_j^*\right]dv_*d\sigma,$$
where we used the shorthands $F_i'=F_i(v')$, $F_i=F_i(v)$, $F_j^{'*}=F_j(v'_*)$ and $F_j^*=F_j(v_*)$. 
$$\left\{ \begin{array}{rl} \displaystyle{v'} & \displaystyle{=\frac{1}{m_i+m_j}\pa{m_iv+m_jv_* +  m_j|v-v_*|\sigma}} \vspace{2mm} \\ \vspace{2mm} \displaystyle{v' _*}&\displaystyle{=\frac{1}{m_i+m_j}\pa{m_iv+m_jv_* -m_i  |v-v_*|\sigma}} \end{array}\right., \: \mbox{and} \quad \mbox{cos}\:\theta = \left\langle \frac{v-v_*}{\abs{v-v_*}},\sigma\right\rangle .$$
Note that these expressions imply that we deal with gases where only binary elastic collisions occur (the mass $m_i$ of all molecules of species $i$ remains the same, since there is no reaction). Indeed, $v'$ and $v'_*$ are the velocities of two molecules of species $i$ and $j$ before collision giving post-collisional velocities $v$ and $v_*$ respectively, with conservation of momentum and kinetic energy:
\begin{equation}\label{elasticcollision}
\begin{split}
m_iv + m_jv_* &= m_iv' + m_jv'_*,
\\\frac{1}{2}m_i\abs{v}^2 + \frac{1}{2}m_j\abs{v_*}^2 &= \frac{1}{2}m_i\abs{v'}^2 + \frac{1}{2}m_j\abs{v'_*}^2.
\end{split}
\end{equation}
\par The collision kernels $B_{ij}$ are nonnegative, moreover they contain all the information about the interaction between two particles and are determined by physics. We mention at this point that one can derive this type of equations from Newtonian mechanics at least formally in the case of single species \cite{Ce}\cite{CIP}. The rigorous validity of the mono-species Boltzmann equation from Newtonian laws is known for short times (Landford's theorem \cite{La} or more recently \cite{GST}\cite{PSS}).  

\bigskip


\subsection{The perturbative regime and its motivation}\label{subsec:perturbregime}
Using the standard changes of variables $(v,v_*) \mapsto (v',v'_*)$ and $(v,v_*) \mapsto (v_*,v)$ (note the lack of symmetry between $v'$ and $v'_*$ compared to $v$ for the second transformation due to different masses) together with the symmetries of the collision operators (see \cite{Ce}\cite{CIP}\cite{Vi2} among others and \cite{DMS}\cite{DJMZ} and in particular \cite{BGS} for multi-species specifically), we recover the following weak forms:
$$\int_{\R^3}Q_{ij}(F_i,F_j)(v)\psi_i(v)\:dv =\int_{\R^6}\int_{\S^2}B_{ij}(|v-v_*|,\cos(\theta))F_iF_j^* \left(\psi_i'-\psi_i\right)\:d\sigma dvdv_*$$
and
\begin{equation}\label{symmetry property Qij}
\begin{split}
& \int_{\R^3}Q_{ij}(F_i,F_j)(v)\psi_i(v)\:dv + \int_{\R^3}Q_{ji}(F_j,F_i)(v)\psi_j(v)\:dv=\\
&\quad - \frac{1}{2}\int_{\R^6}\int_{\S^2}B_{ij}(|v-v_*|,\cos(\theta))\left(F_i'F_j^* - F_iF_j^*\right)\left(\psi_i' + \psi_j'^* - \psi_i - \psi_j^*\right)\:d\sigma dvdv_*.
\end{split}
\end{equation}

Thus 
\begin{equation}\label{invariantsQij}
\sum\limits_{i,j=1}^N\int_{\R^3} Q_{ij}(F_i,F_j)(v)\psi_i(v)\:dv =0
\end{equation}
if and only if $\psi(v)$ belongs to $\mbox{Span}\br{\mathbf{e_1},\dots,\mathbf{e_N},v_1\mathbf{m},v_2\mathbf{m},v_3\mathbf{m},\abs{v}^2\mathbf{m}}$, where $\mathbf{e_k}$ stands for the $k^{th}$ unit vector in $\R^N$ and $\mathbf{m} = (m_1,\dots,m_N)$.
The fact that we need to sum over $i$ has interesting consequences and implies a fundamental difference compared with the single-species Boltzmann equation. In particular it implies conservation of the total number density $c_{\infty,i}$ of each species, of the total momentum of the gas $\rho_\infty u_\infty$ and its total energy $3\rho_\infty\theta_\infty /2$:
\begin{equation}\label{conservationlaws}
\begin{split}
\forall t\geq 0,\quad& c_{\infty,i} = \int_{\T^3\times\R^3} F_i(t,x,v)\:dxdv \quad (1\leq i \leq N)
\\& u_{\infty} = \frac{1}{\rho_\infty}\sum\limits_{i=1}^N\int_{\T^3\times\R^3} m_ivF_i(t,x,v)\:dxdv
\\&\theta_{\infty} = \frac{1}{3\rho_\infty}\sum\limits_{i=1}^N\int_{\T^3\times\R^3} m_i\abs{v-u_\infty}^2F_i(t,x,v)\:dxdv,
\end{split}
\end{equation}
where $\rho_\infty = \sum_{i=1}^Nm_ic_{\infty,i}$ is the global density of the gas. Note that this already shows intricate interactions between each species and the total mixture itself.
\par The operator $\mathbf{Q}=(Q_1,\dots,Q_N)$ also satisfies a multi-species version of the classical H-theorem \cite{DMS} which implies that any local equilibrium, i.e. any function $\mathbf{F}=(F_1,\dots,F_N)$ being the maximum of the Boltzmann entropy, has the form of a local Maxwellian, that is
$$\forall\:1\leq i \leq N,\:\: F_i(t,x,v) = c_{\mbox{\scriptsize{loc}},i}(t,x)\pa{\frac{m_i}{2\pi k_B \theta_{\mbox{\scriptsize{loc}}}(t,x)}}^{3/2}\mbox{exp}\cro{-m_i\frac{\abs{v-u_{\mbox{\scriptsize{loc}}}(t,x)}^2}{2k_B\theta_{\mbox{\scriptsize{loc}}}(t,x)}}.$$
Here $k_B$ is the Boltzmann constant and, denoting the total local mass density by $\rho_{\mbox{\scriptsize{loc}}} = \sum_{i=1}^N m_ic_{\mbox{\scriptsize{loc}},i}$, we used the following local definitions
$$\forall \:1\leq i \leq N, \quad c_{\mbox{\scriptsize{loc}},i}(t,x) = \int_{\R^3}F_i(t,x,v)\:dv,$$
$$u_{\mbox{\scriptsize{loc}}}(t,x)=\frac{1}{\rho_{\mbox{\scriptsize{loc}}}}\sum\limits_{i=1}^N\int_{\R^3}m_ivF_i\:dv,\quad \theta_{\mbox{\scriptsize{loc}}}(t,x) = \frac{1}{3\rho_{\mbox{\scriptsize{loc}}}}\sum\limits_{i=1}^N\int_{\R^3}m_i\abs{v-u_{\mbox{\scriptsize{loc}}}}^2F_i\:dv.$$
On the torus, this multi-species H-theorem also implies that the global equilibrium, i.e. a stationary solution $\mathbf{F}$ to $\eqref{multiBE}$, associated to the initial data $\mathbf{F_0}(x,v) =(F_{0,1},\dots,F_{0,N})$ is uniquely given by the global Maxwellian 
$$\forall\:1\leq i \leq N,\quad F_i(t,x,v) = F_i(v)= c_{\infty,i}\pa{\frac{m_i}{2\pi k_B \theta_{\infty}}}^{3/2}\mbox{exp}\cro{-m_i\frac{\abs{v-u_{\infty}}^2}{2k_B\theta_{\infty}}}.$$
By translating and rescaling the coordinate system we can always assume that $u_\infty=0$ and $k_B\theta_\infty=1$ so that the only global equilibrium is the normalized Maxwellian
\begin{equation}\label{mui}
\boldsymbol\mu =\pa{\mu_i}_{1\leq i \leq N} \quad\mbox{with}\quad \mu_i(v) = c_{\infty,i}\pa{\frac{m_i}{2\pi}}^{3/2}e^{-m_i\frac{\abs{v}^2}{2}}.
\end{equation}

\bigskip
The aim of the present article is to construct a Cauchy theory for the multi-species Boltzmann equation $\eqref{multiBE}$ around the global equilibrium $\boldsymbol\mu$. In other terms we study the existence, uniqueness and exponential decay of solutions of the form $F_i(t,x,v) = \mu_i(v) + f_i(t,x,v)$ for all $i$.
\par Under this perturbative regime, the Cauchy problem amounts to solving the perturbed multi-species Boltzmann system of equations
\begin{equation}\label{perturbedmultiBE}
\partial_t \mathbf{f} + v\cdot\nabla_x\mathbf{f} = \mathbf{L}(\mathbf{f}) + \mathbf{Q}(\mathbf{f}), 
\end{equation}
or equivalently in the non-vectorial form
$$\forall \:1\leq i\leq N,\quad \partial_t f_i + v\cdot\nabla_x f_i = L_i(\mathbf{f}) + Q_i(\mathbf{f}),$$
where $\mathbf{f}=(f_1,\dots,f_N)$ and the operator $\mathbf{L} =(L_1,\dots,L_N)$ is the linear Boltzmann operator given for all $1\leq i \leq N$ by
$$L_i(\mathbf{f}) = \sum\limits_{j=1}^N L_{ij}(f_i,f_j),$$
with
$$L_{ij}(f_i,f_j) = Q_{ij}(\mu_i,f_j)+Q_{ij}(f_i,\mu_j).$$
\par Since we are looking for solutions $\mathbf{F}$ preserving individual mass, total momentum and total energy $\eqref{conservationlaws}$ we have the equivalent perturbed conservation laws for $\mathbf{f} = \mathbf{F}-\boldsymbol\mu$ which are given by
\begin{equation}\label{perturbedconservationlaws}
\begin{split}
\forall t\geq 0,\quad& 0 = \int_{\T^3\times\R^3} f_i(t,x,v)\:dxdv \quad (1\leq i \leq N)
\\& 0 = \sum\limits_{i=1}^N\int_{\T^3\times\R^3} m_ivf_i(t,x,v)\:dxdv
\\&0 = \sum\limits_{i=1}^N\int_{\T^3\times\R^3} m_i\abs{v}^2f_i(t,x,v)\:dxdv.
\end{split}
\end{equation}

\bigskip


\subsection{Notations and assumptions on the collision kernel}\label{subsec:notations}

First, to avoid any confusion, vectors and vector-valued operators in $\R^N$ will be denoted by a bold symbol, whereas their components by the same indexed symbol. For instance, $\mathbf{W}$ represents the vector or vector-valued operator $(W_1,\dots,W_N)$. 
\par We define the Euclidian scalar product in $\R^N$ weighted by a vector $\mathbf{W}$ by
$$\langle \mathbf{f},\mathbf{g}\rangle_{\mathbf{W}}= \sum\limits_{i=1}^Nf_ig_i W_i.$$
In the case $\mathbf{W}=\mathbf{1}=(1,\dots,1)$ we may omit the index $\mathbf{1}$.

\bigskip
\textbf{Function spaces.} 
We define the following shorthand notation
$$\langle v \rangle = \sqrt{1+\abs{v}^2}.$$
\par The convention we choose is to index the space by the name of the concerned variable, so we have for $p$ in $[1,+\infty]$
$$L^p_{[0,T]} = L^p\pa{[0,T]},\quad L^p_{t} = L^p \left(\R^+\right),\quad L^p_x = L^p\left(\T^3\right), \quad L^p_v = L^p\left(\R^3\right).$$
\par For $\func{\mathbf{W}=(W_1, \ldots, W_N)}{\R^3}{\R^+}$ a strictly positive measurable function in $v$, we will use the following vector-valued weighted Lebesgue spaces defined by their norms 
\begin{equation*}\label{norm}
     \begin{array}{ll}
 \norm{f}_{L^2_{v}\pa{\mathbf{W}}} = \left(\sum\limits_{i=1}^N \norm{f_i}^2_{L^2_{v}(W_i)}\right)^{1/2},     &    \norm{f_i}_{L^2_{v}(W_i)}=\left\|f_i W_i(v)\right\|_{L^2_v},\\
 \norm{f}_{L^2_{x,v}\pa{\mathbf{W}}} = \left(\sum\limits_{i=1}^N \norm{f_i}^2_{L^2_{x,v}(W_i)}\right)^{1/2}, &    \norm{f_i}_{L^2_{x,v}(W_i)}=\left\|\|f_i\|_{L^2_x}W_i(v)\right\|_{L^2_v},\\
 \norm{f}_{L^{\infty}_{x,v}\pa{\mathbf{W}}} = \sum\limits_{i=1}^N \norm{f_i}_{L^{\infty}_{x,v}(W_i)},        &    \norm{f_i}_{L^{\infty}_{x,v}\pa{W_i}} = \sup\limits_{(x,v)\in \T^3 \times \R^3}\big(\left|f_i(x,v)\right|W_i(v)\big),\\
 \norm{f}_{L^1_vL^{\infty}_x\pa{\mathbf{W}}} = \sum\limits_{i=1}^N \norm{f_i}_{L^1_vL^{\infty}_x\pa{W_i}},   &    \norm{f_i}_{L^1_vL^{\infty}_x\pa{W_i}}=\left\|\sup\limits_{x \in \T^3}\left|f_i(x,v)\right|W_i(v)\right\|_{L^1_v}.
        \end{array}
 \end{equation*}
Note that $L^2_v(\mathbf{W})$ and $L^2_{x,v}(\mathbf{W})$ are Hilbert spaces with respect to the scalar products 
\begin{eqnarray*}
\langle \mathbf{f},\mathbf{g}\rangle_{L^2_{v}(\mathbf{W})}&=&\sum\limits_{i=1}^N \langle f_i,g_i \rangle_{L^2_{v}(W_i)}=\sum\limits_{i=1}^N \int_{\R^3} f_i g_i W_i^2 dv,
\\\langle\mathbf{f},\mathbf{g}\rangle_{L^2_{x,v}(\mathbf{W})}&=&\sum\limits_{i=1}^N \langle f_i,g_i \rangle_{L^2_{x,v}(W_i)}=\sum\limits_{i=1}^N \int_{\T^3 \times \R^3} f_i g_i W_i^2 dxdv.
\end{eqnarray*}

\bigskip
\textbf{Assumptions on the collision kernel.}

We will use the following assumptions on the collision kernels $B_{ij}$. 

\renewcommand{\labelenumi}{(H\theenumi)}
\begin{enumerate}
\item The following symmetry holds
$$B_{ij}(|v-v_*|,\cos\theta) = B_{ji}(|v-v_*|,\cos\theta)\quad\mbox{for }1\le i,j\le N.$$
\item The collision kernels decompose into the product
$$ B_{ij}(|v-v_*|,\cos\theta) = \Phi_{ij}(|v-v_*|)b_{ij}(\cos\theta),
\quad 1\le i,j\le N,$$
where the functions $\Phi_{ij}\ge 0$ are called kinetic part and $b_{ij}\ge 0$ angular part. This is a common assumption as it is technically more convenient and also covers a wide range of physical applications.
\item The kinetic part has the form of hard or Maxwellian ($\gamma=0$) potentials, \textit{i.e.}
$$\Phi_{ij}(|v-v_*|)=C_{ij}^{\Phi}|v-v_*|^{\gamma}, \quad C_{ij}^{\Phi}>0,~\:\gamma\in[0,1], \quad \forall\: 1 \leq i,j\leq N.$$
\item For the angular part, we assume a strong form of Grad's angular cutoff (first introduced in \cite{Gr1}), that is: there exist constants $C_{b1}$, $C_{b2}>0$ such that
for all $1\le i,j\le N$ and $\theta\in[0,\pi]$,
$$  0<b_{ij}(\cos\theta)\le C_{b1}|\sin\theta|\,|\cos\theta|, \quad b'_{ij}(\cos\theta)\le C_{b2}.$$
Furthermore, 
$$  C^b := \min_{1\le i\le N}\inf_{\sigma_1,\sigma_2\in\S^2}\int_{\S^2}\min\big\{	b_{ii}(\sigma_1\cdot\sigma_3),b_{ii}(\sigma_2\cdot\sigma_3)\big\}\:d\sigma_3 > 0. $$
\end{enumerate}

\noindent We emphasize here that the important cases of Maxwellian molecules ($\gamma=0$ and $b=1$) and of hard spheres ($\gamma=b=1$) are included in our study. We shall use the standard shorthand notations
\begin{equation}\label{constantsbij}
b_{ij}^\infty = \norm{b_{ij}}_{L^\infty_{[-1,1]}} \quad\mbox{and}\quad l_{b_{ij}} = \norm{b\circ \cos}_{L^1_{\S^2}}.
\end{equation}
\bigskip


\subsection{Novelty of this article}\label{subsec:novelty}
As mentioned previously, the present work proves the existence, uniqueness, positivity and exponential trend to equilibrium for the full nonlinear multi-species Boltzmann equation $\eqref{multiBE}$ in $L^1_vL^\infty_x\pa{\langle v \rangle^{k}}$ with the explicit threshold $k>k_0$ defined in Lemma \ref{lem:controlB}, when the initial data $\mathbf{F_0}$ is close enough to the global equilibrium $\boldsymbol\mu$. This is equivalent to solving the perturbed equation $\eqref{perturbedmultiBE}$ for small $\mathbf{f_0}$. This perturbative Cauchy theory for gaseous mixtures is completely new.
\par Moreover, one of the  major contributions of the present article is to combine and adapt several very recent strategies, combined with new hypocoercivity estimates, in order to develop a new constructive approach that allows to deal with polynomial weights without requiring any spatial Sobolev regularity. This is new even in the mono-species case even though the final result we obtain has recently been proved for the mono-species hard sphere model \cite{GMM}) (which we therefore also extend to more general hard and Maxwellian potential kernels.).
\par Also, as a by-product, we prove explicitly that the linear operator $\mathbf{L}-v\cdot\nabla_x$ generates a strongly continuous semigroup with exponential decay both in $L^2_{x,v}\pa{\boldsymbol\mu^{-1/2}}$ and in $L^\infty_{x,v}\pa{\langle v \rangle^\beta\boldsymbol\mu^{-1/2}}$; such constructive and direct results on the torus are new to our knowledge, even for the single-species Boltzmann equation.
\par At last, we derive new estimates in order to deal with different masses and the multi-species cross-interaction operators, and we also extend recent mono-species estimates to more general collision kernels. Note that the asymmetry of the elastic collisions requires to derive a new description of Carleman's representation of the Boltzmann operator as well as new Povzner-type inequalities suitable for this lack of symmetry.

\bigskip


\subsection{State of the art and strategy}\label{subsec:strategy}
Very little is known about any rigorous Cauchy theory for multi-species gases with different masses. We want to mention \cite{BGPS}, where a compactness result for the linear operator $\mathbf{K}:=\mathbf{L}+\boldsymbol{\nu}$ was proved in $L^2_v(\boldsymbol{\mu}^{-1/2})$. For multi-species gases with same masses, the recent work \cite{DJMZ} proved that the operator $\mathbf{L}$ has a spectral gap in $L^2_v\pa{\boldsymbol{\mu}^{-1/2}}$ and obtained an \textit{a priori} exponential convergence to equilibrium for the perturbed equation $\eqref{perturbedmultiBE}$ in $H^1_{x,v}\pa{\boldsymbol\mu^{-1/2}}$. We emphasize here that \cite{DJMZ} only studied the case of same masses $m_i = m_j$ for all $i$, $j$. On the contrary, the single-species Boltzmann equation in the perturbative regime around a global Maxwellian has been extensively studied over the past fifty years (see \cite{UkYa} for an exhaustive review). Starting with Grad \cite{Gr}, the Cauchy problem has been tackled in $L^2_vH^s_x\left(\mu^{-1/2}\right)$ spaces \cite{Uk}, in $H^s_{x,v}\left(\mu^{-1/2}(1+\abs{v})^k\right)$ \cite{Gu1}\cite{Yu} was then extended to $H^s_{x,v}\left(\mu^{-1/2}\right)$ where an exponential trend to equilibrium has also been obtained \cite{MN}\cite{Gu4}. Recently, \cite{GMM} proved existence and uniqueness for single-species Boltzmann equation in more the general spaces $\left(W^{\alpha,1}_v \cap W^{\alpha,q}_v\right)W^{\beta,p}_x\left((1+\abs{v})^k\right)$ for $\alpha \leq \beta$ and $\beta$ and $k$ large enough with explicit thresholds. The latter paper thus includes $L^1_vL^\infty_x\pa{\langle v \rangle^{k}}$. All the results presented above hold in the case of the torus for hard and Maxwellian potentials. We refer the reader interested in the Cauchy problem to the review \cite{UkYa}.
\par All the works mentioned above involve to working in spaces with derivatives in the space variable $x$ (we shall discuss some of the reasons later) with exponential weight. The recent breakthrough \cite{GMM} gets rid of both the Sobolev regularity and the exponential weight but uses a new extension method which still requires to have a well-established linear theory in $H^s_{x,v}\pa{\mu^{-1/2}}$. 

\bigskip

\par Our strategy can be decomposed into four main steps and we now describe each of them and their link to existing works.

\bigskip

\textbf{Step 1: Spectral gap for the linear operator in $L^2_v\pa{\mathbf{\boldsymbol\mu^{-1/2}}}$.} It has been known for long that the single-species linear Boltzmann operator $L$ is a self-adjoint non positive linear operator in the space $L^2_v\left(\mu^{-1/2}\right)$. Moreover it has a spectral gap $\lambda_0$. This has been proved in \cite{Ca2}\cite{Gr1}\cite{Gr2} with non constructive methods for hard potential with cutoff and in \cite{Bob1}\cite{Bob2} in the Maxwellian case. These results were made constructive in \cite{BM}\cite{Mo1} for more general collision operators. One can easily extend this spectral gap to Sobolev spaces $H^s_v\left(\mu^{-1/2}\right)$ (see for instance \cite{GMM} Section $4.1$).
\par Recently, \cite{DJMZ} proved the existence of an explicit spectral gap for the operator $\mathbf{L}$ for multi-species mixtures where all the masses are the same ($m_i=m_j$). Our constructive spectral gap estimate in $L^2_v\pa{\boldsymbol\mu^{-1/2}}$ closely follows their methods that consist in proving that the cross-interactions between different species do not perturb too much the spectral gap that is known to exist for the diagonal operator $L_{ii}$ (single-species operators). We emphasize here that not only we adapt the methods of \cite{DJMZ} to fit the different masses framework but we also derive estimates on the collision frequencies that allow us to get rid of their strong requirement on the collision kernels: $B_{ij}\leq \beta B_{ii}$ for all $i$, $j$. The latter assumption is indeed physically irrelevant in our framework.

\bigskip
\textbf{Step 2: $L^2_{x,v}\pa{\boldsymbol\mu^{-1/2}}$ theory for the full perturbed linear operator.} The next step is to prove that the existence of a spectral gap for $\mathbf{L}$ in the sole velocity variable can be transposed to $L^2_{x,v}\pa{\boldsymbol\mu^{-1/2}}$ when one adds the skew-symmetric transport operator $-v\cdot\nabla_x$. In other words, we prove that $\mathbf{G} = \mathbf{L} - v\cdot\nabla_x$  generates a strongly continuous semigroup in $L^2_{x,v}\pa{\boldsymbol\mu^{-1/2}}$ with exponential decay.
\par One thus wants to derive an exponential decay for solutions to the linear perturbed Boltzmann equation
$$\partial_t \mathbf{f} + v\cdot\nabla_x \mathbf{f} = L\pa{\mathbf{f}}.$$
A straightforward use of the spectral gap $\lambda_L$ of $\mathbf{L}$ shows for such a solution
$$\frac{d}{dt}\norm{\mathbf{f}}^2_{L^2_{x,v}\pa{\boldsymbol\mu^{-1/2}}} \leq -2\lambda_L \norm{\mathbf{f}-\pi_\mathbf{L}\pa{\mathbf{f}}}^2_{L^2_{x,v}\pa{\boldsymbol\mu^{-1/2}}},$$
where $\pi_\mathbf{L}$ stands for the orthogonal projection in $L^2_v\pa{\boldsymbol\mu^{-1/2}}$ onto the kernel of the operator $\mathbf{L}$. This inequality exhibits the hypocoercivity of $\mathbf{L}$. Roughly speaking, the exponential decay in $L^2_{x,v}\pa{\boldsymbol{\mu}^{-1/2}}$ would follow for solutions $\mathbf{f}$ if the microscopic part $\pi_\mathbf{L}^\bot(\mathbf{f})= \mathbf{f} -\pi_\mathbf{L}(\mathbf{f})$ controls the fluid part which has the following form (see Section \ref{sec:spectralgap})
$$\forall 1\leq i \leq N,\quad \pi_\mathbf{L}(\mathbf{f})_i(t,x,v) = \cro{a_i(t,x) + b(t,x)\cdot v + c(t,x)\frac{|v|^2-3m_i^{-1}}{2}}m_i\mu_i(v),$$
where $a_i(t,x), c(t,x) \in \R$ and $b(t,x) \in \R^3$ are the coordinates of an orthogonal basis.
\par The standard strategies in the case of the single-species Boltzmann equation are based on higher Sobolev regularity either from hypocoercivity methods \cite{MN} or elliptic regularity of the coefficients $a$, $b$ and $c$ \cite{Gu3}\cite{Gu4}. Roughly speaking one has \cite{Gu3}\cite{Gu4}
\begin{equation}\label{fluidmicroderivative}
\Delta \pi_L(f) \sim \partial^2 \pi_L^\bot f + \mbox{higher order terms},
\end{equation}
which can be combined with elliptic estimates to control the fluid part by the microscopic part in Sobolev spaces $H^s$. Our main contribution to avoid involving high regularity is based on an adaptation of the recent work \cite{EGKM} (dealing with the single-species Boltzmann equation with diffusive boundary conditions). The key idea consists in integrating against test functions that contains a weak version of the elliptic regularity of $\mathbf{a}(t,x)$, $b(t,x)$ and $c(t,x)$. Basically, the elliptic regularity of $\pi_{\mathbf{L}}\pa{\mathbf{f}}$ will be recovered thanks to the transport part applied to these test functions while on the other side $\mathbf{L}$ encodes the control by $\pi_{\mathbf{L}}^\bot\pa{\mathbf{f}}$.
\par It has to be emphasized that thanks to boundary conditions, \cite{EGKM} only needed the conservation of mass whereas in our case this ``weak version'' of estimates $\eqref{fluidmicroderivative}$ strongly relies on all the conservation laws. The choice of test functions thus has to take into account the delicate interaction between each species and the total mixture we already pointed out. This leads to intricate technicalities since for each species we need to deal with different reference rates of decay $m_i$. Finally, our proof also involves elliptic regularity in negative Sobolev spaces to deal with $\partial_t \mathbf{a}$, $\partial_t b$ and $\partial_t c$.

\bigskip
\textbf{Step 3: $L^\infty_{x,v}\pa{\langle v \rangle^\beta\boldsymbol\mu^{-1/2}}$ theory for the full nonlinear equation.} Thanks to the first two steps we have a satisfactory $L^2$ semigroup theory for the full linear operator. Unfortunately, as it is already the case for the single-species Boltzmann equation (see \cite{Ce}\cite{CIP} or \cite{Vi2} for instance), the underlying $L^2_{x,v}$-norm is not an algebraic norm for the nonlinear operator $\mathbf{Q}$ whereas the $L^\infty_{x,v}$-norm is.
\par The key idea of proving a semigroup property in $L^\infty$ is thanks to an $L^2-L^\infty$ theory ``\`a la Guo'' \cite{Gu6}, where the $L^\infty$-norm will be controlled by the $L^2$-norm along the characteristics. As we shall see, each component $L_i$ can be decomposed into $L_i= K_i - \nu_i$ where $\nu_i(f)=\nu_i(v)f_i$ is a multiplicative operator. If we denote by $\mathbf{S}_\mathbf{G}(t)$ the semigroup generated by $\mathbf{G} = \mathbf{L}-v\cdot\nabla_x$, we have the following implicit Duhamel representation of its $i^{th}$ component along the characteristics
$$S_\mathbf{G}(t)_i = e^{-\nu_i(v)t} + \int_0^t e^{-\nu_i(v)(t-s)}K_i\cro{\mathbf{S}_\mathbf{G}(s)} \:ds.$$
Following the idea of Vidav \cite{Vid} and later used in \cite{Gu6}, an iteration of the above should yield a certain compactness property. Hiding here all the cross-interactions, we end up with
\begin{equation*}
\begin{split}
\mathbf{S}_\mathbf{G}(t) =& e^{-\boldsymbol\nu(v)t} + \int_0^t e^{-\boldsymbol\nu(v)(t-s)}\mathbf{K}e^{-\boldsymbol\nu(v)s}\:ds 
\\&+ \int_0^t\int_0^s e^{-\boldsymbol\nu(v)(t-s)}\mathbf{K}e^{-\boldsymbol\nu(v)(s-s_1)}\mathbf{K}\cro{\mathbf{S}_\mathbf{G}(s_1)} \:ds_1ds.
\end{split}
\end{equation*}
We shall prove that $\mathbf{K}$ is compact and is a kernel operator. The first two terms will be easily estimated and the last term will be roughly of the form
$$\int_0^t\int_0^s\int_{v_1,v_2 \:\mbox{\scriptsize{bounded}}}\abs{\mathbf{S}_\mathbf{G}(s_1, x-(t-s)v-(s-s_1)v_1,v_2}\:dv_2dv_1ds_1ds.$$ 
The double integration implies that $v_1$ and $v_2$ are independent and we can thus perform a change of variables which changes the integral in $v_1$ into an integral over $\T^3$ that we can bound thanks to the previous $L^2$ theory. For integrability reasons, this third step actually proves that $\mathbf{G}$ generates a strongly continuous semigroup with exponential decay in $L^\infty\pa{\langle v \rangle^\beta\boldsymbol\mu^{-1/2}}$ for $\beta > 3/2$.
\par Our work provides two key contributions to prove the latter result. First, to prove the desired pointwise estimate for the kernel of $\mathbf{K}$, we need to give a new representation of the operator in terms of the parameters $(v',v'_*)$ instead of $(v_*,\sigma)$. In the single-species case, such a representation is the well-known Carleman representation \cite{Ca2} and requires integration onto the so-called Carleman hyperplanes $\langle v'-v , v'_* -v \rangle =0$. However, when particles have different masses, the lack of symmetry between $v'$ and $v'_*$ compared to $v$ obliges us to derive new Carleman admissible sets (some become spheres). Second, the decay of the exponential weight differs from one species to the other. To obtain estimates that are similar to the case of single-species we exhibit the property that $\mathbf{K}$ mixes the exponential rate of decay among the cross-interaction between species. This enables us to close the $L^\infty$ estimate for the first two terms of the iterated Duhamel representation.

\bigskip
\textbf{Step 4: Extension to polynomial weights and $L^1_vL^\infty_x$ space.} To conclude the present study, we develop an analytic and nonlinear version of the recent work \cite{GMM}, also recently adapted in a nonlinear setting \cite{Bri6}. The main strategy is to find a decomposition of the full linear operator $\mathbf{G}$ into $\mathbf{G_1}+\mathbf{A}$. We shall prove that $\mathbf{G_1}$ acts like a small perturbation of the operator $\mathbf{G_{\boldsymbol\nu}} = -v\cdot\nabla_x - \boldsymbol\nu(v)$ and is thus hypodissipative, and that $\mathbf{A}$ has a regularizing effect. The regularizing property of the operator $\mathbf{A}$ allows us to decompose the perturbative equation $\eqref{perturbedmultiBE}$ into a system of differential equations
\begin{eqnarray*}
\partial_t \mathbf{f_1} &=& \mathbf{G_1}\pa{\mathbf{f_1}} + \mathbf{Q}(\mathbf{f_1}+\mathbf{f_2},\mathbf{f_1}+\mathbf{f_2})
\\\partial_t \mathbf{f_2} + v\cdot\nabla_x \mathbf{f_2} &=& \mathbf{L}\pa{\mathbf{f_2}} + \mathbf{A}\pa{\mathbf{f_1}} \end{eqnarray*}
The first equation is solved in $L^\infty_{x,v}\pa{m}$ or $L^1_vL^\infty_x\pa{m}$ with the initial data $\mathbf{f_0}$ thanks to the hypodissipativity of $\mathbf{G_1}$. The regularity of $\mathbf{A}\pa{\mathbf{f_1}}$ allows us to use Step 3 and thus solve the second equation with null initial data in $L^\infty_{x,v}\pa{\langle v \rangle^\beta\boldsymbol\mu^{-1/2})}$. First, the existence of a solution to the system having exponential decay is obtained thanks to an iterative scheme combined with new estimates on the multi-species operators $\mathbf{G_1}$ and $\mathbf{A}$. Then uniqueness follows a new stability estimate in an equivalent norm (proposed in \cite{GMM}), that fits the dissipativity of the semigroup generated by $\mathbf{G}$. Finally, positivity of the unique solution comes from a different iterative scheme.
\par In the case of the single-species Boltzmann equation, the less regular weight $m(v)$ one can achieve with this method is determined by the hypodissipative property of $\mathbf{G_1}$ and gives $m=\langle v \rangle^k$ with $k>2$, which is indeed obtained also in the multi-species framework of same masses. In the general case of different masses, the threshold $k_0$ is more intricate (see Theorem \ref{theo:fullcauchy}), since it also depends on the different masses $m_i$.

\bigskip


\subsection{Organisation of the paper}\label{subsec:organisation}

The paper follows exactly the four steps described above.
\par Section \ref{sec:mainresults} gives a precise statement of the main theorems that will be proved in this work and the rest of the article is dedicated to the proof of these theorems.
\par Section \ref{sec:spectralgap} deals with the spectral gap of $\mathbf{L}$. The semigroup property in $L^2_{x,v}\pa{\boldsymbol\mu^{-1/2}}$ is treated in Section \ref{sec:L2theory}. This property is then passed on to $L^\infty_{x,v}\pa{\langle v \rangle^\beta\boldsymbol\mu^{-1/2}}$ in Section \ref{sec:Linftytheory}.
\par At last, we work out the Cauchy problem for the full nonlinear equation in Section \ref{sec:fullcauchy}.

\bigskip

\section{Main results} \label{sec:mainresults}

As explained in the introduction, the ultimate goal of this article is a full perturbative Cauchy theory for the multi-species Boltzmann equation $\eqref{multiBE}$. Along the way, we shall also prove the following important results about the linear perturbed operator $\mathbf{L}-v\cdot\nabla_x$.

\bigskip
\begin{theorem}\label{theo:alllinear}
Let the collision kernels $B_{ij}$ satisfy assumptions $(H1) - (H4)$. Then the following holds.
\begin{enumerate}
\item[(i)] The operator $\mathbf{L}$ is a closed self-adjoint operator in $L^2_v\pa{\boldsymbol\mu^{-1/2}}$ and there exists $\lambda_L >0$ such that
$$\forall \mathbf{f} \in L^2_v\pa{\boldsymbol\mu^{-1/2}}, \quad \left\langle \mathbf{f}, \mathbf{L}\pa{\mathbf{f}} \right\rangle_{L^2_v\pa{\boldsymbol\mu^{-1/2}}} \leq -\lambda_L \norm{\mathbf{f} - \pi_\mathbf{L}\pa{\mathbf{f}}}^2_{L^2_v\pa{\langle v \rangle^{\gamma/2}\boldsymbol\mu^{-1/2}}};$$
\item[(ii)] Let $E=L^2_{x,v}\pa{\boldsymbol\mu^{-1/2}}$ or $E=L^\infty_{x,v}\pa{\langle v \rangle^\beta\boldsymbol\mu^{-1/2}}$ with $\beta>3/2$. The linear perturbed operator $\mathbf{G}=\mathbf{L}-v\cdot\nabla_x$ generates a strongly continuous semigroup $S_{\mathbf{G}}(t)$ on $E$ and there exist $C_E$, $\lambda_E>0$ such that
$$\forall t \geq 0, \quad \norm{S_{\mathbf{G}}(t)\pa{\mbox{Id}-\Pi_{\mathbf{G}}}}_{E} \leq C_E e^{-\lambda_E t},$$
\end{enumerate}
where $\pi_\mathbf{L}$ is the orthogonal projection onto $\mbox{Ker}(\mathbf{L})$ in $L^2_v\pa{\boldsymbol\mu^{-1/2}}$ and $\Pi_{\mathbf{G}}$ is the orthogonal projection onto $\mbox{Ker}(\mathbf{G})$ in $L^2_{x,v}\pa{\boldsymbol\mu^{-1/2}}$.
\\ The constants $\lambda_L$, $C_E$ and $\lambda_E$ are explicit and depend on $N$, $E$, the different masses $m_i$ and the collision kernels.
\end{theorem}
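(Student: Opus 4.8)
The plan is to establish the three assertions in the order they are listed, which is exactly Steps~1--3 of the strategy above. For (i), self-adjointness and closedness of $\mathbf{L}$ on $L^2_v(\boldsymbol\mu^{-1/2})$ follow from the weak-form symmetries $\eqref{symmetry property Qij}$ together with the decomposition $L_i(\mathbf{f})=K_i(\mathbf{f})-\nu_i(v)f_i$, where $\boldsymbol\nu=(\nu_1,\dots,\nu_N)$ is a multiplication operator with $\nu_i(v)\sim\langle v\rangle^\gamma$ (the domain being the set of $\mathbf{f}$ with $\langle v\rangle^\gamma\mathbf{f}\in L^2_v(\boldsymbol\mu^{-1/2})$) and $\mathbf{K}$ is bounded, in fact compact, on that space. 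For the spectral gap I would follow the perturbative scheme of \cite{DJMZ}: each diagonal operator $L_{ii}$ is a mono-species hard or Maxwellian potential operator and hence has an explicit spectral gap in $L^2_v(\mu_i^{-1/2})$ by \cite{BM}\cite{Mo1}, and one then controls the off-diagonal Dirichlet form $\sum_{i\ne j}\langle f_i,L_{ij}(f_i,f_j)\rangle_{L^2_v(\mu_i^{-1/2})}$ by $\norm{\mathbf{f}-\pi_\mathbf{L}(\mathbf{f})}_{L^2_v(\langle v\rangle^{\gamma/2}\boldsymbol\mu^{-1/2})}^2$ with a small constant, using Cauchy--Schwarz/Young and the explicit form of $\mathrm{Ker}(\mathbf{L})$. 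The new ingredient relative to \cite{DJMZ} is a set of direct two-sided bounds on the collision frequencies $\nu_i$ valid for genuinely different masses, which replaces their assumption $B_{ij}\le\beta B_{ii}$; these same bounds are what produce the weight $\langle v\rangle^{\gamma/2}$ on the right-hand side.

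For (ii) with $E=L^2_{x,v}(\boldsymbol\mu^{-1/2})$, generation of the $C_0$-semigroup is immediate since $\mathbf{G}=(-v\cdot\nabla_x-\boldsymbol\nu)+\mathbf{K}$ is a bounded perturbation of the generator $-v\cdot\nabla_x-\boldsymbol\nu$ of a contraction semigroup. For the decay, integrating (i) in $x$ and using skew-symmetry of the transport term gives the hypocoercive inequality $\frac{d}{dt}\norm{\mathbf{f}}_{L^2_{x,v}(\boldsymbol\mu^{-1/2})}^2\le-2\lambda_L\norm{\pi_\mathbf{L}^\bot(\mathbf{f})}_{L^2_{x,v}(\langle v\rangle^{\gamma/2}\boldsymbol\mu^{-1/2})}^2$, and it remains to control the fluid part $\pi_\mathbf{L}(\mathbf{f})$, described by $(a_i(t,x),b(t,x),c(t,x))$, by the microscopic part. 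I would adapt \cite{EGKM}: form a modified energy $\mathcal{E}(t)=\norm{\mathbf{f}}_{L^2_{x,v}(\boldsymbol\mu^{-1/2})}^2+\kappa\,\mathcal{I}(t)$, where $\mathcal{I}(t)$ pairs $\mathbf{f}$ with suitable velocity moments multiplied by solutions of Poisson problems $-\Delta_x\phi=(\text{combinations of }a_i,b,c)$. Differentiating $\mathcal{I}$ and inserting the equation, the transport term yields the Dirichlet energies $\norm{\nabla_x a_i}^2$, $\norm{\nabla_x b}^2$, $\norm{\nabla_x c}^2$ with the good sign while $\mathbf{L}$ contributes only microscopic terms; the zero-mean conditions $\eqref{perturbedconservationlaws}$ and Poincar\'e's inequality then upgrade these to $\norm{a_i}^2$, $\norm{b}^2$, $\norm{c}^2$. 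The time derivatives $\partial_t a_i,\partial_t b,\partial_t c$ generated in the computation are handled by elliptic regularity in $H^{-1}_x$ after using the equation once more. For $\kappa$ small one gets $\mathcal{E}(t)\sim\norm{\mathbf{f}}_{L^2_{x,v}(\boldsymbol\mu^{-1/2})}^2$ and $\frac{d}{dt}\mathcal{E}\le-\lambda\mathcal{E}$ on $(\mathrm{Id}-\Pi_\mathbf{G})E$, which is the claim.

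For (ii) with $E=L^\infty_{x,v}(\langle v\rangle^\beta\boldsymbol\mu^{-1/2})$, I would run an $L^2$--$L^\infty$ argument \textit{\`a la Guo} \cite{Gu6}. Writing the Duhamel representation along the characteristics, $S_\mathbf{G}(t)_i=e^{-\nu_i(v)t}+\int_0^t e^{-\nu_i(v)(t-s)}K_i\cro{\mathbf{S}_\mathbf{G}(s)}\,ds$, and iterating it once more (Vidav--Guo trick \cite{Vid}) produces a double time integral involving $\mathbf{K}$ twice. This requires a pointwise bound on the kernel of $\mathbf{K}$, hence a multi-species Carleman-type representation; since $v'$ and $v'_*$ play asymmetric roles when $m_i\ne m_j$, the admissible integration sets must be re-derived (some hyperplanes becoming spheres), and one must track how $\mathbf{K}$ mixes the Gaussian rates $\mu_i^{-1/2}$ across species so that the weight $\langle v\rangle^\beta\boldsymbol\mu^{-1/2}$ is reproduced. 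The first two Duhamel terms are bounded directly using $\nu_i(v)\gtrsim\langle v\rangle^\gamma$ and the kernel estimate; in the double-integral term one truncates to bounded velocities, exploits the independence of the two intermediate velocities to change variables so that one velocity integral becomes an integral over $\T^3$, and then invokes the $L^2_{x,v}$ decay from the previous case. The threshold $\beta>3/2$ is exactly what makes the velocity integrals converge once the truncation is removed; strong continuity at $t=0$ and generation in this space follow from the same representation and the boundedness of $\mathbf{K}$ in the weighted $L^\infty$ norm.

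I expect Step~2 — the $L^2_{x,v}$ hypocoercivity without Sobolev regularity — to be the main obstacle: the test functions defining $\mathcal{I}(t)$ must simultaneously respect the $N$ distinct reference Maxwellians $\mu_i$ and all the coupled conservation laws $\eqref{perturbedconservationlaws}$, and the bookkeeping of the resulting elliptic estimates, including the $H^{-1}_x$ bounds for $\partial_t a_i,\partial_t b,\partial_t c$, is where the genuinely new difficulties, relative to both \cite{EGKM} and \cite{DJMZ}, concentrate.
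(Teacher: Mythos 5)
Your architecture for part~(ii) matches the paper's closely: for $L^2_{x,v}(\boldsymbol\mu^{-1/2})$ you propose a hypocoercivity argument \textit{\`a la} \cite{EGKM} in which the elliptic regularity of the fluid moments $a_i,b,c$ is captured weakly through test functions and the time-derivatives $\partial_t a_i,\partial_t b,\partial_t c$ are handled by elliptic estimates in $H^{-1}_x$ using the conservation laws; for $L^\infty_{x,v}(\langle v\rangle^\beta\boldsymbol\mu^{-1/2})$ you propose the twice-iterated Duhamel formula with a Carleman-type kernel bound for $\mathbf{K}$ that accounts for the mass asymmetry, a truncation to bounded velocities, and a change of variables reducing to the $L^2_{x,v}$ decay. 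The paper packages the fluid-vs-microscopic control as an integrated-in-time weak formulation (Lemma \ref{lem:controlfluidmicro}) rather than a pointwise modified energy functional $\mathcal{E}(t)=\|\mathbf{f}\|^2+\kappa\mathcal{I}(t)$, but these are interchangeable formulations of the same hypocoercivity device, and the rest of your Step~2 and Step~3 is in line with Sections~\ref{sec:L2theory} and \ref{sec:Linftytheory}.

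There is, however, a genuine conceptual gap in your plan for the spectral gap~(i). You write that, after invoking the mono-species spectral gaps for $L_{ii}$, ``one then controls the off-diagonal Dirichlet form $\sum_{i\neq j}\langle f_i,L_{ij}(f_i,f_j)\rangle$ by $\|\mathbf{f}-\pi_{\mathbf{L}}(\mathbf{f})\|^2$ with a small constant, using Cauchy--Schwarz/Young.'' That reverses the roles. The mono-species part $\mathbf{L^m}$ only gives $-\langle\mathbf{f},\mathbf{L^m}(\mathbf{f})\rangle\gtrsim\|\mathbf{f}-\pi_{\mathbf{L^m}}(\mathbf{f})\|^2$, where $\mathrm{Ker}(\mathbf{L^m})$ is $5N$-dimensional and strictly larger than the $(N+4)$-dimensional $\mathrm{Ker}(\mathbf{L})$. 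On a function $\mathbf{f}\in\mathrm{Ker}(\mathbf{L^m})\cap\mathrm{Ker}(\mathbf{L})^\perp$ the right-hand side vanishes while $\|\mathbf{f}-\pi_{\mathbf{L}}(\mathbf{f})\|=\|\mathbf{f}\|\neq0$, so no amount of subtracting a small multiple of $\|\mathbf{f}-\pi_{\mathbf{L}}(\mathbf{f})\|^2$ can close the estimate: the cross-species form $\mathbf{L^b}$ is not a perturbation to absorb but the \emph{only} source of coercivity on that extra $4N-4$ dimensional space. The paper's Lemma~\ref{sec.remain} proves precisely a \emph{lower} bound $-\langle\mathbf{f},\mathbf{L^b}(\mathbf{f})\rangle\geq C_2\,\mathcal{E}(\mathbf{f})$ on $\mathrm{Ker}(\mathbf{L^m})$, with $\mathcal{E}$ measuring the momentum- and energy-differences between species, and the elementary Cauchy--Schwarz/Young step you allude to appears only in Step~1 of the proof of Theorem~\ref{theo:spectralgapL} to absorb the orthogonal part $\mathbf{f}-\pi_{\mathbf{L^m}}(\mathbf{f})$ inside the $\mathbf{L^b}$ Dirichlet form. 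Without the lower bound on $\mathbf{L^b}$ restricted to $\mathrm{Ker}(\mathbf{L^m})$ and the final comparison of $\mathcal{E}(\pi_{\mathbf{L^m}}(\mathbf{f}))$ with $\|\mathbf{f}-\pi_{\mathbf{L}}(\mathbf{f})\|^2-2\|\mathbf{f}-\pi_{\mathbf{L^m}}(\mathbf{f})\|^2$ (Step~3, through moment identities and Jensen), the argument does not produce a spectral gap. Your observation about replacing the DJMZ hypothesis $B_{ij}\leq\beta B_{ii}$ by direct two-sided bounds on the $\nu_{ij}$ is correct (this is \eqref{relatenu} and Remark~\ref{rem:spectralgap}), but it addresses only the $\mathbf{L^m}$ half of the argument.
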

\bigskip

We now state the results we obtain for the full nonlinear equation.

\bigskip
\begin{theorem}\label{theo:fullcauchy}
Let the collision kernels $B_{ij}$ satisfy assumptions $(H1) - (H4)$ and let $E=L^1_vL^\infty_x\pa{\langle v \rangle^k}$ with $k>k_0$, where $k_0$ is the minimal integer such that
\begin{equation}\label{Ck}
C_k =\frac{2}{k+2}\frac{1-\cro{\max\limits_{i,j}\frac{\abs{m_i-m_j}}{m_i+m_j}}^{\frac{k+2}{2}}+\cro{1-\pa{\max\limits_{i,j}\frac{\abs{m_i-m_j}}{m_i+m_j}}}^{\frac{k+2}{2}}}{1-\max\limits_{i,j}\frac{\abs{m_i-m_j}}{m_i+m_j}}\max\limits_{i,j}\frac{4\pi b_{ij}^\infty}{l_{b_{ij}}}<1.
\end{equation}
where $l_{b_{ij}}$ and $b_{ij}^\infty$ are angular kernel constants $\eqref{constantsbij}$.
\\Then there exist $\eta_E$, $C_E$ and $\lambda_E >0$ such that for any $\mathbf{F_0} = \boldsymbol\mu + \mathbf{f_0} \geq 0$ satisfying the conservation of mass, momentum and energy $\eqref{conservationlaws}$ with $u_\infty=0$ and $\theta_\infty=1$, if
$$\norm{\mathbf{F_0} - \boldsymbol\mu}\leq \eta_E$$
then there exists a unique solution $\mathbf{F}=\boldsymbol\mu + \mathbf{f}$ in $E$ to the multi-species Boltzmann equation $\eqref{multiBE}$ with initial data $\mathbf{f_0}$. Moreover, $\mathbf{F}$ is non-negative, satisfies the conservation laws and
$$\forall t\geq 0, \quad \norm{\mathbf{F}-\boldsymbol\mu}_E \leq C_E e^{-\lambda_E t}\norm{\mathbf{F_0}-\boldsymbol\mu}_E.$$
The constants are explicit and only depend on $N$, $k$, the different masses $m_i$ and the collision kernels.
\end{theorem}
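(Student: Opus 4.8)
The plan is to follow Step 4 of the strategy outlined in the introduction, i.e. an enlargement/factorization argument in the spirit of \cite{GMM}, \cite{Bri6}, now made nonlinear and adapted to the multi-species asymmetry. First I would fix a splitting $\mathbf{G} = \mathbf{A} + \mathbf{G_1}$ of the full linear operator, where $\mathbf{A}$ is a bounded, velocity-truncated, regularizing operator (a smooth cutoff of $\mathbf{K}$ in the $v$ variable on a large ball) and $\mathbf{G_1} = \mathbf{G} - \mathbf{A}$. The crucial quantitative input is that $\mathbf{G_1}$ behaves like $-v\cdot\nabla_x - \boldsymbol\nu(v)$ up to a small relatively bounded term: a Povzner-type / coercivity estimate for the loss part $\boldsymbol\nu$ against the weight $\langle v\rangle^k$ yields, for $k$ such that $C_k<1$ in \eqref{Ck}, a dissipativity estimate $\langle \mathbf{G_1}\mathbf{f},\mathbf{f}\rangle_{L^1_vL^\infty_x(\langle v\rangle^k)} \leq -\lambda_1 \norm{\mathbf{f}}_{L^1_vL^\infty_x(\langle v\rangle^{k+\gamma})}$ for some $\lambda_1>0$. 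This is exactly where the threshold $k_0$ enters: $C_k$ measures how much mass the gain term of $\boldsymbol\nu$ (which in the multi-species case redistributes velocities asymmetrically via the collision rule with masses $m_i,m_j$) transfers back, and one needs it strictly below $1$ to keep net dissipativity. I would establish that $\mathbf{G_1}$ generates a strongly continuous semigroup $S_{\mathbf{G_1}}(t)$ on both $L^\infty_{x,v}(m)$ and $L^1_vL^\infty_x(m)$ with decay $\norm{S_{\mathbf{G_1}}(t)}\leq C e^{-\lambda_1 t}$, using the hypodissipativity just obtained together with the explicit characteristics of $-v\cdot\nabla_x-\boldsymbol\nu$.

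Next I would set up the system decomposition $\mathbf{f}=\mathbf{f_1}+\mathbf{f_2}$:
\begin{equation*}
\partial_t \mathbf{f_1} = \mathbf{G_1}(\mathbf{f_1}) + \mathbf{Q}(\mathbf{f_1}+\mathbf{f_2},\mathbf{f_1}+\mathbf{f_2}), \qquad \mathbf{f_1}(0)=\mathbf{f_0},
\end{equation*}
\begin{equation*}
\partial_t \mathbf{f_2} + v\cdot\nabla_x\mathbf{f_2} = \mathbf{L}(\mathbf{f_2}) + \mathbf{A}(\mathbf{f_1}), \qquad \mathbf{f_2}(0)=0.
\end{equation*}
The first equation lives in the rough space $L^1_vL^\infty_x(\langle v\rangle^k)$ and is controlled by $S_{\mathbf{G_1}}$ plus the bilinear estimate for $\mathbf{Q}$ in this weighted space (which is where I need the algebra-type bound $\norm{\mathbf{Q}(\mathbf{f},\mathbf{g})}_{L^1_vL^\infty_x(\langle v\rangle^{k})} \lesssim \norm{\mathbf{f}}\,\norm{\mathbf{g}}$ at the level of the $L^1_vL^\infty_x(\langle v\rangle^{k+\gamma})$ dissipation norm; the new multi-species cross-term estimates and the extension of mono-species kernel bounds to hard/Maxwellian potentials provide this). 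The second equation has source $\mathbf{A}(\mathbf{f_1})$, which by construction of $\mathbf{A}$ is compactly supported in $v$ and therefore lies in $L^\infty_{x,v}(\langle v\rangle^\beta\boldsymbol\mu^{-1/2})$ for any $\beta$, with norm controlled by $\norm{\mathbf{f_1}}_{L^1_vL^\infty_x(\langle v\rangle^k)}$; hence Theorem \ref{theo:alllinear}(ii) applies and $\mathbf{f_2}$ inherits exponential decay in $L^\infty_{x,v}(\langle v\rangle^\beta\boldsymbol\mu^{-1/2})$, which embeds continuously into $L^1_vL^\infty_x(\langle v\rangle^k)$. Existence then follows from an iterative scheme: build $(\mathbf{f_1}^{(n)},\mathbf{f_2}^{(n)})$ by solving the two equations alternately, show the sequence is bounded in the weighted-in-time norm $\sup_{t\geq 0} e^{\lambda t}(\norm{\mathbf{f_1}(t)}+\norm{\mathbf{f_2}(t)})$ for $\eta_E$ small (absorbing the quadratic term), and show it is Cauchy. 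Summing the two components recovers a solution $\mathbf{f}=\mathbf{F}-\boldsymbol\mu$ to \eqref{perturbedmultiBE} with the claimed exponential decay.

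For uniqueness I would not compare solutions directly in $E$ but in an equivalent norm adapted to the dissipativity of $S_{\mathbf{G}}$ (the device from \cite{GMM}): given two solutions with the same data, the difference $\mathbf{h}$ solves $\partial_t\mathbf{h}+v\cdot\nabla_x\mathbf{h}=\mathbf{L}(\mathbf{h})+\mathbf{Q}(\mathbf{h},\mathbf{f})+\mathbf{Q}(\mathbf{g},\mathbf{h})$, and a Gronwall argument in that norm, using smallness of $\norm{\mathbf{f}},\norm{\mathbf{g}}$ and the bilinear estimate, forces $\mathbf{h}\equiv 0$. Positivity is obtained separately via a monotone iterative scheme on the mild/Duhamel formulation along characteristics: one writes the gain/loss splitting $Q_i=Q_i^+ - \boldsymbol\nu$-type terms, defines $\mathbf{F}^{(n+1)}$ from $\mathbf{F}^{(n)}$ by $(\partial_t + v\cdot\nabla_x + \text{loss})F_i^{(n+1)} = \text{gain}(\mathbf{F}^{(n)})$ started from $\mathbf{F}_0\geq 0$, checks by induction that $\mathbf{F}^{(n)}\geq 0$ (gain terms are nonnegative, the integrating factor is positive), and shows $\mathbf{F}^{(n)}\to\mathbf{F}$; since the limit is the unique solution, $\mathbf{F}\geq 0$. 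Conservation of mass, momentum and energy for $\mathbf{f}$ is preserved along the flow by \eqref{perturbedconservationlaws} and the fact that the iterates can be taken to satisfy them (projecting out the kernel directions as needed).

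The main obstacle I expect is the dissipativity estimate for $\mathbf{G_1}$ in the polynomial-weighted $L^1_vL^\infty_x$ norm with the \emph{sharp} constant $C_k$. Because the masses differ, the post-collisional velocities $v',v'_*$ are weighted asymmetrically by $m_i/(m_i+m_j)$ and $m_j/(m_i+m_j)$, so the classical Povzner inequality and the Carleman-type computations that control $\int \langle v'\rangle^k b_{ij}$ no longer simplify by symmetry; one must track the geometry carefully to extract the explicit factor appearing in \eqref{Ck}, in particular the term $\max_{i,j}\frac{\abs{m_i-m_j}}{m_i+m_j}$ and the exponent $\frac{k+2}{2}$. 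Getting a clean enough bound that $C_k<1$ reduces to a checkable condition on $k$ (and reduces to $k_0=2$ in the equal-mass hard-sphere case) is the technical heart of this step; everything downstream is then a fairly standard small-data fixed-point plus the already-established linear $L^2$–$L^\infty$ theory.
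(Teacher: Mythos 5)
Your proposal follows essentially the same route as the paper: the splitting $\mathbf{G}=\mathbf{A^{(\delta)}}+\mathbf{B^{(\delta)}}-\boldsymbol\nu-v\cdot\nabla_x$, the Povzner-type inequality producing the sharp constant $C_k$ in Lemma~\ref{lem:controlB}, the two-component system \eqref{f1}--\eqref{f2} solved by alternating iteration with the regularizing effect of $\mathbf{A^{(\delta)}}$ feeding into the $L^\infty_{x,v}(\langle v\rangle^\beta\boldsymbol\mu^{-1/2})$ linear theory, uniqueness through the equivalent norm $\norm{\cdot}_{\alpha,k}$ adapted to the dissipativity of $S_{\mathbf{G}}$, and positivity by a gain/loss monotone iteration along characteristics. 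The approach and the structure of the argument match the paper's proof (Propositions~\ref{prop:existenceexpodecay}, \ref{prop:uniqueness}, \ref{prop:positivity}).
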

\bigskip

\begin{remark}\label{rem:mainresults}
We make a few comments about the theorem above.
\begin{enumerate}
\item[(1)] As mentioned in the introduction, $\boldsymbol\mu$ can be replaced by any global equilibrium $\mathbf{M}(c_{i,\infty},u_\infty,\theta_\infty)$. Moreover, as we shall see in Section \ref{sec:fullcauchy}, the natural weight for this theory is the one associated to the conservation of individual masses and total energy: $(1+m_i^{k/2}\abs{v}^k)_{1\leq i \leq N}$. This weight is equivalent to $\langle v \rangle^k$ and we keep the latter weight to work without vector-valued masses outside Subsection \ref{subsubsec:cauchyE}.
\item[(2)] The uniqueness has to be understood in a perturbative regime, that is among the solutions that can be written under the form $\mathbf{F} = \boldsymbol\mu +\mathbf{f}$. We do not give a global uniqueness in $L^1_vL^\infty_x\pa{\langle v \rangle^k}$ (as proved in \cite{GMM} for the single-species Boltzmann equation).
\item[(3)] As a by-product of the proof of uniqueness, we prove that the spectral-gap estimate of Theorem \ref{theo:alllinear} also holds for $E=L^1_vL^\infty_x\pa{\langle v \rangle^k}$ with $k>k_0$.
\item[(4)]In the case of identical masses and hard sphere collision kernels ($b=1$) we recover $C_k = 4/(k+2)$ and thus $k_0=2$ which has recently been obtained in the mono-species case \cite{GMM}.
\end{enumerate}
\end{remark}
\bigskip

\section{Spectral gap for the linear operator in $L^2_v\pa{\boldsymbol\mu^{-1/2}}$}\label{sec:spectralgap}


\subsection{First properties of the linear multi-species Boltzmann operator}\label{subsec:toolboxL}

We start by describing some properties of the linear multi-species Boltzmann operator $\mathbf{L}=\pa{L_i}_{1\leq i \leq N}$. First recall
$$L_i(\mathbf{f}) = \sum_{j=1}^N L_{ij}(f_i,f_j), \quad 1\le i\le N,$$
with
\begin{equation}\label{Lij}
\begin{split}
  L_{ij}(f_i,f_j) &=Q_{ij}\left(\mu_i,f_j\right)
	+ Q_{ij}\left(f_i,\mu_j\right) \nonumber \\
	&= \int_{\R^3\times\S^2}B_{ij}(|v-v_*|, \cos (\theta))\left(\mu_j'^*f_i' + \mu_i'f_j'^* - \mu_j^*f_i - \mu_i f_j^* \right)\:dv_*d\sigma,
\end{split}
\end{equation}
where we have used $\mu_i^{'*} \mu_j' = \mu_i^* \mu_j$ for any
$i$, $j$, which follows from the laws of elastic collisions $\eqref{elasticcollision}$.

\bigskip
Some results about the kernel of $\mathbf{L}$ have recently been obtained \cite{DJMZ} in the case of multi-species having same mass ($m_i=m_j$). Their proofs are directly applicable in the case of different masses, and we therefore refer to their work for detailed proofs. 
\par $\mathbf{L}$ is a self-adjoint operator in $L^{2}_v\left(\boldsymbol\mu^{-1/2}\right)$ with $\langle \mathbf{f}, \mathbf{L}(\mathbf{f})\rangle_{L^2_v\pa{\boldsymbol\mu^{-1/2}}}=0$ if and only if $\mathbf{f}$ belongs to $\mbox{Ker}(\mathbf{L})$. 
$$\mbox{Ker}\left(\mathbf{L}\right) = \mbox{Span}\left\{\boldsymbol\phi_1(v),\dots,\boldsymbol\phi_{N+4}(v)\right\},$$
where $\pa{\boldsymbol\phi_i}_{1\leq i\leq N+4}$ is an orthonormal basis of $\mbox{Ker}\left(\mathbf{L}\right)$ in $L^2_v\left(\boldsymbol\mu^{-1/2}\right)$. More precisely, if we denote $\pi_{\mathbf{L}}$ the orthogonal projection onto $\mbox{Ker}\left(\mathbf{L}\right)$ in $L^2_v\left(\boldsymbol\mu^{-1/2}\right)$:
$$\pi_{\mathbf{L}}(\mathbf{f}) = \sum\limits_{k=1}^{N+4} \pa{\int_{\R^3} \langle\mathbf{f}(v),\boldsymbol\phi_k(v)\rangle_{\boldsymbol\mu^{-1/2}}\:dv} \boldsymbol\phi_k(v),$$
and 
$$\mathbf{e_k} = \pa{\delta_{ik}}_{1\leq i \leq N},$$ we can write
\begin{equation}\label{piL}
\left\{\begin{array}{l} \disp{\boldsymbol\phi_k(v)=\frac{1}{\sqrt{c_{\infty,k}}}\:\mu_k\mathbf{e_k},\quad 1\leq k \leq N}
\\\vspace{2mm}  \disp{ \boldsymbol\phi_k(v) =\frac{v_{k-N}}{\pa{\sum\limits_{i=1}^N m_ic_{\infty,i}}^{1/2}}\:\pa{m_i\mu_i}_{1\leq i \leq N},\quad N+1\leq k \leq N+3.}
\vspace{2mm}\\\vspace{2mm} \disp{\boldsymbol\phi_{N+4}(v)=\frac{1}{\pa{\sum\limits_{i=1}^N c_{\infty,i}}^{1/2}}\:\pa{\frac{\abs{v}^2-3m_i^{-1}}{\sqrt{6}}m_i\mu_i}_{1\leq i \leq N}.}\end{array}\right.
\end{equation}
\par Finally, we denote $\pi_{\mathbf{L}}^\bot = \mbox{Id} - \pi_{\mathbf{L}}$. The projection $\pi_{\mathbf{L}}(\mathbf{f}(t,x,\cdot))(v)$ of $\mathbf{f}(t,x,v)$ onto the kernel of $\mathbf{L}$ is called its fluid part whereas $\pi_{\mathbf{L}}^\bot(\mathbf{f})$ is its microscopic part.

\bigskip
$\mathbf{L}$ can be written under the following form
\begin{equation}\label{LLambdaK}
\mathbf{L} = -\boldsymbol\nu(v) + \mathbf{K},
\end{equation}
where $\boldsymbol\nu = \pa{\nu_i}_{1\leq i\leq N}$ is a multiplicative operator called the collision frequency
\begin{equation}\label{nu}
\nu_i(v) = \sum\limits_{j=1}^N \nu_{ij}(v),
\end{equation}
with
$$\nu_{ij}(v) = C_{ij}^{\Phi}\int_{\R^3\times\mathbb{S}^{2}} b_{ij}\left(\mbox{cos}\:\theta\right)\abs{v-v_*}^\gamma \mu_j(v_*)\:d\sigma dv_*.$$
Each of the $\nu_{ij}$ could be seen as the collision frequency $\nu(v)$ of a single-species Boltzmann kernel with kernel $B_{ij}$. It is well-known (for instance \cite{Ce}\cite{CIP}\cite{Vi2}\cite{GMM}) that under our assumptions: $\nu(v) \sim (1+\abs{v}^\gamma)\sim \langle v \rangle^\gamma$. This means that for all $i$, $j$ there exist $\nu_{ij}^{(0)},\:\nu_{ij}^{(1)} >0$ (they are explicit, see the references above) such that
\begin{equation*}
\forall v \in \R^3,\quad \nu_{ij}^{(0)}\left(1 + |v|^\gamma\right)\leq \nu_{ij}(v)\leq \nu_{ij}^{(1)}\left(1 + |v|^{\gamma}\right),
\end{equation*}
Every constant being strictly positive, the following lemma follows straightforwardly.

\bigskip
\begin{lemma}
There exists a constant $\beta>0$, and for all $i$ in $\br{1,\dots,N}$ there exist $\nu_{i}^{(0)},\:\nu_{i}^{(1)} >0$ such that
\begin{equation}\label{nu0nu1}
\forall v \in \R^3,\quad \nu_{i}^{(0)}\left(1 + |v|^\gamma\right)\leq \nu_{i}(v)\leq \nu_{i}^{(1)}\left(1 + |v|^\gamma\right).
\end{equation}
Thus, we get the following relation between the collision frequencies
\begin{equation}\label{relatenu}
\:\forall v \in \R^3,\quad \nu_{i}(v) \leq \beta \nu_{ii}(v).
\end{equation}
\end{lemma}
\begin{remark}
Estimate \eqref{relatenu}  is a crucial step in the proof of Lemma \ref{lem.specLm}.
In \cite{DJMZ} the additional assumption $B_{ij}\leq C B_{ii}$ for a constant $C>0$ has been used in order to get $\eqref{relatenu}$. 
We want to point out that despite of even having  different masses to handle, we manage to get rid of this assumption.
The prize we have to pay is a slightly more restrictive assumption on the collision kernel $B$ in assumption $(H3)$. \label{rem:spectralgap}
\end{remark}
\bigskip


Next we decompose the operator $\mathbf{L}$ into its mono-species part $\mathbf{L^m}=(L^m_i)_{1\leq i \leq N}$ and its  bi-species part $\mathbf{L^b}=(L^b_i)_{1\leq i \leq N}$ according to 
\begin{align}\label{Lmb}
 \mathbf{L}=\mathbf{L^m}+\mathbf{L^b}, \quad 
 L_i^m(f_i) = L_{ii}(f_i,f_i), \quad L_i^b(f) = \sum_{j\neq i}L_{ij}(f_i,f_j).
\end{align}
Thus $\mathbf{f}$ can be written as
\begin{equation}\label{fdecomp}
  \mathbf{f} = \pi_{\mathbf{L^m}}(\mathbf{f}) + \pi^{\perp}_{\mathbf{L^m}}(\mathbf{f}), 
\end{equation}
where $\pi_{\mathbf{L^m}}$ is the orthogonal projection on $\mbox{Ker}(\mathbf{L^m})$ with respect to $L^2_{v}\pa{\boldsymbol\mu^{-1/2}}$, and $$\pi_{\mathbf{L^m}}^{\perp}:=\left(1-\pi_{\mathbf{L^m}}\right).$$

\bigskip

By employing the standard change of variables, the Dirichlet forms of $\mathbf{L^m}$ and $\mathbf{L^b}$ have the form
\begin{align}
	\left\langle\mathbf{f},\mathbf{L^m}(\mathbf{f})\right\rangle_{L^2_{v}\pa{\boldsymbol\mu^{-1/2}}} &= -\frac14\sum\limits_{i=1}^N\int_{\R^6\times\S^2}B_{ii}\mu_i\mu_i^*\left(A_{ii}\left[f_i\mu_i^{-1},f_i\mu_i^{-1}\right]\right)^2,\\\label{Aij} 
	\left\langle\mathbf{f},\mathbf{L^b}(\mathbf{f})\right\rangle_{L^2_{v}\pa{\boldsymbol\mu^{-1/2}}} &= -\frac14\sum\limits_{i=1}^N\sum\limits_{j\neq i}\int_{\R^6\times\S^2}B_{ij}\mu_i\mu_j^*\left(A_{ij}\left[f_i\mu_i^{-1},f_j\mu_j^{-1}\right]\right)^2,
\end{align}
with the shorthands
  \begin{align}\label{shorthandA}
	A_{ij}\left[f_i\mu_i^{-1},f_j\mu_j^{-1}\right]:= \left(f_i\mu_i^{-1}\right)'+\left(f_j\mu_j^{-1}\right)'^*-\left(f_i\mu_i^{-1}\right)-\left(f_j\mu_j^{-1}\right)^*.
\end{align}
\bigskip

Since $\mathbf{L^m}$ describes a multi-species operator when all the cross-interactions are null, 
\begin{align}
&\pi_{\mathbf{L^m}}(\mathbf{f})_i=m_i\mu_i(v)(a_i(t,x)+u_i(t,x)\cdot v+e_i(t,x)|v|^2), \quad 1\leq i \leq N, \label{kerLm}
\end{align}
where $a_i \in \R, u_i \in \R^3$ and $e_i \in \R$ are the coordinates of $\pi_{\mathbf{L^m}}(\mathbf{f})$ with respect to a $5N$-dimensional basis, while
\begin{align}
&\pi_{\mathbf{L}}(\mathbf{f})_i=m_i\mu_i(v)(a_i(t,x)+u(t,x)\cdot v+e(t,x)|v|^2) \quad 1\leq i \leq N, \label{kerL} 
\end{align}
where $a_i \in \R, u \in \R^3$ and $e \in \R$ are the coordinates of $\pi_{\mathbf{L}}(\mathbf{f})$ with respect to an $(N+4)$-dimensional basis. 
\bigskip

\par Finally, since
\begin{align}\label{max1}
&\int_{\R^3}\mu_i\:dv = c_{i}, \quad \int_{\R^3}\mu_i |v|^2 \:dv = 3c_{i}m_i^{-1},\quad \int_{\R^3}\mu_i|v|^4 \:dv = 15c_{i}m_i^{-2},\
\end{align}
the following moment identities hold for $a_i, u_i, e_i$ defined in \eqref{kerLm} 
\begin{align}\label{max2}
&\int_{\R^3}f_i \:dv = c_{i}(m_ia_i+3e_i), \nonumber \\&\int_{\R^3}f_i v\:dv = c_{i}u_i, \\& \int_{\R^3}f_i|v|^2 \:dv = c_{i}(3a_i+15e_im_i^{-1}).\nonumber
\end{align}
\bigskip

\subsection{Explicit spectral gap}\label{subsec:spectralgap}

This subsection is devoted to the proof of the following constructive spectral gap estimate for the multi-species linear operator $\mathbf{L}$ with different masses. 
\bigskip
\begin{theorem}\label{theo:spectralgapL}
Let the collision kernels $B_{ij}$ satisfy assumptions (H1)-(H4).
Then there exists an explicit constant $\lambda_L>0$ such that
\begin{equation*}
\left\langle \mathbf{f},\mathbf{L(f)}\right\rangle_{L^2_v\pa{\boldsymbol\mu^{-1/2}}} \leq -\lambda_L\|\mathbf{f}-\pi_{\mathbf{L}}(\mathbf{f})\|_{L^2_v\pa{\langle v \rangle^{\gamma/2}\boldsymbol\mu^{-1/2}}}^2\quad \forall \mathbf{f} \in \mbox{Dom}(\mathbf{L}),
\end{equation*}
where $\lambda_L$ depends on the properties of the collision kernel, the number of species $N$ and the different masses.
\end{theorem}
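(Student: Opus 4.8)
The plan is to transfer the known mono-species spectral gap for each diagonal operator $L_{ii}$ to the full operator $\mathbf{L}$, controlling the cross-interactions as a perturbation, following the strategy of \cite{DJMZ} but replacing their hypothesis $B_{ij}\le CB_{ii}$ by the collision-frequency estimate \eqref{relatenu}. First I would recall that each $L_{ii}$, being a single-species linear Boltzmann operator with kernel $B_{ii}$ satisfying (H1)--(H4), has a constructive spectral gap in $L^2_v(\mu_i^{-1/2})$ (by \cite{BM}\cite{Mo1}): there is $\lambda_{ii}>0$ with $\langle f_i,L_{ii}(f_i,f_i)\rangle_{L^2_v(\mu_i^{-1/2})}\le -\lambda_{ii}\|f_i-\pi_{ii}(f_i)\|^2_{L^2_v(\langle v\rangle^{\gamma/2}\mu_i^{-1/2})}$, where $\pi_{ii}$ projects onto the $5$-dimensional mono-species kernel. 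Summing over $i$ gives a spectral gap for $\mathbf{L^m}$ relative to $\mathrm{Ker}(\mathbf{L^m})$, i.e.\ $\langle\mathbf{f},\mathbf{L^m}(\mathbf{f})\rangle_{L^2_v(\boldsymbol\mu^{-1/2})}\le -\lambda_m\|\mathbf{f}-\pi_{\mathbf{L^m}}(\mathbf{f})\|^2_{L^2_v(\langle v\rangle^{\gamma/2}\boldsymbol\mu^{-1/2})}$.

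Next I would handle the bi-species part. Using the Dirichlet form \eqref{Aij}, $\langle\mathbf{f},\mathbf{L^b}(\mathbf{f})\rangle_{L^2_v(\boldsymbol\mu^{-1/2})}\le 0$, so $\mathbf{L^b}$ only helps the sign; the real issue is that $\mathrm{Ker}(\mathbf{L^m})\supsetneq\mathrm{Ker}(\mathbf{L})$, so $\|\mathbf{f}-\pi_{\mathbf{L^m}}(\mathbf{f})\|$ does not control $\|\mathbf{f}-\pi_{\mathbf{L}}(\mathbf{f})\|$. Write $\mathbf{f}=\pi_{\mathbf{L}}(\mathbf{f})+\mathbf{g}$ where $\mathbf{g}=\pi_{\mathbf{L}}^\bot(\mathbf{f})$, and further split $\pi_{\mathbf{L^m}}(\mathbf{g})$ (the part of $\mathbf{g}$ lying in the larger mono-species kernel but orthogonal to $\mathrm{Ker}(\mathbf{L})$) from $\pi_{\mathbf{L^m}}^\bot(\mathbf{g})=\pi_{\mathbf{L^m}}^\bot(\mathbf{f})$. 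On $\pi_{\mathbf{L^m}}^\bot(\mathbf{f})$ the mono-species gap already gives coercivity. It remains to show that on the finite-dimensional complement $V:=\mathrm{Ker}(\mathbf{L^m})\cap\mathrm{Ker}(\mathbf{L})^\bot$ (a $(5N-(N+4))=(4N-4)$-dimensional space) the full Dirichlet form is bounded above by $-c\|\cdot\|^2$. On $V$, $\langle\mathbf{h},\mathbf{L^m}(\mathbf{h})\rangle=0$, so $\langle\mathbf{h},\mathbf{L}(\mathbf{h})\rangle=\langle\mathbf{h},\mathbf{L^b}(\mathbf{h})\rangle$, and one checks that this vanishes only if $\mathbf{h}\in\mathrm{Ker}(\mathbf{L})$, hence only for $\mathbf{h}=0$ on $V$; a compactness/continuity argument on the unit sphere of $V$ then yields a positive constant. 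Combining the estimate on $\pi_{\mathbf{L^m}}^\bot(\mathbf{f})$, the estimate on $V$, and a Cauchy--Sch/Young argument for the cross terms between the two pieces (absorbing small constants), produces the global $\lambda_L$.

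The constructive/explicit character of $\lambda_L$ requires that the constant coming from the finite-dimensional argument on $V$ be quantified rather than obtained by abstract compactness; this is where I would invest the most care, following \cite{DJMZ}: one expresses $\langle\mathbf{h},\mathbf{L^b}(\mathbf{h})\rangle$ for $\mathbf{h}\in V$ explicitly using \eqref{Aij}--\eqref{shorthandA} and the parametrization \eqref{kerLm}, and diagonalizes the resulting quadratic form in the coordinates $(a_i,u_i,e_i)$ subject to the linear constraints defining $V$ (namely that the total momentum $\sum m_i u_i$-type and total energy combinations vanish, cf.\ \eqref{max2}). The explicit lower bound then involves $C^b$ from (H4) and the $C^\Phi_{ij}$, $\Phi_{ij}$ lower bounds, and the masses $m_i$. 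I also need \eqref{relatenu} at the point where the mono-species gap for $L_{ii}$ in $L^2_v(\mu_i^{-1/2})$ is re-expressed with the full collision frequency weight $\langle v\rangle^{\gamma/2}$, since $\nu_i\le\beta\nu_{ii}$ ensures the $L_{ii}$-gap controls the correct weighted norm uniformly. The main obstacle is precisely the explicit control of the bi-species quadratic form on $V$: establishing that it is strictly negative-definite there and extracting a computable constant, which demands carefully unravelling the constraints that distinguish $\mathrm{Ker}(\mathbf{L})$ from $\mathrm{Ker}(\mathbf{L^m})$ in the presence of different masses $m_i$. Once that is done, the assembly into the stated inequality for arbitrary $\mathbf{f}\in\mathrm{Dom}(\mathbf{L})$ is routine linear algebra plus Young's inequality.
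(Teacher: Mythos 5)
Your proposal is correct in outline and follows the same global strategy as the paper (decompose $\mathbf{L}=\mathbf{L^m}+\mathbf{L^b}$, transfer the mono-species gap via \cite{Mo1} plus \eqref{relatenu}, then supply coercivity on the part of $\mathrm{Ker}(\mathbf{L^m})$ that lies outside $\mathrm{Ker}(\mathbf{L})$). The difference is in how that last, finite-dimensional step is organized, and there the paper's route is both simpler and more quantitative than what you outline. Rather than restricting $\langle\mathbf{h},\mathbf{L^b}(\mathbf{h})\rangle$ to the orthogonal complement $V=\mathrm{Ker}(\mathbf{L^m})\cap\mathrm{Ker}(\mathbf{L})^\bot$ and diagonalizing the resulting constrained quadratic form (or invoking compactness on the unit sphere of $V$, which you rightly flag as non-constructive), the paper proves in Lemma~\ref{sec.remain} that on \emph{all} of $\mathrm{Ker}(\mathbf{L^m})$ one has $\langle\mathbf{f},\mathbf{L^b}(\mathbf{f})\rangle\le -C_2\,\mathcal{E}(\mathbf{f})$ with $\mathcal{E}(\mathbf{f})=\sum_{i,j}\bigl(|u_i-u_j|^2+(e_i-e_j)^2\bigr)$ and $C_2$ an \emph{explicit integral} \eqref{D}; positivity of $C_2$ follows from a short measure-theoretic argument (the integrand is positive on the open, nonempty complement of the set $\{|v'|=|v|\}$), not from compactness or spectral decomposition of a constrained form. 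The passage from $\mathcal{E}(\mathbf{f}^\parallel)$ to the norm $\|\mathbf{f}-\pi_{\mathbf{L}}(\mathbf{f})\|^2_{\mathcal H}$ that you want in the conclusion is then done algebraically in Step~3 of the proof: using the moment identities \eqref{max1}--\eqref{max2} one writes $\|\mathbf{f}^\parallel\|^2-\|\pi_{\mathbf{L}}(\mathbf{f})\|^2$ in the $(a_i,u_i,e_i)$ coordinates and bounds the resulting variance-type expressions $I_1,I_2$ by $\mathcal{E}$ via Jensen's inequality; the equivalence of $\mathcal H$ and $L^2_v(\boldsymbol\mu^{-1/2})$ norms on the finite-dimensional space $\mathrm{Ker}(\mathbf{L^m})$ supplies the constant $k_0$. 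Finally, the absorption of the orthogonal part in Step~1 is handled by the pointwise inequality $A_{ij}[h_i,h_j]^2\ge\frac12 A_{ij}[h^\parallel_i,h^\parallel_j]^2-A_{ij}[h^\perp_i,h^\perp_j]^2$ rather than a generic Cauchy--Schwarz on cross terms; this lets the bi-species contribution of $\mathbf{f}^\perp$ be dominated by $4\eta\|\mathbf{f}^\perp\|^2_{\mathcal H}$ and absorbed into the mono-species gap, provided $\eta\le C_1/8$. So your approach would work and is essentially a reparametrization of the paper's, but the paper's use of the energy $\mathcal{E}$ and the explicit $C_2$ integral avoids both the explicit parametrization of $V$ and the compactness crutch you were worried about.
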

\bigskip

The next two lemmas are crucial for the proof of Theorem \ref{theo:spectralgapL}, generalizing the strategy of \cite{DJMZ} to the case of different masses.
The key idea is to decompose $\mathbf{L}$ into $\mathbf{L}=\mathbf{L^m}+\mathbf{L^b}$ (see \eqref{Lmb}), and to derive separately a spectral-gap estimate for the mono-species part $\mathbf{L^m}$ on its domain $\mbox{Dom}(\mathbf{L^m})$ (see Lemma \ref{lem.specLm}), and a spectral-gap type estimate for the bi-species part $\mathbf{L^b}$ on $\mbox{Ker}(\mathbf{L^m})$ (see Lemma \ref{sec.remain}) measured in terms of the following functional 
\begin{align*}
\mathcal{E}:\: \mbox{Ker}(\mathbf{L^m})\to \R^+, \quad
\mathcal{E}(\mathbf{f}):=\sum\limits_{i,j=1}^N \left(\left|u^{(f)}_i - u^{(f)}_j\right|^2 + \left(e^{(f)}_i-e^{(f)}_j\right)^2 \right),
\end{align*}
where for a fixed $\mathbf{f} \in \mbox{Ker}(\mathbf{L^m})$, $u^{(f)}_i$ and $e^{(f)}_i$  describe the coordinates of the $i^{th}$ component of $\mathbf{f}$ with respect to the basis defined in \eqref{kerLm}.
To lighten computations, we introduce the following Hilbert space $\mathcal H:=L^2_v\left(\boldsymbol{\nu}^{1/2}\boldsymbol\mu^{-1/2}\right)$, which is equivalent to  $L^2_v\pa{\langle v \rangle^{\gamma/2}\boldsymbol\mu^{-1/2}}$:
\begin{equation}
\begin{split} \label{H}
\mathcal H &= \bigg\{f\in L^2_v(\boldsymbol{\mu^{-1/2}}):\|\mathbf{f}\|_\mathcal H^2 = \sum_{i=1}^N\int_{\R^3}f_i^2\nu_i \mu_i^{-1} \:dv < \infty\bigg\}.
\end{split}
\end{equation}
\bigskip

\begin{lemma}\label{lem.specLm}
For all $\mathbf{f}$ in $\mbox{Dom}(\mathbf{L^m})$ there exists an explicit constant $C_1>0$, such that
$$\left\langle\mathbf{f},\mathbf{L^m(f)}\right\rangle_{L^2_v\pa{\boldsymbol\mu^{-1/2}}} \leq - C_1 \|\mathbf{f}-\pi_{\mathbf{L^m}}(\mathbf{f})\|_{L^2_v\pa{\langle v \rangle^{\gamma/2}\boldsymbol\mu^{-1/2}}}^2,$$
where $C_1$ depends on the properties of the collision kernel, the number of species $N$ and the different masses.
\end{lemma}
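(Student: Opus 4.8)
The plan is to exploit the block-diagonal structure of $\mathbf{L^m}$. By definition $L^m_i(\mathbf{f})=L_{ii}(f_i,f_i)$ depends only on the $i^{\text{th}}$ component, so
$$\langle\mathbf{f},\mathbf{L^m}(\mathbf{f})\rangle_{L^2_v(\boldsymbol\mu^{-1/2})}=\sum_{i=1}^N\langle f_i,L_{ii}(f_i,f_i)\rangle_{L^2_v(\mu_i^{-1/2})},$$
each summand being the Dirichlet form of a genuine single-species linearized Boltzmann operator with collision kernel $B_{ii}$ and equilibrium $\mu_i$. Hence $\mathrm{Ker}(\mathbf{L^m})=\bigoplus_{i=1}^N\mathrm{Ker}(L_{ii})$ and, by \eqref{max2}, the coordinates $a_i,u_i,e_i$ in \eqref{kerLm} depend only on moments of $f_i$, so that $\pi_{\mathbf{L^m}}(\mathbf{f})_i=\pi_{ii}(f_i)$, where $\pi_{ii}$ is the mono-species orthogonal projection onto $\mathrm{Span}\{m_i\mu_i,\,m_iv\mu_i,\,m_i|v|^2\mu_i\}$ in $L^2_v(\mu_i^{-1/2})$. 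It therefore suffices to produce, for each fixed $i$, an explicit spectral-gap estimate for $L_{ii}$, and then take $C_1$ to be (essentially) the minimum of the $N$ resulting constants.

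First I would verify that each kernel $B_{ii}=C^{\Phi}_{ii}|v-v_*|^\gamma\,b_{ii}(\cos\theta)$ falls into the scope of the constructive single-species spectral-gap theory of \cite{BM}\cite{Mo1}: assumptions (H2)--(H3) provide the hard/Maxwellian potential structure together with Grad's angular cutoff, and the lower bound $C^b>0$ contained in (H4) is precisely the non-degeneracy condition on the angular kernel required there. To account for $m_i\neq 1$, I would apply the rescaling $v\mapsto m_i^{-1/2}v$, which maps $\mu_i$ to a standard centered Gaussian and keeps $B_{ii}$ in the same class (only $C^\Phi_{ii}$ and the cut-off bounds get multiplied by explicit powers of $m_i$). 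The constructive spectral gap then transfers back and yields an explicit $\lambda_{ii}>0$ with
$$\langle f_i,L_{ii}(f_i,f_i)\rangle_{L^2_v(\mu_i^{-1/2})}\leq -\lambda_{ii}\,\norm{f_i-\pi_{ii}(f_i)}_{L^2_v(\nu_{ii}^{1/2}\mu_i^{-1/2})}^2,$$
the collision-frequency weight $\nu_{ii}$ being available because the decomposition $L_{ii}=-\nu_{ii}+K_{ii}$ gives the coercivity $\langle f_i,L_{ii}f_i\rangle\leq -c_1\norm{\nu_{ii}^{1/2}f_i}^2+c_2\norm{f_i}^2$, which combined with the unweighted Grad spectral gap upgrades the estimate to the $\nu_{ii}$-weighted form.

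Then I would sum over $i$. Since $\nu_i=\sum_j\nu_{ij}\geq\nu_{ii}$ trivially while conversely $\nu_i\leq\beta\,\nu_{ii}$ by \eqref{relatenu} --- this is exactly where that inequality enters --- the weights $\nu_{ii}$, $\nu_i$ and $\langle v\rangle^{\gamma}$ are pairwise comparable with explicit constants through \eqref{nu0nu1}. Using in addition the orthogonal decomposition $\norm{\mathbf{f}-\pi_{\mathbf{L^m}}(\mathbf{f})}^2_{L^2_v(\langle v\rangle^{\gamma/2}\boldsymbol\mu^{-1/2})}=\sum_i\norm{f_i-\pi_{ii}(f_i)}^2_{L^2_v(\langle v\rangle^{\gamma/2}\mu_i^{-1/2})}$, one obtains
$$\sum_{i=1}^N\lambda_{ii}\,\norm{f_i-\pi_{ii}(f_i)}^2_{L^2_v(\nu_{ii}^{1/2}\mu_i^{-1/2})}\ \geq\ C_1\,\norm{\mathbf{f}-\pi_{\mathbf{L^m}}(\mathbf{f})}^2_{L^2_v(\langle v\rangle^{\gamma/2}\boldsymbol\mu^{-1/2})},$$
with $C_1$ an explicit constant built from $\min_i\lambda_{ii}$, $\beta$ and the constants $\nu^{(0)}_i,\nu^{(1)}_i$. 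Combining the two displayed chains with the block decomposition of $\langle\mathbf{f},\mathbf{L^m}(\mathbf{f})\rangle$ finishes the proof.

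I expect the only genuinely delicate point to be the bookkeeping around the mass rescaling: one has to check that the constructive constants of \cite{BM}\cite{Mo1}, stated for a normalized Maxwellian, depend on $B_{ii}$ and $m_i$ only through quantities that remain finite and strictly positive under $v\mapsto m_i^{-1/2}v$, so that each $\lambda_{ii}$ stays explicit and, there being only finitely many species, $\min_i\lambda_{ii}>0$. Everything else is the block-diagonal reduction together with the norm equivalences \eqref{relatenu}--\eqref{nu0nu1}.
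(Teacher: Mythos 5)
Your proposal is correct and follows essentially the same route as the paper: exploit the block-diagonal structure of $\mathbf{L^m}$ to reduce to $N$ mono-species Dirichlet forms, invoke the constructive single-species spectral gap of \cite{Mo1} for each $L_{ii}$, and then pass from the weight $\nu_{ii}$ to $\nu_i$ (and on to $\langle v\rangle^{\gamma/2}$) via $\eqref{relatenu}$ and $\eqref{nu0nu1}$. The paper simply cites \cite{Mo1} in the form that already yields the $\nu_{ii}$-weighted estimate and absorbs the mass dependence into the constant, so your extra care about the rescaling $v\mapsto m_i^{-1/2}v$ and the upgrade from the unweighted to the collision-frequency-weighted gap is sound bookkeeping rather than a different argument.
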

\begin{proof}
By \cite[Theorem 1.1 and Remark 1 below it]{Mo1} together with the shorthand introduced in \eqref{shorthandA}, 
\begin{equation*}
  \frac14\int_{\R^6\times\S^2}B_{ii} \left(A_{ii}\left[f_i\mu_i^{-1}, f_i\mu_i^{-1}\right]\right)^2 \mu_i \mu_i^*
	\:dvdv_*d\sigma \ge 
	\lambda_m c_{\infty,i}\int_{\R^3}(f_i-\pi_{\mathbf{L^m}}(\mathbf{f})_i)^2\nu_{ii}\mu_i^{-1}\:dv,
\end{equation*}
where $\lambda_m>0$ depends on the properties of the collision kernel, the number of species $N$ and the different masses.
Summing this estimate over $i=1,\ldots,N$ and employing \eqref{Aij} yields
\begin{equation}\label{fLmf}
  -\left\langle\mathbf{f},\mathbf{L^m(f)}\right\rangle_{L^2_v\pa{\boldsymbol\mu^{-1/2}}} \ge \lambda^m \sum_{i=1}^N c_{\infty,i}\int_{\R^3}
	(f_i-\pi_{\mathbf{L^m}}(f_i))^2\frac{\nu_{ii}}{\mu_i}\:dv.
\end{equation}
Now we can estimate $\nu_{ii}$ in terms of $\nu_i$ by using $\eqref{relatenu}$, and plugging this bound into \eqref{fLmf} together with the fact that $\mathcal H$ is equivalent to  $L^2_v\pa{\langle v \rangle^{\gamma/2}\boldsymbol\mu^{-1/2}}$ finishes the proof.
\end{proof}
\bigskip

\begin{lemma}\label{sec.remain}
For all $\mathbf{f}$ in $\mbox{Ker}(\mathbf{L^m})\cap\mbox{Dom}(\mathbf{\mathbf{L^b}})$ there exists an explicit $C_2>0$ such that
\begin{align*}
 \left\langle \mathbf{f}, \mathbf{L^b(f)} \right\rangle_{L^2_v\pa{\boldsymbol\mu^{-1/2}}} \leq -C_2 \:\mathcal{E}(\mathbf{f}),  
\end{align*}
with the functional  $\mathcal{E}$ defined by
\begin{align}\label{energynorm}
\mathcal{E}:\: \mbox{Ker}(\mathbf{L^m})\to \R^+, \quad
\mathcal{E}(\mathbf{f}):=\sum\limits_{i,j=1}^N \left(\left|u^{(f)}_i - u^{(f)}_j\right|^2 + \left(e^{(f)}_i-e^{(f)}_j\right)^2 \right),
\end{align}
where for fixed $\mathbf{f} \in \mbox{Ker}(\mathbf{L^m})$ it holds that $u^{(f)}_i$, $e^{(f)}_i$ describe the coordinates of the $i^{th}$ component of $\mathbf{f}$ with respect to the basis defined in \eqref{kerLm}, and $C_2>0$ is defined in \eqref{D}.
\end{lemma}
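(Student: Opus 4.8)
The goal is to lower-bound $-\langle \mathbf{f},\mathbf{L^b(f)}\rangle$ by a constant multiple of $\mathcal{E}(\mathbf{f})$ for $\mathbf{f}\in\mathrm{Ker}(\mathbf{L^m})$. The starting point is the explicit Dirichlet form \eqref{Aij}: writing $\mathbf{f}\in\mathrm{Ker}(\mathbf{L^m})$ as $f_i=m_i\mu_i(a_i+u_i\cdot v+e_i|v|^2)$ from \eqref{kerLm}, we have $f_i\mu_i^{-1}=m_i(a_i+u_i\cdot v+e_i|v|^2)=:m_i\phi_i(v)$, so the shorthand $A_{ij}[f_i\mu_i^{-1},f_j\mu_j^{-1}]$ in \eqref{shorthandA} becomes $m_i(\phi_i'-\phi_i)+m_j((\phi_j)'^*-\phi_j^*)$, a concrete (at most quadratic) polynomial in $v,v_*,\sigma$. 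Thus
$$
-\langle \mathbf{f},\mathbf{L^b(f)}\rangle_{L^2_v(\boldsymbol\mu^{-1/2})} = \frac14\sum_{i=1}^N\sum_{j\neq i}\int_{\R^6\times\S^2}B_{ij}\,\mu_i\mu_j^*\,\big(m_i(\phi_i'-\phi_i)+m_j((\phi_j)'^*-\phi_j^*)\big)^2\,d\sigma\,dv\,dv_*,
$$
which is manifestly nonnegative. The task is to show it cannot be small unless all the $u_i$ coincide and all the $e_i$ coincide.

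First I would reduce to a single pair $i\neq j$: since every term in the double sum is nonnegative, it suffices to bound from below, for each fixed pair, the quantity
$$
\mathcal{D}_{ij}(\mathbf{f}):=\frac14\int_{\R^6\times\S^2}B_{ij}\,\mu_i\mu_j^*\,\big(m_i(\phi_i'-\phi_i)+m_j((\phi_j)'^*-\phi_j^*)\big)^2\,d\sigma\,dv\,dv_*
$$
by a constant times $|u_i-u_j|^2+(e_i-e_j)^2$. The bilinear form $(\mathbf{f},\mathbf{f})\mapsto\mathcal{D}_{ij}(\mathbf{f})$ is a nonnegative quadratic form on the finite-dimensional space of pairs $(\phi_i,\phi_j)$ of polynomials of the above form; its kernel (the collision invariants for the $B_{ij}$-collisions, using \eqref{elasticcollision} with masses $m_i,m_j$) is spanned by $\phi_i,\phi_j\in\{1\}$, by momentum $m_i\phi_i=m_j\phi_j=v$-component, and by energy $m_i\phi_i=m_j\phi_j=|v|^2/2$ — equivalently, $m_i(\phi_i'-\phi_i)+m_j((\phi_j)'^*-\phi_j^*)=0$ identically forces $u_i=u_j$ and $e_i=e_j$ (and leaves $a_i,a_j$ free). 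By equivalence of norms on a finite-dimensional quotient space, there is therefore a constant $c_{ij}>0$ with $\mathcal{D}_{ij}(\mathbf{f})\ge c_{ij}(|u_i-u_j|^2+(e_i-e_j)^2)$. Summing over all pairs gives the claim with $C_2=\min_{i\neq j}c_{ij}$ (up to a harmless combinatorial factor absorbing the symmetrisation). To make $C_2$ \emph{explicit} — which the statement demands — I would not invoke abstract norm-equivalence but instead follow \cite{DJMZ}: pick a few judicious test directions in $\sigma$ or integrate against specific moments to extract $|u_i-u_j|^2$ and $(e_i-e_j)^2$ quantitatively, using assumption (H4) (the lower bound on $b_{ij}$ via $C^b$, or here rather a positivity/lower bound on $b_{ij}$ itself) and (H3) to control $\Phi_{ij}$ from below on a fixed velocity annulus where $\mu_i\mu_j^*$ is bounded below. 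This is the computation referenced by the forward pointer to \eqref{D}.

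The main obstacle is exactly the asymmetry introduced by $m_i\neq m_j$: the post-collisional velocities $v',v'_*$ in the multi-species formulas are \emph{not} obtained from $v,v_*$ by the symmetric reflection of the mono-species case, so the change of variables and the explicit parametrisation of the collision manifold differ, and the polynomial $m_i(\phi_i'-\phi_i)+m_j((\phi_j)'^*-\phi_j^*)$ expanded in terms of $v,v_*,\sigma$ has cross terms that, in the mono-species computation, would cancel by symmetry. Concretely one must verify that the quadratic form still degenerates \emph{only} on the $(N{+}4)$-dimensional space of global collision invariants — i.e. that no spurious extra invariant appears for the $B_{ij}$-collision when $m_i\ne m_j$ — and then track the masses through every constant. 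I would handle this by substituting the explicit $v',v'_*$ into $\phi_i'$ and $(\phi_j)'^*$, expanding in powers of $|v-v_*|$ and the angular variable, and isolating the coefficient of a non-invariant harmonic (e.g. the piece proportional to $\langle (v-v_*),\sigma\rangle$ after projecting out constants), whose $L^2(d\sigma)$-norm is bounded below by (H4) and whose remaining $v,v_*$-integral against $\mu_i\mu_j^*$ produces precisely $|u_i-u_j|^2$ and $(e_i-e_j)^2$ with computable positive coefficients depending on $m_i,m_j$. This yields the explicit $C_2$ and completes the proof.
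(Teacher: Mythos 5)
Your reduction to a single pair $i\neq j$ and your identification of the kernel of the quadratic form $\mathcal{D}_{ij}$ (namely that $A_{ij}=0$ identically forces $u_i=u_j$ and $e_i=e_j$, leaving $a_i,a_j$ free) is correct, and the norm-equivalence argument on the finite-dimensional quotient does yield \emph{some} $c_{ij}>0$. But the lemma asserts an \emph{explicit} $C_2$ --- indeed the very formula \eqref{D} is part of the statement --- and your abstract argument cannot produce it; you acknowledge this, but the constructive substitute you sketch (testing against a few angular directions, lower-bounding $b_{ij}$ via (H4), controlling $\Phi_{ij}$ on an annulus, a Povzner-style expansion) is not what the paper does and is considerably heavier than what is needed.

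The paper's route is shorter and more algebraic. Substituting $f_i=m_i\mu_i(a_i+u_i\cdot v+e_i|v|^2)$ into \eqref{shorthandA} and then using \emph{both} microscopic conservation laws \eqref{elasticcollision} (momentum and energy), the shorthand collapses to
\[
A_{ij}\bigl[f_i\mu_i^{-1},f_j\mu_j^{-1}\bigr]
= m_i(u_i-u_j)\cdot(v'-v)+m_i(e_i-e_j)\bigl(|v'|^2-|v|^2\bigr),
\]
so the $a_i,a_j$ drop out automatically and the asymmetry between $v'$ and $v'_*$ --- which you flag as the "main obstacle" --- disappears at this step, before any integration. Squaring, the cross term integrates to zero by oddity of $(v,v_*,\sigma)\mapsto -(v,v_*,\sigma)$. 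For the velocity term, a component-wise reflection argument shows
\[
\int B_{ij}\bigl((u_i-u_j)\cdot(v'-v)\bigr)^2\mu_i\mu_j^*
=\frac{|u_i-u_j|^2}{3}\int B_{ij}|v-v'|^2\mu_i\mu_j^*.
\]
This gives directly $\langle\mathbf{f},\mathbf{L^b(f)}\rangle\leq -C_2\,\mathcal{E}(\mathbf{f})$ with $C_2$ exactly as in \eqref{D} (a min over $i,j$ of an integral involving $\min\{\frac13|v-v'|^2,(|v'|^2-|v|^2)^2\}$). Positivity of $C_2$ is then shown by a soft topological argument --- the integrand vanishes only on the closed null set $\{|v'|=|v|\}$, whose complement is open and nonempty --- not via the lower bound (H4) on the angular kernel. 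One small imprecision in your kernel description: the momentum invariant corresponds to $\phi_i=\phi_j$ (i.e.\ $u_i=u_j$), not $m_i\phi_i=m_j\phi_j$; the latter would give $m_iu_i=m_ju_j$, which is a different condition when the masses differ.
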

\bigskip

\begin{remark}
 Note that for $\mathbf{f}$ in $\mbox{Ker}(\mathbf{L^m})$ it holds that
 $$\mathcal{E}(\mathbf{f}) = 0  \quad \Leftrightarrow \quad \mathbf{f} \in \mbox{Ker}(\mathbf{L^b}),$$
 since $\mbox{Ker}(\mathbf{L})=\mbox{Ker}(\mathbf{L^m})\cap \mbox{Ker}(\mathbf{L^b})$. This fact together with a multi-species version of the H-theorem show that the left-hand side of the estimate in Lemma \ref{sec.remain} is null if and only  if the right-hand side is null.
 \end{remark}

\begin{proof}
Let $\mathbf{f} \in \mbox{Ker}(\mathbf{L^m})\cap\mbox{Dom}(\mathbf{\mathbf{L^b}})$. Writing $\mathbf{f}$ in the form \eqref{kerLm} and applying the microscopic conservation laws \eqref{elasticcollision} yields  
\begin{align*}
A_{ij}[f_i \mu_i^{-1}, f_j \mu_j^{-1}] = m_i(u_i-u_j)\cdot(v'-v) + m_i(e_i-e_j)(|v'|^2-|v|^2),
\end{align*}
and thus
\begin{align*}
  -\langle\mathbf{f},&\mathbf{L^b}(\mathbf{f})\rangle_{L^2_v\pa{\boldsymbol\mu^{-1/2}}}\\
		&= \frac14\sum_{\overset{i,j=1}{j\neq i}}^Nm_i^2\int_{\R^6\times\S^2}B_{ij}
	\big[(u_i-u_j)\cdot(v'-v) + (e_i-e_j)(|v'|^2-|v|^2)\big]^2 \mu_i\mu_j^*.
\end{align*}
Using the symmetry of $B_{ij}$ and of $\mu_i\mu_j^*$ 
together with the  oddity of the function $G(v,v_*,\sigma)=B_{ij}(u_i-u_j)\cdot(v'-v)(|v'|^2-|v|^2)$
with respect to $(v,v_*,\sigma)$ yields that the mixed term in the square of the integral above vanishes. Thus we obtain
\begin{align}\label{fLf3}
  -\langle \mathbf{f}, &\mathbf{L^b}(\mathbf{f})\rangle_{L^2_v\pa{\boldsymbol\mu^{-1/2}}}= \frac14\sum_{\overset{i,j=1}{j\neq i}}^N m_i^2\int_{\R^6\times\S^2}B_{ij}\\\nonumber
	&\times\big(|(u_i-u_j)\cdot(v'-v)|^2 + (e_i-e_j)^2(|v'|^2-|v|^2)^2\big)\mu_i\mu_j^*\:dvdv_*d\sigma.
\end{align}

We claim that the following holds
$$
  \int_{\R^6\times\S^2}B_{ij}((u_i-u_j)\cdot(v'-v))^2 \mu_i\mu_j^*\:dvdv_*d\sigma
	= \frac{|u_i-u_j|^2}{3}\int_{\R^6\times\S^2}B_{ij}|v-v'|^2\mu_i\mu_j^*\:dvdv_*d\sigma.
$$
To prove this identity, we write $u_{i,k}$ and $v_k$ for the $k$th component of
the vectors $u_i$ and $v$, respectively. The change of variables $(v_k,v_k^*,\sigma_k)
\mapsto -(v_k,v_k^*,\sigma_k)$ for fixed $k$ leaves $B_{ij}$, $\mu_i$, and $\mu_j^*$
unchanged but $v_k'\mapsto -v_k'$, such that
$$
  \int_{\R^6\times\S^2}B_{ij}v_k'v_\ell \mu_i\mu_j^*\:dvdv_*d\sigma = 0
	\quad\mbox{for }\ell\neq k.
$$
Moreover,
$$
  \int_{\R^6\times\S^2}B_{ij}v_kv_\ell \mu_i\mu_j^*\:dvdv_*d\sigma = 0
	\quad\mbox{for }\ell\neq k,
$$
since the integrand is odd. Thus,
\begin{align*}
  \int_{\R^6\times\S^2} & B_{ij}((u_i-u_j)\cdot(v'-v))^2 \mu_i\mu_j^*\:dvdv_*d\sigma \\
	&= \sum_{k,\ell=1}^3 (u_{i,k}-u_{j,k})(u_{i,\ell}-u_{j,\ell})
	\int_{\R^6\times\S^2}B_{ij}(v_k'-v_k)(v_\ell'-v_\ell)\mu_i\mu_j^*\:dvdv_*d\sigma \\
	&= \sum_{k=1}^3 (u_{i,k}-u_{j,k})^2\int_{\R^6\times\S^2}B_{ij}
	(v_k-v_k')^2 \mu_i\mu_j^*\:dvdv_*d\sigma.
\end{align*}
Since the integral is independent of $k$, we get
\begin{align*}
  \int_{\R^6\times\S^2} & B_{ij}((u_i-u_j)\cdot(v'-v))^2 \mu_i\mu_j^*\:dvdv_*d\sigma \\
	&= \frac13\sum_{k=1}^3 (u_{i,k}-u_{j,k})^2\int_{\R^6\times\S^2}B_{ij}
	|v-v'|^2 \mu_i\mu_j^*\:dvdv_*d\sigma,
\end{align*}
which proves the claim.

This implies that for all $\mathbf{f}$ in $\mbox{Ker}(\mathbf{L^m})\cap\mbox{Dom}(\mathbf{\mathbf{L^b}})$ it holds that
$$
  \langle\mathbf{f},\mathbf{L^b}(\mathbf{f})\rangle_{L^2_v\pa{\boldsymbol\mu^{-1/2}}}
	\leq -C_2 \: \mathcal{E}(\mathbf{f}),
$$
where $\mathcal{E}(\cdot)$ is defined in \eqref{energynorm} and
\begin{equation}\label{D}
  C_2 = \frac{1}{4}\min_{1\le i,j\le n}\int_{\R^6\times\S^2}m_i^2B_{ij}
	\min\left\{\frac13|v-v'|^2,(|v'|^2-|v|^2)^2\right\}\mu_i\mu_j^* \:dvdv_*d\sigma.
\end{equation}
The last part is to prove that $C_2>0$. For this we note that the integrand of \eqref{D} vanishes
if and only if $|v'|=|v|$. However, the set
$$
  X = \{(v,v_*,\sigma)\in\R^3\times\R^3\times\S^2: |v'|=|v|\}
$$
is closed since it is the pre-image of $\{0\}$ of the function 
$H(v,v_*,\sigma)=|v'|^2-|v|^2$ which is continuous. Now $X^c$ is open and nonempty
and thus has positive Lebesgue measure, and since the integrand in \eqref{D} is
positive on $X^c$, we get that $C_2>0$, which finishes the proof.
\end{proof}
\bigskip

\begin{proof}[Proof of Theorem \ref{theo:spectralgapL}]
The proof will be performed in $4$ steps. To lighten notation, we will use the following shorthands for $\mathbf{f} \in \mbox{Dom}(\mathbf{L})$:
\begin{align}\label{shorthands}
\mathbf{f^\parallel}=\pi_{\mathbf{L^m}}(\mathbf{f}), \quad \mathbf{f^{\perp}}=\mathbf{f}-\mathbf{f^{\parallel}}, \quad 
h_i^\parallel=\mu_i^{-1}f_i^\parallel, \quad h_i^{\perp}=\mu_i^{-1}h_i^\perp.
\end{align}
\bigskip

\textbf{Step $1:$ Absorption of the orthogonal part.}\label{sec.ortho}

\par The nonnegativity of  $-\langle\mathbf{f},\mathbf{L^b}(\mathbf{f})\rangle_{L^2_v\pa{\boldsymbol\mu^{-1/2}}}\ge 0$ and Lemma \ref{lem.specLm} imply that
\begin{eqnarray}
  -\langle(\mathbf{f},\mathbf{L(f)}\rangle_{L^2_v\pa{\boldsymbol\mu^{-1/2}}} \ge C_1\|\mathbf{f}-\mathbf{f^\parallel}\|_\mathcal H^2 - \eta\langle\mathbf{f},\mathbf{L^b}(\mathbf{f})\rangle_{L^2_v\pa{\boldsymbol\mu^{-1/2}}},\label{fLf}
\end{eqnarray}
where $\eta \in (0,1]$ and $C_1>0$ was defined in Lemma \ref{lem.specLm}. 
Now it holds that
\begin{align*}
  A_{ij}[h_i,h_j]^2 
	&\ge \frac12A_{ij}[h_i^\parallel,h_j^\parallel]^2
	- A_{ij}[h_i^\perp,h_j^\perp]^2,
\end{align*}
and plugging this into \eqref{Aij} and \eqref{fLf} implies
\begin{align}
  -\langle\mathbf{f},\mathbf{L(f)})\rangle_{L^2_v\pa{\boldsymbol\mu^{-1/2}}} &\ge C_1\|f^\perp\|_\mathcal H^2
	+ \frac{\eta}{8}\sum_{i=1}^N\sum_{j\neq i}\int_{\R^6\times\S^2}
	B_{ij}A_{ij}[h_i^\parallel,h_j^\parallel]^2\mu_i\mu_j^*\:dvdv_*d\sigma \nonumber \\
	&\phantom{xx}{}- \frac{\eta}{4}\sum_{i=1}^N\sum_{j\neq i}^N\int_{\R^6\times\S^2}
	B_{ij}A_{ij}[h_i^\perp,h_j^\perp]^2 \mu_i\mu_j^*\:dvdv_*d\sigma. \label{fLf2}
\end{align}

Now we prove that (up to a small factor) the last term on the right-hand side can be estimated from below
by $\|\mathbf{f^\perp}\|_\mathcal H^2$. For this we perform the standard change of variables $(v,v_*) \to (v_*,v)$ together with $i \leftrightarrow j$ and $(v,v_*)\to(v',v'_*)$, and by using the 
identity $\mu_i\mu_j^*=\mu_i'\mu_j'^*$ we obtain
\begin{align*}
  &\sum_{i=1}^N\sum_{j\neq i}\int_{\R^6\times\S^2}  B_{ij}A_{ij}[h_i^\perp,h_j^\perp]^2 \mu_i\mu_j^*\:dvdv_*d\sigma \\
	&\quad\quad\quad\le 4\sum_{i=1}^N\sum_{j\neq i}\int_{\R^6\times\S^2}B_{ij}\big(((h_i^\perp)')^2 + ((h_j^\perp)'^*)^2
	+ (h_i^\perp)^2 + ((h_j^\perp)^*)^2\big)\mu_i\mu_j^* \:dvdv_*d\sigma \\
	&\quad\quad\quad\le 16\sum_{i=1}^N\sum_{j\neq i}\int_{\R^6\times\S^2}B_{ij}(h_i^\perp)^2 \mu_i\mu_j^* \:dvdv_*d\sigma.
	\end{align*}
Taking into account the definition \eqref{nu} of $\nu_i$, we get for the last term on the right-hand side of \eqref{fLf2} 
\begin{align*}
  - &\frac{\eta}{4} \sum_{i=1}^N\sum_{j\neq i} \int_{\R^6\times\S^2}
	B_{ij}A_{ij}[h_i^\perp,h_j^\perp]^2 \mu_i\mu_j^*\:dvdv_*d\sigma \\
	&\quad\quad\quad\ge -4\eta\sum_{i,j=1}^N\int_{\R^6\times\S^2} B_{ij}
	(f_i^\perp)^2 \mu_j^*\mu_i^{-1}\:dvdv_*d\sigma \\
	&\quad\quad\quad\ge -4\eta\sum_{i=1}^N\int_{\R^3}(f_i^\perp)^2\nu_i\mu_i^{-1} \:dv
	= -4\eta\|f^\perp\|_\mathcal H^2.
\end{align*}
Finally \eqref{fLf2} yields
\begin{equation*}
\begin{split}
  -\langle\mathbf{f},\mathbf{L(f)}\rangle_{L^2_v\pa{\boldsymbol\mu^{-1/2}}} \ge& \left(C_1-4\eta\right)\left\|\mathbf{f}-\mathbf{f^\parallel}\right\|_\mathcal H^2
	\\ &+ \frac{\eta}{8}\sum_{i=1}^N\sum_{j\neq i}\int_{\R^6\times\S^2}
	B_{ij}A_{ij}\left[h_i^\parallel,h_j^\parallel\right]^2\mu_i\mu_j^*\:dvdv_*d\sigma.
\end{split}
\end{equation*}
Thus 
\begin{align}\label{lem.ortho}
  \langle \mathbf{f},\mathbf{L(f)}\rangle_{L^2_v\pa{\boldsymbol\mu^{-1/2}}} \leq -(C_1-4\eta)\left\|\mathbf{f}-\mathbf{f^\parallel}\right\|_\mathcal H^2
	+\frac{\eta}{2}\langle \mathbf{f^\parallel},\mathbf{L^b(f^\parallel})\rangle_{L^2_v\pa{\boldsymbol\mu^{-1/2}}},
\end{align}
where $0<\eta\leq\min\{1,C_1/8\}$.
\bigskip

\textbf{Step $2:$ Estimate for for the remaining part.}
Due to Lemma \ref{sec.remain} there exists an explicit $C_2>0$ such that
\begin{align*}
\left\langle\mathbf{f^\parallel},\mathbf{L^b}(\mathbf{f^\parallel})\right\rangle_{L^2_v\pa{\boldsymbol\mu^{-1/2}}} \leq - C_2 \: \mathcal{E}\left(\mathbf{f^\parallel}\right).
\end{align*}

\textbf{Step $3:$ Estimate for the momentum and energy differences.}\label{sec.diff}

\par We need to find a relation between $\mathcal{E}(\mathbf{f^\parallel}), \left\|\mathbf{f}-\mathbf{f^{\parallel}}\right\|$ and $\left\|\mathbf{f}-\pi_{\mathbf{L}}(\mathbf{f})\right\|$ respectively. To this end, we decompose $\mathbf{f}=\mathbf{f^\parallel}+\mathbf{f^\perp}$ recalling that $\mathbf{f^\parallel}=\pi_{\mathbf{L^m}}(\mathbf{f})$ and
$\mathbf{f^\perp}=\mathbf{f}-\mathbf{f^\parallel}$. Using an arbitrary orthonormal basis $\left(\boldsymbol{\psi}_k\right)_{1\leq k\leq 5N}$ of $\mbox{Ker}(\mathbf{L}^m)$ in $L^2_v\pa{\boldsymbol\mu^{-1/2}}$, we first show that 
\begin{equation}\label{aux2}
  \|\mathbf{f}-\pi_{\mathbf{L}}(\mathbf{f})\|_\mathcal H^2 \le 2\|\mathbf{f}^\perp\|_\mathcal H^2 
	+ k_0\pa{\|\mathbf{f}^\parallel\|_{L^2_v\pa{\boldsymbol\mu^{-1/2}}}^2 - \|\pi_{\mathbf{L}}(\mathbf{f})\|_{L^2_v\pa{\boldsymbol\mu^{-1/2}}}^2},
\end{equation}
where $k_0=10N\max_{1\le k,\ell\le 5N}|\langle\boldsymbol{\psi_k},\boldsymbol{\psi_\ell}\rangle_\mathcal H|$.

To this end, we start with
\begin{equation}\label{aux1}
  \|\mathbf{f}-\pi_{\mathbf{L}}(\mathbf{f})\|_\mathcal H^2 \le 2\big(\|\mathbf{f}^\perp\|_\mathcal H^2 
	+ \|\mathbf{f}^\parallel-\pi_{\mathbf{L}}(\mathbf{f})\|_\mathcal H^2\big).
\end{equation}
Denoting the last term by $\mathbf{g}:=\mathbf{f}^\parallel-\pi_{\mathbf{L}}(\mathbf{f})
\in\mbox{Ker}(\mathbf{L}^m)$ (note that $\mbox{Ker}(\mathbf{L})\subset\mbox{Ker}(\mathbf{L^m})$) 
and using Young's inequality implies
\begin{align*}
  \|\mathbf{g}\|_\mathcal H^2 &= \sum_{i=1}^N\int_{\R^3}\left|\sum_{k=1}^{5N}\langle\mathbf{g},\boldsymbol{\psi_k}\rangle_{L^2_v\pa{\boldsymbol\mu^{-1/2}}}
	\psi_{k,i}\right|^2\nu_i(v)\:dv\\
	&= \sum_{k,\ell=1}^{5N}\langle\mathbf{g},\boldsymbol{\psi}_k\rangle_{L^2_v\pa{\boldsymbol\mu^{-1/2}}}\langle\mathbf{g},\boldsymbol{\psi}_\ell\rangle_{L^2_v\pa{\boldsymbol\mu^{-1/2}}}
	\langle\boldsymbol{\psi}_k,\boldsymbol{\psi}_\ell\rangle_\mathcal H \\
	&\le \frac12\max_{1\le k,\ell\le 5N}\abs{\langle\boldsymbol{\psi_k},\boldsymbol{\psi}_\ell\rangle_\mathcal H}
	\sum_{k,\ell=1}^{5N}\pa{\langle\mathbf{g},\boldsymbol{\psi_k}\rangle^2_{L^2_v\pa{\boldsymbol\mu^{-1/2}}}+\langle\mathbf{g},\boldsymbol{\psi}_\ell\rangle^2_{L^2_v\pa{\boldsymbol\mu^{-1/2}}}} \\
	&= 5N\max_{1\le k,\ell\le 5N}\abs{\langle\boldsymbol{\psi_k},\boldsymbol{\psi_\ell}\rangle_\mathcal H}\|\mathbf{g}\|^2_{L^2_v\pa{\boldsymbol\mu^{-1/2}}}.
\end{align*}
Thus, \eqref{aux1} implies
$$
  \|\mathbf{f}-\pi_{\mathbf{L}}(\mathbf{f})\|_\mathcal H^2 \le 2\|\mathbf{f}^\perp\|_\mathcal H^2 
	+ 10N\max_{1\le k,\ell\le 5N}\abs{\langle\boldsymbol{\psi_k},\boldsymbol{\psi_\ell}\rangle_\mathcal H}
	\|\mathbf{f}^\parallel-\pi_{\mathbf{L}}(\mathbf{f})\|_{L^2_v\pa{\boldsymbol\mu^{-1/2}}}^2.
$$
Now $\mbox{Ker}(\mathbf{L})\subset\mbox{Ker}(\mathbf{L^m})$ implies $\pi_{\mathbf{L^m}}\pi_{\mathbf{L}}=\pi_{\mathbf{L}}$, thus
\begin{align*}
  \|\mathbf{f}^\parallel-\pi_{\mathbf{L}}(\mathbf{f})\|^2_{L^2_v\pa{\boldsymbol\mu^{-1/2}}} = \|\mathbf{f}^\parallel\|_{L^2_v\pa{\boldsymbol\mu^{-1/2}}}^2 - \|\pi_{\mathbf{L}}(\mathbf{f})\|_{L^2_v\pa{\boldsymbol\mu^{-1/2}}}^2,
\end{align*}
which indeed yields \eqref{aux2}.
\bigskip

Now the moment identities \eqref{max1} and \eqref{max2} yield
\begin{align*}
  \|\mathbf{f}^\parallel\|_{L^2_v\pa{\boldsymbol\mu^{-1/2}}}^2 
	&= \sum_{i=1}^N c_{\infty,i}(m_i^2a_i^2 + m_i|u_i|^2 +15e_i^2 + 6m_ia_ie_i),
\end{align*}
and
\begin{equation*}
\begin{split}
  \|\pi_{\mathbf{L}}(\mathbf{f})\|_{L^2_v\pa{\boldsymbol\mu^{-1/2}}}^2
	&= \sum_{j=1}^{N+4}\langle\mathbf{f},\boldsymbol{\phi_{j}}\rangle_{L^2_v\pa{\boldsymbol\mu^{-1/2}}}^2 
	\\&= \sum_{i=1}^N c_{\infty,i}(m_ia_i+3e_i)^2 
	+ \rho_\infty\left|\sum_{i=1}^N\frac{\rho_{\infty,i}}{\rho_\infty}u_i\right|^2
	+ 6c_\infty\left(\sum_{i=1}^N\frac{c_{\infty,i}}{c_{\infty}}e_i\right)^2,
\end{split}
\end{equation*}
where $\left(\boldsymbol{\phi}_j\right)_{1\leq j\leq N+4}$ is the orthonormal basis of $\mbox{Ker}(\mathbf{L})$ in $L^2_v(\boldsymbol{\mu}^{-1/2})$ introduced in \eqref{piL}.
\bigskip

Inserting these expressions into \eqref{aux2}, we conclude that
\begin{align*}
  \|\mathbf{f}-\pi_{\mathbf{L}}(\mathbf{f})\|_\mathcal H^2
	&\le 2\|\mathbf{f}-\mathbf{f^{\parallel}}\|_\mathcal H^2 
	+ k_0\rho_\infty\left(\sum_{i=1}^N\frac{\rho_{\infty,i}}{\rho_\infty}|u_i|^2 
	- \left|\sum_{i=1}^N\frac{\rho_{\infty,i}}{\rho_\infty}
	u_i\right|^2\right) \\
	&\phantom{xx}{}+ 6k_0c_\infty\left(\sum_{i=1}^N
	\frac{c_{\infty,i}}{c_\infty} e_i^2
	- \left(\sum_{i=1}^N\frac{c_{\infty,i}}{c_\infty}e_i\right)^2\right).
\end{align*}
The next step is to prove that the following estimates hold:
\begin{align}
  I_1:=\sum_{i=1}^N \frac{\rho_{\infty,i}}{\rho_\infty}|u_i|^2 
	- \left|\sum_{i=1}^N\frac{\rho_{\infty,i}}{\rho_\infty}u_i\right|^2
	&\le \sum_{i,j=1}^N|u_i-u_j|^2, \label{aux.ineq1} \\
  I_2:=\sum_{i=1}^N\frac{c_{\infty,i}}{c_\infty} e_i^2 
	- \left(\sum_{i=1}^N\frac{c_{\infty,i}}{c_\infty}e_i\right)^2
	&\le \sum_{i,j=1}^N(e_i-e_j)^2. \label{aux.ineq2}
\end{align}
Note that we only need to prove the estimate for $I_1$, since the arguments for $I_2$ are exactly the same.
In order to handle the expression $I_1$, we define for $\mathbf{u}=(u_i)_{1\leq i \leq N}$ and $\mathbf{v}=(v_i)_{1\leq i \leq N} \in \R^{3N}$
the following scalar product on $\R^{3N}$ with corresponding norm
$$
  \langle\mathbf{u},\mathbf{v}\rangle_\rho = \sum_{i=1}^N\frac{\rho_{\infty,i}}{\rho_\infty}u_i\cdot v_i, \quad \|\mathbf{u}\|_\rho = \langle\mathbf{u},\mathbf{u}\rangle_\rho^{1/2},
$$
where $u_i\cdot v_i$ denotes the standard Euclidean scalar product in $\R^3$. Note that the vector
${\mathbf 1}=(1,\ldots,1)\in\R^{3N}$ satisfies $\|{\mathbf 1}\|_\rho=1$. Now we use the following elementary identity
$$
  \|\mathbf{u}\|_\rho^2 - \langle\mathbf{u},{\mathbf 1}\rangle_\rho^2 = \|\mathbf{u}-\langle\mathbf{u},{\mathbf 1}\rangle_\rho{\mathbf 1}\|_\rho^2,
$$
which can be written as
$$
  I_1 = \sum_{i=1}^N \frac{\rho_{\infty,i}}{\rho_\infty}|u_i|^2 
	- \left|\sum_{i=1}^N\frac{\rho_{\infty,i}}{\rho_\infty}u_i\right|^2
	= \sum_{i=1}^N\frac{\rho_{\infty,i}}{\rho_\infty}\left|u_i 
	- \sum_{j=1}^N\frac{\rho_{\infty,j}}{\rho_\infty}u_j
	\right|^2.
$$
By using the fact that $\sum_{j=1}^N\rho_{\infty,j}=\rho_\infty$, we get
\begin{align*}
  I_1 &= \sum_{i=1}^N\frac{\rho_{\infty,i}}{\rho_\infty}
	\left|\left(1-\frac{\rho_{\infty,i}}{\rho_\infty}\right)u_i
	- \sum_{j\neq i}\frac{\rho_{\infty,j}}{\rho_\infty}u_j\right|^2
	= \sum_{i=1}^N\frac{\rho_{\infty,i}}{\rho_\infty}
	\left|\sum_{j\neq i}\frac{\rho_{\infty,j}}{\rho_\infty}
	(u_i-u_j)\right|^2. 
\end{align*}
Inserting the additional factor $(\sum\limits_{j \neq i}\rho_{\infty,k}/\rho_{\infty})^2$ leads to a convex combination of $\lambda_j$ such that  $\sum_{j\neq i}\lambda_j=1$:
\begin{align*}
	I_1&= \sum_{i=1}^N\frac{\rho_{\infty,i}}{\rho_\infty}
	\left(\sum_{k\neq i}\frac{\rho_{\infty,k}}{\rho_\infty}\right)^2
  \left|\frac{\sum_{j\neq i}(\rho_{\infty,j}/\rho_\infty)
	(u_i-u_j)}{\sum_{k\neq i}\rho_{\infty,k}/\rho_\infty}  \right|^2 \\
  &= \sum_{i=1}^N\frac{\rho_{\infty,i}}{\rho_\infty}
	\left(\sum_{k\neq i}\frac{\rho_{\infty,k}}{\rho_\infty}\right)^2
	\left|\sum_{j\neq i}\lambda_j(u_i-u_j)\right|^2,
\end{align*}
where $\lambda_j=(\rho_{\infty,j}/\rho_\infty)(\sum_{k\neq i}
(\rho_{\infty,k}/\rho_\infty))^{-1}$. Thus we can apply Jensen's inequality to this convex
combination and obtain
\begin{align*}
  I_1 &=\sum_{i=1}^N\frac{\rho_{\infty,i}}{\rho_\infty}
	\left(\sum_{k\neq i}\frac{\rho_{\infty,k}}{\rho_\infty}\right)^2
	\left|\sum_{j\neq i}\lambda_j(u_i-u_j)\right|^2
	\le \sum_{i=1}^N\frac{\rho_{\infty,i}}{\rho_\infty}
	\left(\sum_{k\neq i}\frac{\rho_{\infty,k}}{\rho_\infty}\right)^2
	\sum_{j\neq i}\lambda_j|u_i-u_j|^2 .
\end{align*}
Finally, we can estimate the right-hand side easily by using the definition of the $\lambda_j$ and that $\rho_{\infty,j}\le\rho_\infty$ to obtain
\begin{align*}
  I_1 &\leq \sum_{i=1}^N\frac{\rho_{\infty,i}}{\rho_\infty}
	\left(1-\frac{\rho_{\infty,i}}{\rho_\infty}\right)
	\sum_{j\neq i}\frac{\rho_{\infty,j}}{\rho_\infty}|u_i-u_j|^2
	\le \sum_{i,j=1}^N|u_i-u_j|^2.
\end{align*}
For $I_2$ in \eqref{aux.ineq2} exactly the same calculations hold.
This implies that
\begin{align}\label{lem.diff}
- \mathcal{E}(\mathbf{f^{\parallel}}) \leq -C_3 \big(\|\mathbf{f}-\pi_{\mathbf{L}}(\mathbf{f})\|_\mathcal H^2 - 2\|\mathbf{f}-\mathbf{f^{\parallel}}\|_\mathcal H^2\big),
\end{align}
where $C_3=1/C_k>0$, with
\begin{equation*}
  C_k = 10N \max_{1\le k,\ell\le 5N}\left|\sum_{i=1}^N\int_{\R^3}\psi_{k,i}\psi_{\ell,i}\nu_i \:dv\right|\max\left\{\rho_{\infty},6c_{\infty}\right\},
\end{equation*}
recalling that $\left(\boldsymbol{\psi}_k\right)_{1\leq k\leq 5N}$ is an arbitrary orthonormal basis of $\mbox{Ker}(\mathbf{L^m})$ in $L^2_v\pa{\boldsymbol\mu^{-1/2}}$.
\bigskip

\textbf{Step $4$: End of the proof.}

\par Putting together \eqref{lem.ortho}, Lemma \ref{sec.remain}, and \eqref{lem.diff} yields
\begin{align*}
  \langle\mathbf{f},\mathbf{L}(\mathbf{f})\rangle_{L^2_v\pa{\boldsymbol\mu^{-1/2}}}
	&\leq -(C_1-4\eta)\|\mathbf{f}-\mathbf{f^{\parallel}}\|_\mathcal H^2
	- C_2/2\:\mathcal{E}(\mathbf{f^{\parallel}}) \\
	&\leq -\left(C_1-4\eta-C_2C_3\eta\right)\|\mathbf{f}-\mathbf{f^{\parallel}}\|_\mathcal H^2 
	-(C_2 C_3\eta)/2\|\mathbf{f}-\pi_{\mathbf{L}}(\mathbf{f})\:\|_\mathcal H^2.
\end{align*}
The first term on the right-hand side is nonnegative if we choose
$$0<\eta \leq \min\br{1, C_1/(4+C_2C_3)},$$
and the desired spectral-gap estimate follows with $\lambda_L=(C_2 C_3C_4\eta)/2$, where the additional constant $C_4>0$ takes care of the fact that $\mathcal{H}$ is equivalent to $L^2_v\pa{\langle v \rangle^{\gamma/2}\boldsymbol\mu^{-1/2}}$.
\end{proof}
\bigskip

\begin{remark}
\begin{enumerate}
\item[(1)] We obtain the following relation between the spectral-gap constant $\lambda$ derived for same masses $m_i=m_j ~~\textnormal{for} ~~1\leq i,j \leq N$ in \cite[Theorem 3]{DJMZ} and our new constant $\lambda_L$ 
for different masses in Theorem \ref{theo:spectralgapL} :
\begin{align*}
 \lambda_L=\lambda\min_{1\leq i \leq N}m_i^2\frac{6 \rho_{\infty}}{\max\{\rho_{\infty}, 6 c_{\infty} \}},
\end{align*}
where $\rho_{\infty}=\sum_{i=1}^N m_ic_{\infty,i}$ and $c_{\infty}=\sum_{i=1}^N c_{\infty,i}$. 
Thus, increasing the difference between the masses $m_i$ makes the 
the spectral-gap constant $\lambda_L$ smaller, 
while in the special case of identical masses the two spectral-gap constants $\lambda$ and $\lambda_L$  are equal.

\item[(2)] Furthermore, the spectral-gap result of Theorem $\ref{theo:spectralgapL}$ only holds for a finite number of species $1\leq N<\infty$, since for $N \to \infty$ we get that $\lambda_L \to \infty$.
It remains an open problem whether or not it is possible to extend the result of Theorem $\ref{theo:spectralgapL}$ to the limit $N \to \infty$.
\end{enumerate}
\end{remark}
\bigskip

\section{$L^2$ theory for the linear part with maxwellian weight}\label{sec:L2theory}

This section is devoted to the study of the linear perturbed operator $\mathbf{G}=\mathbf{L}-v\cdot\nabla_x$ in $L^2_{x,v}\pa{\boldsymbol\mu^{-1/2}}$, which is the natural space for $\mathbf{L}$. We shall show that $\mathbf{G}$ generates a strongly continuous semigroup on this space.

\bigskip
\begin{theorem}\label{theo:semigroupL2}
We assume that assumptions $(H1) - (H4)$ hold for the collision kernel. Then the linear perturbed operator $\mathbf{G}=\mathbf{L}-v\cdot\nabla_x$ generates a strongly continuous semigroup $S_{\mathbf{G}}(t)$ on $L^2_{x,v}\pa{\boldsymbol\mu^{-1/2}}$ which satisfies
$$\forall t \geq 0, \quad \norm{S_{\mathbf{G}}(t)\pa{\mbox{Id}-\Pi_{\mathbf{G}}}}_{L^2_{x,v}\pa{\boldsymbol\mu^{-1/2}}} \leq C_G e^{-\lambda_G t},$$
where $\Pi_{\mathbf{G}}$ is the orthogonal projection onto $\mbox{Ker}(\mathbf{G})$ in $L^2_{x,v}\pa{\boldsymbol\mu^{-1/2}}$.
\\The constants $C_G$, $\lambda_G>0$ are explicit and depend on $N$, the different masses $m_i$ and the collision kernels.
\end{theorem}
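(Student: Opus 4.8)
The plan is to proceed in three stages: generation of the semigroup, a hypocoercive a priori estimate, and the conclusion by a Lyapunov argument.

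\textbf{Generation.} First I would write $\mathbf{G} = \mathbf{G}_{\boldsymbol\nu} + \mathbf{K}$, where $\mathbf{G}_{\boldsymbol\nu} = -v\cdot\nabla_x - \boldsymbol\nu(v)$ and $\mathbf{K} = \mathbf{L} + \boldsymbol\nu$. On $\T^3$ with periodic boundary conditions, $-v\cdot\nabla_x$ is skew-adjoint on $L^2_{x,v}\pa{\boldsymbol\mu^{-1/2}}$ and, by $\eqref{nu0nu1}$, $\boldsymbol\nu(v) \geq \nu_0 > 0$; the explicit representation $S_{\mathbf{G}_{\boldsymbol\nu}}(t)\mathbf{f}(x,v) = e^{-\boldsymbol\nu(v)t}\mathbf{f}(x-vt,v)$ shows that $\mathbf{G}_{\boldsymbol\nu}$ generates a strongly continuous semigroup with $\norm{S_{\mathbf{G}_{\boldsymbol\nu}}(t)}_{L^2_{x,v}\pa{\boldsymbol\mu^{-1/2}}} \leq e^{-\nu_0 t}$. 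Since $\mathbf{K}$ is a bounded operator on $L^2_v\pa{\boldsymbol\mu^{-1/2}}$ (hence on $L^2_{x,v}\pa{\boldsymbol\mu^{-1/2}}$), the bounded-perturbation theorem yields that $\mathbf{G}$ generates a strongly continuous semigroup $S_{\mathbf{G}}(t)$. One checks from the weak form of $\mathbf{L}$ that $\mbox{Ker}(\mathbf{G})$ is exactly $\mbox{Span}\br{\boldsymbol\phi_1,\dots,\boldsymbol\phi_{N+4}}$ viewed as $x$-independent functions, that $S_{\mathbf{G}}(t)$ leaves $\mbox{Ker}(\mathbf{G})$ and its orthogonal complement invariant, and that $\Pi_{\mathbf{G}}$ commutes with $S_{\mathbf{G}}(t)$; it therefore suffices to prove the decay for data with $\Pi_{\mathbf{G}}\mathbf{f} = 0$, equivalently for solutions satisfying $\eqref{perturbedconservationlaws}$ for all $t$.

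\textbf{The hypocoercive a priori estimate}, which is the heart of the argument, follows the weak-elliptic-regularity idea of \cite{EGKM} adapted to the mixture. Let $\mathbf{f}$ solve $\partial_t\mathbf{f} + v\cdot\nabla_x\mathbf{f} = \mathbf{L}(\mathbf{f})$ with $\Pi_{\mathbf{G}}\mathbf{f} = 0$, and decompose $\mathbf{f} = \pi_{\mathbf{L}}(\mathbf{f}) + \pi_{\mathbf{L}}^\perp(\mathbf{f})$ with $\pi_{\mathbf{L}}(\mathbf{f})_i = \big(a_i(t,x) + b(t,x)\cdot v + c(t,x)\tfrac{\abs{v}^2 - 3m_i^{-1}}{2}\big)m_i\mu_i(v)$. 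Theorem \ref{theo:spectralgapL} gives at once
$$\frac{d}{dt}\norm{\mathbf{f}}^2_{L^2_{x,v}\pa{\boldsymbol\mu^{-1/2}}} = 2\langle \mathbf{f},\mathbf{L}(\mathbf{f})\rangle_{L^2_{x,v}\pa{\boldsymbol\mu^{-1/2}}} \leq -2\lambda_L\norm{\pi_{\mathbf{L}}^\perp(\mathbf{f})}^2_{L^2_{x,v}\pa{\langle v \rangle^{\gamma/2}\boldsymbol\mu^{-1/2}}},$$
so only the fluid part $(a_i,b,c)$ remains to be controlled. Projecting the equation onto $\mbox{Ker}(\mathbf{L})$ produces the local macroscopic system: a continuity equation for each $a_i$ (per-species mass being conserved), together with momentum and energy balance laws for $b$ and $c$ coupling the whole mixture (only the total momentum and energy are conserved), all with right-hand sides that are velocity moments of $\pi_{\mathbf{L}}^\perp(\mathbf{f})$. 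I would then introduce potentials $\phi^{a_i},\phi^b,\phi^c$ solving Poisson equations on $\T^3$ (e.g. $-\Delta_x\phi^{a_i} = a_i$, legitimate since $\int_{\T^3}a_i = 0$, and similarly for $b$, $c$ thanks to $\Pi_{\mathbf{G}}\mathbf{f}=0$), test the Boltzmann equation against carefully chosen velocity moments of these potentials of the form $\big(\text{polynomial in }v\big)m_i\mu_i(v)$ built from $\nabla_x\phi^{a_i}$, $\nabla_x\phi^b$, $\nabla_x\phi^c$, and use that $v\cdot\nabla_x$ applied to the test functions reconstructs $\Delta_x\phi$; an elliptic estimate $\norm{\phi}_{H^2_x} \lesssim \norm{\text{coeff}}_{L^2_x}$ then converts this into a lower bound for $\norm{a_i}_{L^2_x}^2$, $\norm{b}_{L^2_x}^2$, $\norm{c}_{L^2_x}^2$. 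The error terms are of two kinds: those controlled by $\norm{\pi_{\mathbf{L}}^\perp(\mathbf{f})}^2$ through Cauchy--Schwarz and the ellipticity bounds, and time derivatives that assemble into a bounded interaction functional $\mathcal{I}(\mathbf{f})(t)$ with $\abs{\mathcal{I}(\mathbf{f})} \lesssim \norm{\mathbf{f}}^2_{L^2_{x,v}\pa{\boldsymbol\mu^{-1/2}}}$; the terms $\partial_t a_i, \partial_t b, \partial_t c$ are handled by substituting the macroscopic equations once more, which forces a further elliptic estimate in a negative-order Sobolev space for the second-order-in-time contributions. The outcome is
$$\frac{d}{dt}\mathcal{I}(\mathbf{f}) \leq -\kappa\norm{\pi_{\mathbf{L}}(\mathbf{f})}^2_{L^2_{x,v}\pa{\boldsymbol\mu^{-1/2}}} + C\norm{\pi_{\mathbf{L}}^\perp(\mathbf{f})}^2_{L^2_{x,v}\pa{\langle v \rangle^{\gamma/2}\boldsymbol\mu^{-1/2}}}$$
for explicit $\kappa, C > 0$.

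\textbf{Conclusion.} Set $\mathcal{G}(\mathbf{f}) = \norm{\mathbf{f}}^2_{L^2_{x,v}\pa{\boldsymbol\mu^{-1/2}}} + \delta\,\mathcal{I}(\mathbf{f})$ with $\delta > 0$ small enough that $\mathcal{G}$ is equivalent to $\norm{\mathbf{f}}^2_{L^2_{x,v}\pa{\boldsymbol\mu^{-1/2}}}$ and $C\delta \leq \lambda_L$, so the microscopic error in the $\mathcal{I}$-inequality is absorbed by the spectral-gap term (using $\norm{\langle v\rangle^{\gamma/2}\,\cdot\,} \geq \norm{\cdot}$). Combining the two differential inequalities gives $\frac{d}{dt}\mathcal{G}(\mathbf{f}) \leq -\delta\kappa\norm{\pi_{\mathbf{L}}(\mathbf{f})}^2 - \lambda_L\norm{\pi_{\mathbf{L}}^\perp(\mathbf{f})}^2 \leq -\lambda_G\mathcal{G}(\mathbf{f})$, hence $\norm{\mathbf{f}(t)}_{L^2_{x,v}\pa{\boldsymbol\mu^{-1/2}}} \leq C_G e^{-\lambda_G t}\norm{\mathbf{f}(0)}_{L^2_{x,v}\pa{\boldsymbol\mu^{-1/2}}}$ by Grönwall; applying this to $\mathbf{f}(0) = (\mbox{Id}-\Pi_{\mathbf{G}})\mathbf{f}_0$ and extending from the dense set of regular solutions to all of $L^2_{x,v}\pa{\boldsymbol\mu^{-1/2}}$ by density yields the theorem. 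I expect the main obstacle to be precisely the hypocoercivity step: choosing the test functions so that the weak elliptic regularity genuinely closes while respecting the mixture's structural asymmetry --- per-species conservation of mass against only global conservation of momentum and energy, with the distinct reference Maxwellians $m_i\mu_i$ --- and controlling the second-order-in-time terms, which is what makes the negative Sobolev elliptic estimates necessary.
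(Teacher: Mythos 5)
Your proposal is correct and follows essentially the same route as the paper: hypocoercivity via the EGKM-type test-function/weak-elliptic-regularity argument adapted to the multi-species structure (per-species mass versus global momentum and energy conservation, distinct Maxwellian rates $m_i$), including the negative-Sobolev treatment of $\partial_t a_i$, $\partial_t b$, $\partial_t c$, and the contraction of the transport term against Poisson potentials of the macroscopic coordinates. The only organizational difference is in the closing step: the paper packages the fluid-part control as an integrated-in-time inequality with a source term (Lemma~\ref{lem:controlfluidmicro}) and applies it to $e^{\lambda t}\mathbf{f}$, whereas you close by the equivalent Lyapunov functional $\norm{\mathbf{f}}^2_{L^2_{x,v}\pa{\boldsymbol\mu^{-1/2}}}+\delta\,\mathcal{I}(\mathbf{f})$ and Gr\"onwall; the two are interchangeable.
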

\bigskip

Let us first make an important remark about $\Pi_\mathbf{G}$. Note that $\mathbf{G}(\mathbf{f}) = 0$ means
$$\forall i\in \br{1,\dots,N},\:\forall (x,v) \in \T^3\times\R^3, \quad v\cdot \nabla_x f_i (x,v) = L_i(\mathbf{f}(x,\cdot))(v)$$
Multiplying by $\mu_i^{-1}(v)f_i(x,v)$ and integrating over $\T^3\times\R^3$ implies
$$0 = \int_{\T^3} \langle L_i(\mathbf{f}(x,\cdot)),f_i(x,\cdot)\rangle_{L^2_v\pa{\mu_i^{-1/2}}}\:dx$$
and therefore by summing over $i$ in $\br{1,\dots,N}$
$$0 = \int_{\T^3} \langle \mathbf{L}(\mathbf{f}(x,\cdot)),\mathbf{f}(x,\cdot)\rangle_{L^2_v\pa{\boldsymbol\mu^{-1/2}}}\:dx.$$
The integrand is nonpositive thanks to the spectral gap of $\mathbf{L}$ and hence
$$\forall x \in \T^3,\:\forall v \in \R^3, \quad \mathbf{f}(x,v) = \pi_{\mathbf{L}} (\mathbf{f}(x,\cdot))(v)$$
and therefore $\mathbf{L}(\mathbf{f}(x,\cdot))= 0$. The latter further implies that $v\cdot\nabla_x \mathbf{f}(x,v) = 0$ which in turn implies that $\mathbf{f}$ does not depend on $x$ \cite[Lemma B.2]{Bri1}.
\par We can thus define the projection in $L^2_{x,v}\pa{\boldsymbol\mu^{-1/2}}$ onto the kernel of $\mathbf{G}$
\begin{equation}\label{PiG}
\Pi_\mathbf{G}(\mathbf{f}) = \sum\limits_{k=1}^{N+4} \pa{\int_{\T^3\times\R^3} \langle\mathbf{f}(x,v),\boldsymbol\phi_k(v)\rangle_{\boldsymbol\mu^{-1/2}}\:dxdv} \boldsymbol\phi_k(v),
\end{equation}
where the $\boldsymbol\phi_k$ were defined in $\eqref{piL}$. Again we define $\Pi_\mathbf{G}^\bot = \mbox{Id} - \Pi_\mathbf{G}$. Note that $\Pi_\mathbf{G}^\bot(\mathbf{f})=0$ amounts to saying that $\mathbf{f}$ satifies the multi-species perturbed conservation laws $\eqref{perturbedconservationlaws}$, \textit{i.e.} null individual mass, sum of momentum and sum of energy.

\bigskip

In Subsection \ref{subsec:controlfluidmicro}, we show the key lemma of the proof that is the \textit{a priori} control of the fluid part of $S_{\mathbf{G}}(t)$ by its orthogonal part, thus recovering some coercivity for $\mathbf{G}$ in the set of solutions to the linear perturbed equation. Subsection \ref{subsec:semigroupL2} is dedicated to the proof of Theorem \ref{theo:semigroupL2}.

\bigskip


\subsection{A priori control of the fluid part by the microscopic part}\label{subsec:controlfluidmicro}

As seen in the previous section, the operator $\mathbf{L}$ is only coercive on the orthogonal part. The key argument is to show that we recover some coercivity for solutions to the differential equation. Namely, that for these specific functions, the microscopic part controls the fluid part. This is the purpose of the next lemma

\bigskip
\begin{lemma}\label{lem:controlfluidmicro}
Let $\mathbf{f_0}(x,v)$ and $\mathbf{g}(t,x,v)$ be in $L^2_{x,v}\pa{\boldsymbol\mu^{-1/2}}$ such that $\Pi_{\mathbf{G}}(\mathbf{f_0})=\Pi_{\mathbf{G}}(\mathbf{g})=0$. Suppose that $\mathbf{f}(t,x,v)$ in $L^2_{x,v}\pa{\boldsymbol\mu^{-1/2}}$ is solution to the equation 
\begin{equation}\label{equationL2lem}
\partial_t \mathbf{f} = \mathbf{L}\pa{\mathbf{f}} - v\cdot\nabla_x \mathbf{f}+\mathbf{g}
\end{equation}
with initial value $\mathbf{f_0}$ and satisfying the multi-species conservation laws.  Then there exist an explicit $C_\bot >0$ and a function $N_\mathbf{f}(t)$ such that for all $t\geq 0$
\begin{itemize}
\item[(i)] $\abs{N_\mathbf{f}(t)}\leq C_\bot \norm{\mathbf{f}(t)}^2_{L^2_{x,v}\pa{\boldsymbol\mu^{-1/2}}}$;
\item[(ii)]  
\begin{equation*}
\begin{split}
\int_0^t\norm{\pi_\mathbf{L}(\mathbf{f})}^2_{L^2_{x,v}\pa{\boldsymbol\mu^{-1/2}}}\:ds \leq& N_\mathbf{f}(t)-N_\mathbf{f}(0) + C_\bot \int_0^t\norm{\pi^\bot_\mathbf{L}(\mathbf{f})}^2_{L^2_{x,v}\pa{\boldsymbol\mu^{-1/2}}}\:ds
\\& +C_\bot\int_0^t\norm{\mathbf{g}}_{L^2_{x,v}\pa{\boldsymbol\mu^{-1/2}}}ds.
\end{split}
\end{equation*}
\end{itemize}
The constant $C_\bot$ is independent of $\mathbf{f}$ and $\mathbf{g}$.
\end{lemma}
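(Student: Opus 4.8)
The plan is to adapt the weak elliptic-regularity argument of \cite{EGKM} to the multi-species setting; the new point is that the construction must be compatible with \emph{all} the perturbed conservation laws \eqref{perturbedconservationlaws} (individual masses, total momentum, total energy) and with the species-dependent decay rates $m_i$. Decompose $\mathbf f = \pi_\mathbf{L}(\mathbf f) + \pi_\mathbf{L}^\bot(\mathbf f)$ with $\pi_\mathbf{L}(\mathbf f)_i = \cro{a_i(t,x) + b(t,x)\cdot v + c(t,x)\tfrac{\abs{v}^2-3m_i^{-1}}{2}}m_i\mu_i(v)$. First I would derive the \emph{macroscopic equations}: multiplying \eqref{equationL2lem} by the collision invariants $\mathbf{e_i}$, $v_k\mathbf m$, $\abs{v}^2\mathbf m$ and integrating in $v$ annihilates the $\mathbf L$-contribution by \eqref{invariantsQij}, leaving a first-order system that expresses $\partial_t a_i,\nabla_x a_i,\partial_t b,\nabla_x b,\partial_t c,\nabla_x c$ as $x$-derivatives of the fluid coordinates and of $\langle v\rangle^2\boldsymbol\mu^{1/2}$-weighted moments of $\pi_\mathbf{L}^\bot(\mathbf f)$ (hence $\lesssim \norm{\pi_\mathbf{L}^\bot(\mathbf f)}_{L^2_{x,v}(\boldsymbol\mu^{-1/2})}$), plus moments of $\mathbf g$. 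From $\Pi_\mathbf{G}(\mathbf f)=\Pi_\mathbf{G}(\mathbf g)=0$ and the orthogonality of $\pi_\mathbf{L}^\bot(\mathbf f)$ to $\mbox{Ker}(\mathbf L)$, one also gets $\int_{\T^3}a_i\,dx=\int_{\T^3}b\,dx=\int_{\T^3}c\,dx=0$ for every $t$, which makes the Poisson problems below solvable and provides Poincaré inequalities on $\T^3$.

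Next, for each macroscopic coordinate $\xi\in\br{a_1,\dots,a_N,b,c}$ I solve $-\Delta_x\Phi_\xi(t,\cdot)=\xi(t,\cdot)$ on $\T^3$ with zero mean, so that by elliptic regularity $\norm{\Phi_\xi(t)}_{H^2_x}\lesssim\norm{\xi(t)}_{L^2_x}\lesssim\norm{\pi_\mathbf{L}(\mathbf f)(t)}_{L^2_{x,v}(\boldsymbol\mu^{-1/2})}$. Out of $\nabla_x\Phi_\xi$ I build vector-valued test functions $\boldsymbol\psi_\xi(t,x,v)$, component by component, of the form (polynomial in $v$)$\times\mu_j(v)$ contracted with $\nabla_x\Phi_\xi$, the $v$-polynomials being chosen — as in the single-species case, but now with the correct $m_i$-weights — so that the transport term reproduces exactly $\norm{\xi}_{L^2_x}^2$. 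Pairing \eqref{equationL2lem} with $\boldsymbol\psi_\xi$ and integrating over $[0,t]\times\T^3\times\R^3$, the $\partial_t\mathbf f$ term gives the boundary piece $\cro{\langle\mathbf f,\boldsymbol\psi_\xi\rangle_{L^2_{x,v}(\boldsymbol\mu^{-1/2})}}_0^t$ (collected into $N_\mathbf f$) minus $\int_0^t\langle\mathbf f,\partial_t\boldsymbol\psi_\xi\rangle$; integrating the transport term by parts in $x$ and using $-\Delta_x\Phi_\xi=\xi$ produces $c_\xi\int_0^t\norm{\xi}_{L^2_x}^2$ up to terms paired against $\pi_\mathbf{L}^\bot(\mathbf f)$; since $\mathbf L$ vanishes on $\mbox{Ker}(\mathbf L)$, $\int_0^t\langle\mathbf L(\mathbf f),\boldsymbol\psi_\xi\rangle=\int_0^t\langle\mathbf L(\pi_\mathbf{L}^\bot(\mathbf f)),\boldsymbol\psi_\xi\rangle$ is $\lesssim\int_0^t\norm{\pi_\mathbf{L}^\bot(\mathbf f)}\,\norm{\boldsymbol\psi_\xi}$; and the source yields $\int_0^t\langle\mathbf g,\boldsymbol\psi_\xi\rangle$. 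Using $\norm{\boldsymbol\psi_\xi(s)}\lesssim\norm{\pi_\mathbf{L}(\mathbf f)(s)}$ throughout, Cauchy–Schwarz and Young's inequality turn every term but the coercive one into $\varepsilon\int_0^t\norm{\pi_\mathbf{L}(\mathbf f)}^2 + C_\varepsilon\int_0^t\pa{\norm{\pi_\mathbf{L}^\bot(\mathbf f)}^2+\norm{\mathbf g}}$.

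The delicate point is $\int_0^t\langle\mathbf f,\partial_t\boldsymbol\psi_\xi\rangle$, which involves $\partial_t\Phi_\xi$: since the macroscopic equations express $\partial_t\xi$ as an $x$-divergence of $L^2_x$-quantities (the fluid coordinates and moments of $\pi_\mathbf{L}^\bot(\mathbf f)$ and $\mathbf g$), i.e. as $H^{-1}_x$-data, solving the Poisson problem with such data — elliptic regularity in negative Sobolev spaces on $\T^3$ — gives $\norm{\partial_t\Phi_\xi(t)}_{H^1_x}\lesssim\norm{\pi_\mathbf{L}(\mathbf f)(t)}+\norm{\pi_\mathbf{L}^\bot(\mathbf f)(t)}+\norm{\mathbf g(t)}$, and the resulting $\norm{\pi_\mathbf{L}(\mathbf f)}^2$ is absorbed into the left-hand side. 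Summing over $\xi\in\br{a_1,\dots,a_N,b,c}$, using that $\norm{\pi_\mathbf{L}(\mathbf f)}_{L^2_{x,v}(\boldsymbol\mu^{-1/2})}^2$ is equivalent to $\sum_i\norm{a_i}_{L^2_x}^2+\norm{b}_{L^2_x}^2+\norm{c}_{L^2_x}^2$, and choosing $\varepsilon$ small yields (ii) with $N_\mathbf f(t):=\sum_\xi\langle\mathbf f(t),\boldsymbol\psi_\xi(t)\rangle_{L^2_{x,v}(\boldsymbol\mu^{-1/2})}$; property (i) then follows from Cauchy–Schwarz and $\norm{\boldsymbol\psi_\xi(t)}\lesssim\norm{\pi_\mathbf{L}(\mathbf f)(t)}\le\norm{\mathbf f(t)}$.

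The main obstacle is not any single estimate but the bookkeeping imposed by the coupling: because only the individual masses and the \emph{total} momentum and energy are conserved, the macroscopic system ties each $a_i$ to the shared $b,c$ and to the whole mixture, so the $v$-polynomials defining $\boldsymbol\psi_\xi$ must carry exactly the right $m_i$-dependent coefficients for the transport term to reproduce $\norm{a_i}^2$, $\norm{b}^2$ and $\norm{c}^2$ with no surviving uncontrolled cross-fluid terms. This, together with the negative-Sobolev handling of $\partial_t(a_i,b,c)$, is where the real work lies; the remainder is Poincaré, Young and standard elliptic regularity on the torus.
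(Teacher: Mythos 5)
Your overall strategy is the right one and essentially matches the paper's: pair the equation against test functions built from $-\Delta_x\Phi_\xi=\xi$ for $\xi\in\{a_1,\dots,a_N,b,c\}$, with $v$-polynomial prefactors tuned (in an $m_i$-dependent way) so the transport term reproduces $\norm{\xi}_{L^2_x}^2$; use the vanishing of the spatial averages of $a_i,b,c$ (from the conservation laws) for solvability and Poincar\'e; handle $\partial_t\boldsymbol\psi_\xi$ by elliptic regularity for $\partial_t\xi$ in negative Sobolev spaces, obtained by testing against $x$-only test functions times collision invariants; and collect boundary-in-time pairings into $N_\mathbf{f}(t)$.

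The gap is in the closing step. You claim that, for each $\xi$, Cauchy--Schwarz and Young turn every non-coercive term into $\varepsilon\int\norm{\pi_\mathbf{L}(\mathbf f)}^2+C_\varepsilon\int\bigl(\norm{\pi_\mathbf{L}^\bot(\mathbf f)}^2+\norm{\mathbf g}\bigr)$, so that summing over $\xi$ and taking $\varepsilon$ small absorbs everything on the left. This fails for the $\mathbf a$-estimate: the time-derivative piece pairs $\norm{b}_{L^2_x}$ against $\norm{\partial_t\nabla_x\Phi_{a_i}}_{L^2_x}$, and the negative-Sobolev bound for $\partial_t a_i$ itself produces $\norm{b}_{L^2_x}$, so the dominant cross-term is $\norm{b}\cdot\norm{b}=\norm{b}^2$. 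No application of Young with a free parameter can give this a small coefficient, because $\norm{b}^2$ sits on \emph{both} sides of the product. The resulting bound is necessarily of the form $\int\norm{a}^2\leq N^{(a)}+C_{a,b}\int\norm{b}^2+\dots$ with a \emph{fixed} constant $C_{a,b}$, which a plain sum over $\xi$ cannot absorb unless $C_{a,b}<1$, and there is no reason for that. (In the $b$-estimate the $\norm{a}$-cross-term does come with a free small parameter $\varepsilon_b$ because there $\norm{a}$ appears only inside $\partial_t\nabla_x\Phi_{b_J}$, paired against $\norm{c}$; similarly in the $c$-estimate the $\norm{b}$-cross-term is genuinely small since no fluid term survives in $\Psi_4^{(c)}$. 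The smallness is structurally one-directional.) What closes the argument is a \emph{weighted} linear combination $[\text{estimate for }a]+\alpha[\text{estimate for }b]+\beta[\text{estimate for }c]$, choosing $\alpha>C_{a,b}$ first, then $\varepsilon_b$ so that $\alpha\varepsilon_b<1$, then $\beta>\alpha C_{b,c}(\varepsilon_b)$, then $\varepsilon_c$ so that $C_{a,b}+\beta\varepsilon_c<\alpha$. Without this ordered choice of weights your final absorption step does not go through.
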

\bigskip

The methods of the proof are a technical adaptation of the method proposed in \cite{EGKM} in the case of bounded domain with diffusive boundary conditions. The description of $\mbox{Ker}(\mathbf{L})$ associated with the global equilibrium $\boldsymbol\mu$ is given by orthogonal functions in $L^2_v$ but that are not of norm one. Unlike \cite{EGKM} where only  mass conservation holds but boundary conditions overcome the lack of conservation laws, we strongly need the conservation of mass, momentum and energy.

\bigskip
\begin{proof}[Proof of Lemma \ref{lem:controlfluidmicro}]

We recall $\eqref{piL}$ the definition of $\pi_\mathbf{L}(\mathbf{f}) = \pa{\pi_i(\mathbf{f})}_{1\leq i \leq N}$  and we define $(a_i(t,x))_{1\leq i \leq N}$, $b(t,x)$ and $c(t,x)$ to be the coordinates of $\pi_\mathbf{L}(\mathbf{f})$: 
\begin{equation}\label{piLL2}
\forall \: 1\leq i\leq N, \quad \pi_i(\mathbf{f})(t,x,v) = \cro{a_i(t,x) + b(t,x)\cdot v + c(t,x)\frac{\abs{v^2}-3m_i^{-1}}{2}}m_i\mu_i(v).
\end{equation}
Note that we are working with an orthogonal but not orthonormal basis of $\mbox{Ker}(\mathbf{L})$ in $L^2_{x,v}(\boldsymbol{\mu^{-1/2}})$ in order to lighten computations. We will denote by $\rho_i$ the mass of $m_i\mu_i$.
\par The key idea of the proof is to choose suitable test functions $\boldsymbol\psi = \pa{\psi_i}_{1\leq i\leq N}$ in $H^1_{x,v}$ that will catch the elliptic regularity of $a_i$, $b$ and $c$ and estimate them.
\par For a test function $\boldsymbol\psi=\boldsymbol\psi(t,x,v)$ integrated against the differential equation $\eqref{equationL2lem}$ we have by Green's formula on each coordinate
\begin{eqnarray*}
&&\int_0^t\frac{d}{dt}\int_{\T^3\times\R^3} \langle \boldsymbol\psi, \mathbf{f}\rangle_{\mathbf{1}}\:dxdvds = \int_{\T^3\times\R^3} \langle \boldsymbol\psi(t), \mathbf{f}(t)\rangle_{\mathbf{1}}\:dxdv -\int_{\T^3\times\R^3} \langle \boldsymbol\psi_0, \mathbf{f_0}\rangle_{\mathbf{1}}\:dxdv   
\\&&\quad\quad= \int_0^t \int_{\T^3\times\R^3} \langle \mathbf{f}, \partial_t\boldsymbol\psi\rangle_{\mathbf{1}}\:dxdvds + \int_0^t \int_{\T^3\times\R^3} \langle \mathbf{L}\pa{\mathbf{f}},\boldsymbol\psi\rangle_{\mathbf{1}}\:dxdvds
\\&&\quad\quad\quad+\sum\limits_{i=1}^N\int_0^t \int_{\T^3\times\R^3} f_i v\cdot\nabla_x\psi_i \:dxdvds + \int_0^t\int_{\T^3\times\R^3}\langle \boldsymbol\psi,\mathbf{g} \rangle_{\mathbf{1}}\:dxdvds. 
\end{eqnarray*}
We decompose $\mathbf{f} = \pi_\mathbf{L}(\mathbf{f}) + \pi_\mathbf{L}^\bot(\mathbf{f})$ in the term involving $v\cdot\nabla_x$ and use the fact that $\mathbf{L}\pa{\mathbf{f}} = \mathbf{L}[\pi_\mathbf{L}^\bot(\mathbf{f})]$ to obtain the weak formulation
\begin{equation}\label{eqpsi}
-\sum\limits_{i=1}^N\int_0^t \int_{\T^3\times\R^3} \pi_i(\mathbf{f}) v\cdot\nabla_x\psi_i \:dxdvds = \Psi_1(t) + \Psi_2(t)+\Psi_3(t) + \Psi_4(t)+\Psi_5(t)
\end{equation}
with the following definitions
\begin{eqnarray}
\Psi_1(t) &=& \int_{\T^3\times\R^3} \langle \boldsymbol\psi_0, \mathbf{f_0}\rangle_{\mathbf{1}}\:dxdv -\int_{\T^3\times\R^3} \langle \boldsymbol\psi(t), \mathbf{f}(t)\rangle_{\mathbf{1}}\:dxdv, \label{Psi1}
\\\Psi_2(t) &=& \sum\limits_{i=1}^N\int_0^t \int_{\T^3\times\R^3} \pi^\bot_\mathbf{L}(\mathbf{f})_i v\cdot\nabla_x\psi_i \:dxdvds, \label{Psi2}
\\\Psi_3(t) &=& \sum\limits_{i=1}^N\int_0^t \int_{\T^3\times\R^3} \mathbf{L}\pa{\pi_\mathbf{L}^\bot(\mathbf{f})}_i \psi_i\:dxdvds, \label{Psi3}
\\\Psi_4(t) &=& \sum\limits_{i=1}^N\int_0^t \int_{\T^3\times\R^3} f_i\partial_s\psi_i\:dxdvds, \label{Psi4}
\\\Psi_5(t) &=& \int_0^t\int_{\T^3\times\R^3}\langle \boldsymbol\psi,\mathbf{g} \rangle_{\mathbf{1}}\:dxdvds. \label{Psi5}
\end{eqnarray}

\par For each of the functions $\mathbf{a} = \pa{a_i}_{1\leq i\leq N}$, $b$ and $c$, we construct a $\boldsymbol\psi$ such that the left-hand side of $\eqref{eqpsi}$ is exactly the $L^2_x$-norm of the function and the rest of the proof is estimating the four different terms $\Psi_i(t)$. Note that $\Psi_1(t)$ is already under the desired form
\begin{equation}\label{Psi1all}
\Psi_1(t) = N_\mathbf{f}(t)-N_\mathbf{f}(0)
\end{equation}
with $\abs{N_\mathbf{f}(s)} \leq C\norm{\mathbf{f}}^2_{L^2_{x,v}\pa{\boldsymbol\mu^{-1/2}}}$ if $\psi_i(x,v)\mu_i^{1/2}(v)$ is in $L^2_{x,v}$ for all $i$ and their norm is controlled by the one of $\mathbf{f}$ (which will be the case in our next choices).

\bigskip
\begin{remark}
The linear perturbed equation $\eqref{equationL2lem}$ and the conservation laws are invariant under standard time mollification. We therefore consider for simplicity in the rest of the proof that all functions are smooth in the variable $t$. Exactly the same estimates can be derived for more general functions and the method would obviously be to study time mollified equation and then take the limit in the smoothing parameter.
\end{remark}
\bigskip

For clarity, every positive constant will be denoted by $C_k$.


\bigskip
\textbf{Estimate for $\mathbf{a}=\pa{a_i}_{1\leq i \leq N}$.} By assumption $\mathbf{f}$ preserves the mass which is equivalent to
$$0 = \int_{\T^3\times\R^3}\mathbf{f}(t,x,v)\:dxdv = \int_{\T^3} \pa{\int_{\R^3}\langle\mathbf{f}(t,x,v),\boldsymbol\mu\rangle_{\boldsymbol\mu^{-1/2}}\:dv}dx =\int_{\T^3} \mathbf{a}(t,x)\:dx,$$
where we used the fact that $\boldsymbol{\mu} \in \mbox{Ker}(\mathbf{G})$, $\mathbf{f_0} \in \mbox{Ker}(\mathbf{G})^{\perp}$ and the orthogonality of the basis defined in 
\eqref{piLL2}.
Define a test function $\boldsymbol\psi_\mathbf{a} = \pa{\psi_i}_{1\leq i\leq N}$ by
$$\psi_i(t,x,v) = \pa{\abs{v}^2-\alpha_i}v\cdot\nabla_x\phi_i(t,x)$$
where
$$-\Delta_x\phi_i(t,x) = a_i(t,x)$$
and $\alpha_i>0$ is chosen such that for all $1\leq k \leq 3$
$$\int_{\R^3} \pa{\abs{v}^2-\alpha_i}\frac{\abs{v}^2-3m_i^{-1}}{2}v_k^2\mu_i(v)\:dv = 0.$$
The integral over $\T^3$ of $a_i(t,\cdot)$ is null and therefore standard elliptic estimate \cite{Eva} yields:
\begin{equation}\label{phiaH2}
\forall t \geq 0,\quad\norm{\phi_i(t)}_{H^2_x}\leq C_0\norm{a_i(t)}_{L^2_x}.
\end{equation}
The latter estimate provides both the control of $\Psi_1 = N^{(a)}_\mathbf{f}(t) - N^{(a)}_\mathbf{f}(0)$, as discussed before, and the control of $\eqref{Psi5}$, using Cauchy-Schwarz and Young's inequality,
\begin{eqnarray}
 \abs{\Psi_5(t)} &\leq& C\sum\limits_{i=1}^N\int_0^t \norm{\sqrt{\rho_i}\phi_i}_{L^2_x}\norm{g_i}_{L^2_{x,v}\pa{\mu_i^{-1/2}}}\:ds\nonumber
 \\&\leq& \frac{C_1}{4} \int_0^t\norm{\mathbf{a}}^2_{L^2_x\pa{\boldsymbol\rho^{1/2}}}\:ds + C_5\int_0^t \norm{\mathbf{g}}^2_{L^2_{x,v}\pa{\boldsymbol\mu^{-1/2}}}\:ds,\label{Psi5a}
\end{eqnarray}
where $C_1>0$ is given in $\eqref{RHSa}$ below and where we defined $\boldsymbol\rho = \pa{\rho_i}_{1\leq i \leq N}$ the vector of the masses associated to $(m_i\mu_i)_{1\leq i \leq N}$.

\bigskip
Firstly, we compute the term on the left-hand side of $\eqref{eqpsi}$.
\begin{eqnarray*}
&&-\sum\limits_{i=1}^N\int_0^t \int_{\T^3\times\R^3} \pi_i(\mathbf{f}) v\cdot\nabla_x\psi_i \:dxdvds 
\\&& = -\sum\limits_{i=1}^N\sum\limits_{1\leq j,k \leq 3}\int_0^t \int_{\T^3} a_i(s,x)\pa{\int_{\R^3}\pa{\abs{v}^2-\alpha_i}v_jv_km_i\mu_i(v)\:dv}\partial_{x_j}\partial_{x_k}\phi_i\:dxds
\\&&\quad -\sum\limits_{i=1}^N\sum\limits_{1\leq j,k \leq 3}\int_0^t \int_{\T^3} b(s,x)\cdot \pa{\int_{\R^3}v\pa{\abs{v}^2-\alpha_i}v_jv_km_i\mu_i(v)\:dv}\partial_{x_j}\partial_{x_k}\phi_i\:dxds
\\&&\quad -\sum\limits_{i=1}^N\sum\limits_{1\leq j,k \leq 3}\int_0^t \int_{\T^3} c(s,x)\pa{\int_{\R^3}\pa{\abs{v}^2-\alpha_i}\frac{\abs{v}^2-3m_i^{-1}}{2}v_jv_km_i\mu_idv}\partial_{x_j}\partial_{x_k}\phi_i.
\end{eqnarray*}
The second term is null as well as the first and last ones when $j\neq k$ thanks to the oddity in $v$. In the last term when $j=k$ we recover our choice of $\alpha_i$ which makes the last term being null too. It remains the first term when $k=j$. In this case, the integral in $v$ gives a constant $C_1$ independent of $i$ times $\rho_i$. Direct computations give $\alpha_i=10/m_i$ and $C_1>0$. It follows
\begin{eqnarray}
-\sum\limits_{i=1}^N\int_0^t \int_{\T^3\times\R^3} \pi_i(\mathbf{f}) v\cdot\nabla_x\psi_i \:dxdvds &=& -C_1\sum\limits_{i=1}^N \int_0^t\int_{\T^3}a_i(s,x)\rho_i\Delta_x \phi_i(s,x)\:dxds\nonumber
\\&=& C_1 \sum\limits_{i=1}^N\int_0^t \int_{\T^3}a_i^2\rho_i\:ds \nonumber
\\&=& C_1 \int_0^t \norm{\mathbf{a}(s)}^2_{L^2_{x}\pa{\boldsymbol\rho^{1/2}}}\:ds. \label{RHSa}
\end{eqnarray}

\bigskip
We recall $\mathbf{L}=-\boldsymbol\nu(v)+\mathbf{K}$ where $\mathbf{K}$ is a bounded operator in $L^2_v\pa{\boldsymbol\mu^{-1/2}}$. Moreover, the $H^2_x$-norm of $\phi_i(t,x)$ is bounded by the $L^2_x$-norm of $a_i(t,x)$. Multiplying by $\mu_i^{1/2}(v)\mu_i(v)^{-1/2}$ inside the $i^{th}$ integral of $\Psi_2$ $\eqref{Psi2}$ and of $\Psi_3$ $\eqref{Psi3}$ a mere Cauchy-Schwarz inequality yields
\begin{equation}\label{Psi23a}
\begin{split}
\forall k\in\br{2,3}, \quad \abs{\Psi_k(t)} &\leq C \sum\limits_{i=1}^N\int_0^t\norm{\sqrt{\rho_i}a_i}_{L^2_{x}}\norm{\pi_i^\bot(\mathbf{f})}_{L^2_{x,v}\pa{\mu_i^{-1/2}}}\:ds
\\&\leq  \frac{C_1}{4} \int_0^t\norm{\mathbf{a}}^2_{L^2_{x}\pa{\boldsymbol\rho^{1/2}}}\:ds + C_2\int_0^t\norm{\pi_\mathbf{L}^\bot(\mathbf{f})}^2_{L^2_{x,v}\pa{\boldsymbol\mu^{-1/2}}}\:ds.
\end{split}
\end{equation}
We used Young's inequality for the last inequality, with $C_1$ defined in $\eqref{RHSa}$.

\bigskip
It remains to estimate the term with time derivatives $\eqref{Psi4}$. It reads
\begin{eqnarray*}
\Psi_4(t) &=&\sum\limits_{i=1}^N\int_0^t \int_{\T^3\times\R^3}f_i\pa{\abs{v}^2-\alpha_i} v\cdot\cro{\partial_t\nabla_x\phi_i}\:dxdvds
\\&=& \sum\limits_{i=1}^N\sum\limits_{k=1}^3\int_0^t \int_{\T^3\times\R^3}\pi_i(\mathbf{f})\pa{\abs{v}^2-\alpha_i} v_k\partial_t\partial_{x_k}\phi_i\:dxdvds 
\\ &\:&+ \sum\limits_{i=1}^N\int_0^t \int_{\T^3\times\R^3}\pi_i^\bot(\mathbf{f})\pa{\abs{v}^2-\alpha_i} v\cdot\cro{\partial_t\nabla_x\phi_i}\:dxdvds
\end{eqnarray*}
Using oddity properties for the first integral on the right-hand side and then Cauchy-Schwarz with the following bound
$$\int_{\R^3}\pa{\abs{v}^2-\alpha_i}^2 \abs{v}^2 \mu_i(v)\:dv = C\rho_i<+\infty$$
we get
\begin{equation}\label{Psi4astart}
\abs{\Psi_4(t)}\leq C\sum\limits_{i=1}^N\int_0^t\cro{\sum\limits_{k=1}^3\norm{\rho_ib_k}_{L^2_{x}}+\norm{\pi_i^\bot(\mathbf{f})}_{L^2_{x,v}\pa{\mu_i^{-1/2}}}}\norm{\partial_t\nabla_x\phi_i}_{L^2_x}\:ds.
\end{equation}

\par Estimating $\norm{\partial_t\nabla_x\phi_a}_{L^2_x}$ will come from elliptic estimates in negative Sobolev spaces. We use the decomposition of the weak formulation $\eqref{eqpsi}$ between $t$ and $t+\eps$ (instead of between $0$ and $t$) with $\boldsymbol\psi(t,x,v) = \phi(x)\mathbf{e}_i \in H^1_x$, where $\mathbf{e}_i = \pa{\delta_{ji}}_{1\leq j \leq N}$. We furthermore require that $\phi(x)$ has a null integral over $\T^3$. $\boldsymbol\psi$ only depends on $x$ and therefore $\Psi_4(t)=0$. Moreover, multiplying by $\mu_i(v)\mu_i^{-1}(v)$ in the $i^{th}$ integral of $\Psi_3$ yields
$$\Psi_3(t)= \int_t^{t+\eps} \int_{\T^3} \langle \mathbf{L}(\mathbf{f}), \mu_i\mathbf{e}_i\rangle_{L^2_v\pa{\boldsymbol\mu^{-1/2}}}\phi(x) \:dxdvds  = 0,$$
by definition of $\mbox{Ker}(\mathbf{L})$.
\par  From the weak formulation $\eqref{eqpsi}$ it therefore remains
\begin{equation*}
\begin{split}
&\int_{\T^3\times\R^3} \phi(x)\langle \mathbf{e}_i, \mathbf{f}(t+\eps)\rangle_{\mathbf{1}}\:dxdv -\int_{\T^3\times\R^3} \phi(x)\langle \mathbf{e}_i, \mathbf{f}(t)\rangle_{\mathbf{1}}\:dxdv 
\\&\quad= \int_{t}^{t+\eps} \int_{\T^3\times\R^3}\pi_i(\mathbf{f})v\cdot\nabla_x\phi(x)\:dxdvds + \int_{t}^{t+\eps} \int_{\T^3\times\R^3}\pi^\bot_i(\mathbf{f})v\cdot\nabla_x\phi(x)\:dxdvds
\\&\quad\quad + \int_t^{t+\eps} \int_{\Omega\times\R^3}g_i(s,x,v)\phi(x)\:dxdvds
\end{split}
\end{equation*}
which is equal to
\begin{eqnarray*}
\int_{\T^3} \rho_i\cro{a_i(t+\eps)-a_i(t)}\phi(x)\:dx &=& C\int_t^{t+\eps}\int_{\T^3} \rho_i b(s,x)\cdot \nabla_x\phi(x)\:dxds
\\&&+ \int_{t}^{t+\eps} \int_{\T^3\times\R^3}\pi^\bot_i(\mathbf{f})\mu_i(v)^{-1/2}\mu_i(v)^{1/2}v\cdot\nabla_x\phi(x)
\\&&+\int_t^{t+\eps} \int_{\Omega\times\R^3}g_i(s,x,v)\phi(x)\:dxdvds,
\end{eqnarray*}
where $C$ does not depend on $i$.
\par Dividing by $\rho_i\eps$ and taking the limit as $\eps$ goes to $0$ yields, after a mere Cauchy-Schwarz inequality on the right-hand side
\begin{eqnarray*}
\abs{\int_{\T^3}\partial_ta_i(s,x)\phi(x)\:dx} &\leq& C\cro{\norm{b(t,x)}_{L^2_x}+\norm{\pi_i^\bot(\mathbf{f})}_{L^2_{x,v}\pa{\mu_i^{-1/2}}}}\norm{\nabla_x\phi(x)}_{L^2_x}
\\&&+ C\norm{g_i}_{L^2_{x,v}\pa{\mu_i^{-1/2}}}\norm{\phi}_{L^2_x}
\\&\leq& C\cro{\norm{b(t,x)}_{L^2_x}+\norm{\pi_i^\bot(\mathbf{f})}_{L^2_{x,v}\pa{\mu_i^{-1/2}}}+\norm{g_i}_{L^2_{x,v}\pa{\mu_i^{-1/2}}}}
\\&&\times\norm{\nabla_x\phi(x)}_{L^2_x}.
\end{eqnarray*}
We used Poincar\'e inequality since $\phi(x)$ has a null integral over $\T^d$. The latter inequality is true for all $\phi$ in $H^1_x$ with a null integral and therefore implies for all $t\geq 0$
\begin{equation}\label{H1*a}
\norm{\partial_ta_i(t,x)}_{\pa{\mathcal{H}^1_x}^*} \leq C\cro{\norm{b(t,x)}_{L^2_x}+\norm{\pi_i^\bot(\mathbf{f})}_{L^2_{x,v}\pa{\mu_i^{-1/2}}}+ \norm{g_i}_{L^2_{x,v}\pa{\mu_i^{-1/2}}}}
\end{equation}
where $\pa{\mathcal{H}^1_x}^*$ is the dual of the set of functions in $H^1_x$ with null integral.
\par Thanks to the conservation of mass we have that $\partial_ta_i(t,x)$ have a zero integral on the torus and we can construct $\Phi_i(t,x)$ such that
$$-\Delta_x\Phi_i(t,x) = \partial_ta_i(t,x)$$
and by standard elliptic estimate \cite{Eva}:
$$\norm{\Phi_i}_{\mathcal{H}^1_x} \leq \norm{\partial_t a_i}_{\pa{\mathcal{H}^1_x}^*} \leq C\cro{ \norm{b(t,x)}_{L^2_x}+\norm{\pi_i^\bot(\mathbf{f})}_{L^2_{x,v}\pa{\mu_i^{-1/2}}} + \norm{g_i}_{L^2_{x,v}\pa{\mu_i^{-1/2}}}},$$
where we used $\eqref{H1*a}$. Combining this estimate with 
$$\norm{\partial_t\nabla_x\phi_i}_{L^2_x} = \norm{\nabla_x\Delta^{-1}\partial_t a_i}_{L^2_x} \leq  \norm{\Delta^{-1}\partial_t a_i}_{H^1_x} = \norm{\Phi_i}_{H^1_x}$$
we can further control $\Psi_4$ in $\eqref{Psi4astart}$ using $\rho_i = \sqrt{\rho_i}\sqrt{\rho_i}$
\begin{equation}\label{Psi4a}
\abs{\Psi_4(t)} \leq C_5\int_0^t\pa{\sum\limits_{i=1}^N\norm{\sqrt{\rho_i}b}^2_{L^2_{x}} + \norm{\pi_i^\bot(\mathbf{f})}^2_{L^2_{x,v}\pa{\mu_i^{-1/2}}}+ \norm{g_i}^2_{L^2_{x,v}\pa{\mu_i^{-1/2}}}}\:ds.
\end{equation}

\bigskip
We now plug $\eqref{RHSa}$, $\eqref{Psi1all}$, $\eqref{Psi23a}$, $\eqref{Psi4a}$ and $\eqref{Psi5a}$ into $\eqref{eqpsi}$
\begin{equation}\label{afinal}
\begin{split}
\int_0^t \norm{\mathbf{a}}^2_{L^2_{x}\pa{\boldsymbol\rho^{1/2}}}\:ds \leq & N^{(a)}_\mathbf{f}(t)-N^{(a)}_\mathbf{f}(0) +C_{a,b}\int_0^t \norm{b}^2_{L^2_{x}\pa{\boldsymbol\rho^{1/2}}}\:ds
\\ &+ C_a\int_0^t \cro{\norm{\pi_\mathbf{L}^\bot(\mathbf{f})}^2_{L^2_{x,v}\pa{\boldsymbol\mu^{-1/2}}}+\norm{\mathbf{g}}^2_{L^2_{x,v}\pa{\boldsymbol\mu^{-1/2}}}}\:ds.
\end{split}
\end{equation}


\bigskip
\textbf{Estimate for $b$.} The choice of function to integrate against to deal with the $b$ term is more involved technically.
We emphasize that $b(t,x)$ is a vector $\pa{b_1(t,x),b_2(t,x),b_3(t,x)}$,  
thus we used the obvious short-hand notation
$$\norm{b}^2_{L^2_{x}\pa{\boldsymbol\rho^{1/2}}} = \sum\limits_{i=1}^N\sum\limits_{k=1}^3\norm{\sqrt{\rho_i}b_k}^2_{L^2_x}.$$
\par Fix $J$ in $\br{1,2,3}$ and the conservation of momentum implies that for all $t\geq 0$
$$\int_{\T^3}b_J(t,x)\:dx = 0.$$
Define $\boldsymbol\psi_{b_J}(t,x,v) = \pa{\psi_{iJ}(t,x,v)}_{1\leq i \leq N}$ with
$$\psi_{iJ}(t,x,v) = \sum\limits_{j=1}^3\varphi^{(J)}_{ij}(t,x,v),$$
with
$$\varphi^{(J)}_{ij}(t,x,v) = \left\{\begin{array}{l} \disp{\abs{v}^2v_jv_J\partial_{x_j}\phi_J(t,x) - \frac{7}{2m_i}\pa{v_j^2-m_i^{-1}}\partial_{x_J}\phi_J(t,x), \quad\mbox{if}\:\:j\neq J} \vspace{2mm}\\\vspace{2mm} \disp{\frac{7}{2m_i}\pa{v_J^2-m_i^{-1}}\partial_{x_J}\phi_J(t,x), \quad\mbox{if}\:\: j=J.} \end{array}\right.$$
where
$$-\Delta_x\phi_J(t,x) = b_J(t,x).$$
Since it will be important, we emphasize here that for all $j \neq k$
\begin{equation}\label{contribution0}
\int_{\R^3}\pa{v_j^2-m_i^{-1}}\mu_i(v)\:dv =0 \quad\mbox{and}\quad \int_{\R^3}\pa{v_j^2-m_i^{-1}}v_k^2\mu_i(v)\:dv =0.
\end{equation}
The null integral of $b_J$ implies by standard elliptic estimate \cite{Eva}
\begin{equation}\label{phibH2}
\forall t \geq 0,\quad\norm{\phi_J(t)}_{H^2_x}\leq C_0\norm{b_J(t)}_{L^2_x}.
\end{equation}
Again, this estimate provides the control of $\Psi_1(t) = N^{(J)}_{\mathbf{f}}(t) - N^{(J)}_{\mathbf{f}}(0)$ and of $\Psi_5(t)$ as in $\eqref{Psi5a}$:
\begin{equation}\label{Psi5b}
 \abs{\Psi_5(t)} \leq \frac{C_1}{4} \int_0^t\norm{b_J}^2_{L^2_x\pa{\boldsymbol\rho^{1/2}}}\:ds + C_5\int_0^t \norm{\mathbf{g}}^2_{L^2_{x,v}\pa{\boldsymbol\mu^{-1/2}}}\:ds,
\end{equation}
where $C_1>0$ is given in $\eqref{RHSb}$ below.

\bigskip
We start by the left-hand side of $\eqref{eqpsi}$. By oddity, there is neither contribution from any of the $a_i(s,x)$ nor from $c(s,x)$. Hence, for all $i$ in $\br{1,\dots,N}$
\begin{eqnarray*}
&&-\int_0^t \int_{\Omega\times\R^3} \pi_i(\mathbf{f}) v\cdot\nabla_x\psi_{iJ} \:dxdvds 
\\&& = -\sum\limits_{1\leq k,l\leq 3}\sum\limits_{\overset{j=1}{j\neq J}}^3\int_0^t \int_{\Omega}  b_l(s,x)\pa{\int_{\R^3}\abs{v^2}v_lv_kv_jv_Jm_i\mu_i(v)\:dv}\partial_{x_k}\partial_{x_j}\phi_J(s,x)\:dxds
\\&&\quad + \frac{7}{2m_i}\sum\limits_{1\leq k,l\leq 3}\sum\limits_{\overset{j=1}{j\neq J}}^3\int_0^t \int_{\Omega} b_l(s,x) \pa{\int_{\R^3}\pa{v_j^2-m_i^{-1}}v_lv_km_i\mu_idv}\partial_{x_k}\partial_{x_J}\phi_J\:dxds
\\&&\quad -\frac{7}{2m_i}\sum\limits_{1\leq k,l\leq 3}\int_0^t \int_{\Omega} b_l(s,x)\pa{\int_{\R^3}\pa{v_J^2-m_i^{-1}}v_lv_km_i\mu_i(v)\:dv}\partial_{x_k}\partial_{x_J}\phi_J\:dxds.
\end{eqnarray*}
The last two integrals on $\R^3$ are zero if $l\neq k$. Moreover, when $l=k$ and $l\neq J$ it is also zero by $\eqref{contribution0}$. We compute directly for $l= J$
$$\int_{\R^3}\pa{v_J^2-m_i^{-1}}v_J^2m_i\mu_i(v)\:dv = \frac{2}{m_i^2}\rho_i.$$
The first term is composed by integrals in $v$ of the form
$$\int_{\R^3}\abs{v}^2v_kv_jv_lv_J\mu_i(v)\:dv$$
which is always null unless two indices are equals to the other two. Therefore if $j=l$ then $k=J$ and if $j\neq l$ we only have two options: $k=j$ and $l=J$ or $k=l$ and $j=J$. Hence, for all $i$ in $\br{1,\dots,N}$
\begin{eqnarray*}
&&-\int_0^t \int_{\Omega\times\R^3} \pi_i(\mathbf{f}) v\cdot\nabla_x\psi_J \:dxdvds
\\&& =  -\sum\limits_{\overset{j=1}{j\neq J}}^3\int_0^t\int_{\Omega}b_J(s,x)\partial_{x_jx_j}\phi_J \pa{\int_{\R^3}\abs{v}^2v_j^2v_J^2m_i\mu_i(v)\:dv}dxds
\\&&\quad -\sum\limits_{\overset{j=1}{j\neq J}}^3\int_0^t\int_{\Omega} b_j(s,x)\partial_{x_jx_J}\phi_J \pa{\int_{\R^3}\abs{v}^2v_j^2v_J^2m_i\mu_i(v)\:dv}dxds
\\&&\quad + \frac{7}{m_i^3} \sum\limits_{\overset{j=1}{j\neq J}}^3\int_0^t\int_{\Omega} \rho_i b_j(s,x)\partial_{x_jx_J}\phi_J\:dxds
- \frac{7}{m_i^3}\int_0^t \int_{\Omega} \rho_i b_J(s,x)\partial_{x_J}\partial_{x_J}\phi_J(s,x)\:dxds.
\end{eqnarray*} 
To conlude we compute for $j\neq J$
$$\int_{\R^3}\abs{v^2}v_j^2v_J^2 m_i\mu_i(v)\:dv = \frac{7}{m_i^3}\rho_i$$
and it thus only remains the following equality for all $i$ in $\br{1,\dots,N}$.
\begin{eqnarray*}
-\int_0^t \int_{\Omega\times\R^3} \pi_i(\mathbf{f}) v\cdot\nabla_x\psi_J \:dxdvds &=& -\frac{7}{m_i^3} \int_0^t\int_{\Omega}\rho_ib_J(s,x)\Delta_x \phi_J(s,x)\:dxds
\\&=& \frac{7}{m_i^3} \int_0^t\norm{\sqrt{\rho_i}b_J}^2_{L^2_x}\:ds. 
\end{eqnarray*}
Summing over $i$ yields
\begin{equation}\label{RHSb}
-\sum\limits_{i=1}^N\int_0^t \int_{\Omega\times\R^3} \pi_j(\mathbf{f}) v\cdot\nabla_x\psi_J = \frac{7}{m_i^3} \int_0^t \norm{b_J}_{L^2_{x}\pa{\boldsymbol\rho^{1/2}}} \:dxdvds.
\end{equation}
We recall $\boldsymbol\rho = \pa{\rho_i}_{1\leq i \leq N}$.

\bigskip
Then the terms $\Psi_2$ and $\Psi_3$ are dealt with as in $\eqref{Psi23a}$
\begin{equation}\label{Psi23b}
\forall k\in\br{2,3}, \quad \abs{\Psi_k(t)} \leq   \frac{7}{4}\int_0^t\norm{b_J}^2_{L^2_x\pa{\boldsymbol\rho^{1/2}}}\:ds + C_2\int_0^t\norm{\pi_\mathbf{L}^\bot(\mathbf{f})}^2_{L^2_{x,v}\pa{\boldsymbol\mu^{-1/2}}}\:ds.
\end{equation}

\bigskip
It remains to estimate $\Psi_4$ which involves time derivative $\eqref{Psi4}$:
\begin{eqnarray*}
\Psi_4(t) &=&\sum\limits_{i=1}^N\sum\limits_{j=1}^3\int_0^t \int_{\Omega\times\R^3}f_i \partial_t\varphi^{(J)}_{ij}(s,x,v)\:dxdvds
\\&=& \sum\limits_{i=1}^N\sum\limits_{j=1}^3\int_0^t \int_{\Omega\times\R^3}\pi_i^\bot(\mathbf{f}) \partial_t\varphi^{(J)}_{ij}(s,x,v)\:dxdvds
\\&\:& +\sum\limits_{i=1}^N\sum\limits_{\overset{j=1}{j\neq J}}^3\int_0^t \int_{\Omega\times\R^3}\pi_i(\mathbf{f})\abs{v}^2v_jv_J\partial_{x_j}\phi_J\:dxdvds
\\&\:& +\sum\limits_{i=1}^N\sum\limits_{j=1}^3 \pm \frac{7}{2m_i}\int_0^t \int_{\Omega\times\R^3} \pi_i(\mathbf{f})\pa{v_j^2-m_j^{-1}}\partial_{x_J}\phi_J\:dxdvds.
\end{eqnarray*}
By oddity arguments, only terms in $a_i(s,x)$ and $c(s,x)$ can contribute to the last two terms on the right-hand side. However, $j\neq J$ implies that the second term is zero as well as the contribution of $a_i(s,x)$ in the third term thanks to $\eqref{contribution0}$. Finally, a Cauchy-Schwarz inequality on both integrals yields as in $\eqref{Psi4astart}$
\begin{equation}\label{Psi4bstart}
\abs{\Psi_4(t)}\leq C\sum\limits_{i=1}^N\int_0^t\cro{\norm{\rho_i c}_{L^2_x}+\norm{\pi_i^\bot(\mathbf{f})}_{L^2_{x,v}\pa{\mu_i^{-1/2}}}}\norm{\partial_t\nabla_x\phi_J}_{L^2_x}\:ds.
\end{equation}

\par To estimate $\norm{\partial_t\nabla_x\phi_J}_{L^2_x}$ we follow the idea developed for $\mathbf{a}(s,x)$ about negative Sobolev regularity. We apply the weak formulation $\eqref{eqpsi}$ to a specific function between $t$ and $t+\eps$. The test function is $\boldsymbol\psi(x,v) = \phi(x)v_J\mathbf{m}$ with $\phi$ in $H^1_x$ with a zero integral over $\T^3$. Note that $\psi$ does not depend on $t$ so $\Psi_4=0$ and multiplying by $\mu_i(v)\mu_i^{-1}(v)$ in the $i^{th}$ integral of $\Psi_3$ yields
$$\Psi_3(t)= \int_0^t \int_{\T^3} \langle \mathbf{L}(\mathbf{f}), v_J(m_i\mu_i)_{1\leq i\leq N}\rangle_{L^2_v\pa{\boldsymbol\mu^{-1/2}}}\partial_{x_k}\phi(x) \:dxdvds  = 0,$$
by definition of $\mbox{Ker}(\mathbf{L})$.
\par It remains
\begin{eqnarray*}
&&C\sum\limits_{i=1}^N\int_{\Omega} \rho_i\cro{b_J(t+\eps)-b_J(t)}\phi(x)\:dx 
\\&&\quad\quad\quad\quad\quad = \sum\limits_{i=1}^N\int_{t}^{t+\eps} \int_{\Omega\times\R^3}\pi_i(\mathbf{f})v_Jv\cdot\nabla_x\phi(x)\:dxdvds
\\&&\quad\quad\quad\quad\quad\quad + \sum\limits_{i=1}^N\int_{t}^{t+\eps} \int_{\Omega\times\R^3}\pi_i^\bot(\mathbf{f})v_Jv\cdot\nabla_x\phi(x)\:dxdvds
\\&&\quad\quad\quad\quad\quad\quad + \sum\limits_{i=1}^N\int_{t}^{t+\eps} \int_{\Omega\times\R^3} g_iv_J \phi(x)\:dxdvds.
\end{eqnarray*}
As for $a_i(t,x)$ we divide by $\eps$ and take the limit as $\eps$ goes to $0$. By oddity, the first integral on the right-hand side only gives terms with $a_i(s,x)$ and $c(s,x)$. The other two integrals are dealt with by a Cauchy-Schwarz inequality and Poincar\'e. This yields
\begin{equation}\label{negativesobolevb}
\begin{split}
&\abs{\int_{\Omega} \partial_t b_J(t,x)\phi(x)\:dx} 
\\&\quad\leq C  \cro{\norm{\mathbf{a}}_{L^2_x\pa{\boldsymbol\rho^{1/2}}}+\norm{c}_{L^2_x\pa{\boldsymbol\rho^{1/2}}}+\norm{\pi_\mathbf{L}^\bot(\mathbf{f})}_{L^2_{x,v}\pa{\boldsymbol\mu^{-1/2}}}+\norm{\mathbf{g}}_{L^2_{x,v}\pa{\boldsymbol\mu^{-1/2}}}}\norm{\nabla_x\phi}_{L^2_x}.
\end{split}
\end{equation}

\par The latter is true for all $\phi(x)$ in $H^1_x$ with a null integral over $\T^3$. We thus fix $t$ and apply the inequality above to 
$$-\Delta_x\phi(t,x) = \partial_tb_J(t,x)$$
which has a zero integral thanks to the conservation of momentum
and obtain
$$\norm{\partial_t\nabla_x\phi_J}_{L^2_x}^2 = \norm{\nabla_x\Delta^{-1}\partial_t b_J}_{L^2_x}^2 = \int_{\Omega}\pa{\nabla_x\Delta^{-1}\partial_t b_J}\nabla_x\phi(x)\:dx.$$
We integrate by parts 
$$\norm{\partial_t\nabla_x\phi_J}_{L^2_x}^2 = \int_{\Omega} \partial_t b_J(t,x)\phi(x)\:dx.$$
At last, we use $\eqref{negativesobolevb}$
\begin{equation*}
\begin{split}
&\norm{\partial_t\nabla_x\phi_J}_{L^2_x}^2 
\\&\leq C  \cro{\norm{\mathbf{a}}_{L^2_x\pa{\boldsymbol\rho^{1/2}}}+\norm{c}_{L^2_x\pa{\boldsymbol\rho^{1/2}}}+\norm{\pi_\mathbf{L}^\bot(\mathbf{f})}_{L^2_{x,v}\pa{\boldsymbol\mu^{-1/2}}}+\norm{\mathbf{g}}_{L^2_{x,v}\pa{\boldsymbol\mu^{-1/2}}}}\norm{\nabla_x\phi}_{L^2_x} 
\\&= C  \cro{\norm{\mathbf{a}}_{L^2_x\pa{\boldsymbol\rho^{1/2}}}+\norm{c}_{L^2_x\pa{\boldsymbol\rho^{1/2}}}+\norm{\pi_\mathbf{L}^\bot(\mathbf{f})}_{L^2_{x,v}\pa{\boldsymbol\mu^{-1/2}}}+\norm{\mathbf{g}}_{L^2_{x,v}\pa{\boldsymbol\mu^{-1/2}}}}\norm{\nabla_x \Delta_x^{-1}\partial_t b_J}_{L^2_x}
\\&= C  \cro{\norm{\mathbf{a}}_{L^2_x\pa{\boldsymbol\rho^{1/2}}}+\norm{c}_{L^2_x\pa{\boldsymbol\rho^{1/2}}}+\norm{\pi_\mathbf{L}^\bot(\mathbf{f})}_{L^2_{x,v}\pa{\boldsymbol\mu^{-1/2}}}+\norm{\mathbf{g}}_{L^2_{x,v}\pa{\boldsymbol\mu^{-1/2}}}}\norm{\partial_t\nabla_x \phi_J}_{L^2_x}.
\end{split}
\end{equation*}

\par Combining this estimate with $\eqref{Psi4bstart}$ and using Young's inequality with any $\eps_b>0$
\begin{equation}\label{Psi4b}
\begin{split}
\abs{\Psi_4(t)} \leq &\eps_b \int_0^t\norm{\mathbf{a}}^2_{L^2_x\pa{\boldsymbol\rho^{1/2}}}\:ds 
\\&+ C_5(\eps_b)\int_0^t\cro{\norm{c}^2_{L^2_x\pa{\boldsymbol\rho^{1/2}}}+ \norm{\pi_\mathbf{L}^\bot(\mathbf{f})}^2_{L^2_{x,v}\pa{\boldsymbol\mu^{-1/2}}}+\norm{\mathbf{g}}^2_{L^2_{x,v}\pa{\boldsymbol\mu^{-1/2}}}}\:ds.
\end{split}
\end{equation}

\bigskip
We now gather $\eqref{RHSb}$, $\eqref{Psi1all}$, $\eqref{Psi23b}$, $\eqref{Psi4b}$ and $\eqref{Psi5b}$
\begin{equation*}
\begin{split}
\int_0^t \norm{b_J}^2_{L^2_x\pa{\boldsymbol\rho^{1/2}}}\:ds \leq & N^{(J)}_{\mathbf{f}}(t)-N^{(J)}_{\mathbf{f}}(0) + \eps_b\int_0^t \norm{a}^2_{L^2_x\pa{\boldsymbol\rho^{1/2}}}\:ds+ C_{J,c}(\eps_b)\int_0^t \norm{c}^2_{L^2_x\pa{\boldsymbol\rho^{1/2}}}
\\&+C_{J}(\eps_b)\int_0^t \cro{\norm{\mathbf{g}}^2_{L^2_{x,v}\pa{\boldsymbol\mu^{-1/2}}}+\norm{\pi_\mathbf{L}^\bot(\mathbf{f})}^2_{L^2_{x,v}\pa{\boldsymbol\mu^{-1/2}}}}\:ds.
\end{split}
\end{equation*}
Finally, summing over all $J$ in $\br{1,2,3}$
\begin{equation}\label{bfinal}
\begin{split}
\int_0^t \norm{b}^2_{L^2_x\pa{\boldsymbol\rho^{1/2}}}\:ds \leq & N^{(b)}_\mathbf{f}(t)-N^{(b)}_\mathbf{f}(0) + \eps_b\int_0^t \norm{\mathbf{a}}^2_{L^2_x\pa{\boldsymbol\rho^{1/2}}}+ C_{b,c}\int_0^t \norm{c}^2_{L^2_x\pa{\boldsymbol\rho^{1/2}}} 
\\&+C_b\int_0^t \cro{\norm{\pi_\mathbf{L}^\bot(\mathbf{f})}^2_{L^2_{x,v}}+\norm{\mathbf{g}}^2_{L^2_{x,v}\pa{\boldsymbol\mu^{-1/2}}}}\:ds,
\end{split}
\end{equation}
with $C_{b,c}$ and $C_b$ depending on $\eps_b$.


\bigskip
\textbf{Estimate for $c$.} The contribution of $c(t,x)$ is really similar to the one of $\mathbf{a}(t,x)$. Since $\mathbf{f}$ preserves mass and energy the following holds
$$\int_{\T^3} c(t,x)\:dx=0.$$
Define the test function $\boldsymbol\psi = \pa{\psi_{ic}(t,x,v)}_{1\leq i\leq N}$ with
$$\psi_{ic}(t,x,v) = \pa{\abs{v}^2-\alpha_{ic}}v\cdot\nabla_x\phi_c(t,x)$$
where
$$-\Delta_x\phi_c(t,x) = c(t,x)$$
and $\alpha_{ic}>0$ is chosen such that for all $1\leq k \leq 3$
$$\int_{\R^3} \pa{\abs{v}^2-\alpha_{ic}}v_k^2\:\mu_i(v)\:dv = 0.$$
Again, the null integral of $c$ and standard elliptic estimate \cite{Eva} show
\begin{equation}\label{phicH2}
\forall t \geq 0,\quad\norm{\phi_c(t)}_{H^2_x}\leq C_0\norm{c(t)}_{L^2_x}.
\end{equation}
Again, this estimate provides the control of $\Psi_1 = N^{(c)}_\mathbf{f}(t) - N^{(c)}_\mathbf{f}(0)$ and of $\Psi_5(t)$ as in $\eqref{Psi5a}$:
\begin{equation}\label{Psi5c}
 \abs{\Psi_5(t)} \leq \frac{C_1}{4} \int_0^t\norm{c}^2_{L^2_x\pa{\boldsymbol\rho^{1/2}}}\:ds + C_5\int_0^t \norm{\mathbf{g}}^2_{L^2_{x,v}\pa{\boldsymbol\mu^{-1/2}}}\:ds,
\end{equation}
where $C_1>0$ is given in $\eqref{RHSc}$ below.

\bigskip
We start by the left-hand side of $\eqref{eqpsi}$.
\begin{eqnarray*}
&&-\sum\limits_{i=1}^N\int_0^t \int_{\T^3\times\R^3} \pi_i(\mathbf{f}) v\cdot\nabla_x\psi_c \:dxdvds 
\\&& = -\sum\limits_{i=1}^N\sum\limits_{1\leq j,k \leq 3}\int_0^t \int_{\T^3} a_i(s,x)\pa{\int_{\R^3}\pa{\abs{v}^2-\alpha_{ic}}v_jv_km_i\mu_i(v)\:dv}\partial_{x_j}\partial_{x_k}\phi_c\:dxds
\\&&\quad -\sum\limits_{i=1}^N\sum\limits_{1\leq j,k \leq 3}\int_0^t \int_{\T^3} b(s,x)\cdot \pa{\int_{\R^3}v\pa{\abs{v}^2-\alpha_{ic}}v_jv_km_i\mu_i(v)\:dv}\partial_{x_j}\partial_{x_k}\phi_c\:dxds
\\&&\quad -\sum\limits_{i=1}^N\sum\limits_{1\leq j,k \leq 3}\int_0^t \int_{\T^3} c(s,x)\pa{\int_{\R^3}\pa{\abs{v}^2-\alpha_{ic}}\frac{\abs{v}^2-3m_i^{-1}}{2}v_jv_km_i\mu_idv}\partial_{x_j}\partial_{x_k}\phi_c.
\end{eqnarray*}
By oddity, the second integral vanishes, as well as all the others if $j\neq k$. Our choice of $\alpha_{ic}$ makes the first integral vanish even for $j=k$. It only remains the last integral with terms $j=k$ and therefore the definition of $\Delta_x\phi_c(t,x)$ gives
\begin{eqnarray}
-\sum\limits_{i=1}^N\int_0^t \int_{\T^3\times\R^3} \pi_i(\mathbf{f}) v\cdot\nabla_x\psi_c \:dxdvds &=& C_1 \int_0^t\sum\limits_{i=1}^N\int_{\T^3}\rho_i c(s,x)^2\:dxds \nonumber
\\&=& C_1 \int_0^t \norm{c(s)}_{L^2_{x}\pa{\boldsymbol\rho^{1/2}}}\:ds. \label{RHSc}
\end{eqnarray}
Again, direct computations give $\alpha_{ic}=5/m_i$ and $C_1>0$.

\bigskip
Then the terms $\Psi_2$ and $\Psi_3$ are dealt with as in $\eqref{Psi23a}$
\begin{equation}\label{Psi23c}
\forall k\in\br{2,3}, \quad \abs{\Psi_k(t)} \leq   \frac{C_1}{4}\int_0^t\norm{c}^2_{L^2_{x}\pa{\boldsymbol\rho^{1/2}}}\:ds + C_2\int_0^t\norm{\pi_\mathbf{L}^\bot(\mathbf{f})}^2_{L^2_{x,v}\pa{\boldsymbol\mu^{-1/2}}}\:ds.
\end{equation}

\bigskip
As for $\mathbf{a}(t,x)$ the estimate on $\Psi_4$ $\eqref{Psi4}$ will follow from elliptic regularity in negative Sobolev spaces. With exactly the same computations as for $\eqref{Psi4astart}$ we have
\begin{equation}\label{Psi4cstart}
\abs{\Psi_4(t)}\leq C\int_0^t\norm{\pi_\mathbf{L}^\bot(\mathbf{f})}_{L^2_{x,v}\pa{\boldsymbol\mu^{-1/2}}}\norm{\partial_t\nabla_x\phi_c}_{L^2_x}\:ds.
\end{equation}
Note that the contribution of $\pi_\mathbf{L}$ was null by oddity for the $\mathbf{a}(t,x)$ and $c(t,x)$ terms and also for the $b(t,x)$ terms thanks to our choice of $\alpha_{ic}$.
\par To estimate $\norm{\partial_t\nabla_x\phi_c}_{L^2_x}$ we use the decomposition of the weak formulation $\eqref{eqpsi}$ between $t$ and $t+\eps$ (instead of between $0$ and $t$) with 
$$\boldsymbol\psi(t,x,v) = \pa{m_i(\abs{v}^2-3m_i^{-1})\phi(x)}_{1\leq i \leq N}$$
 where $\phi$ belongs to $H^1_x$ and has a zero integral on the torus. $\psi$ does not depend on $t$ and therefore $\Psi_4(t)=0$. Moreover, multiplying by $\mu_i(v)\mu_i^{-1}(v)$ in the $i^{th}$ integral of $\Psi_3$ yields
$$\Psi_3(t)= \int_0^t \int_{\T^3} \langle \mathbf{L}(\mathbf{f}), \pa{\frac{\abs{v}^2-3m_i^{-1}}{2}m_i\mu_i}_{1\leq i\leq N}\rangle_{L^2_v\pa{\boldsymbol\mu^{-1/2}}}\partial_{x_k}\phi(x) \:dxdvds  = 0,$$
by definition of $\mbox{Ker}(\mathbf{L})$.
\par From the weak formulation $\eqref{eqpsi}$ it therefore remains
\begin{eqnarray*}
C\int_{\T^3} \cro{c(t+\eps)-c(t)}\phi(x)\:dx &=& \sum\limits_{i=1}^N\int_{t}^{t+\eps} \int_{\T^3\times\R^3}\pi_i(\mathbf{f})\frac{m_i\abs{v}^2-3}{2}v\cdot\nabla_x\phi(x)
\\&+& \sum\limits_{i=1}^N\int_{t}^{t+\eps} \int_{\T^3\times\R^3}\pi_i^\bot(\mathbf{f})\frac{m_i\abs{v}^2-3}{2}v\cdot\nabla_x\phi(x)
\\&+& \sum\limits_{i=1}^N\int_{t}^{t+\eps} \int_{\T^3\times\R^3}g_i(s,x,v)\frac{m_i\abs{v}^2-3}{2}\phi(x).
\end{eqnarray*}
As for $\mathbf{a}(t,x)$ we divide by $\eps$ and take the limit as $\eps$ goes to $0$. By oddity, the first integral on the right-hand side only gives terms with $\rho_ib(s,x)$. The last two terms are dealt with by multiplying by $\mu_i(v)^{-1/2}\mu_i(v)^{1/2}$ inside each integral and applying a Cauchy-Schwarz inequality. Note that again we also apply Poincar\'e inequality. This yields
\begin{equation*}
\begin{split}
&\abs{\int_{\T^3} \partial_t c(t,x)\phi(x)\:dx} 
\\&\quad\quad\quad\leq C \cro{\norm{b}_{L^2_{x}\pa{\boldsymbol\rho^{1/2}}}+\norm{\pi_\mathbf{L}^\bot(\mathbf{f})}_{L^2_{x,v}\pa{\boldsymbol\mu^{-1/2}}}+\norm{\mathbf{g}}_{L^2_{x,v}\pa{\boldsymbol\mu^{-1/2}}}}\norm{\nabla_x\phi}_{L^2_x}.
\end{split}
\end{equation*}

\par That estimate holds for all $\phi(x)$ in $H^1_x$ with null integral over $\T^3$. We copy the arguments made for $\mathbf{a}(t,x)$ or $b_J(t,x)$ and construct 
$$-\Delta_x\Phi_c(t,x) = \partial_tc(t,x)$$
and obtain by elliptic estimates
\begin{eqnarray*}
\norm{\partial_t\nabla_x\phi_c}_{L^2_x} &=& \norm{\nabla_x\Delta^{-1}\partial_t c}_{L^2_x} \leq  \norm{\Delta^{-1}\partial_t c}_{H^1_x} = \norm{\Phi_c}_{H^1_x}
\\ &\leq& C\norm{\partial_tc(t,x)}_{\pa{H^1_x}^*} 
\\&\leq& C \cro{\norm{b}_{L^2_{x}\pa{\boldsymbol\rho^{1/2}}}+\norm{\pi_\mathbf{L}^\bot(\mathbf{f})}_{L^2_{x,v}\pa{\boldsymbol\mu^{-1/2}}}+\norm{\mathbf{g}}_{L^2_{x,v}\pa{\boldsymbol\mu^{-1/2}}}}.
\end{eqnarray*}

\par Combining this estimate with $\eqref{Psi4cstart}$ and using Young's inequality with any $\eps_c>0$
\begin{equation}\label{Psi4c}
\abs{\Psi_4(t)} \leq \eps_c\int_0^t\norm{b}^2_{L^2_{x}\pa{\boldsymbol\rho^{1/2}}}\:ds + C_5(\eps_c)\int_0^t \cro{\norm{\pi_\mathbf{L}^\bot(\mathbf{f})}^2_{L^2_{x,v}\pa{\boldsymbol\mu^{-1/2}}}+\norm{\mathbf{g}}^2_{L^2_{x,v}\pa{\boldsymbol\mu^{-1/2}}}}\:ds.
\end{equation}

\bigskip
We now gather $\eqref{RHSc}$, $\eqref{Psi1all}$, $\eqref{Psi23c}$, $\eqref{Psi4c}$ and $\eqref{Psi5c}$ into $\eqref{eqpsi}$:
\begin{equation}\label{cfinal}
\begin{split}
\int_0^t \norm{c}^2_{L^2_{x}\pa{\boldsymbol\rho^{1/2}}}\:ds \leq & N^{(c)}_\mathbf{f}(t)-N^{(c)}_\mathbf{f}(0) + \eps_c\int_0^t \norm{b}^2_{L^2_{x}\pa{\boldsymbol\rho^{1/2}}}\:ds
\\&+ C_c(\eps_c)\int_0^t \cro{\norm{\pi_\mathbf{L}^\bot(\mathbf{f})}^2_{L^2_{x,v}\pa{\boldsymbol\mu^{-1/2}}}+\norm{\mathbf{g}}^2_{L^2_{x,v}\pa{\boldsymbol\mu^{-1/2}}}}\:ds.
\end{split}
\end{equation}


\bigskip
\textbf{Conclusion of the proof.} We gather together the estimates we derived for $\mathbf{a}$, $b$ and $c$. We compute the linear combination $\eqref{afinal} + \alpha \times \eqref{bfinal} + \beta \times \eqref{cfinal}$. For all $\eps_b >0$ and $\eps_c >0$ this implies
\begin{equation*}
\begin{split}
&\int_0^t \cro{\norm{\mathbf{a}}^2_{L^2_x\pa{\boldsymbol\rho^{1/2}}} + \alpha\norm{b}^2_{L^2_x\pa{\boldsymbol\rho^{1/2}}} + \beta \norm{c}^2_{L^2_x\pa{\boldsymbol\rho^{1/2}}}} \:ds 
\\&\leq N_\mathbf{f}(t)-N_\mathbf{f}(0) + C_\bot\int_0^t\cro{ \norm{\pi_\mathbf{L}^\bot(\mathbf{f})}^2_{L^2_{x,v}\pa{\boldsymbol\mu^{-1/2}}}+ \norm{\mathbf{g}}^2_{L^2_{x,v}\pa{\boldsymbol\mu^{-1/2}}}}\:ds
\\&\quad+\int_0^t \cro{\alpha\eps_b\norm{\mathbf{a}}^2_{L^2_x\pa{\boldsymbol\rho^{1/2}}} + \pa{C_{a,b}+\beta\eps_c}\norm{b}^2_{L^2_x\pa{\boldsymbol\rho^{1/2}}} + \alpha C_{b,c}(\eps_b) \norm{c}^2_{L^2_x\pa{\boldsymbol\rho^{1/2}}}} \:ds.
\end{split}
\end{equation*}

\par We first choose $\alpha > C_{a,b}$, then $\eps_b$ such that $\alpha\eps_b < 1$ and then $\beta > \alpha C_{b,c}(\eps_b) $. Finally, we fix $\eps_c$ small enough such that $C_{a,b}+\beta\eps_c < \alpha$ . With such choices we can absorb the last term on the right-hand side by the left-hand side. This concludes the proof of Lemma \ref{lem:controlfluidmicro} since
$$\norm{\pi_\mathbf{L}(\mathbf{f})}^2_{L^2_{x,v}\pa{\boldsymbol\mu^{-1/2}}} = \norm{\mathbf{a}}^2_{L^2_x\pa{\boldsymbol\rho^{1/2}}} + \norm{b}^2_{L^2_x\pa{\boldsymbol\rho^{1/2}}} +  \norm{c}^2_{L^2_x\pa{\boldsymbol\rho^{1/2}}}.$$

\end{proof}
\bigskip


\subsection{Generation of a $C^0$ semigroup on $L^2_{x,v}\pa{\boldsymbol\mu^{-1/2}}$}\label{subsec:semigroupL2}

We now have the tools to develop the hypocoercivity of $\mathbf{G}$ into a semigroup property.

\bigskip
\begin{proof}[Proof of Theorem \ref{theo:semigroupL2}]
Let $\mathbf{f_0}$ be in $L^2_{x,v}\pa{\boldsymbol\mu^{-1/2}}$ and consider the following equation
\begin{equation}\label{equationL2}
\partial_t \mathbf{f} = \mathbf{L}\pa{\mathbf{f}} - v\cdot\nabla_x \mathbf{f}
\end{equation}
with initial data $\mathbf{f_0}$.
\par Since the transport part $-v\cdot\nabla_x$ is skew-symmetric in $L^2_{x,v}\pa{\mathbf{\mu}_i^{-1/2}}$ (mere integration by part) and $\mathbf{L}$ is self-adjoint, $\mbox{Ker}(\mathbf{G})$ and $\pa{\mbox{Ker}(\mathbf{G})}^\bot$ are stable under $\eqref{equationL2}$. We therefore consider only the case $\mathbf{f_0}$ in $\pa{\mbox{Ker}(\mathbf{G})}^\bot$ and the associated solution stays in $\pa{\mbox{Ker}(\mathbf{G})}^\bot$ for all $t$.
\par Moreover, $\mathbf{L}$ has a spectral gap $\lambda_L$ and so by Theorem \ref{theo:spectralgapL}, if $\mathbf{f}=\pa{f_i}_{1\leq i \leq N}$ is a solution to $\eqref{equationL2}$ we have the following
\begin{eqnarray}
\frac{1}{2}\frac{d}{dt}\norm{\mathbf{f}}^2_{L^2_{x,v}\pa{\boldsymbol\mu^{-1/2}}} &=& \int_{\T^3 \times \R^3} \langle\partial_t \mathbf{f},\mathbf{f} \rangle_{\boldsymbol\mu^{-1/2}} \:dxdv \nonumber
\\&=&  -\sum\limits_{i=1}^N \int_{\T^3\times\R^3} v\cdot\nabla_x\pa{f_i(t,x,v)^2}\mu_i^{-1}(v)\:dxdv \nonumber
\\&&+ \int_{\T^3} \langle \mathbf{L}(\mathbf{f})(t,x,\cdot),\mathbf{f}(t,x,\cdot) \rangle_{L^2_v\pa{\mu^{-1/2}}}\:dx \nonumber
\\&\leq &  -\lambda_L \norm{\pi_\mathbf{L}^\bot(\mathbf{f})}^2_{L^2_{x,v}\pa{\boldsymbol\mu^{-1/2}}}. \label{ineqL2}
\end{eqnarray}
We remind that $\pi_\mathbf{L}^\bot=\mbox{Id}-\pi_\mathbf{L}$ where $\pi_\mathbf{L}$ is the orthogonal projection $\eqref{piL}$ onto $\mbox{Ker}(L)$ in $L^2_v\pa{\boldsymbol\mu^{-1/2}}$.  The norm is thus decreasing under the flow and it therefore follows that $\mathbf{G}$ generates a strongly continuous semigroup on $L^2_v\pa{\boldsymbol\mu^{-1/2}}$, we refer the reader to \cite{Ka} (general theory) or \cite{Uk}\cite{UkYa} (for the special case of single species Boltzmann equation).

\bigskip
Let $\mathbf{f} = S_{\mathbf{G}}(t)\mathbf{f_0}$ and define $\mathbf{\tilde{f}}(t,x,v) = e^{\lambda t}\mathbf{f}(t,x,v)$ for $\lambda > 0$ to be defined later. $\mathbf{\tilde{f}}$ satisfies the conservation laws and is solution in $L^2_{x,v}\pa{\boldsymbol\mu^{-1/2}}$ to the following equation
$$\partial_t \mathbf{\tilde{f}} = \mathbf{G}(\mathbf{\tilde{f}}) + \lambda\mathbf{\tilde{f}}.$$

\par As for $\eqref{ineqL2}$ we obtain the following estimate
\begin{equation}\label{ineqL2tilde}
\norm{\mathbf{\tilde{f}}}^2_{L^2_{x,v}\pa{\boldsymbol\mu^{-1/2}}} \leq \norm{\mathbf{f_0}}^2_{L^2_{x,v}\pa{\boldsymbol\mu^{-1/2}}} - 2\lambda_L\int_0^t \norm{\pi_\mathbf{L}^\bot(\mathbf{\tilde{f}})}^2_{L^2_{x,v}\pa{\boldsymbol\mu^{-1/2}}}+ 2\lambda\int_0^t \norm{\mathbf{\tilde{f}}}^2_{L^2_{x,v}\pa{\boldsymbol\mu^{-1/2}}}.
\end{equation}

\par Along with the latter estimate, we have the following control given by Lemma \ref{lem:controlfluidmicro} with $\mathbf{g}=\lambda\mathbf{\tilde{f}}$
\begin{equation}\label{controlpiLpiLbot}
\begin{split}
\int_0^t\norm{\pi_\mathbf{L}(\mathbf{\tilde{f}})}^2_{L^2_{x,v}\pa{\boldsymbol\mu^{-1/2}}}\:ds \leq & N_\mathbf{\tilde{f}}(t)-N_\mathbf{\tilde{f}}(0) + C_\bot \int_0^t\norm{\pi^\bot_\mathbf{L}(\mathbf{\tilde{f}})}^2_{L^2_{x,v}\pa{\boldsymbol\mu^{-1/2}}}\:ds
\\&+ C_\bot\lambda^2 \int_0^t\norm{\mathbf{\tilde{f}}}^2_{L^2_{x,v}\pa{\boldsymbol\mu^{-1/2}}}\:ds
\end{split}
\end{equation}
where $C_\bot>0$ is independent of $\mathbf{f}$ and $\abs{N_\mathbf{\tilde{f}}(s)} \leq C\norm{\mathbf{\tilde{f}}(s)}^2_{L^2_{x,v}\pa{\boldsymbol\mu^{-1/2}}}$, then $\eps \times \eqref{controlpiLpiLbot}+\eqref{ineqL2tilde}$ yields
\begin{eqnarray*}
&&\cro{\norm{\mathbf{\tilde{f}}}^2_{L^2_{x,v}\pa{\boldsymbol\mu^{-1/2}}}-\eps N_\mathbf{\tilde{f}}(t)} + C_{\eps}\int_0^t \pa{\norm{\pi_\mathbf{L}(\mathbf{\tilde{f}})}^2_{L^2_{x,v}\pa{\boldsymbol\mu^{-1/2}}} + \norm{\pi_\mathbf{L}^\bot (\mathbf{\tilde{f}})}^2_{L^2_{x,v}\pa{\boldsymbol\mu^{-1/2}}}}
\\&&\quad\quad\quad \leq \norm{\mathbf{f_0}}^2_{L^2_{x,v}\pa{\boldsymbol\mu^{-1/2}}}-\eps N_{\tilde{f}}(0) + \pa{2\lambda + \eps C_\bot \lambda^2}\int_0^t \norm{\mathbf{\tilde{f}}}^2_{L^2_{x,v}\pa{\boldsymbol\mu^{-1/2}}}\:ds.
\end{eqnarray*}
where $C_\eps = \min\br{2\lambda_L-\eps C_\bot, \eps}$. By the control on $\abs{N_\mathbf{\tilde{f}}(s)}$ and the fact that 
$$\norm{\pi_\mathbf{L}(\mathbf{\tilde{f}})}^2_{L^2_{x,v}\pa{\boldsymbol\mu^{-1/2}}} + \norm{\pi_\mathbf{L}^\bot (\mathbf{\tilde{f}})}^2_{L^2_{x,v}\pa{\boldsymbol\mu^{-1/2}}} = \norm{\mathbf{\tilde{f}}}^2_{L^2_{x,v}\pa{\boldsymbol\mu^{-1/2}}}$$
we can choose $\eps$ small enough such that $C_\eps >0$ and then $\lambda$ small enough such that $\pa{2\lambda + \eps C_\bot \lambda^2} < C_\eps$. Such choices imply that $\norm{\mathbf{\tilde{f}}}^2_{L^2_{x,v}\pa{\boldsymbol\mu^{-1/2}}}$ is uniformly bounded in time by $C\norm{\mathbf{f_0}}^2_{L^2_{x,v}\pa{\boldsymbol\mu^{-1/2}}}$.
\par By definition of $\mathbf{\tilde{f}}$, this shows an exponential decay for $\mathbf{f}$ and concludes the proof of Theorem \ref{theo:semigroupL2}.

\end{proof}
\bigskip

\section{$L^\infty$ theory for the linear part with maxwellian weight}\label{sec:Linftytheory}

As explained in the introduction, the $L^2$ setting is not algebraic for the nonlinear operator $\mathbf{Q}$. We therefore need to work in an $L^\infty$ framework. We first give a pointwise control on the linear operator $\mathbf{K}$ in Subsection \ref{subsec:keyestimatesoperator} and then we prove that the linear part of the perturbed equation $\eqref{perturbedmultiBE}$ generates a strongly continuous semigroup in $L^\infty_{x,v}\pa{\langle v \rangle^\beta\boldsymbol\mu^{-1/2}}$ in Subsection \ref{subsec:Linftylinear}.

\bigskip


\subsection{Pointwise estimate on $\mathbf{K}$}\label{subsec:keyestimatesoperator}

We recall that $\mathbf{L}$ can be written under the following form
$$\mathbf{L} = -\boldsymbol\nu(v) + \mathbf{K},$$
where $\boldsymbol\nu = \pa{\nu_i}_{1\leq i\leq N}$ is a multiplicative operator satisfying $\eqref{nu0nu1}$:
$$\forall v \in \R^3,\quad \nu_i^{(0)}(1+\abs{v}^\gamma)\leq \nu_i(v)\leq \nu_i^{(1)}(1+\abs{v}^\gamma),$$
with $\nu_i^{(0)},\:\nu_i^{(1)} >0$.

\par In the case of single-species Boltzmann equation, the operator $\mathbf{K}$ can be written as a kernel operator (\cite{Gr2} or \cite{CIP} Section 7.2) and we give here a similar property where the different exponential decay rates, due to the different masses, are explicitely taken into account. These explicit bounds will be strongly needed for the $L^\infty$ theory.

\bigskip
\begin{lemma}\label{lem:kernelK}
Let $\mathbf{f}$ be in $L^2_v\pa{\boldsymbol\mu^{-1/2}}$. Then for all $i$ in $\br{1,\dots,N}$ there exists $\mathbf{k}^{(i)}$ such that
$$K_i(\mathbf{f})(v) = \int_{\R^3}\langle \mathbf{k}^{(i)}(v,v_*), \mathbf{f}(v_*)\rangle\:dv_*.$$
Moreover there exist $m$, $C_K >0$ such that for all $i$ in $\br{1,\dots,N}$ and for all $1\leq j \leq N$
\begin{equation}\label{pointwiseki}
\abs{k^{(i)}_j(v,v_*)} \leq C_K\sqrt{\frac{\mu_i(v)}{\mu_j(v_*)}} \cro{\abs{v-v_*}^\gamma + \abs{v-v_*}^{\gamma-2}}e^{-m\abs{v-v_*}^2-m\frac{\abs{\abs{v}^2-\abs{v_*}^2}^2}{\abs{v-v_*}^2}}. 
\end{equation}
The constants $m$ and $C_K$ are explicit and depend only on $\pa{m_i}_{1\leq i \leq N}$ and the collision kernel $B$.
\end{lemma}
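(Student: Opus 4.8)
The plan is to follow the classical Grad decomposition $\mathbf L=-\boldsymbol\nu(v)+\mathbf K$ and to realize each component of $\mathbf K$ as a kernel operator by a change of variables adapted to the mass asymmetry. Splitting $Q_{ij}=Q_{ij}^+-Q_{ij}^-$ into gain and loss parts in \eqref{Lij}, one checks that the loss contributions $Q_{ij}^-(f_i,\mu_j)(v)=\nu_{ij}(v)f_i(v)$ add up to $-\boldsymbol\nu(v)$ (this is exactly \eqref{nu}), so that $K_i(\mathbf f)=\sum_{j=1}^N\big[Q_{ij}^+(\mu_i,f_j)+Q_{ij}^+(f_i,\mu_j)-Q_{ij}^-(\mu_i,f_j)\big]$. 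The term $Q_{ij}^-(\mu_i,f_j)(v)=\mu_i(v)\,C_{ij}^\Phi l_{b_{ij}}\int_{\R^3}|v-v_*|^\gamma f_j(v_*)\,dv_*$ is already a kernel operator, and its kernel $-\mu_i(v)C_{ij}^\Phi l_{b_{ij}}|v-v_*|^\gamma$ obeys \eqref{pointwiseki} at once: from the elementary identity $|v|^2+|v_*|^2=\tfrac12|v+v_*|^2+\tfrac12|v-v_*|^2\ge\tfrac12|v-v_*|^2+\tfrac12\tfrac{(|v|^2-|v_*|^2)^2}{|v-v_*|^2}$ one gets $\sqrt{\mu_i(v)\mu_j(v_*)}\le C\exp\big(-\tfrac{m_0}{8}|v-v_*|^2-\tfrac{m_0}{8}\tfrac{(|v|^2-|v_*|^2)^2}{|v-v_*|^2}\big)$ with $m_0=\min_k m_k$, and the polynomial factor is absorbed by splitting off a small fraction of the Gaussian for large $|v-v_*|$ (using $|v-v_*|^\gamma\le|v-v_*|^\gamma+|v-v_*|^{\gamma-2}$). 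This already fixes a candidate value for $m$ depending only on $\min_k m_k$.

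The substantial work is the two gain terms, for which we need a multi-species Carleman representation. For $Q_{ij}^+(f_i,\mu_j)$ we use the change of variables $(v_*,\sigma)\mapsto(v',\omega)$ making $v'$ the integration variable, and for $Q_{ij}^+(\mu_i,f_j)$ the change of variables $(v_*,\sigma)\mapsto(v'_*,\omega)$ making $v'_*$ the integration variable; the strong cutoff $b_{ij}(\cos\theta)\le C_{b1}|\sin\theta|\,|\cos\theta|$ is used to control the vanishing of the Jacobian (which degenerates like $1-\cos\theta$) and leaves, after renaming the new variable $v_*$, a kernel with the announced $|v-v_*|^{\gamma-2}$ and $|v-v_*|^\gamma$ behaviour. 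The place where the mass asymmetry enters is the geometry of the residual angular set: once $v$ and the new variable $v_*$ are fixed, momentum conservation in \eqref{elasticcollision} gives that the original $v_*^{\mathrm{old}}$ equals $v'_*+\tfrac{m_i}{m_j}(v'-v)$, and plugging this into $|v-v_*^{\mathrm{old}}|=|v'-v'_*|$ forces $v'$ (resp.\ $v'_*$) to satisfy $\tfrac{m_i-m_j}{m_j}|v'-v|^2=2(v'-v)\cdot(v_*-v)$. When $m_i=m_j$ this is the usual Carleman hyperplane through $v$ orthogonal to $v-v_*$; when $m_i\neq m_j$ it is a genuine sphere, of centre $v+\tfrac{m_j}{m_i-m_j}(v_*-v)$ and radius $\tfrac{m_j}{|m_i-m_j|}|v-v_*|$, so in particular of radius comparable to $|v-v_*|$.

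On this set we rewrite the Maxwellian factor in the integrand using the conservation laws: energy conservation together with the fibre relation above yields, for the $Q_{ij}^+(\mu_i,f_j)$ contribution, $|v'|^2=|v|^2+2v_*\cdot(v'-v)+\tfrac{m_i}{m_j}|v'-v|^2$, whence (after completing the square in $v'-v$) $\mu_i(v')$ equals $\sqrt{\mu_i(v)/\mu_j(v_*)}$ times a constant times $\exp\!\big(\tfrac{m_j}{4}|v_*|^2-\tfrac{m_i}{4}|v|^2-\tfrac{m_i^2}{2m_j}\big|v'-v+\tfrac{m_j}{m_i}v_*\big|^2\big)$; the analogous identity holds for $Q_{ij}^+(f_i,\mu_j)$. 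Integrating this Gaussian over the hyperplane (resp.\ over the sphere, using compactness and the bound on the radius so that $\int_{\mathcal S}\mu_i(v')\,d\mathcal H^2\lesssim|\mathcal S|\sup_{\mathcal S}\mu_i$, evaluated at the point of $\mathcal S$ nearest the origin), and then invoking once more the elementary inequality of the first paragraph — directly in the hyperplane case, and after estimating the exponent at the nearest point in the sphere case — produces \eqref{pointwiseki}, at the cost of possibly shrinking $m$. Collecting the three contributions to $k^{(i)}_j$ (for $j\neq i$: the gain kernel of $Q_{ij}^+(\mu_i,f_j)$ plus the loss kernel of $Q_{ij}^-(\mu_i,f_j)$; for $j=i$: additionally $\sum_j$ of the gain kernels of $Q_{ij}^+(f_i,\mu_j)$, the diagonal part $j=i$ being the classical single-species estimate of \cite{Gr2}\cite{CIP}) and taking $C_K$ and $m$ as the worst constants over the finitely many pairs $(i,j)$ finishes the proof.

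The main obstacle is precisely the sphere case $m_i\neq m_j$: establishing the explicit change of variables and its Jacobian, controlling the surface-measure contribution, and extracting from the integrand the exact weight $\sqrt{\mu_i(v)/\mu_j(v_*)}$ together with Gaussian decay in $|v-v_*|$ and $\big||v|^2-|v_*|^2\big|/|v-v_*|$, \emph{uniformly} in $(v,v_*)$, since the relevant cancellations between $\sup_{\mathcal S}\mu_i$ and the growing factor $\mu_j(v_*)^{-1/2}$ are delicate. This is the multi-species analogue of Grad's classical kernel estimate and, unlike the hyperplane case, has no ready-made reference.
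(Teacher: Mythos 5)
Your overall route is the same as the paper's: you split
$K_i=\sum_j\big[Q^+_{ij}(\mu_i,f_j)+Q^+_{ij}(f_i,\mu_j)-Q^-_{ij}(\mu_i,f_j)\big]$,
bound the loss term directly, and represent the two gain terms via a multi-species Carleman change of variables, correctly noticing that the fibre for $Q^+_{ij}(f_i,\mu_j)$ remains a hyperplane while for $Q^+_{ij}(\mu_i,f_j)$ it becomes a sphere of radius $\frac{m_j}{|m_i-m_j|}|v-v_*|$ when $m_i\neq m_j$; your completion of the square
$\mu_i(v')=C\sqrt{\mu_i(v)/\mu_j(v_*)}\,\exp\big(\tfrac{m_j}{4}|v_*|^2-\tfrac{m_i}{4}|v|^2-\tfrac{m_i^2}{2m_j}|v'-v+\tfrac{m_j}{m_i}v_*|^2\big)$
on the fibre is also correct. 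Two small slips: with $v_*:=v'_*$ fixed, the residual constraint on $v'$ is $\tfrac{m_i-m_j}{m_j}|v'-v|^2=-2(v'-v)\cdot(v_*-v)$ (you dropped the minus sign), and consequently the sphere is centred at $\tfrac{m_iv-m_jv_*}{m_i-m_j}=v-\tfrac{m_j}{m_i-m_j}(v_*-v)$, not at $v+\tfrac{m_j}{m_i-m_j}(v_*-v)$.

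The genuine gap is the one you flag yourself but do not close: the sphere-case pointwise bound. After bounding $\sup_{\mathcal S}\mu_i(v')$ and peeling off the surface measure $\sim|v-v_*|^2$, you are left competing the unbounded factor $e^{\frac{m_j}{4}|v_*|^2-\frac{m_i}{4}|v|^2}$ against the Gaussian that the nearest point of $\mathcal S$ provides. The ``elementary inequality'' $|v|^2+|v_*|^2\ge\tfrac12|v-v_*|^2+\tfrac12(|v|^2-|v_*|^2)^2/|v-v_*|^2$ is tailored to the symmetric product $\sqrt{\mu_i(v)\mu_j(v_*)}$ (it works for your loss term) and does not, by itself, control the asymmetric competition between $\sup_{\mathcal S}\mu_i$ and $\mu_j(v_*)^{-1/2}$ --- which is exactly where the mass asymmetry bites. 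The paper closes this (see the passage from $\eqref{tildeES2}$ to $\eqref{normU}$) by bounding the exponent at the nearest point by $-\tfrac{m_i}{2}(R-|O|)^2$ with $R=\tfrac{m_j}{|m_i-m_j|}|v-v_*|$ and $|O|=\tfrac{1}{|m_i-m_j|}|m_iv-m_jv_*|$, expanding the cross term $R|O|$ via the decomposition $v+v_*=V^\bot+V^\|$ parallel/orthogonal to $v-v_*$, peeling off exactly $-\tfrac{m_i}{4}|v|^2+\tfrac{m_j}{4}|v_*|^2$ to account for the weight $\sqrt{\mu_i(v)/\mu_j(v_*)}$, and verifying that the residue is a negative-definite quadratic form in $\big(\tfrac{|v|^2-|v_*|^2}{|v-v_*|},|v-v_*|\big)$, with determinant $m_im_j/64>0$. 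That quadratic-form computation is the substance of the lemma and has no substitute in your sketch; without it the announced uniform Gaussian decay in $\eqref{pointwiseki}$ does not follow. The rest of your outline (the hyperplane and loss estimates, and the bookkeeping of which gain/loss pieces contribute to $k^{(i)}_j$ for $j=i$ versus $j\neq i$) is sound and matches the paper.
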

\bigskip

\begin{proof}[Proof of Lemma \ref{lem:kernelK}]
By definition, $\mathbf{K} = \pa{K_i}_{1\leq i \leq N}$ with
\begin{equation}\label{Kilemma}
K_i(\mathbf{f})(v) = \sum\limits_{j=1}^N \int_{\mathbb{S}^2\times\R^3}B_{ij}\pa{\abs{v-v_*},\cos\theta}\cro{\mu_j^{'*}f_i' + \mu_i'f_j^{'*} - \mu_i f_j^*}\:d\sigma dv_*.
\end{equation}
We used the identity $\mu_i(v)\mu_j(v_*) = \mu_i(v')\mu_j(v'_*)$ that is a consequence of the conservation of energy during an elastic collision.

\bigskip
\textbf{Step 1: A kernel form.}
The third term in the integral is already in the desired form. The first two terms require a new representation of the collision kernel where the integrand parameters will be $v'$ and $v'_*$ instead of $v_*$  and $\sigma$. Such a representation has been obtained in the case of a single-species Boltzmann equation and is called the Carleman representation \cite{Ca2}. We derive below the Carleman representation associated with the multi-species Boltzmann operator. We follow the methods used in \cite{Ca2}\cite{CIP} Section 7.2 and \cite{GPV}. However, the existence of different masses generates an asymmetry between $v'$ and $v'_*$ as we shall see.

\bigskip
The laws of elastic collisions gives
$$v'= V + \frac{m_j}{m_i+m_j}\abs{v-v_*}\sigma \quad\mbox{and}\quad v'_* = V - \frac{m_i}{m_i+m_j}\abs{v-v_*}\sigma$$
where $V$ is the center of mass of the particles $i$ and $j$:
$$V = \frac{m_i}{m_i+m_j}v + \frac{m_j}{m_i+m_j}v_*.$$
We can also express 
$$v = V +\frac{m_j}{m_i+m_j}\pa{v-v_*} \quad\mbox{and}\quad v_* = V - \frac{m_i}{m_i+m_j}\pa{v-v_*}.$$
Note that 
\begin{eqnarray}
\abs{v-v_*} &=& \abs{v'-v'_*},\label{v-v*}
\\ \abs{v-v'} &\leq& \frac{2m_j}{m_i+m_j}\abs{v'-v'_*}, \label{v-v'}
\\ \abs{v-v'_*} &\leq& \abs{v'-v'_*}. \label{v-v'*}
\end{eqnarray}

\bigskip
The points $v$, $v_*$, $v'$ and $v'_*$ therefore belong to the plane defined by $V$ and $\mbox{Span}(\sigma, v-v_*)$. We have the following geometric configuration, which gives a perfect circle in the case of equal masses.

\begin{figure}[!h]
\begin{center}
\includegraphics[scale=0.5]{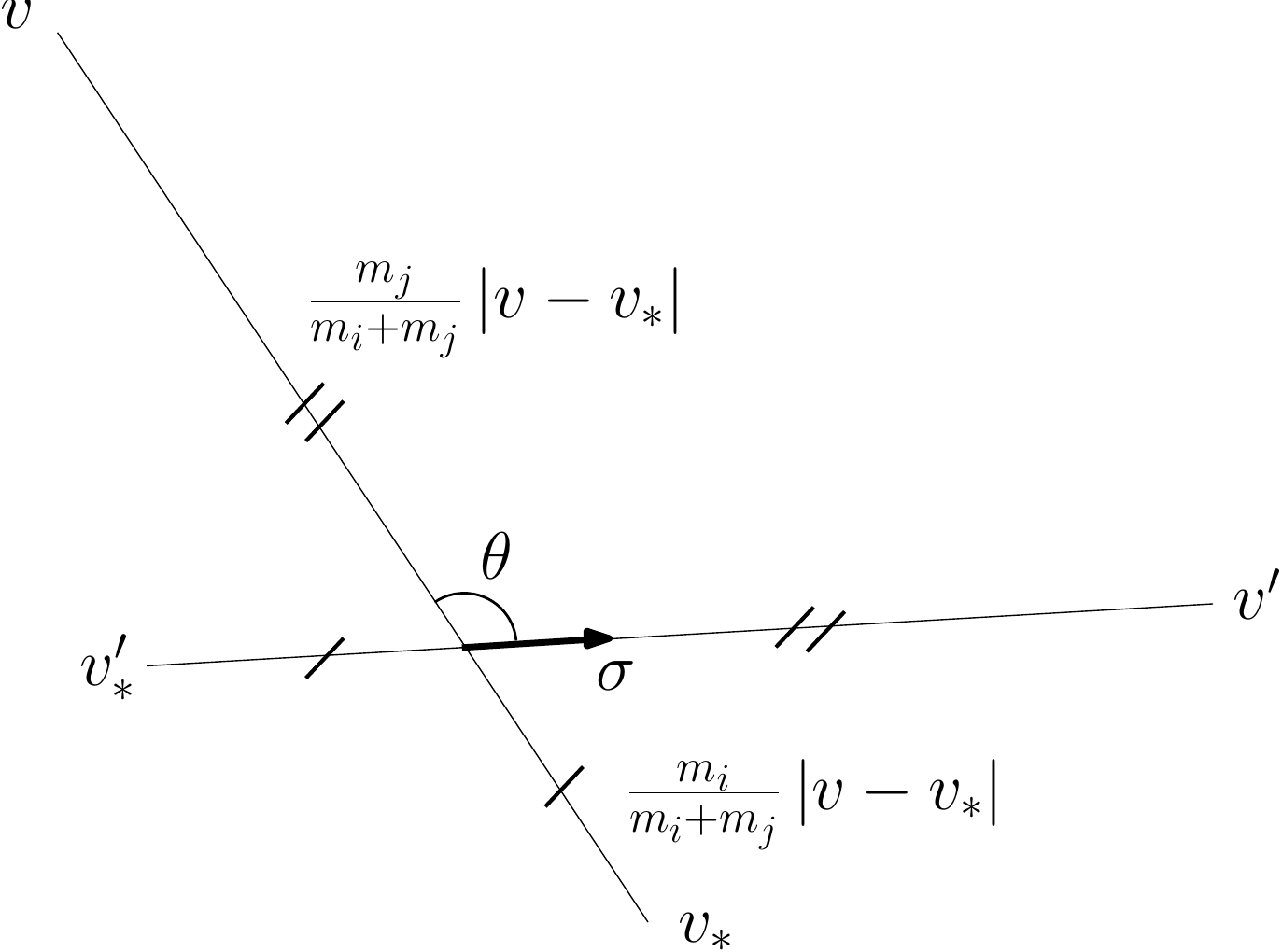}
\end{center}
\caption{\footnotesize Relation between pre-collisional and post-collisional velocities}
\label{fig:rolling}
\end{figure}

Geometrically, $m_j^{-1}(v-V)$, $m_j^{-1}(v'-V)$, $m_i^{-1}(v_*-V)$ and $m_i^{-1}(v'_*-V)$ are on the same circle of diameter $\left|m_i^{-1}(v'^*-V)-m_j(v'-V)\right|=\frac{2}{m_i+m_j}|v-v_*|$. Therefore,
$$\left\langle \frac{1}{m_i}(v'_*-V) - \frac{1}{m_j}(v-V), \frac{1}{m_j}(v-V)-\frac{1}{m_j}(v'-V)   \right\rangle = 0.$$
Using the laws of elasticity to see that
$$V = \frac{m_i}{m_i+m_j}v' + \frac{m_j}{m_i+m_j}v'_*$$
we end up with the following orthogonal property (that is also easily checked by direct computations)
\begin{equation}\label{orthogonal}
\left\langle v'_* - \pa{\frac{m_i+m_j}{2m_j}v - \frac{m_i-m_j}{2m_j}v'} , v-v' \right\rangle =0.
\end{equation}

\par We can now apply the change of variables $(v_*,\sigma) \mapsto (v',v'_*)$, where $v'$ evolves in $\R^3$ and $v'_*$ in $E^{ij}_{vv'}$. $E^{ij}_{vv'}$ is the hyperplane that passes through 
\begin{equation}\label{VE}
V_E(v,v') = \frac{m_i+m_j}{2m_j}v - \frac{m_i-m_j}{2m_j}v'
\end{equation}
 and is orthogonal to $v-v'$; we denote $dE(v'_*)$ the Lebesgue measure on it. Note that $v_* = V(v',v'_*)$ is now a function of $v'$ and $v'_*$:
$$V(v',v'_*) = v'_* + m_im_j^{-1}v' - m_im_j^{-1}v.$$
Up to the translation and dilatation (generating a constant $C_{ij}>0$ only depending on $m_i$ and $m_j$) from $v$ to the origin of $E^{ij}_{vv'}$, this change of variables works as derived in \cite{GPV}. Our operator thus reads
\begin{equation} \label{carlemanv'*functionv'}
\begin{split}
&\int_{\R^3\times \mathbb{S}^{2}}  B(v-v_*,\sigma)f'g^{'*}\:dv_*d\sigma 
\\&\quad\quad\quad= C_{ij}\int_{\R^3}\frac{1}{\abs{v-v'}}\pa{\int_{E^{ij}_{vv'}}\frac{B\left(v-V(v',v'_*),\frac{v'_*-v'}{\abs{v'_*-v'}}\right)}{\abs{v'_*-v'}}g^{'*}\:dE(v'_*)}f'\:dv'.
\end{split}
\end{equation}

\bigskip
We can also give a Carleman representation where we first integrate against $v'_*$. In the case $m_i=m_j$ the orthogonal property $\eqref{orthogonal}$ is entirely symmetric in $v'$ and $v'_*$ and we reach the same representation $\eqref{carlemanv'*functionv'}$ with the role of $v'$ and $v'_*$ swapped. This is the classical case of a single-species Boltzmann operator.
\par In the case $m_i \neq m_j$, $\eqref{orthogonal}$ is equivalent to
$$\abs{v'}^2 - 2\left\langle v' , \frac{m_i}{m_i-m_j}v -\frac{m_j}{m_i-m_j}v'_*\right\rangle = \left\langle v,\frac{2m_j}{m_i-m_j}v'_* - \frac{m_i+m_j}{m_i-m_j}v \right\rangle$$
which is itself equivalent to
\begin{equation}\label{tildeEijvv'*}
\abs{v'+\pa{\frac{m_j}{m_i-m_j}v'_* - \frac{m_i}{m_i-m_j}v}}^2= \abs{\frac{m_j}{m_i-m_j}v'_* - \frac{m_j}{m_i-m_j}v}^2.
\end{equation}
\par The same change of variables as before but $(v_*,\sigma) \mapsto (v'_*,v')$ instead of $(v_*,\sigma) \mapsto (v',v'_*)$ thus yields
\begin{equation} \label{carlemanv'functionv'*}
\begin{split}
&\int_{\R^3\times \mathbb{S}^{2}}  B(v-v_*,\sigma)f'g^{'*}\:dv_*d\sigma 
\\&\quad\quad\quad= C_{ij}\int_{\R^3}\frac{1}{\abs{v-v'_*}}\pa{\int_{\tilde{E}^{ij}_{vv'_*}}\frac{B\left(v-V(v',v'_*),\frac{v'_*-v'}{\abs{v'_*-v'}}\right)}{\abs{v'_*-v'}}f'\:dE(v')}g^{'*}\:dv'_*,
\end{split}
\end{equation}
where $\tilde{E}^{ij}_{vv'_*}$ stands for $E^{ij}_{vv'_*}$ if $m_i=m_j$ or for the sphere defined by $\eqref{tildeEijvv'*}$; and $dE$ is the Lebesgue measure on it.

\bigskip
We therefore conclude gathering $\eqref{Kilemma}$, $\eqref{carlemanv'*functionv'}$ and $\eqref{carlemanv'functionv'*}$ with a relabelling of the integrated variables,
\begin{equation}\label{Kikernel}
\begin{split}
K_i(\mathbf{f})(v) &= \sum\limits_{j=1}^NC_{ij}\int_{\R^3} \pa{\frac{1}{\abs{v-v_*}}\int_{\tilde{E}^{ij}_{vv_*}}\frac{B_{ij}\left(v-V(u,v_*),\frac{v_*-u}{\abs{u-v_*}}\right)}{\abs{u-v_*}}\mu_i(u)\:dE(u)}f_j^*\:dv_*
\\& + \sum\limits_{j=1}^NC_{ji}\int_{\R^3} \pa{\frac{1}{\abs{v-v_*}}\int_{E^{ij}_{vv_*}}\frac{B_{ij}\left(v-V(v_*,u),\frac{u-v_*}{\abs{u-v_*}}\right)}{\abs{u-v_*}}\mu_j(u)\:dE(u)}f_i^*\:dv_*
\\&- \sum\limits_{j=1}^N\int_{\R^3}B_{ij}\pa{\abs{v-v_*},\cos\theta}\mu_i(v) f_j^*\:dv_*.
\end{split}
\end{equation}
This concludes the fact that $K_i$ is a kernel operator.

\bigskip
\textbf{Step 2: Pointwise estimate.}
It remains to show the pointwise estimate $\eqref{pointwiseki}$. The assumptions on $B_{ij}$ imply that
$$\abs{B_{ij}\left(v-V(v_*,u),\frac{u-v_*}{\abs{u-v_*}}\right)} \leq C \abs{v-V(v_*,u)}^\gamma,$$
where $C$ denotes any positive constant independent of $v$ and $v_*$. We shall bound each of the three terms in $\eqref{Kikernel}$ separately.

\bigskip
From elastic collision laws $\eqref{v-v*}$, for $u$ in $E^{ij}_{vv_*}$ one has $\abs{v-V(v_*,u)} = \abs{u-v_*}$, and hence
$$\abs{\int_{E^{ij}_{vv_*}}\frac{B_{ij}\left(v-V(v_*,u),\frac{u-v_*}{\abs{u-v_*}}\right)}{\abs{u-v_*}}\mu_j(u)\:dE(u)} \leq C\int_{E^{ij}_{vv_*}}\frac{1}{\abs{u-v_*}^{1-\gamma}}e^{-m_j\frac{\abs{u}^2}{2}}\:dE(u).$$
We can further bound, since $\eqref{v-v'}$ is valid on $E^{ij}_{vv_*}$,
$$\abs{u-v_*} \geq  \frac{m_i+m_j}{2m_j}\abs{v-v_*},$$
and get
$$\abs{\int_{E^{ij}_{vv_*}}\frac{B_{ij}\left(v-V(v_*,u),\frac{u-v_*}{\abs{u-v_*}}\right)}{\abs{u-v_*}}\mu_j(u)\:dE(u)} \leq \frac{C}{\abs{v-v_*}^{1-\gamma}}\int_{E_{vv*}}e^{-m_j\frac{\abs{u}^2}{2}}\:dE(u).$$

\par To estimate the integral over $E^{ij}_{vv_*}$ we make the change of variables 
$$u=V_E(v,v_*)+w$$
 with $V_E(v,v_*)$ the origin $\eqref{VE}$ of $E^{ij}_{vv_*}$ and $w$ in $\pa{\mbox{Span}(v-v_*)}^\bot$. Using $\langle v,w\rangle = \langle v_*,w\rangle$ we compute
\begin{eqnarray*}
\abs{u}^2=\abs{V_E(v,v_*)+w}^2 &=& \abs{w+\frac{1}{2}(v+v_*) + \frac{m_i}{2m_j}\pa{v-v_*}}^2
\\& =& \abs{w+\frac{1}{2}(v+v_*)}^2 + \frac{m_i^2}{4m_j^2}\abs{v-v_*}^2 +\frac{m_i}{2m_j}\pa{\abs{v}^2-\abs{v_*}^2}.
\end{eqnarray*}
Now we decompose $v+v_* = V^{\bot} + V^{\parallel}$ where $V^{\parallel}$ is the projection onto $\mbox{Span}(v-v_*)$ and $V^{\bot}$ is the orthogonal part. This implies
$$\abs{u}^2 = \abs{w+\frac{1}{2}V^\bot}^2 + \frac{1}{4}\abs{V^\parallel}^2 + \frac{m_i^2}{4m_j^2}\abs{v-v_*}^2 +\frac{m_i}{2m_j}\pa{\abs{v}^2-\abs{v_*}^2}.$$
By definition,
$$\abs{V^\parallel}^2 = \frac{\langle v+v_*,v-v_*\rangle^2}{\abs{v-v_*}^2}= \frac{\abs{\abs{v}^2-\abs{v_*}^2}^2}{\abs{v-v_*}^2}$$
and therefore the following holds
\begin{equation}\label{1stineqki}
\begin{split}
&\abs{\frac{1}{\abs{v-v_*}}\int_{E^{ij}_{vv_*}}\frac{B_{ij}\left(v-V(v_*,u),\frac{u-v_*}{\abs{u-v_*}}\right)}{\abs{u-v_*}}\mu_j(u)\:dE(u)}
\\&\quad \leq  \frac{C}{\abs{v-v_*}^{2-\gamma}}e^{-\frac{m_i^2}{8m_j}\abs{v-v_*}^2-\frac{m_j}{8}\frac{\abs{\abs{v}^2-\abs{v_*}^2}^2}{\abs{v-v_*}^2}}\sqrt{\frac{\mu_i(v)}{\mu_i(v_*)}}\cro{\int_{\pa{v-v_*}^\bot}e^{-\frac{m_j}{2}\abs{w+\frac{1}{2}V^\bot}^2}\:dE(w)}.
\end{split}
\end{equation}
The space $(v-v_*)^\bot$ is invariant by translation of vector $-2^{-1}V^\bot$ and the exponential term inside the integral only depends on the norm and therefore the integral term is a constant not depending on $v$ or $v_*$.

\bigskip
We now turn to the term involving $\tilde{E}^{ij}_{vv'}$ which is a bit more technical. In the case $m_i=m_j$ then $\tilde{E}^{ij}_{vv'} = E^{ij}_{vv'}$. We therefore have the bound $\eqref{1stineqki}$ to which we use $\mu_i(v)\mu_i^{-1}(v) = C_{ij}\mu_j(v)\mu_i^{-1}(v)$ since $m_i=m_j$.
\par Assume now that $m_i\neq m_j$. As for $E^{ij}_{vv_*}$, the elastic collision properties $\eqref{v-v*}$ and $\eqref{v-v'*}$ give for all $v_*$ in $\R^3$ and $u$ in $\tilde{E}^{ij}_{vv_*}$
$$\abs{\int_{\tilde{E}^{ij}_{vv_*}}\frac{B_{ij}\left(v-V(u,v_*),\frac{v_*-u}{\abs{u-v_*}}\right)}{\abs{u-v_*}}\mu_i(u)\:dE(u)} \leq \frac{C}{\abs{v-v_*}^{1-\gamma}}\int_{\tilde{E}^{ij}_{vv_*}}e^{-m_i\frac{\abs{u}^2}{2}}\:dE(u).$$
Since $\tilde{E}^{ij}_{vv_*}$ is the sphere of radius 
$$R_{vv_*} = \frac{m_j}{\abs{m_i-m_j}}\abs{v-v_*}$$
and centered at
$$O_{vv_*} = \frac{m_i}{m_i-m_j}v -\frac{m_j}{m_i-m_j}v_*.$$
We make a change of variables to end up on $\mathbb{S}^2$:
\begin{equation}\label{tildeES2}
\begin{split}
&\abs{\frac{1}{\abs{v-v_*}}\int_{\tilde{E}^{ij}_{vv_*}}\frac{B_{ij}\left(v-V(u,v_*),\frac{v_*-u}{\abs{u-v_*}}\right)}{\abs{u-v_*}}\mu_i(u)\:dE(u)} 
\\&\quad\quad\quad\quad\quad\leq C\abs{v-v_*}^\gamma\int_{\mathbb{S}^2}e^{-\frac{m_i}{2}\abs{R_{vv_*}u + O_{vv_*}}^2}\:d\sigma(u).
\end{split}
\end{equation}

Decomposing the norm inside the integral and using Cauchy-Schwarz inequality yields
\begin{equation}\label{normUstart}
\begin{split}
-\frac{m_i}{2}\abs{R_{vv_*}u + O_{vv_*}}^2 \leq& -\frac{m_im_j^2}{2(m_i-m_j)^2}\abs{v-v_*}^2 - \frac{m_i}{2(m_i-m_j)^2}\abs{m_iv-m_jv_*}^2 
\\&+\frac{m_im_j}{\pa{m_i-m_j}^2}\abs{v-v_*}\abs{m_iv-m_jv_*}
\end{split}
\end{equation}

The idea is to express everything in terms of $\abs{v-v_*}$ and $\frac{\abs{v}^2-\abs{v_*}^2}{\abs{v-v_*}}$. We recall that we defined $v+v_* = V^\bot+V^\parallel$ with $V^\bot$ orthogonal to $\mbox{Span}(v-v_*)$ and $V^\parallel = \frac{\langle v+v_*, v-v_*\rangle}{\abs{v-v_*}}(v-v_*)$. We first use the identity
$$\abs{v-v_*}\abs{m_iv-m_jv_*} = \frac{1}{4}\cro{\abs{(1+m_i)v -(1+m_j)v_*}^2-\abs{(1-m_i)v -(1-m_j)v_*}^2}$$
and then the following equality that holds for all $a$ and $b$,
\begin{eqnarray}
\abs{av-bv_*}^2 &=& \abs{\frac{a-b}{2}(v+v_*)+\frac{a+b}{2}(v-v_*)}^2 \nonumber
\\ &=& \frac{(a-b)^2}{4}\abs{V^\bot}^2 + \frac{(a-b)^2}{4}\frac{\abs{\abs{v}^2-\abs{v_*}^2}^2}{\abs{v-v_*}^2} \label{avbv*}
\\&\quad& + \frac{(a-b)(a+b)}{2}\pa{\abs{v}^2-\abs{v_*}^2} + \frac{(a+b)^2}{4}\abs{v-v_*}^2.\nonumber
\end{eqnarray}
Direct computations from $\eqref{normUstart}$ then yield
\begin{equation*}
\begin{split}
-\frac{m_i}{2}\abs{R_{vv_*}u + O_{vv_*}}^2 \leq& -\frac{m_i}{8}\abs{V^\bot}^2 -\frac{m_i}{8}\frac{\abs{\abs{v}^2-\abs{v_*}^2}^2}{\abs{v-v_*}^2} - \frac{m_i}{4}\pa{\abs{v}^2-\abs{v_*}^2}
\\& -\frac{m_i}{8}\abs{v-v_*}^2.
\end{split}
\end{equation*}

\par Taking $(a,b)=(1,0)$ and $(a,b)=(0,1)$ in $\eqref{avbv*}$ we have
\begin{equation*}
\begin{split}
\frac{m_i}{4}\abs{v}^2 - \frac{m_j}{4}\abs{v_*}^2 =& \frac{m_i-m_j}{16}\abs{V^\bot}^2 + \frac{m_i-m_j}{16}\frac{\abs{\abs{v}^2-\abs{v_*}^2}^2}{\abs{v-v_*}^2}
\\& + \frac{m_i+m_j}{8}\pa{\abs{v}^2-\abs{v_*}^2} + \frac{m_i-m_j}{16}\abs{v-v_*}^2.
\end{split}
\end{equation*}
At last we obtain
\begin{equation}\label{normU}
\begin{split}
-\frac{m_i}{2}\abs{R_{vv_*}u + O_{vv_*}}^2 \leq& -\frac{m_i}{4}\abs{v}^2 + \frac{m_j}{4}\abs{v_*}^2 - \frac{m_i+m_j}{16}\abs{V^\bot}^2 
\\&+ U\pa{\frac{\abs{v}^2-\abs{v_*}}{\abs{v-v_*}},\abs{v-v_*}}
\end{split}
\end{equation}
where $U(x,y)$ is a quadratic form defined by
$$U(x,v) = -\frac{m_i+m_j}{16}x^2 -\frac{m_i-m_j}{8}xy - \frac{m_i+m_j}{16}y^2.$$
The latter quadratic form is associated with the symmetric matrix
$$\left(\begin{array}{cc}  \disp{-\frac{m_i+m_j}{16}} & \disp{-\frac{m_i-m_j}{16}} \\ \disp{-\frac{m_i-m_j}{16}}& \disp{-\frac{m_i+m_j}{16}} \end{array}\right)$$
which has a negative trace and determinant $m_im_j/64 >0$. It therefore is a negative definite symmetric matrix and thus, denoting by $-\lambda(m_i,m_j)<0$ its largest eigenvalue we have
$$\forall (x,x)\in \R^2, \quad U(x,v)\leq -\lambda(m_i,m_j)\cro{x^2+y^2}.$$
Plugging the latter into $\eqref{normU}$ and going back to the integral of interest $\eqref{tildeES2}$ we get
\begin{equation}\label{2ndineqki}
\begin{split}
&\abs{\frac{1}{\abs{v-v_*}}\int_{\tilde{E}^{ij}_{vv_*}}\frac{B_{ij}\left(v-V(u,v_*),\frac{v_*-u}{\abs{u-v_*}}\right)}{\abs{u-v_*}}\mu_i(u)\:dE(u)} 
\\&\quad\quad\quad\quad\quad\leq C\abs{v-v_*}^\gamma e^{-\lambda(m_i,m_j)\abs{v-v_*}^2-\lambda(m_i,m_j)\frac{\abs{\abs{v}^2-\abs{v_*}^2}^2}{\abs{v-v_*}^2}}\sqrt{\frac{\mu_i(v)}{\mu_j(v_*)}}.
\end{split}
\end{equation}

\bigskip
To conclude we turn to the last integral term in $\eqref{Kikernel}$ which is easily bounded by
\begin{equation*}
\begin{split}
\abs{B_{ij}\pa{\abs{v-v_*},\cos\theta}\mu_i(v)} &\leq C \abs{v-v_*}^\gamma \mu_i(v)
\\&\leq C \abs{v-v_*}^\gamma e^{-\frac{1}{4}\pa{m_i\abs{v}^2+m_j\abs{v_*}^2}}\sqrt{\frac{\mu_i(v)}{\mu_j(v_*)}}.
\end{split}
\end{equation*}
Using Cauchy-Schwartz
\begin{equation*}
\begin{split}
\abs{v-v_*}^2 + \frac{\abs{\abs{v}^2-\abs{v_*}^2}^2}{\abs{v-v_*}^2}&=\abs{v-v_*}^2 + \frac{\abs{\langle v-v_*,v+v_* \rangle}^2}{\abs{v-v_*}^2} 
\\&\leq \abs{v-v_*}^2+\abs{v+v_*}^2 = 2\pa{\abs{v}^2+\abs{v_*}^2},
\end{split}
\end{equation*}
this implies
\begin{equation}\label{3rdineqki}
\abs{B_{ij}\pa{\abs{v-v_*},\cos\theta}\mu_i(v)} \leq C\abs{v-v_*}^\gamma e^{-\frac{m_{ij}}{8}\abs{v-v_*}^2 - \frac{m_{ij}}{8}\frac{\abs{\abs{v}^2-\abs{v_*}^2}^2}{\abs{v-v_*}^2}}\sqrt{\frac{\mu_i(v)}{\mu_j(v_*)}},
\end{equation}
where $m_{ij}=\min\br{m_i,m_j}$.

\bigskip
Gathering $\eqref{Kikernel}$-$\eqref{1stineqki}$-$\eqref{2ndineqki}$-$\eqref{3rdineqki}$ gives the desired estimate on $k_j^{(i)}$.
\end{proof}
\bigskip

The pointwise estimate on $k^{(i)}_j$ can be transferred into a decay of the $L^1_v$-norm with a relatively important weight. This has been proved in \cite[Lemma 7]{Gu6} for the right-hand side of $\eqref{pointwiseki}$ with $m=1/8$. The case of general $m$ is identical and leads to

\bigskip
\begin{lemma}\label{lem:L1ki}
Let $\beta >0$ and $\theta$ in $[0,1/(32m))$. There exists $C_{\theta,\beta} >0$ and $\eps_{\theta,\beta}>0$ such that for all $i$, $j$ in $\br{1,\dots,N}$ and all $\eps$ in $[0,\eps_{\theta,\beta})$,
$$ \int_{\R^3} \abs{k^{(i)}_j(v,v_*)}e^{\eps m \abs{v-v_*}^2+\eps m \frac{\abs{\abs{v}^2-\abs{v_*}^2}^2}{\abs{v-v_*}^2}}\frac{\langle v \rangle^\beta e^{\theta\abs{v}^2}\mu_i(v)^{-1/2}}{\langle v_* \rangle^\beta e^{\theta\abs{v_*}^2}\mu_j(v_*)^{-1/2}}\:dv_* \leq \frac{C_{\beta,\theta}}{1+\abs{v}}.$$
\end{lemma}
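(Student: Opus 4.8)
The plan is to insert the pointwise bound $\eqref{pointwiseki}$ of Lemma \ref{lem:kernelK} into the integral and to use that the Maxwellian factors cancel \emph{exactly}: indeed $\sqrt{\mu_i(v)/\mu_j(v_*)}\cdot\mu_i(v)^{-1/2}\mu_j(v_*)^{1/2}=1$, so once $\eqref{pointwiseki}$ is used the only remaining Gaussian in $v,v_*$ is $e^{\theta(\abs{v}^2-\abs{v_*}^2)}$, while the auxiliary factor $e^{\eps m\abs{v-v_*}^2+\eps m\abs{\abs{v}^2-\abs{v_*}^2}^2/\abs{v-v_*}^2}$ appearing in the statement merges with the $e^{-m(\cdots)}$ of $\eqref{pointwiseki}$ into $e^{-(1-\eps)m(\cdots)}$. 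Thus it suffices to bound
\[
\int_{\R^3}\pa{\abs{v-v_*}^\gamma+\abs{v-v_*}^{\gamma-2}}\frac{\langle v \rangle^\beta}{\langle v_* \rangle^\beta}\,e^{\theta(\abs{v}^2-\abs{v_*}^2)}\,e^{-(1-\eps)m\pa{\abs{v-v_*}^2+\abs{\abs{v}^2-\abs{v_*}^2}^2/\abs{v-v_*}^2}}\:dv_*
\]
by $C_{\beta,\theta}/(1+\abs{v})$.

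First I would discard the polynomial ratio: from $\langle v \rangle\leq C\langle v-v_* \rangle\langle v_* \rangle$ we get $\langle v \rangle^\beta/\langle v_* \rangle^\beta\leq C_\beta\langle v-v_* \rangle^\beta$, and this polynomial in $\abs{v-v_*}$, together with the $\abs{v-v_*}^\gamma$ factor, is absorbed into $e^{-(1-\eps)m\abs{v-v_*}^2}$ at the price of an arbitrarily small reduction of the exponent; the singularity $\abs{v-v_*}^{\gamma-2}$ is locally integrable in $\R^3$ (since $\gamma\in[0,1]$) and, after passing to spherical coordinates, only contributes a polynomial factor. One is then left with $\int_{\R^3}P(\abs{v-v_*})\,e^{\theta(\abs{v}^2-\abs{v_*}^2)}\,e^{-\kappa(\abs{v-v_*}^2+\abs{\abs{v}^2-\abs{v_*}^2}^2/\abs{v-v_*}^2)}\:dv_*$ for a fixed polynomial $P$ and $\kappa=(1-\eps)m$ up to an arbitrarily small loss. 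Writing $z=v-v_*$ and using $\abs{v}^2-\abs{v_*}^2=2z\cdot v-\abs{z}^2$, hence $\abs{\abs{v}^2-\abs{v_*}^2}^2/\abs{z}^2=(2\abs{v}\cos\phi-\abs{z})^2$ with $\phi$ the angle between $z$ and $v$, I would pass to spherical coordinates in $z$ with polar axis $v$: the azimuthal integral gives $2\pi$, and in the variable $w=2\abs{v}\cos\phi-\abs{z}$ (so $dw=2\abs{v}\,d(\cos\phi)$) the exponent reads $-\kappa r^2-\kappa w^2+\theta rw$ with $r=\abs{z}$; extending the $w$-range to $\R$ and completing the square produces, for each fixed $r$, a bound $\leq\frac{C}{\abs{v}}e^{-(\kappa-\theta^2/(4\kappa))r^2}$, the region $\abs{v}\leq1$ being handled separately by bounding the $\cos\phi$-range by $2$ (which gives a constant, compatible with $C_{\beta,\theta}/(1+\abs{v})$ there).

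Finally, the remaining integral $\int_0^\infty P_1(r)\,e^{-(\kappa-\theta^2/(4\kappa))r^2}\,dr$ is finite as soon as $\kappa-\theta^2/(4\kappa)>0$, i.e. $\theta<2\kappa$, which is guaranteed on the asserted range of $\theta$ once $\eps$ is taken small enough; this is precisely where the thresholds $\theta<1/(32m)$ and $\eps<\eps_{\theta,\beta}$ come from, following the computation in \cite[Lemma 7]{Gu6}. Collecting the factor $1/\abs{v}$ from the previous step and handling $\abs{v}\leq1$ separately yields the claimed bound $C_{\beta,\theta}/(1+\abs{v})$. I expect the only genuinely delicate point to be the bookkeeping in the first two steps, namely checking that the Maxwellian cancellation is exact so that the surviving exponent $-(1-\eps)m(\abs{v-v_*}^2+\abs{\abs{v}^2-\abs{v_*}^2}^2/\abs{v-v_*}^2)+\theta(\abs{v}^2-\abs{v_*}^2)$ remains globally negative after completing the square, and keeping precise track of how much Gaussian decay is consumed by the weight $\langle v-v_* \rangle^\beta$ and by the auxiliary factor $e^{\eps m(\cdots)}$; once this is organised, the remaining steps are the classical Grad kernel estimates.
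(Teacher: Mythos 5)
Your proposal is correct. The paper itself does not supply a proof of this lemma: it simply cites \cite[Lemma 7]{Gu6} (which treats $m=1/8$) and asserts the general case is identical, so your argument fills in the classical Grad-type computation that is being invoked. The steps you propose --- the exact Maxwellian cancellation $\sqrt{\mu_i(v)/\mu_j(v_*)}\cdot\mu_i(v)^{-1/2}\mu_j(v_*)^{1/2}=1$, absorption of $\langle v\rangle^\beta/\langle v_*\rangle^\beta\le C_\beta\langle v-v_*\rangle^\beta$ and of the polynomial prefactor into a small fraction of the Gaussian, the substitution $z=v-v_*$, $r=\abs{z}$, $w=2\abs{v}\cos\phi-r$ with Jacobian $\pi r^2\,dr\,dw/\abs{v}$ producing the $1/(1+\abs{v})$ decay, completion of the square in $w$ to get $-(\kappa-\theta^2/(4\kappa))r^2$ with $\kappa=(1-\eps)m$, and the separate treatment of $\abs{v}\le1$ --- are the right ones in the right order, and the $r^2$ Jacobian does indeed kill the $\abs{v-v_*}^{\gamma-2}$ singularity.

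One bookkeeping remark on the hypothesis. Your completion of the square gives convergence precisely for $\theta<2\kappa=2(1-\eps)m$, so the natural threshold is $\theta<2m$, which reduces to Guo's $\theta<1/4$ at $m=1/8$. The range $\theta\in[0,1/(32m))$ printed in the lemma coincides with $[0,2m)$ only at $m=1/8$; for $m<1/8$ it is strictly larger, and your calculation (correctly) does not cover $\theta\in(2m,\,1/(32m))$ --- indeed the integral diverges there. You should therefore state and use the threshold $\theta<2m$, which is what your computation actually delivers, rather than reproduce the paper's printed range uncritically.
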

\bigskip

From Lemma \ref{lem:kernelK} and \ref{lem:L1ki} we conclude that $\mathbf{K}$ is a bounded operator on $L^\infty_{v}\pa{\langle v\rangle^\beta\boldsymbol\mu^{-1/2}}$.

\bigskip


\subsection{Semigroup generated by the linear part}\label{subsec:Linftylinear}

\bigskip
Following ideas developed in \cite{Gu6} in the case of bounded domains, the $L^2$ theory could be used to construct a $L^\infty$ one by using the flow of characteristics to transfer pointwise estimates at $x-vt$ into integral in the space variable. Such a method is the core of the $L^\infty$ theory thanks to the following lemma.

\bigskip
\begin{lemma}\label{lem:L2Linfty}
Let $\beta>3/2$ and let $(H1) - (H4)$ hold for the collision kernel. Assume that there exist $T_0 >0$ and $\lambda$, $C_{T_0}>0$ such that for all $\mathbf{f}(t,x,v)$ in $L^\infty_{x,v}\pa{\langle v \rangle^\beta\boldsymbol\mu^{-1/2}}$ solution to
\begin{equation}\label{lineqLinfty}
\partial_t \mathbf{f} + v\cdot\nabla_x\mathbf{f} = \mathbf{L}(\mathbf{f})
\end{equation}
with initial data $\mathbf{f_0}$, the following holds for all $t$ in $[0,T_0]$
$$\norm{\mathbf{f}(t)}_{L^\infty_{x,v}\pa{\langle v \rangle^\beta\boldsymbol\mu^{-1/2}}} \leq e^{\lambda (T_0-2t)}\norm{\mathbf{f_0}}_{L^\infty_{x,v}\pa{\langle v \rangle^\beta\boldsymbol\mu^{-1/2}}} + C_{T_0}\int_0^{t} \norm{\mathbf{f}(s)}_{L^2_{x,v}\pa{\boldsymbol\mu^{-1/2}}}\:ds.$$
Then for all $0 < \tilde{\lambda}< \min\br{\lambda,\lambda_G}$, defined in Theorem \ref{theo:semigroupL2}, there exists $C = C\pa{\beta,\tilde{\lambda}}>0$ such that for all $\mathbf{f}$ solution to $\eqref{lineqLinfty}$ in $L^\infty_{x,v}\pa{\langle v \rangle^\beta\boldsymbol\mu^{-1/2}}$  satisfying $\Pi_{\mathbf{G}}(\mathbf{f})=0$,
$$\forall t \geq 0, \quad \norm{\mathbf{f}(t)}_{L^\infty_{x,v}\pa{\langle v \rangle^\beta\boldsymbol\mu^{-1/2}}} \leq C e^{-\tilde{\lambda} t}\norm{\mathbf{f_0}}_{L^\infty_{x,v}\pa{\langle v \rangle^\beta\boldsymbol\mu^{-1/2}}}.$$
\end{lemma}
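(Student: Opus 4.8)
The plan is to run the $L^2$--$L^\infty$ bootstrap of Guo: the delicate slab estimate over $[0,T_0]$ is granted by hypothesis, so it remains only to feed into it the exponential $L^2$ decay from Theorem~\ref{theo:semigroupL2} and iterate in time. Throughout I write $\|\mathbf{g}\|_\infty := \norm{\mathbf{g}}_{L^\infty_{x,v}\pa{\langle v \rangle^\beta\boldsymbol\mu^{-1/2}}}$ and $\|\mathbf{g}\|_2 := \norm{\mathbf{g}}_{L^2_{x,v}\pa{\boldsymbol\mu^{-1/2}}}$.

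\textbf{Step 1 (embedding $L^\infty\hookrightarrow L^2$ and $L^2$ decay).} For $\beta>3/2$ and any $i$, pointwise $\abs{f_i}^2\mu_i^{-1} \leq \norm{f_i}^2_{L^\infty_{x,v}\pa{\langle v \rangle^\beta\mu_i^{-1/2}}}\langle v \rangle^{-2\beta}$, and $\int_{\R^3}\langle v \rangle^{-2\beta}\,dv<\infty$ precisely because $2\beta>3$; summing over $i$ produces $C_\beta>0$ with $\|\mathbf{g}\|_2\leq C_\beta\|\mathbf{g}\|_\infty$. In particular $\mathbf{f_0}\in L^2_{x,v}\pa{\boldsymbol\mu^{-1/2}}$, and by $L^2$-uniqueness $\mathbf{f}$ coincides with the $L^2$-solution $S_{\mathbf{G}}(t)\mathbf{f_0}$; since $\Pi_{\mathbf{G}}(\mathbf{f_0})=0$, Theorem~\ref{theo:semigroupL2} gives
\[
\|\mathbf{f}(s)\|_2 \leq C_G e^{-\lambda_G s}\|\mathbf{f_0}\|_2 \leq C_G C_\beta e^{-\lambda_G s}\|\mathbf{f_0}\|_\infty, \qquad s\geq 0 .
\]

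\textbf{Step 2 (a one-step recursion).} Equation \eqref{lineqLinfty} is autonomous, so for any $t_0\geq 0$ the function $s\mapsto\mathbf{f}(t_0+s)$ is again a solution; applying the hypothesis to it at $s=T_0$ and inserting the bound of Step~1 yields
\[
\|\mathbf{f}(t_0+T_0)\|_\infty \leq e^{-\lambda T_0}\|\mathbf{f}(t_0)\|_\infty + \frac{C_G C_\beta C_{T_0}}{\lambda_G}\,e^{-\lambda_G t_0}\|\mathbf{f_0}\|_\infty .
\]
Writing $u_n:=\|\mathbf{f}(nT_0)\|_\infty$, $q:=e^{-\lambda T_0}\in(0,1)$, $p:=e^{-\lambda_G T_0}\in(0,1)$ and $A:=\lambda_G^{-1}C_G C_\beta C_{T_0}\|\mathbf{f_0}\|_\infty$, this is $u_{n+1}\leq q\,u_n + A\,p^{\,n}$ for all $n\geq 0$, with $u_0=\|\mathbf{f_0}\|_\infty$.

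\textbf{Step 3 (iteration and filling the gaps).} Unrolling gives $u_n\leq q^n u_0 + A\sum_{k=0}^{n-1}q^{\,n-1-k}p^{\,k}$, and the geometric sum is at most $n\max\{p,q\}^{\,n-1}$ (and at most $\abs{p-q}^{-1}p^{\,n}$ when $q<p$); as $\max\{p,q\}=e^{-\min\{\lambda,\lambda_G\}T_0}$ and $A,u_0$ are proportional to $\|\mathbf{f_0}\|_\infty$, for every $\widetilde{\lambda}<\min\{\lambda,\lambda_G\}$ there is $C_{\widetilde{\lambda}}>0$ with $u_n\leq C_{\widetilde{\lambda}}\,e^{-\widetilde{\lambda} n T_0}\|\mathbf{f_0}\|_\infty$ (the factor $n$ is absorbed upon passing to the strictly smaller rate). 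Finally, for arbitrary $t\geq 0$ write $t=nT_0+r$ with $r\in[0,T_0)$, apply the hypothesis to $s\mapsto\mathbf{f}(nT_0+s)$ at $s=r$, and bound $e^{\lambda(T_0-2r)}\leq e^{\lambda T_0}$ and the integral term by Step~1:
\[
\|\mathbf{f}(t)\|_\infty \leq e^{\lambda T_0}u_n + C_{T_0}\int_{nT_0}^{t}\|\mathbf{f}(s)\|_2\,ds \leq \Big(e^{\lambda T_0}C_{\widetilde{\lambda}} + \lambda_G^{-1}C_G C_\beta C_{T_0}\Big)e^{-\widetilde{\lambda} nT_0}\|\mathbf{f_0}\|_\infty ,
\]
and $e^{-\widetilde{\lambda} nT_0}\leq e^{\widetilde{\lambda} T_0}e^{-\widetilde{\lambda} t}$ gives the announced bound with $C=C(\beta,\widetilde{\lambda})$. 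The genuine analytic content — the slab estimate over $[0,T_0]$ — is assumed here, so within this lemma the only points requiring care are the harmless loss of rate from $\min\{\lambda,\lambda_G\}$ to any strictly smaller $\widetilde{\lambda}$ (which also disposes of the resonant case $\lambda=\lambda_G$ and of the polynomial prefactor from the geometric series) and the bookkeeping of $C_{T_0}$, which may depend on $T_0$ but not on $\mathbf{f}$.
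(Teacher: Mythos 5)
Your proof is correct and follows essentially the same route as the paper's: shift the slab estimate to windows $[nT_0,(n+1)T_0]$, feed in the $L^2$ decay of $S_\mathbf{G}(t)$ from Theorem~\ref{theo:semigroupL2} via the embedding valid for $\beta>3/2$, unroll the resulting geometric recursion, absorb the polynomial prefactor by relaxing to a strictly smaller rate $\tilde\lambda$, and interpolate for non-integer multiples of $T_0$. The only cosmetic difference is the order of operations — you substitute the $L^2$ decay to get a clean one-step recursion before iterating, while the paper iterates the raw inequality first and substitutes afterward — which changes nothing.
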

\bigskip

\begin{proof}[Proof of Lemma \ref{lem:L2Linfty}]
To shorten the computations we use the following notation $\mathbf{w}_\beta(v) = \langle v \rangle^\beta\boldsymbol\mu^{-1/2}.$
\par Let $\mathbf{f}$ be a solution to $\eqref{lineqLinfty}$ in $L^\infty_{x,v}\pa{\mathbf{w}_\beta}$ associated with the initial data $\mathbf{f_0}$. Taking $n$ in $\N$ we can apply the assumption of the lemma to $\mathbf{\tilde{f}}(t,x,v) = \mathbf{f}(t+nT_0,x,v)$. This yields, with a change of variables $t\mapsto t-nT_0$,
\begin{equation*}
\begin{split}
\norm{\mathbf{f}((n+1)T_0)}_{L^\infty_{x,v}\pa{\mathbf{w}_\beta}} \leq& e^{-\lambda T_0}\norm{\mathbf{f}(nT_0)}_{L^\infty_{x,v}\pa{\mathbf{w}_\beta}} + C_{T_0}\int_{nT_0}^{(n+1)T_0} \norm{\mathbf{f}(s)}_{L^2_{x,v}\pa{\boldsymbol\mu^{-1/2}}}\:ds.
\end{split}
\end{equation*}
We can iterate the process for $\mathbf{f}(nT_0)$ as long as $n \neq 0$. We thus obtain
\begin{equation}\label{controlfn+1T0}
\begin{split}
\norm{\mathbf{f}((n+1)T_0)}_{L^\infty_{x,v}\pa{\mathbf{w}_\beta}} \leq& e^{-(n+1)\lambda T_0}\norm{\mathbf{f_0}}_{L^\infty_{x,v}\pa{\mathbf{w}_\beta}} 
\\&+ C_{T_0}\sum\limits_{k=0}^ne^{-k\lambda T_0}\int_{(n-k)T_0}^{(n+1-k)T_0} \norm{\mathbf{f}(s)}_{L^2_{x,v}\pa{\boldsymbol\mu^{-1/2}}}\:ds.
\end{split}
\end{equation}

\bigskip
We see that multiplying and dividing by $\langle v \rangle^\beta$ gives
$$\norm{\mathbf{f}}^2_{L^2_{x,v}\pa{\boldsymbol\mu^{-1/2}}} = \sum\limits_{i=1}^N\int_{\T^3\times\R^3}f_i^2\mu_i^{-1}\:dxdv \leq \abs{\T^3}\pa{\int_{\R^3}\frac{dv}{\pa{1+ \abs{v}^2}^\beta}}\norm{\mathbf{f}}^2_{L^\infty_{x,v}\pa{\mathbf{w}_\beta}}.$$
Since $\beta >3/2$, the integral is finite and $\mathbf{f}$ also belongs to $L^2_{x,v}\pa{\boldsymbol\mu^{-1/2}}$. By Theorem \ref{theo:semigroupL2} it follows that $\mathbf{f}(t) = S_{\mathbf{G}}(t)\pa{ \mathbf{f_0}}$ and thus if $\Pi_\mathbf{G}(\mathbf{f})=0$ we have the following exponential decay
$$\forall t\geq 0, \quad\norm{\mathbf{f}(t)}_{L^2_{x,v}\pa{\boldsymbol\mu^{-1/2}}} \leq C_G e^{-\lambda_G t}\norm{\mathbf{f_0}}_{L^2_{x,v}\pa{\boldsymbol\mu^{-1/2}}} \leq C_{G,\beta}e^{-\lambda_G t}\norm{\mathbf{f_0}}_{L^\infty_{x,v}\pa{\mathbf{w}_\beta}}.$$
Plugging the latter into $\eqref{controlfn+1T0}$ and taking $0<\tilde{\lambda} <  \lambda_1 \leq  \min\br{\lambda,\lambda_G}$
\begin{equation*}
\begin{split}
&\norm{\mathbf{f}((n+1) T_0)}_{L^\infty_{x,v}\pa{\mathbf{w}_\beta}} 
\\&\quad\quad\quad\leq \cro{e^{-(n+1)\lambda T_0} + C_{\beta,G}\pa{\sum\limits_{k=0}^ne^{-k\lambda_1 T_0}\int_{(n-k)T_0}^{(n+1-k)T_0} e^{-\lambda_1 s}\:ds}}\norm{\mathbf{f_0}}_{L^\infty_{x,v}\pa{\mathbf{w}_\beta}}
\\&\quad\quad\quad\leq \cro{e^{-(n+1)\lambda T_0} + \frac{C_{\beta,G}e^{\lambda_1 T_0}}{\lambda_1}(n+1)e^{-(n+1)\lambda_1T_0}}\norm{\mathbf{f_0}}_{L^\infty_{x,v}\pa{\mathbf{w}_\beta}}
\\&\quad\quad\quad\leq C_{T_0,\tilde{\lambda}}e^{-(n+1)\tilde{\lambda}T_0}\norm{\mathbf{f_0}}_{L^\infty_{x,v}\pa{\mathbf{w}_\beta}},
\end{split}
\end{equation*}
where we used $(n+1)e^{-(n+1)\lambda_1T_0} \leq C e^{-(n+1)\tilde{\lambda}T_0}$.

\bigskip
At last, for $t\geq 0$ there exists $n$ in $\N$ such that $nT_0 \leq t \leq (n+1) T_0$. Using the inequality satisfied by $\sup\limits_{0\leq t \leq T_0}\norm{\mathbf{f}(t-nT_0,x,v)}_{L^\infty_{x,v}\pa{\mathbf{w}_\beta}}$, same computations as above gives
\begin{eqnarray*}
\norm{\mathbf{f}(t)}_{L^\infty_{x,v}\pa{\mathbf{w}_\beta}} &\leq& C \norm{\mathbf{f}((n+1)T_0)}_{L^\infty_{x,v}\pa{\mathbf{w}_\beta}} \leq Ce^{-(n+1)\tilde{\lambda}T_0}\norm{\mathbf{f_0}}_{L^\infty_{x,v}\pa{\mathbf{w}_\beta}}
\\&\leq& Ce^{-\tilde{\lambda} t}\norm{\mathbf{f_0}}_{L^\infty_{x,v}\pa{\mathbf{w}_\beta}},
\end{eqnarray*}
where $C$ is any positive constants depending on $T_0$. This concludes the proof.
\end{proof}
\bigskip

We now state the theorem about the linear perturbed equation.

\bigskip
\begin{theorem}\label{theo:semigroupLinfty}
Let $\beta > 3/2$ and let assumptions $(H1) - (H4)$ hold for the collision kernel. The linear perturbed operator $\mathbf{G}=\mathbf{L}-v\cdot\nabla_x$ generates a semigroup $S_{\mathbf{G}}(t)$ on $L^\infty_{x,v}\pa{\langle v\rangle^\beta \boldsymbol\mu^{-1/2}}$. Moreover, there exists $\lambda_\infty$ and $C_\infty>0$ such that 
$$\forall t \geq 0, \quad \norm{S_{\mathbf{G}}(t)\pa{\mbox{Id}-\Pi_{\mathbf{G}}}}_{L^\infty_{x,v}\pa{\langle v\rangle^\beta \boldsymbol\mu^{-1/2}}} \leq C_\infty e^{-\lambda_\infty t},$$
where $\Pi_{\mathbf{G}}$ is the orthogonal projection onto $\mbox{Ker}(\mathbf{G})$ in $L^2_{x,v}\pa{\boldsymbol\mu^{-1/2}}$.
\\The constants $C_\infty$ and $\lambda_\infty$ are explicit and depend on $\beta$, $N$ and the collision kernel.
\end{theorem}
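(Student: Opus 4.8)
The plan is to first check that $\mathbf{G}=\mathbf{L}-v\cdot\nabla_x$ generates a semigroup on $E_\infty:=L^\infty_{x,v}\pa{\langle v\rangle^\beta\boldsymbol\mu^{-1/2}}$, and then to upgrade the $L^2$ decay of Theorem \ref{theo:semigroupL2} to $E_\infty$ via Lemma \ref{lem:L2Linfty}. Writing $\mathbf{L}=-\boldsymbol\nu(v)+\mathbf{K}$, the damped transport part $\mathbf{G}_{\boldsymbol\nu}=-v\cdot\nabla_x-\boldsymbol\nu(v)$ has the explicit flow $\pa{S_{\mathbf{G}_{\boldsymbol\nu}}(t)\mathbf{f}}_i(x,v)=e^{-\nu_i(v)t}f_i(x-tv,v)$, which is a contraction semigroup on $E_\infty$, and Lemmas \ref{lem:kernelK} and \ref{lem:L1ki} show that $\mathbf{K}$ is a bounded operator on $E_\infty$; hence $\mathbf{G}=\mathbf{G}_{\boldsymbol\nu}+\mathbf{K}$ generates a semigroup $S_{\mathbf{G}}(t)$ on $E_\infty$ by bounded perturbation, and the associated solutions satisfy Duhamel's formula along characteristics.

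\textbf{The Vidav--Guo iteration.} For a solution $\mathbf{f}$ of $\eqref{lineqLinfty}$, the $i$th component of Duhamel's formula reads
\[
f_i(t,x,v)=e^{-\nu_i(v)t}f_{0,i}(x-tv,v)+\int_0^t e^{-\nu_i(v)(t-s)}K_i\cro{\mathbf{f}(s)}(x-(t-s)v,v)\:ds .
\]
Substituting the same identity into $K_i\cro{\mathbf{f}(s)}$ and using that, by Lemma \ref{lem:kernelK}, $K_i$ is the kernel operator $K_i(\mathbf{g})(v)=\int_{\R^3}\langle\mathbf{k}^{(i)}(v,v_*),\mathbf{g}(v_*)\rangle\:dv_*$, one obtains a three--term expansion $f_i=\mathrm{(I)}+\mathrm{(II)}+\mathrm{(III)}$, where $\mathrm{(I)}$ is the free transport term, $\mathrm{(II)}=\int_0^t e^{-\nu_i(v)(t-s)}\sum_j\int k^{(i)}_j(v,v_*)e^{-\nu_j(v_*)s}f_{0,j}(\cdots)\:dv_*ds$, and $\mathrm{(III)}$ is the double integral in $(s,s_1,v_*,v_{**})$ carrying the product $k^{(i)}_j(v,v_*)k^{(j)}_\ell(v_*,v_{**})$ together with $f_\ell(s_1,\cdot,v_{**})$ evaluated at the broken characteristic $x-(t-s)v-(s-s_1)v_*$. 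The multi-species point is that the kernel bound of Lemma \ref{lem:kernelK} contains exactly the factor $\sqrt{\mu_i(v)/\mu_j(v_*)}$, so that $|k^{(i)}_j(v,v_*)|\frac{\langle v\rangle^\beta\mu_i(v)^{-1/2}}{\langle v_*\rangle^\beta\mu_j(v_*)^{-1/2}}$ is a single-species--type kernel whose $v_*$--integral is $\leq C/(1+|v|)$ by Lemma \ref{lem:L1ki}: the species-dependent Maxwellians built into the weight cancel the differing exponential rates. Combined with $e^{-\nu_i(v)(t-s)}e^{-\nu_j(v_*)s}\leq e^{-\lambda_0 t/2}$ (with $\lambda_0=\min_i\nu_i^{(0)}$, since one of $t-s,s$ exceeds $t/2$), terms $\mathrm{(I)}$ and $\mathrm{(II)}$ are bounded by $C_1 e^{-\lambda_0 t/4}\norm{\mathbf{f_0}}_{E_\infty}$; choosing $T_0$ large (depending on $C_1$) and $\lambda=\lambda_0/8$ turns this into $e^{\lambda(T_0-2t)}\norm{\mathbf{f_0}}_{E_\infty}$ for $t\in[0,T_0]$.

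\textbf{The double term $\mathrm{(III)}$.} Here I would split the $(v,v_*,v_{**})$--integration: on the part where one of $|v|,|v_*|,|v_{**}|$ exceeds a large radius $R$, the $C/(1+|v|)$ decay of the kernels (or the Gaussian tail of the second one) produces a coefficient $\eps(R)\to 0$; on the part where all three velocities are $\leq R$, I further split $|s-s_1|\leq\kappa$ (bounded crudely, with small coefficient $\sim\kappa$) against $|s-s_1|>\kappa$. On the latter, with bounded velocities, the change of variables $v_*\mapsto y=x-(t-s)v-(s-s_1)v_*$ has Jacobian $|s-s_1|^{-3}\leq\kappa^{-3}$, so the $v_*$--integral of $|f_\ell(s_1,y,v_{**})|$ becomes an integral over $\T^3$; two Cauchy--Schwarz inequalities (in $y$, then in the bounded $v_{**}$--ball) bound it by $C_{\kappa,R}\norm{\mathbf{f}(s_1)}_{L^2_{x,v}\pa{\boldsymbol\mu^{-1/2}}}$, and integrating $e^{-\nu_i(v)(t-s)}$ in $s$ yields the contribution $C_{T_0}\int_0^t\norm{\mathbf{f}(s)}_{L^2_{x,v}\pa{\boldsymbol\mu^{-1/2}}}\:ds$. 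The remaining error terms (the $\eps(R)$-- and $\kappa$--regions) come multiplied by $\sup_{[0,T_0]}\norm{\mathbf{f}(s)}_{E_\infty}$, which is finite and $\leq e^{CT_0}\norm{\mathbf{f_0}}_{E_\infty}$ by a Gr\"onwall argument using that $\mathbf{K}$ is bounded on $E_\infty$; choosing first $T_0$, then $R$ large and $\kappa$ small, these are absorbed into $\tfrac12 e^{\lambda(T_0-2t)}\norm{\mathbf{f_0}}_{E_\infty}$. This verifies the hypothesis of Lemma \ref{lem:L2Linfty}, and since Theorem \ref{theo:semigroupL2} supplies the $L^2_{x,v}\pa{\boldsymbol\mu^{-1/2}}$ decay with rate $\lambda_G$ on $\mbox{Ker}(\mathbf{G})^\bot$, Lemma \ref{lem:L2Linfty} gives the claimed bound for $S_{\mathbf{G}}(t)\pa{\mbox{Id}-\Pi_{\mathbf{G}}}$ on $E_\infty$, with any $\lambda_\infty<\min\br{\lambda_0/8,\lambda_G}$.

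\textbf{Main obstacle.} The delicate part is the bookkeeping of $\mathrm{(III)}$: one must decompose phase space and time so that the change of variables is genuinely nondegenerate and the unavoidable $\norm{\mathbf{f}}_{E_\infty}$--error terms come with arbitrarily small coefficients, while the surviving leading term is purely $L^2$ --- and in the mixture setting every appearance of a kernel estimate must be paired with the weight-mixing identity above, so that the different masses $m_i$ do not spoil the single-species-type bounds of Lemma \ref{lem:L1ki}.
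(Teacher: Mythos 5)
Your overall strategy matches the paper's: iterate Duhamel's formula once (the Vidav--Guo trick), bound the first two terms pointwise using the weight-mixing property $|k^{(i)}_j(v,v_*)|\,w_{\beta i}(v)/w_{\beta j}(v_*)$ and Lemma \ref{lem:L1ki}, then for the doubly-iterated term split phase space and time, use the change of variables $v_*\mapsto y$ to convert a velocity integral into a spatial one and invoke the $L^2$ theory, and finally apply Lemma \ref{lem:L2Linfty} together with Theorem \ref{theo:semigroupL2}. Your bounded-perturbation argument for generation of $S_{\mathbf{G}}(t)$ on $L^\infty_{x,v}(\langle v\rangle^\beta\boldsymbol\mu^{-1/2})$ is a valid and slightly cleaner alternative to the paper's contraction argument on the Duhamel fixed point.

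There is, however, a genuine gap in the treatment of the double term $\mathrm{(III)}$ on the compact set $\{|v|\le R,\ |v_*|\le 2R,\ |v_{**}|\le 3R\}$. After the change of variables $v_*\mapsto y$, the integrand still carries the product $k^{(i)}_j(v,v_*(y))\,k^{(j)}_\ell(v_*(y),v_{**})$, and by Lemma \ref{lem:kernelK} each kernel has a Grad-type singularity $|v-v_*|^{\gamma-2}$ with $\gamma\in[0,1]$. Since $\int_{|w|\le R}|w|^{2(\gamma-2)}\,dw=\infty$ for $\gamma\le 1/2$, the kernel is not in $L^2_{\mathrm{loc}}$ in that regime, so you cannot pull it out as a constant nor absorb it by a Cauchy--Schwarz inequality in $y$ or in $v_{**}$; your ``two Cauchy--Schwarz'' step therefore fails for Maxwellian molecules and for any $\gamma\le 1/2$ covered by assumption (H3). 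The missing ingredient is the kernel approximation $\eqref{approxkij}$: on the compact set, replace $k^{(i)}_j$ by a smooth, \emph{bounded}, compactly supported $k^{(i)}_{R,j}$ with $L^1$-error $\le 1/R$, write $k^{(i)}_j k^{(j)}_\ell = (k^{(i)}_j-k^{(i)}_{R,j})k^{(j)}_\ell + (k^{(j)}_\ell-k^{(j)}_{R,\ell})k^{(i)}_{R,j}+k^{(i)}_{R,j}k^{(j)}_{R,\ell}$, treat the first two pieces as additional small-coefficient errors multiplied by $\sup_s\norm{\mathbf{f}(s)}_{E_\infty}$ via Lemma \ref{lem:L1ki}, and apply the change of variables and Cauchy--Schwarz only to the bounded main piece $k^{(i)}_{R,j}k^{(j)}_{R,\ell}\le C_R$, which then cleanly produces the $C_{T_0}\int_0^t\norm{\mathbf{f}(s)}_{L^2_{x,v}(\boldsymbol\mu^{-1/2})}\,ds$ factor. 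Once that step is inserted, your argument coincides with the paper's proof.
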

\bigskip

\begin{proof}[Proof of Theorem \ref{theo:semigroupLinfty}]
As before, we use the shorthand notations $\mathbf{w}_\beta = \langle v \rangle^\beta\boldsymbol\mu^{-1/2}$ and $w_{\beta i} = \langle v \rangle^\beta\mu^{-1/2}_i$.
\par Let $\mathbf{f_0}$ be in $L^\infty_{x,v}\pa{\mathbf{w}_\beta}$ with $\beta >3/2$. 
If $\mathbf{f}$ is solution of $\eqref{lineqLinfty}$:
$$\partial_t \mathbf{f} = \mathbf{G}(\mathbf{f})$$
in $L^\infty_{x,v}\pa{\mathbf{w}_\beta}$ with initial data $\mathbf{f_0}$ then because $\beta >3/2$ we have that $\mathbf{f}$ belongs to $L^2_{x,v}\pa{\boldsymbol\mu^{-1/2}}$ and $\mathbf{f}(t) = S_{\mathbf{G}}(t)\mathbf{f_0}$ in this space. This implies first that $\mathbf{f}$ has to be unique and second that $\mbox{Ker}(\mathbf{G})$ and $\pa{\mbox{Ker}(\mathbf{G})}^\bot$ are stable under the flow of the equation $\eqref{lineqLinfty}$. It suffices to consider $\mathbf{f_0}$ such that $\Pi_\mathbf{G}(\mathbf{f_0})=0$ and to prove existence and exponential decay of solutions to $\eqref{lineqLinfty}$ in $L^\infty_{x,v}\pa{\mathbf{w}_\beta}$ with initial data $\mathbf{f_0}$.

\bigskip
We recall that $\boldsymbol\nu(v) = (\nu_i(v))_{1\leq i\leq N}$ is a multiplicative operator and so the existence of solutions to equation $\eqref{lineqLinfty}$ is equivalent to the existence of a fixed point to its Duhamel's form along the characteristics of the free transport equation. These characteristic trajectories are straight lines of constant speed. We thus need to have existence and exponential decay of a fixed point $\mathbf{f} = \pa{f_i}_{1\leq i \leq N}$ to the following problem for all $i$ in $\br{1,\dots,N}$:
$$f_i(t,x,v) = e^{-\nu_i(v)t}f_{0i}(x-vt,v) + \int_0^t e^{-\nu_i(v)(t-s)}K_i\pa{\mathbf{f}(s,x-(t-s)v,\cdot)}(v)\:ds.$$
Thanks to Lemma \ref{lem:kernelK}, each operator $K_i$ is a kernel operator and we thus have for all $i$ in $\br{1,\dots,N}$,
\begin{equation*}
\begin{split}
f_i(t,x,v) =& e^{-\nu_i(v)t}f_{0i}(x-vt,v)
\\&+\sum\limits_{j=1}^N \int_0^t\int_{\R^3}e^{-\nu_i(v)(t-s)}k^{(i)}_j(v,v_*)f_j(s,x-(t-s)v,v_*)\:dv_*ds.
\end{split}
\end{equation*}

Iterating this Duhamel's form we end up with the following formulation
\begin{equation}\label{iteratedDuhamel}
f_i(t,x,v) = D^{(i)}_1(\mathbf{f_0})(t,x,v) + D^{(i)}_2(\mathbf{f_0})(t,x,v) + D^{(i)}_3(\mathbf{f})(t,x,v) 
\end{equation}
where we define
\begin{eqnarray}
&&D^{(i)}_1(\mathbf{f_0}) = e^{-\nu_i(v)t}f_{0i}(x-vt,v), \label{D1}
\\&&D^{(i)}_2(\mathbf{f_0}) = \sum\limits_{j=1}^N\int_0^t\int_{\R^3}e^{-\nu_i(v)(t-s)}e^{-\nu_j(v_*)s}k^{(i)}_j(v,v_*) \label{D2}
\\&&\quad\quad\quad\quad\quad\times f_{0j}(x-(t-s)v -sv_*,v_*)\:dv_*ds, \nonumber
\\&&D^{(i)}_3(\mathbf{f}) = \sum\limits_{j=1}^N\sum\limits_{l=1}^N \int_0^t\int_0^s\int_{\R^3}\int_{\R^3} e^{-\nu_i(v)(t-s)}e^{-\nu_j(v_*)(s-s_1)}k^{(i)}_j(v,v_*)k^{(j)}_l(v_*,v_{**}) \nonumber
\\&&\quad\quad\quad\quad\times f_l(s_1,x-(t-s)v-(s-s_1)v_*,v_{**})\:dv_{**}dv_*ds_1ds. \label{D3}
\end{eqnarray}

\par Thanks to this Duhamel's formulation, the existence of a fixed point to $\eqref{iteratedDuhamel}$ in $L^\infty_tL^\infty_{x,v}\pa{\mathbf{w}_\beta}$ follows from a contraction argument. The computations required to prove such a contraction property follow exactly the ones leading to the exponential decay of the latter fixed point. We therefore solely prove that if $\mathbf{f}$ satisfies $\eqref{iteratedDuhamel}$ then $\mathbf{f}$ decreases exponentially in $L^\infty_{x,v}\pa{\mathbf{w}_\beta}$.

\bigskip
We shall bound each of the terms $\eqref{D1}$, $\eqref{D2}$ and $\eqref{D3}$ separately. From $\eqref{nu0nu1}$, for all $i$ there exists $\nu_0^{(i)} = \min_{v\in\R^3}\br{\nu_i(v)}> 0$. We define by $\nu_0>0$ the minimum of the $\nu_0^{(i)}$ and every positive constant independent of $i$ and $\mathbf{f}$ will be denoted by $C_k$.
\par The first term $\eqref{D1}$ is straightforwardly bounded.
\begin{equation}\label{D1final}
\norm{D^{(i)}_1(\mathbf{f_0})(t)}_{L^\infty_{x,v}\pa{w_{\beta i}}} \leq e^{-\nu_0 t}\norm{f_{0i}}_{L^\infty_{x,v}\pa{w_{\beta i}}}.
\end{equation}

\par In the second term $\eqref{D2}$ we multiply and divide inside the $v_*$ integral by $w_{\beta j}(v_*)$ and take the supremum $\T^3\times\R^3$.
\begin{equation*}
\begin{split}
\abs{w_{\beta i}(v)D^{(i)}_2(\mathbf{f_0})(t)} \leq & Cte^{-\nu_0 t}
\\&\times\sum\limits_{j=1}^N\pa{\int_{\R^3} \abs{k^{(i)}_j(v,v_*)} \frac{\langle v \rangle^\beta \mu_i^{-1/2}}{\langle v_* \rangle^\beta \mu_{j*}^{-1/2}}\:dv_*}\norm{f_{0j}}_{L^\infty_{x,v}\pa{w_{\beta j}}}.
\end{split}
\end{equation*}
Applying Lemma \ref{lem:L1ki} with $\theta=\eps=0$, the integral term is  bounded uniformly in $i$, $j$ and $v$. Hence
\begin{equation}\label{D2final}
\norm{D^{(i)}_2(\mathbf{f_0})(t)}_{L^\infty_{x,v}\pa{w_{\beta i}}} \leq C_2 t e^{-\nu_0 t} \norm{\mathbf{f_0}}_{L^\infty_{x,v}\pa{\mathbf{w}_\beta}}.
\end{equation}
\par The third and last term $\eqref{D3}$ is more involved analytically and requires to consider the cases $\abs{v} \geq R$ and $\abs{v} \leq R$, for $R$ to be chosen later, separately.

\bigskip
\textbf{Step 1: $\mathbf{\abs{v}\geq R}$.}
We multiply and divide by $w_{\beta l}(v_{**})$ inside the $v_*$ integral of $\eqref{D3}$ and take the supremum in space and velocity for $f_l$. The exponential factor can be bounded by
$$e^{-\nu_i(v)(t-s)-\nu_j(v_*)(s-s_1)} \leq e^{-\frac{\nu_0}{2}t}e^{-\frac{\nu_0}{2}(t-s)}e^{-\frac{\nu_0}{2}(t-s_1)}e^{\frac{\nu_0}{2}s}.$$
Hence, for all $t$, $x$ and $v$,
\begin{equation}\label{genericbound}
\begin{split}
&\abs{w_{\beta i}(v)D^{(i)}_3\pa{\mathbf{f}}(t,x,v)} 
\\&\quad\quad\quad\leq e^{-\frac{\nu_0}{2}t}\sum\limits_{1\leq j,l\leq N}\int_0^t\int_0^se^{-\frac{\nu_0}{2}(t-s_1)}\pa{e^{\frac{\nu_0}{2}s}\norm{f_l}_{L^\infty_{x,v}\pa{w_{\beta l}}}}
\\&\quad\quad\quad\quad\times\cro{\int_{\R^3}\abs{k^{(i)}_j(v,v_*)}\frac{w_{\beta i}(v)}{w_{\beta i}(v_*)}\pa{\int_{\R^3}\abs{k^{(j)}_l(v_*,v_{**})}\frac{w_{\beta i}(v_*)}{w_{\beta l}(v_{**})}\:dv_{**}}dv_*}\:ds_1ds.
\end{split}
\end{equation}
We use Lemma \ref{lem:L1ki} twice to bound the term inside bracket independently of $j$, $l$ and $v$ by
$$\frac{C_\beta^2}{1+\abs{v}}\leq \frac{C_\beta^2}{1+R}.$$
We conclude
\begin{equation}\label{D3geqR}
\sup\limits_{\abs{v}\geq R}\abs{w_{\beta i}(v)D^{(i)}_3\pa{\mathbf{f}}(t,x,v)} \leq \frac{C_3}{1+R}\:e^{-\frac{\nu_0}{2}t}\sup\limits_{0\leq s \leq t}\cro{e^{\frac{\nu_0}{2}}\norm{\mathbf{f}}_{L^\infty_{x,v}\pa{\mathbf{w}_\beta}}}.
\end{equation}

\bigskip
\textbf{Step 2: $\mathbf{\abs{v}\leq R}$.}
In order for the change of variables $y = x-(t-s)v - (s-s_1)v_*$ in the $v_*$ integral to be well-defined we need $s-s_1$ bounded from below. Moreover, in order to make the $L^2$-norm appearing we would need to have $k^{(i)}_j(v,v_*)$ uniformly bounded which is not the case. We therefore need to approximate it uniformly by compactly supported functions, which is possible on compact domains. We take $\eta >0$ and divide $\eqref{D3}$ into four parts
\begin{equation}\label{D3leqNdecomposition}
\begin{split}
D^{(i)}_3\pa{\mathbf{f}} = & \int_0^t\int_{s-\eta}^s\int_{\R^3\times\R^3} d^{(i)}_3 + \int_0^t\int_0^{s-\eta}\int_{\abs{v_*}\geq 2R}\int_{\R^3} d^{(i)}_3 
\\&+ \int_0^t\int_0^{s-\eta}\int_{\abs{v_*}\leq 2R}\int_{\abs{v_{**}}\geq 3R} d^{(i)}_3 + \int_0^t\int_0^{s-\eta}\int_{\abs{v_*}\leq 2R}\int_{\abs{v_{**}}\leq 3R} d^{(i)}_3,
\end{split}
\end{equation} 
where, using
$$e^{-\nu_i(v)(t-s)-\nu_j(v_*)(s-s_1)} \leq e^{-\nu_0(t-s_1)},$$
we have the following bound
\begin{equation*}
\begin{split}
d^{(i)}_3 \leq e^{-\nu_0(t-s_1)}\sum\limits_{1\leq j,l\leq N}  & k^{(i)}_j(v,v_*)k^{(j)}_l(v_*,v_{**})
\\&\times \abs{f_l(s_1,x-(t-s)v-(s-s_1)v_*,v_{**})}\:dv_{**}dv_*ds_1ds.
\end{split}
\end{equation*}

\par The first integral in $\eqref{D3leqNdecomposition}$ is dealt with by using Lemma \ref{lem:L1ki} twice, as for $\eqref{genericbound}$.  We get
\begin{equation}\label{di31}
\begin{split}
\abs{w_{\beta i}\int_0^t\int_{s-\eta}^s\int_{\R^3\times\R^3} d^{(i)}_3} &\leq  C e^{-\frac{\nu_0}{2}t}\pa{\int_0^t\int_{s-\eta}^s e^{-\frac{\nu_0}{2}(t-s_1)\:}ds_1ds}\sup\limits_{0\leq s \leq t}\cro{e^{\frac{\nu_0}{2}}\norm{\mathbf{f}}_{L^\infty_{x,v}\pa{\mathbf{w}_\beta}}}
\\&\leq \eta C e^{-\frac{\nu_0}{2}t}\sup\limits_{0\leq s \leq t}\cro{e^{\frac{\nu_0}{2}}\norm{\mathbf{f}}_{L^\infty_{x,v}\pa{\mathbf{w}_\beta}}}.
\end{split}
\end{equation}
\par For the second and third terms in $\eqref{D3leqNdecomposition}$ we remark that for $\abs{v}\leq R$ we always have either $\abs{v-v_*} \geq R$ or $\abs{v_*-v_{**}}\geq R$  in the domain of integration and therefore we have for any $\eps >0$ either one of the following inequalities 
\begin{eqnarray*}
\abs{k^{(i)}_j(v,v_*)} &\leq& e^{-m\eps R^2}\abs{k^{(i)}_j(v,v_*)e^{m\eps\abs{v-v_*}^2}}
\\\abs{k^{(j)}_l(v_*,v_{**})} &\leq& e^{-m\eps R^2}\abs{k^{(j)}_l(v_*,v_{**})e^{m\eps \abs{v_*-v_{**}}^2}}.
\end{eqnarray*}
Now we take $\eps$ small enough to apply Lemma \ref{lem:L1ki} as before but with the first inequality above for $\abs{v_*}\geq 2R$ or the second inequality above for $\abs{v_*}\leq 2R$ and $\abs{v_{**}}\geq 3R$. Exactly the same computations as $\eqref{genericbound}$ before yields
\begin{eqnarray}
\abs{w_{\beta i}\int_0^t\int_0^{s-\eta}\int_{\abs{v_*}\geq 2R}\int_{\R^3} d^{(i)}_3} &\leq&  Ce^{-m\eps R^2} e^{-\frac{\nu_0}{2}t}\sup\limits_{0\leq s \leq t}\cro{e^{\frac{\nu_0}{2}}\norm{\mathbf{f}}_{L^\infty_{x,v}\pa{\mathbf{w}_\beta}}} \label{di32}
\\\abs{w_{\beta i}\int_0^t\int_0^{s-\eta}\int_{\abs{v_*}\geq 2R}\int_{\abs{v_{**}}\geq 3} d^{(i)}_3} &\leq&  Ce^{-m\eps R^2} e^{-\frac{\nu_0}{2}t}\sup\limits_{0\leq s \leq t}\cro{e^{\frac{\nu_0}{2}}\norm{\mathbf{f}}_{L^\infty_{x,v}\pa{\mathbf{w}_\beta}}}\label{di33}
\end{eqnarray} 

\par At last, the last term in $\eqref{D3leqNdecomposition}$ deals with a set included in the compact support 
$$\Omega = \br{(v,v_*,v_{**})\in\R^3,\quad\abs{v}\leq 3R,\:\abs{v_*}\leq 2R,\;\abs{v_{**}}\leq 3R}.$$
As discussed earlier, Lemma \ref{lem:kernelK} shows that $k^{(i)}_j(v,v_*)$ has a possible blow-up in $\abs{v-v_*}^\gamma$. However, since $\Omega$ is compact we can approximate $k^{(i)}_j(v,v_*)$, for all $i$ and $j$, by a smooth and compactly supported function $k^{(i)}_{R,j}(v,v_*)$ in the following uniform sense
\begin{equation}\label{approxkij}
\sup\limits_{\abs{v}\leq 3R} \int_{\abs{v_*}\leq 3R}\abs{k^{(i)}_{j}(v,v_*) -k^{(i)}_{R,j}(v,v_*)}\frac{w_{\beta i(v)}}{w_{\beta i}(v_*)}\:dv_*\leq \frac{1}{R}.
\end{equation}
Thanks to the following equality
\begin{equation*}
\begin{split}
k^{(i)}_j(v,v_*)k^{(j)}_l(v_*,v_{**}) =& \pa{k^{(i)}_j(v,v_*)-k^{(i)}_{R,j}(v,v_{*})}k^{(j)}_l(v_*,v_{**})
\\&+ \pa{k^{(j)}_l(v_*,v_{**})-k^{(j)}_{R,l}(v_*,v_{**})}k^{(i)}_{R,j}(v,v_{*})
\\&+ k^{(i)}_{R,j}(v,v_*)k^{(j)}_{R,l}(v_{*},v_{**})
\end{split}
\end{equation*}
the last term in $\eqref{D3leqNdecomposition}$ is bounded by
\begin{equation*}
\begin{split}
&\abs{w_{\beta i} \int_0^t\int_0^{s-\eta}\int_{\abs{v_*}\leq 2R}\int_{\abs{v_{**}}\leq 3R} d^{(i)}_3} 
\\&\leq \frac{C}{R} e^{-\frac{\nu_0}{2}t}\sup\limits_{0\leq s \leq t}\cro{e^{\frac{\nu_0}{2}}\norm{\mathbf{f}}_{L^\infty_{x,v}\pa{\mathbf{w}_\beta}}}\sup\limits_{1\leq i,j,l \leq N}\pa{\sup\limits_{\abs{v_*}\leq 2R}\int_{\abs{v_{**}}\leq 3R} \abs{k^{(j)}_l(v_*,v_{**})}\frac{w_{\beta i}(v_*)}{w_{\beta l}(v_{**})}\:dv_{**} }
\\&\quad +\frac{C}{R} e^{-\frac{\nu_0}{2}t}\sup\limits_{0\leq s \leq t}\cro{e^{\frac{\nu_0}{2}}\norm{\mathbf{f}}_{L^\infty_{x,v}\pa{\mathbf{w}_\beta}}}\sup\limits_{1\leq i,j \leq N}\pa{\sup\limits_{\abs{v}\leq R}\int_{\abs{v_{*}}\leq 2R} \abs{k^{(i)}_{R,j}(v,v_{*})}\frac{w_{\beta i(v)}}{w_{\beta i}(v_*)}\:dv_{*} }
\\&\quad + \sum\limits_{1\leq j,l\leq N}\int_0^t\int_0^{s-\eta}e^{-\nu_0(t-s_1)}\int_{\overset{\abs{v_*}\leq 2R}{\abs{v_{**}}\leq 3R}} \abs{k^{(i)}_{R,j}(v,v_*)k^{(j)}_{R,l}(v_{*},v_{**})}\abs{f_l(s_1,y(v_*),v_{**})}
\end{split}
\end{equation*}
where we made the usual controls $\eqref{genericbound}$ and used $\eqref{approxkij}$. We also defined $y(v_*) = x-(t-s)v-(s-s_1)v_*$. The first two terms are dealt with using Lemma \ref{lem:L1ki} while we can bound $k^{(i)}_{R,j}k^{(j)}_{R,l}$ by a constant $C_R$ depending only on $R$ (note that all constants only depending on $R$ will be denoted by $C_R$). This yields
\begin{equation*}
\begin{split}
&\abs{w_{\beta i} \int_0^t\int_0^{s-\eta}\int_{\abs{v_*}
\leq 2R}\int_{\abs{v_{**}}\leq 3R} d^{(i)}_3} 
\\&\leq \frac{C}{R} e^{-\frac{\nu_0}{2}t}\sup\limits_{0\leq s \leq t}\cro{e^{\frac{\nu_0}{2}}\norm{\mathbf{f}}_{L^\infty_{x,v}\pa{\mathbf{w}_\beta}}} + C_R\sum\limits_{l=1}^{N}\int_0^t\int_0^{s-\eta}\int_{\overset{\abs{v_*}\leq 2R}{\abs{v_{**}}\leq 3R}} \abs{f_l(s_1,y(v_*),v_{**})}.
\end{split}
\end{equation*}
We first integrate over $v_*$. We make the change of variables $y=y(v_*)$ which has a jacobian $\abs{s-s_1}^{-3} \leq \eta^{-3}$. Since we are on the periodic box, $y$ has to be understood as the class of equivalence of $y(v_*)$ and is therefore not one-to-one. However, $v_*$ being bounded by $2R$ we cover $\T^3$ only finitely many times (depending on $R$). Hence,
\begin{equation*}
\begin{split}
&\abs{w_{\beta i} \int_0^t\int_0^{s-\eta}\int_{\abs{v_*}
\leq 2R}\int_{\abs{v_{**}}\leq 3R} d^{(i)}_3} 
\\&\leq \frac{C}{R} e^{-\frac{\nu_0}{2}t}\sup\limits_{0\leq s \leq t}\cro{e^{\frac{\nu_0}{2}}\norm{\mathbf{f}}_{L^\infty_{x,v}\pa{\mathbf{w}_\beta}}} + \frac{C_R}{\eta^3}\sum\limits_{l=1}^{N}\int_0^t\int_0^{s-\eta}\int_{\T^3\times\R^3}\abs{f_l(s_1,y,v_{**})}.
\end{split}
\end{equation*}
Finally, a Cauchy-Schwarz inequality against  $\mu^{-1/2}_l(v_{**})/\mu^{-1/2}_l(v_{**})$ yields the following estimate
\begin{equation}\label{di34}
\begin{split}
&\abs{w_{\beta i} \int_0^t\int_0^{s-\eta}\int_{\abs{v_*}
\leq 2R}\int_{\abs{v_{**}}\leq 3R} d^{(i)}_3} 
\\&\leq \frac{C}{R} e^{-\frac{\nu_0}{2}t}\sup\limits_{0\leq s \leq t}\cro{e^{\frac{\nu_0}{2}}\norm{\mathbf{f}}_{L^\infty_{x,v}\pa{\mathbf{w}_\beta}}} + \frac{C_R}{\eta^3}t\int_0^t\norm{\mathbf{f}(s_1)}_{L^2_{x,v}\pa{\boldsymbol\mu^{-1/2}}}\:ds_1.
\end{split}
\end{equation}
\par Plugging $\eqref{di31}$, $\eqref{di32}$, $\eqref{di33}$ and $\eqref{di34}$ into $\eqref{D3leqNdecomposition}$ gives the final estimate
\begin{equation}\label{D3leqR}
\begin{split}
\sup\limits_{\abs{v}\leq R}\abs{w_{\beta i}(v)D^{(i)}_3\pa{\mathbf{f}}} \leq& C_4e^{-\frac{\nu_0}{2}t}\pa{\eta +e^{-m\eps R^2} +\frac{1}{R}}\sup\limits_{0\leq s \leq t}\cro{e^{\frac{\nu_0}{2}}\norm{\mathbf{f}}_{L^\infty_{x,v}\pa{\mathbf{w}_\beta}}} 
\\&+ C_{R,\eta}t\int_0^t\norm{\mathbf{f}(s)}_{L^2_{x,v}\pa{\boldsymbol\mu^{-1/2}}}\:ds.
\end{split}
\end{equation}

\bigskip
We can now conclude the proof by gathering $\eqref{iteratedDuhamel}$, $\eqref{D1final}$, $\eqref{D2final}$, $\eqref{D3geqR}$ and $\eqref{D3leqR}$. We get that for all $i$ in $\br{1,\dots,N}$
\begin{equation}\label{Linftyfinal}
\begin{split}
e^{\frac{\nu_0}{2}t}\norm{f_i(t)}_{L^\infty_{x,v}\pa{w_{\beta i}}} \leq& \pa{1+C_2t}e^{-\frac{\nu_0}{2}t}\norm{\mathbf{f_0}}_{L^\infty_{x,v}\pa{\mathbf{w}_\beta}} + C_{R,\eta}t\int_0^t\norm{\mathbf{f}(s)}_{L^2_{x,v}\pa{\boldsymbol\mu^{-1/2}}}\:ds
\\&+ C_5\pa{\eta +e^{-m\eps R^2} +\frac{1}{R}}\sup\limits_{0\leq s \leq t}\cro{e^{\frac{\nu_0}{2}}\norm{\mathbf{f}}_{L^\infty_{x,v}\pa{\mathbf{w}_\beta}}}.
\end{split}
\end{equation}
We remind the reader that $C_2$ and $C_5$ are independent of $\eta$, $R$ and $t$; moreover $\eps>0$ is fixed. We choose $R$ large enough and $\eta$ small enough such that
$$C_5\pa{\eta +e^{-m\eps R^2} +\frac{1}{R}}\leq \frac{1}{2}$$
and $T_0 > 0$ such that
$$2(1+C_2T_0)e^{-\nu_0 T_0} = e^{-\frac{\nu_0}{2}T_0}.$$
Such choices with $\eqref{Linftyfinal}$ yields that for all $t$ in $[0,T_0]$,
$$\norm{\mathbf{f}(t)}_{L^\infty_{x,v}\pa{\langle v \rangle^\beta\boldsymbol\mu^{-1/2}}} \leq e^{\frac{\nu_0}{2} (T_0-2t)}\norm{\mathbf{f_0}}_{L^\infty_{x,v}\pa{\langle v \rangle^\beta\boldsymbol\mu^{-1/2}}} + CT_0\int_0^{t} \norm{\mathbf{f}(s)}_{L^2_{x,v}\pa{\boldsymbol\mu^{-1/2}}}\:ds.$$
Lemma \ref{lem:L2Linfty} then concludes the proof of Theorem \ref{theo:semigroupLinfty}.
\end{proof}
\bigskip

\section{The full nonlinear equation in a perturbative regime}\label{sec:fullcauchy}

This section is devoted to the proof of Theorem \ref{theo:fullcauchy}. We divide our study in three steps. Subsection \ref{subsec:existenceexpodecay} deals with the existence of a solution with exponential decay to the perturbed multi-species Boltzmann equation that reads
\begin{equation}\label{perturbedmultiBEcauchy}
\partial_t \mathbf{f}+ v\cdot\nabla_x \mathbf{f} = \mathbf{L}\pa{\mathbf{f}} +\mathbf{Q}\pa{\mathbf{f}}.
\end{equation}
Then Subsection \ref{subsec:uniqueness} proves the uniqueness of such solutions and, at last, Subsection \ref{subsec:positivity} shows the positivity of the latter.

\bigskip


\subsection{Existence of a solution that decays exponentially}\label{subsec:existenceexpodecay}

We refer to the definition of $\Pi_\mathbf{G}$ $\eqref{PiG}$ and recall that $\Pi_\mathbf{G}(\mathbf{f})=0$ is a convenient way to say that $\mathbf{f}$ satisfies the conservation laws $\eqref{conservationlaws}$ with $\theta_\infty=1$ and $u_\infty=0$.
\par This subsection is dedicated to the proof of the following proposition.

\bigskip
\begin{prop}\label{prop:existenceexpodecay}
Let assumptions $(H1) - (H4)$ hold for the collision kernel, and let $k>k_0$, where $k_0$ is the smallest integer such that $C_{k_0} <1$ where $C_k$ was given by $\eqref{Ck}$. There exists $\eta_k$, $C_k$ and $\lambda_k >0$ such that for any $\mathbf{f_0}$ in $L^1_vL^\infty_x\pa{\langle v \rangle^k}$ satisfying $\Pi_{\mathbf{G}}(\mathbf{f_0})=0$, if
$$\norm{\mathbf{f_0}}\leq \eta_k$$
then there exists $\mathbf{f}$ in $L^1_vL^\infty_x\pa{\langle v \rangle^k}$ with $\Pi_\mathbf{G}(\mathbf{f})=0$ solution to $\eqref{perturbedmultiBEcauchy}$ with initial data $\mathbf{f_0}$ such that
$$\forall t\geq 0, \quad \norm{\mathbf{f}}_{L^1_vL^\infty_x\pa{\langle v \rangle^k}} \leq C_k e^{-\lambda_k t}\norm{\mathbf{f_0}}_{L^1_vL^\infty_x\pa{\langle v \rangle^k}}.$$
The constants are explicit and only depend on $N$, $k$ and the collision kernels.
\end{prop}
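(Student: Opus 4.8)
The plan is to follow the enlargement-and-factorization strategy of \cite{GMM}, as adapted in \cite{Bri6}, but now in an analytic (iterative) and nonlinear fashion adapted to the multi-species operator $\mathbf{G}=\mathbf{L}-v\cdot\nabla_x$. First I would split the full linear operator as $\mathbf{G}=\mathbf{G_1}+\mathbf{A}$, where $\mathbf{A}$ is a well-chosen bounded, velocity-truncating and velocity-mollifying operator (of the form $\mathbf{A}(\mathbf{f})=\mathbf{M}\chi_R(v)\,\mathbf{f}$ convolved in $v$, or the analogous $\mathbf{K}$-truncation) whose range is regular in $v$ and which is still bounded from $L^1_vL^\infty_x(\langle v\rangle^k)$ into $L^\infty_{x,v}(\langle v\rangle^\beta\boldsymbol\mu^{-1/2})$ for $\beta>3/2$, and $\mathbf{G_1}=\mathbf{G}-\mathbf{A}=-v\cdot\nabla_x-\boldsymbol\nu(v)+(\mathbf{K}-\mathbf{A})$. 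The key linear step is to prove that $\mathbf{G_1}$ is hypodissipative on $L^1_vL^\infty_x(\langle v\rangle^k)$: writing the mild (Duhamel along characteristics) form of $\partial_t\mathbf{f_1}=\mathbf{G_1}(\mathbf{f_1})$, one bounds the $\mathbf{K}-\mathbf{A}$ contribution in $L^1_vL^\infty_x$-norm by $C_k$ times the input norm, where $C_k$ is precisely the constant $\eqref{Ck}$; this is where the Carleman/Povzner-type estimates of the $\mathbf{K}$-operator (the asymmetric multi-species versions alluded to in the introduction, cf. Lemma \ref{lem:controlB} and the surrounding material) feed in, and the condition $C_k<1$ is exactly what makes $\mathbf{G_1}$ dissipative with rate close to $-\nu_0$. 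So for $k>k_0$ there are $C_1,\lambda_1>0$ with $\|S_{\mathbf{G_1}}(t)\|_{L^1_vL^\infty_x(\langle v\rangle^k)}\le C_1 e^{-\lambda_1 t}$, and the same for the weaker weight $L^\infty_{x,v}(m)$.

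Next I would set up the decomposition of the nonlinear equation into the coupled system
\begin{align*}
\partial_t \mathbf{f_1} &= \mathbf{G_1}(\mathbf{f_1}) + \mathbf{Q}(\mathbf{f_1}+\mathbf{f_2},\mathbf{f_1}+\mathbf{f_2}),\\
\partial_t \mathbf{f_2} + v\cdot\nabla_x\mathbf{f_2} &= \mathbf{L}(\mathbf{f_2}) + \mathbf{A}(\mathbf{f_1}),
\end{align*}
with $\mathbf{f_1}(0)=\mathbf{f_0}$ and $\mathbf{f_2}(0)=0$, and build a solution by a Picard-type iterative scheme $(\mathbf{f_1}^{(n)},\mathbf{f_2}^{(n)})$. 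At step $n$, the first equation is solved in $L^1_vL^\infty_x(\langle v\rangle^k)$ using the hypodissipativity of $\mathbf{G_1}$ together with the bilinear estimate for $\mathbf{Q}$ in the algebraic $L^\infty_x$-based norm (here one uses that $L^1_vL^\infty_x(\langle v\rangle^k)$ is stable under $\mathbf{Q}$ up to a loss $\langle v\rangle^\gamma$ absorbed by $\boldsymbol\nu$, for $k$ large enough); the second equation is solved with zero initial data using Theorem \ref{theo:semigroupLinfty} (Step 3), since $\mathbf{A}(\mathbf{f_1}^{(n)})$ lands in $L^\infty_{x,v}(\langle v\rangle^\beta\boldsymbol\mu^{-1/2})$. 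One propagates a smallness bound $\|\mathbf{f_1}^{(n)}\|+\|\mathbf{f_2}^{(n)}\|\le \delta$ with exponential weight $e^{\lambda_k t}$: choosing $\eta_k$ small enough, the quadratic nonlinearity is controlled by the linear decay, the iterates stay in a fixed small ball, and $(\mathbf{f_1}^{(n)},\mathbf{f_2}^{(n)})$ is Cauchy (the difference of two iterates satisfies the same system with one quadratic factor carrying the difference, again closed by smallness). The limit $(\mathbf{f_1},\mathbf{f_2})$ gives $\mathbf{f}=\mathbf{f_1}+\mathbf{f_2}\in L^1_vL^\infty_x(\langle v\rangle^k)$ solving $\eqref{perturbedmultiBEcauchy}$ with $\|\mathbf{f}(t)\|\le C_k e^{-\lambda_k t}\|\mathbf{f_0}\|$; the condition $\Pi_{\mathbf{G}}(\mathbf{f_0})=0$ is preserved since $\mathbf{Q}$ produces no collision invariants, so $\Pi_{\mathbf{G}}(\mathbf{f})=0$ throughout.

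The main obstacle is the hypodissipativity estimate for $\mathbf{G_1}$ in $L^1_vL^\infty_x(\langle v\rangle^k)$ with the sharp threshold constant $\eqref{Ck}$: one must show that the part of $\mathbf{K}$ that is \emph{not} removed by $\mathbf{A}$ contributes, in $L^1_v$-norm with polynomial weight $\langle v\rangle^k$, at most the factor $C_k$, and this requires the new multi-species Carleman representation and Povzner-type inequalities which, because of the mass asymmetry, give different gain factors $|m_i-m_j|/(m_i+m_j)$ for each cross-interaction — these are exactly the quantities appearing in $C_k$. A secondary, more routine difficulty is checking that the bilinear term $\mathbf{Q}$ is bounded on $L^1_vL^\infty_x(\langle v\rangle^k)$ with the right loss of weight (so that it is absorbed by $\boldsymbol\nu$), and that the mollified operator $\mathbf{A}$ indeed maps $L^1_vL^\infty_x(\langle v\rangle^k)$ boundedly into the exponentially weighted $L^\infty$ space where Step 3 applies; both follow from the pointwise kernel bounds of Lemma \ref{lem:kernelK} and standard moment estimates, but must be carried out with all species indices and the distinct Maxwellians $\mu_i$ kept track of.
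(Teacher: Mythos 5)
Your proposal follows essentially the same route as the paper: the $\mathbf{G}=\mathbf{G_1}+\mathbf{A}$ splitting via a truncated collision kernel, the hypodissipativity of $\mathbf{G_1}$ on $L^1_vL^\infty_x(\langle v\rangle^k)$ with the sharp Povzner constant $C_k<1$ (Lemma \ref{lem:controlB}, Proposition \ref{prop:povzner}), the regularizing bound for $\mathbf{A}$ into $L^\infty_{x,v}(\langle v\rangle^\beta\boldsymbol\mu^{-1/2})$ (Lemma \ref{lem:controlA}), the coupled system $\eqref{f1}$--$\eqref{f2}$ solved respectively by Propositions \ref{prop:cauchyE} and \ref{prop:cauchyLinfty}, the Picard iteration propagating smallness and exponential decay, and preservation of $\Pi_{\mathbf{G}}(\mathbf{f})=0$ via $\pi_{\mathbf{L}}(\mathbf{Q})=0$. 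The only minor difference is organizational: the paper obtains strong ($L^1_vL^\infty_x$) convergence for $\mathbf{f_1}^{(n)}$ but passes to the limit in $\mathbf{f_2}^{(n)}$ by weak-$*$ compactness in $L^\infty_{x,v}(\langle v\rangle^\beta\boldsymbol\mu^{-1/2})$, rather than proving both sequences are Cauchy.
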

\bigskip


\subsubsection{Decomposition of the perturbed equation and toolbox}

As explained in the introduction, the main strategy is to find a decomposition of the perturbed Boltzmann equation $\eqref{perturbedmultiBEcauchy}$ into a system of differential equations where we could make use of the $L^\infty$ semigroup theory developed in Section \ref{sec:Linftytheory}. More precisely, one would like to solve a somewhat simpler equation in $L^1_vL^\infty_x\pa{\langle v \rangle^k}$ and that the remainder part has regularising properties and could thus be handled in the more regular space $L^\infty_{x,v}\pa{\langle v \rangle^\beta\boldsymbol\mu^{-1/2}}$. Then the exponential decay of $S_{\mathbf{G}}(t)$ in the more regular space could be carried up to the bigger space. 
\par Remark that
$$L^\infty_{x,v}\pa{\langle v \rangle^\beta\boldsymbol\mu^{-1/2}} \subset L^1_vL^\infty_x\pa{\langle v \rangle^k}.$$

\bigskip
We propose here a decomposition of the mutli-species linear operator $\mathbf{G} = \mathbf{L} -v\cdot\nabla_x$ that follows the idea used in \cite{GMM} for the single-species Boltzmann operator.
We define for $\delta \in (0,1)$ to be chosen later the truncation function $\Theta(v,v^*,\sigma) \in C^{\infty}(\mathbb{R}^3 \times \mathbb{R}^3)$ bounded by one on the set
\begin{align*}
 \left\{|v|\leq\delta^{-1} \quad \textnormal{and} \quad 2\delta \leq |v-v^*| \leq \delta^{-1} \quad \textnormal{and} \quad |\cos\theta|\leq 1-2\delta\right\},
\end{align*}
and its support included in the set
\begin{align*}
 \left\{|v|\leq 2\delta^{-1} \quad \textnormal{and} \quad \delta \leq |v-v^*| \leq 2 \delta^{-1} \quad \textnormal{and} \quad |\cos\theta|\leq 1-\delta\right\}.
\end{align*}

Thus we can define the splitting
\begin{align*}
 \mathbf{G}=\mathbf{L}-v\cdot\nabla_x= \mathbf{A^{(\delta)}} + \mathbf{B^{(\delta)}} - \boldsymbol{\nu} - v \cdot \nabla_x,
\end{align*}
with the operators $\mathbf{A^{(\delta)}}=\left(A^{(\delta)}_i\right)_{1\leq i \leq N}$ and $\mathbf{B^{(\delta)}}=\left(B^{(\delta)}_i\right)_{1\leq i \leq N}$ defined as
\begin{align*}
A^{(\delta)}_i(\mathbf{f}(v))&=\sum_{j=1}^N C_{ij}^{\Phi}\int_{\mathbb{R}^3\times\mathbb{S}^2}\Theta_{\delta}\left(\mu_j'^*f_i' + \mu_i'f_j'^* - \mu_if_j^*\right)b_{ij}(\cos \theta)|v-v^*|^{\gamma}d\sigma dv^*,\\
B^{(\delta)}_i(\mathbf{f}(v))&=\sum_{j=1}^N C_{ij}^{\Phi}\int_{\mathbb{R}^3\times\mathbb{S}^2}\left(1-\Theta_{\delta}\right)\left(\mu_j'^*f_i' + \mu_i'f_j'^* - \mu_if_j^*\right)b_{ij}(\cos \theta)|v-v^*|^{\gamma}d\sigma dv^*.
\end{align*}
Our goal is to show that $\mathbf{A^{(\delta)}}$ has some regularizing effects and that $\mathbf{G_1}:=\mathbf{B^{(\delta)}}-\boldsymbol{\nu}-v\cdot\nabla_x$ acts like a small perturbation of $\boldsymbol{G_{\nu}}:=-\boldsymbol{\nu}-v\cdot\nabla_x$ and is thus hypodissipative.


\bigskip
\begin{lemma}\label{lem:controlA}
For any $k$ in $\N$, $\beta>0$ and $\delta$ in $(0,1)$, there exists $C_A>0$ such that for all $\mathbf{f}$ in $L^1_vL^\infty_x\pa{\langle v \rangle^k}$
$$\norm{\mathbf{A^{(\delta)}} \pa{\mathbf{f}}}_{L^\infty_{x,v}\pa{\langle v\rangle^\beta\boldsymbol{\mu^{-1/2}}}} \leq C_A\norm{\mathbf{f}}_{L^1_vL^\infty_x\pa{\langle v \rangle^k}}.$$
The constant $C_A$ is constructive and only depends on $k$, $\beta$, $\delta$, $N$ and the collision kernels.
\end{lemma}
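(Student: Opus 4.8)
The plan is to show that $\mathbf{A^{(\delta)}}$ is in fact a kernel operator with a smooth, compactly supported kernel (in $v$), so that applying it to a function which is only $L^1_vL^\infty_x(\langle v\rangle^k)$ produces something bounded in $v$, and even bounded after multiplication by the Gaussian weight $\langle v\rangle^\beta\boldsymbol\mu^{-1/2}$. The key point is that the truncation $\Theta_\delta$ localizes $v$, $v_*$ and $\cos\theta$ into a compact region away from the grazing set, which makes every quantity appearing in the integral (the kinetic factor $|v-v_*|^\gamma$, the angular factor $b_{ij}(\cos\theta)$, the Maxwellians evaluated at $v'$, $v'_*$, $v_*$) uniformly bounded, and confines the velocities $v$, $v_*$, $v'$, $v'_*$ all to a fixed ball of radius $\sim\delta^{-1}$.

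First I would write $A^{(\delta)}_i(\mathbf{f})=\sum_j C_{ij}^\Phi\bigl(A^{(\delta)}_{i,j,1}+A^{(\delta)}_{i,j,2}-A^{(\delta)}_{i,j,3}\bigr)$ corresponding to the three terms $\mu_j'^*f_i'$, $\mu_i'f_j'^*$ and $\mu_if_j^*$. The term $A^{(\delta)}_{i,j,3}$ is the easy one: since $\Theta_\delta$ forces $|v|\le 2\delta^{-1}$ we directly get
$$\bigl|A^{(\delta)}_{i,j,3}(\mathbf{f})(v)\bigr|\le \mathbf{1}_{|v|\le2\delta^{-1}}\,\mu_i(v)\int_{\R^3\times\S^2}(1\wedge\Theta_\delta)\,b_{ij}^\infty\,|v-v_*|^\gamma\,\abs{f_j(v_*)}\,d\sigma dv_*,$$
and since on the support $|v-v_*|\le 2\delta^{-1}$, the $v_*$-integral is bounded by $C_\delta\int_{\R^3}\abs{f_j(v_*)}\,dv_*\le C_\delta\|f_j\|_{L^1_vL^\infty_x(\langle v\rangle^k)}$ (the $L^\infty_x$ is harmless since the bound is pointwise in $x$). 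For the two gain-type terms $A^{(\delta)}_{i,j,1}$ and $A^{(\delta)}_{i,j,2}$ I would use the Carleman-type change of variables of Lemma \ref{lem:kernelK}: the $(v_*,\sigma)\mapsto(v',v'_*)$ (resp.\ $(v_*,\sigma)\mapsto(v'_*,v')$) substitution turns these terms into kernel operators $\int_{\R^3}\mathbf{k}^{(i),\delta}_j(v,v_*)f_j(v_*)\,dv_*$ where the kernel is now \emph{smooth and compactly supported} in both variables — the smoothness of $\Theta_\delta$ and of $b_{ij}$ (assumption (H4) gives $b_{ij}$ bounded), together with the fact that we are uniformly away from the singular set $|v-v_*|=0$ and from the sphere degeneracies, removes the $|v-v_*|^{\gamma-2}$ blow-up present in \eqref{pointwiseki}. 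Hence $|\mathbf{k}^{(i),\delta}_j(v,v_*)|\le C_\delta\,\mathbf{1}_{|v|,|v_*|\le C\delta^{-1}}$, and then
$$\langle v\rangle^\beta\mu_i(v)^{-1/2}\bigl|A^{(\delta)}_{i,j,1}(\mathbf{f})(v)\bigr|\le C_\delta\,\mathbf{1}_{|v|\le C\delta^{-1}}\langle v\rangle^\beta\mu_i(v)^{-1/2}\int_{|v_*|\le C\delta^{-1}}\abs{f_j(v_*)}\,dv_*,$$
and on the bounded set $|v|\le C\delta^{-1}$ the prefactor $\langle v\rangle^\beta\mu_i(v)^{-1/2}$ is itself bounded by a constant depending on $\delta,\beta$, while the remaining integral is $\le C_\delta\|f_j\|_{L^1_vL^\infty_x(\langle v\rangle^k)}$. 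Summing over $i,j$ and taking the supremum over $x$ (again harmless, as everything is pointwise in $x$) gives the claimed estimate with $C_A=C_A(k,\beta,\delta,N,B)$.

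The step I expect to require the most care is the rigorous justification that the truncated gain terms really do produce a kernel that is bounded (not merely integrable), i.e.\ that the truncation $\Theta_\delta$ genuinely kills the $|v-v_*|^{\gamma-2}$ singularity inherited from the Carleman change of variables. Concretely, in the representation of Lemma \ref{lem:kernelK} the factor $1/\abs{v-v_*}$ in front of the inner integral over $\tilde E^{ij}_{vv_*}$ (or $E^{ij}_{vv_*}$) is harmless because $\Theta_\delta$ forces $|v-v_*|\ge\delta$; and the inner integral itself, being an integral of a smooth bounded function ($\Theta_\delta\,b_{ij}\,|\cdot|^\gamma\,\mu$) over a piece of a hyperplane or sphere of bounded diameter, is uniformly bounded. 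One must check that the sets $E^{ij}_{vv_*}$, $\tilde E^{ij}_{vv_*}$ intersected with the support of $\Theta_\delta$ have uniformly bounded surface measure — this follows since $|v-v_*|$ is bounded above on the support, so the relevant hyperplane pieces/spheres have bounded radius. Modulo this geometric bookkeeping, which is entirely analogous to (but strictly easier than) the estimates already carried out in the proof of Lemma \ref{lem:kernelK}, the argument is routine, and I would present it compactly by reducing to the three-term splitting above and invoking the change of variables of Lemma \ref{lem:kernelK} for the gain terms.
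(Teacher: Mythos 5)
Your proof is correct and follows the same route as the paper, which also invokes the Carleman representation of Lemma \ref{lem:kernelK} to write $\mathbf{A^{(\delta)}}$ as a kernel operator and then uses the support of $\Theta_\delta$ to conclude. Where the paper's two-line proof appeals only to compact support of the kernel, you correctly observe that compact support alone is not enough and that one must also verify the truncation kills the $|v-v_*|^{\gamma-2}$-type singularity of the Carleman kernel (which it does, since $|v-v_*|\geq\delta$ and $|\cos\theta|\leq 1-\delta$ on $\mbox{supp}\,\Theta_\delta$ translate, after the change of variables, into lower bounds on $|v-v'|$ and $|v-v'_*|$); this is exactly the right bookkeeping.
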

\bigskip

\begin{proof}[Proof of Lemma $\ref{lem:controlA}$]
As we proved it in Lemma \ref{lem:kernelK}, the operator $\mathbf{A^{(\delta)}}$ can be written as a kernel operator thanks to Carleman representation:
$$\forall \: i\in \br{1,\dots,N},\quad A^{(\delta)}_i (\mathbf{f})(x,v) =\int_{\R^3} \langle \mathbf{k^{(i),(\delta)}_A}(v,v_*),\mathbf{f}(x,v_*)\rangle\:dv_*.$$
Moreover, by definition of  $\mathbf{A^{(\delta)}}$, its kernels $\mathbf{k^{(i),(\delta)}_A}$ are of compact support which implies the desired estimate.
\end{proof}
\bigskip

Thanks to the regularizing property above of the operator $\mathbf{A^{(\delta)}}$ we are looking for solutions to the perturbed Boltzmann equation
$$\partial_t \mathbf{f} = \mathbf{G}\pa{\mathbf{f}} + \mathbf{Q}(\mathbf{f})$$
in the form of $\mathbf{f}=\mathbf{f_1}+\mathbf{f_2}$ with $\mathbf{f_1}$ in $L^1_vL^\infty_x\pa{\langle v \rangle^k}$ and $\mathbf{f_2}$ in $L^\infty_{x,v}\pa{\langle v \rangle^\beta\boldsymbol\mu^{-1/2}}$ and $(\mathbf{f_1},\mathbf{f_2})$ satisfying the following system of equation

\begin{eqnarray}
\partial_t \mathbf{f_1} &=& \mathbf{G_1^{(\delta)}} \pa{\mathbf{f_1}} + \mathbf{Q}(\mathbf{f_1}+\mathbf{f_2}) \quad\mbox{and}\quad \mathbf{f_1}(0,x,v)=\mathbf{f_0}(x,v),\label{f1}
\\\partial_t \mathbf{f_2} &=& \mathbf{G}(\mathbf{f_2}) + \mathbf{A^{(\delta)}}(\mathbf{f_1}) \quad\mbox{and}\quad \mathbf{f_2}(0,x,v)=0\label{f2}.
\end{eqnarray}

\bigskip
The equation in the smaller space $\eqref{f2}$ will be treated thanks to the semigroup generated by $\mathbf{G}$ in $L^\infty\pa{\langle v \rangle^\beta\boldsymbol\mu^{-1/2}}$ whilst we expect an exponential decay for solutions in the larger space $\eqref{f1}$. Indeed, $\mathbf{B^{(\delta)}}$ can be controlled by the multiplicative operator $\boldsymbol\nu(v)$ thanks to the following lemma.

\bigskip
\begin{lemma}\label{lem:controlB}
Define
$$\mathbf{\bar{w_k}} = \pa{1+m_i^{k/2}\abs{v}^k}_{1\leq i \leq N} \quad\mbox{and}\quad \mathbf{\bar{w_k}}\boldsymbol\nu = \pa{(1+m_i^{k/2}\abs{v}^k)\nu_i(v)}_{1\leq i \leq N}.$$
There exists $k_0$ in $\N$ such that for any $k\geq k_0$ and $\delta$ in $(0,1)$ there exists $C_B(k,\delta)>0$ such that for all $\mathbf{f}$ in $L^1_vL^\infty_x\pa{\mathbf{\bar{w_k}}\boldsymbol\nu}$,
$$\norm{\mathbf{B^{(\delta)}}(\mathbf{f})}_{L^1_vL^\infty_x\pa{\mathbf{\bar{w_k}}}} \leq C_B(k,\delta)\norm{\mathbf{f}}_{L^1_vL^\infty_x\pa{\mathbf{\bar{w_k}}\boldsymbol\nu}}.$$
Moreover we have the following formula
$$C_B(k,\delta) = C_k + \eps_k(\delta)$$
where $\eps_k(\delta)$ is an explicit function that tends to $0$ as $\delta$ tends to $0$ and $C_k$ is defined by $\eqref{Ck}$ and  $k_0$ is the minimal integer such that $C_{k_0} <1$.
\end{lemma}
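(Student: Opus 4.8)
The plan is to estimate $B^{(\delta)}_i(\mathbf{f})$ pointwise in $x$ and then integrate the resulting bound in $v$ against the weight $\mathbf{\bar w_k}$. Since $B^{(\delta)}_i(\mathbf{f})(v)=\sum_j C^\Phi_{ij}\int_{\R^3\times\S^2}(1-\Theta_\delta)(\mu_j'^*f_i'+\mu_i'f_j'^*-\mu_i f_j^*)b_{ij}(\cos\theta)|v-v_*|^\gamma\,d\sigma dv_*$, the gain terms $\mu_j'^*f_i'+\mu_i'f_j'^*$ and the loss term $\mu_i f_j^*$ are treated separately. For the loss term, after taking $\sup_x$ one bounds $\mu_i(v)\int_{\R^3}\sup_x|f_j(x,v_*)|\,dv_*$ times $\int_{\S^2}(1-\Theta_\delta)b_{ij}|v-v_*|^\gamma d\sigma$, which up to a factor is controlled by $\nu_{ij}(v)$, hence produces a term dominated by $\nu_i(v)$ times a small Gaussian weight on $v$: this contributes only to $\eps_k(\delta)$ (the truncation $1-\Theta_\delta$ on $|v|\le\delta^{-1}$ makes the relevant region either $|v|$ large — absorbed by the fast decay of $\mu_i$ against $\langle v\rangle^k$ — or a thin set in $v_*$ or $\cos\theta$ whose measure $\to0$ as $\delta\to0$). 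The truly relevant contribution is the gain part, and for it one must use the classical change of variables $(v,v_*,\sigma)\mapsto(v_*,v',v'_*)$ or rather the pre-post collisional symmetry together with the multi-species weak form, so that $\int B^{(\delta)}_i(\mathbf f)\,\psi_i\,dv$ is rewritten with the roles of $v$ and $v'$ swapped; this transfers the weight $\mathbf{\bar w_k}$ from the post-collisional $v$ to the pre-collisional velocities $v',v'_*$.

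The key computation is then the \emph{weight inequality}: one needs that, pointwise on admissible collisions,
\begin{equation*}
m_i^{k/2}|v'|^k + m_j^{k/2}|v'_*|^k \le \Gamma_{ij}(k)\,\bigl(m_i^{k/2}|v|^k + m_j^{k/2}|v_*|^k\bigr),
\end{equation*}
with an explicit constant $\Gamma_{ij}(k)<1$ coming from the geometry of elastic collisions with different masses. Indeed from $v'=\frac1{m_i+m_j}(m_iv+m_jv_*+m_j|v-v_*|\sigma)$ one gets $m_i|v'|^2\le \frac{m_i}{(m_i+m_j)^2}(\,\cdots)$ and a careful use of energy conservation $m_i|v|^2+m_j|v_*|^2=m_i|v'|^2+m_j|v'_*|^2$ plus an angular averaging of $|v\cdot\sigma|$-type terms yields, after raising to the power $k/2$ and integrating over $\S^2$ against $b_{ij}$, exactly the constant
\begin{equation*}
C_k=\frac{2}{k+2}\,\frac{1-\bigl[\max_{i,j}\tfrac{|m_i-m_j|}{m_i+m_j}\bigr]^{\frac{k+2}{2}}+\bigl[1-\max_{i,j}\tfrac{|m_i-m_j|}{m_i+m_j}\bigr]^{\frac{k+2}{2}}}{1-\max_{i,j}\tfrac{|m_i-m_j|}{m_i+m_j}}\,\max_{i,j}\frac{4\pi b^\infty_{ij}}{l_{b_{ij}}}.
\end{equation*}
This is the analogue of the Povzner-type estimate in the mono-species case, where one recovers $4/(k+2)$ for identical masses and $b=1$; the factor $4\pi b^\infty_{ij}/l_{b_{ij}}$ accounts for the normalization of the angular kernel. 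After this, one integrates the rewritten gain term in $v$, bounds $\int_{\S^2}b_{ij}\,d\sigma$ by $l_{b_{ij}}$, uses that the Gaussians $\mu_j'^*,\mu_i'$ give $L^1_{v_*}$-integrability, and the factor $|v-v_*|^\gamma$ is absorbed into $\nu_i$ on the right-hand side (this is where the weight $\mathbf{\bar w_k}\boldsymbol\nu$ rather than $\mathbf{\bar w_k}$ appears). Choosing $k_0$ minimal with $C_{k_0}<1$ and $\delta$ small enough that $\eps_k(\delta)$ is as small as desired, we get $C_B(k,\delta)=C_k+\eps_k(\delta)$.

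The main obstacle is the weight inequality and its sharp constant: one must split the angular integral according to whether $|v-v_*|\sigma$ adds to or subtracts from the momentum part, handle the asymmetry between $v'$ (weighted by $m_j/(m_i+m_j)$) and $v'_*$ (weighted by $m_i/(m_i+m_j)$), and carry out the $\S^2$-integration of $\bigl(\alpha+\beta\cos\theta\bigr)^{k/2}$-type expressions exactly enough to produce the stated $C_k$ rather than a cruder bound — this is precisely the new Povzner-type inequality announced in the introduction as needed to cope with the loss of symmetry. A secondary technical point is verifying that the truncation error $\eps_k(\delta)\to0$: this requires decomposing the support of $1-\Theta_\delta$ into the three regions $\{|v|\ge\delta^{-1}\}$, $\{|v-v_*|\le2\delta\}\cup\{|v-v_*|\ge\delta^{-1}/2\}$ and $\{|\cos\theta|\ge1-2\delta\}$, and bounding each contribution using the Gaussian decay, the $|v-v_*|^\gamma$ factor with $\gamma\in[0,1]$, and assumption (H4) on $b_{ij}$; these are routine once the structure is set up.
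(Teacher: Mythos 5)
Your plan has the right overall shape --- change of variables so the operator is tested against $\mu_j^*|f_i|$, an averaged Povzner-type estimate for the post-collisional weights, and an $\eps_k(\delta)$ analysis of the truncation --- but the central inequality you state is false, and correcting it exposes a further mechanism that your plan omits.

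You claim that pointwise on admissible collisions
\begin{equation*}
m_i^{k/2}|v'|^k + m_j^{k/2}|v'_*|^k \le \Gamma_{ij}(k)\bigl(m_i^{k/2}|v|^k + m_j^{k/2}|v_*|^k\bigr), \qquad \Gamma_{ij}(k)<1.
\end{equation*}
No such pointwise bound exists: choosing $\sigma=(v-v_*)/|v-v_*|$ gives $v'=v$, $v'_*=v_*$ and forces $\Gamma_{ij}(k)\ge1$; Povzner gains are intrinsically sphere-averaged. But the averaged inequality with your right-hand side is \emph{also} false with constant strictly less than $1$: take $m_i=m_j$, $v_*=-v$, $|v|=1$; then $v'=\sigma$, $v'_*=-\sigma$, and
\begin{equation*}
\int_{\S^2}\bigl(|v'|^k+|v'_*|^k\bigr)\,d\sigma = 8\pi = 4\pi\bigl(|v|^k+|v_*|^k\bigr),
\end{equation*}
so the averaged constant is exactly $1$. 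The correct right-hand side is the larger quantity $(m_i|v|^2+m_j|v_*|^2)^{k/2}$, which is precisely what Proposition~\ref{prop:povzner} proves. The difference between $(m_i|v|^2+m_j|v_*|^2)^{k/2}$ and $m_i^{k/2}|v|^k+m_j^{k/2}|v_*|^k$ is a sum of cross terms $(m_i|v|^2)^{\alpha}(m_j|v_*|^2)^{k/2-\alpha}$ with $0<\alpha<k/2$, and these are \emph{not} disposed of by the three truncation regions you list. In the paper's proof one restricts the remaining integral by the indicator $\chi_A$ of the set
\begin{equation*}
A=\Bigl\{\sqrt{m_i|v|^2+m_j|v_*|^2}\ge\min(\sqrt{m_i},\sqrt{m_j})\,\delta^{-1}\ \text{or}\ |v-v_*|\ge\delta^{-1}\Bigr\},
\end{equation*}
on which $\chi_A\lesssim\delta(m_i|v|^2+m_j|v_*|^2)$ for $\delta$ small; this bound is what promotes each cross term to full degree $k$ at the cost of a power of $\delta$, so they end up in $\eps_k(\delta)$. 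Without this mechanism the constant multiplying $\|\mathbf f\|_{L^1_vL^\infty_x(\mathbf{\bar{w_k}}\boldsymbol\nu)}$ does not reduce to $C_k+\eps_k(\delta)$; dismissing this part as ``routine once the structure is set up'' misjudges where the difficulty lies.

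There is also a structural difference, though a secondary one. The paper does not split gain and loss. After the changes of variables it collects all three weights against $\mu_j^*|f_i|$ and then adds and subtracts the weight $1+m_i^{k/2}|v|^k$, so that Proposition~\ref{prop:povzner} is applied to the \emph{full combination} (post-collisional weights minus both pre-collisional weights), which is nonpositive after angular averaging up to the cross terms above. Your alternative of showing that the loss alone contributes only $\eps_k(\delta)$ is workable, but the estimate you sketch, factoring the loss as $\mu_i(v)\int\sup_x|f_j(x,v_*)|\,dv_*$ times an angular integral, discards the $v_*$-decay built into $L^1_vL^\infty_x(\mathbf{\bar{w_k}}\boldsymbol\nu)$, which is needed to make the region $|v-v_*|\ge\delta^{-1}$ in the support of $1-\Theta_\delta$ small. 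That can be repaired; the primary obstacles are the incorrect Povzner statement and the untreated cross terms.
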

\bigskip

We make an important remark.
\begin{remark}\label{rem:kq*}
We emphasize here that for $k>k_0$ we have that $\lim_{\delta\to 0}C_B(k,\delta)=C_k <1$. Until the end of this article we fix $\delta_k >0$ such that
$C_B(k,\delta_k)<1$. For convenience we will drop the exponent and use the following notations: $\mathbf{B}=\mathbf{B^{(\delta_k)}}$, $\mathbf{A}=\mathbf{A^{(\delta_k)}}$, $\mathbf{G_1}=\mathbf{G_1^{(\delta_k)}}$ and finally $C_B = C_B(k,\delta_k)$. The equivalent of this result for the mono-species Boltzmann equation can be found in \cite[Lemma 4.4]{GMM} for $k>2$ which is recovered here when $m_i=m_j$ (note that our Lemma deals with more general collision kernels).
\par We also notice here that the weighted norm $\mathbf{\bar{w_k}}$ required for this sharp lemma is equivalent to $\langle v \rangle^k$.
\end{remark}
\bigskip

The proof of Lemma \ref{lem:controlB} relies on a Povzner-type inequality. Such inequalities are now common in the mono-species Boltzmann literature (for both elastic and inelastic collisions) \cite{Pov}\cite{MisWen}\cite{Bob}\cite{BobGamPan}\cite{GMM} and state that the integral on $\S^2$ of $\cro{\abs{v'}^k+\abs{v'_*}^k}$ can be controlled strictly by the integral on $\S^2$ of $C_k\cro{\abs{v}^k+\abs{v_*}^k}$ with $C_k=4/(k+2)<1$ (for hard sphere collision kernels) and a remainder term of lower order when $k>2$. As we shall see, the asymmetry brought by the difference of masses generates a larger constant $C_k$ that can still be less than $1$ if $k$ is large enough.
\par The method proposed here to prove such a Povzner inequality is inspired by \cite[Lemma 1 and Corollary 3]{BobGamPan}. The main idea is to consider kinetic energies $m_i\abs{v'}^2$ and $m_j\abs{v'_*}^2$ to exhibit the problematic term arising from $m_i-m_j$ which can be non-zero. We state our result, which covers the mono-species case when $m_i=m_j$.

\bigskip
\begin{prop}[Povzner-type inequality]\label{prop:povzner}
Let $i$ and $j$ in $\br{1,\dots,N}$. Then for all $k>2$,
$$\int_{\S^2} \cro{m_i^{k/2}\abs{v'}^k + m_j^{k/2}\abs{v'_*}^k}\:d\sigma \leq \frac{l_{b_{ij}}}{ b_{ij}^\infty} C_k \cro{m_i\abs{v}^2 + m_j\abs{v_*}^2}^{k/2}$$
where $C_k$ was defined by $\eqref{Ck}$ and $l_{b_{ij}}$, $b_{ij}^\infty$ by $\eqref{constantsbij}$.
\end{prop}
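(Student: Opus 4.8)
The plan is to reduce the inequality on $\S^2$ to a computation of a single scalar integral by passing to a suitable reference frame. First I would introduce the kinetic energies $E_i = m_i|v|^2$, $E_j = m_j|v_*|^2$ and their post-collisional analogues $E_i' = m_i|v'|^2$, $E_j' = m_j|v'_*|^2$, and observe from the elastic collision laws $\eqref{elasticcollision}$ that $E_i' + E_j' = E_i + E_j =: \mathcal{E}$. Following \cite[Lemma 1, Corollary 3]{BobGamPan}, the key point is that although the individual $|v'|^2$ and $|v'_*|^2$ depend on $v$, $v_*$, $\sigma$ in a complicated way, the \emph{ratio} $E_i'/\mathcal{E}$ depends only on the scattering angle $\theta$ (the angle between $v - v_*$ and $\sigma$): using $v' = V + \tfrac{m_j}{m_i+m_j}|v-v_*|\sigma$ with $V$ the centre of mass, one computes explicitly that $E_i' = \tfrac{m_i}{(m_i+m_j)^2}|m_i v + m_j v_* + m_j|v-v_*|\sigma|^2$, and after expanding and using $m_i|v|^2 + m_j|v_*|^2$ together with the momentum identity, $E_i'/\mathcal{E}$ reduces to an affine function of $\cos\theta$ of the form $\alpha + \beta\cos\theta$ where $\alpha,\beta$ depend only on the masses (with $\beta$ involving $|m_i-m_j|$; when $m_i = m_j$ this recovers the classical symmetric picture).

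Second, I would write $m_i^{k/2}|v'|^k = E_i'^{k/2}$ and likewise for $j$, so that
\begin{equation*}
m_i^{k/2}|v'|^k + m_j^{k/2}|v'_*|^k = \mathcal{E}^{k/2}\left[ \left(\tfrac{E_i'}{\mathcal{E}}\right)^{k/2} + \left(1 - \tfrac{E_i'}{\mathcal{E}}\right)^{k/2}\right].
\end{equation*}
Denoting $s = s(\theta) = E_i'/\mathcal{E} \in [0,1]$ and integrating over $\sigma \in \S^2$ (parametrised by $\theta \in [0,\pi]$ and an azimuthal angle), the left-hand side becomes $\mathcal{E}^{k/2}$ times $\int_{\S^2} \psi_k(s(\theta))\,d\sigma$ where $\psi_k(s) = s^{k/2} + (1-s)^{k/2}$. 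Since $\mathcal{E} = m_i|v|^2 + m_j|v_*|^2$, the claim becomes
\begin{equation*}
\int_{\S^2} \psi_k(s(\theta))\,d\sigma \leq \frac{l_{b_{ij}}}{b_{ij}^\infty}\, C_k,
\end{equation*}
and the remaining work is to evaluate or bound this angular integral. Here one uses the bound $b_{ij}(\cos\theta) \le b_{ij}^\infty$ from $(H4)$ only to replace the $b_{ij}$-weight — wait, more carefully: the statement as written has no $b_{ij}$ inside the $\S^2$ integral on the left, so the bound must come out purely from estimating $\sup_\theta$-type quantities against the constants $l_{b_{ij}} = \|b\circ\cos\|_{L^1_{\S^2}}$ and $b_{ij}^\infty$; I would bound $\psi_k(s(\theta)) \le \psi_k$ evaluated at its maximum over the admissible range of $s$, which since $\psi_k$ is convex is attained at an endpoint of the interval $[s_{\min}, s_{\max}]$ determined by $\cos\theta \in [-1,1]$, and the range of $s$ is exactly $[\alpha - |\beta|, \alpha + |\beta|]$ with $\alpha \pm |\beta|$ expressible via $\tfrac{|m_i - m_j|}{m_i + m_j}$ and $1 - \tfrac{|m_i-m_j|}{m_i+m_j}$; then $C_k$ as defined in $\eqref{Ck}$ appears precisely as $\tfrac{2}{k+2}$ times the relevant combination of these endpoint powers divided by the length of the interval, which is the averaged value $\tfrac{1}{2}\int_{-1}^1 \psi_k\,d(\cos\theta)$ type expression — so really I would compute $\int_{-1}^1 \psi_k(\alpha + \beta t)\,dt$ exactly, which gives $\tfrac{2}{(k+2)\beta}[\,\cdots\,]$, matching $\eqref{Ck}$.

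Third, I would assemble the pieces: the factor $\tfrac{l_{b_{ij}}}{b_{ij}^\infty}$ arises because $\int_{\S^2} d\sigma = l_{b_{ij}}/b_{ij}^\infty$ would be wrong — rather, one writes $\int_{\S^2} d\sigma = 2\pi \int_0^\pi \sin\theta\,d\theta$ and, comparing with $l_{b_{ij}} = 2\pi\int_0^\pi b_{ij}(\cos\theta)\sin\theta\,d\theta$, uses $(H4)$'s lower positivity together with $b_{ij}^\infty$ to relate the plain angular measure to $l_{b_{ij}}/b_{ij}^\infty$; concretely, since I want to end with the constant $\tfrac{l_{b_{ij}}}{b_{ij}^\infty}C_k$, I track how the azimuthal $2\pi$ and the $\cos\theta$-average combine, using the substitution $t = \cos\theta$ so that $\int_{\S^2}\psi_k(s(\theta))d\sigma = 2\pi\int_{-1}^1 \psi_k(\alpha+\beta t)\,dt$, and then bound this against $\tfrac{l_{b_{ij}}}{b_{ij}^\infty} C_k$ by noting $l_{b_{ij}} \ge$ something and $b_{ij}^\infty \ge$ something — this bookkeeping is the one genuinely fiddly point.

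\textbf{Main obstacle.} The main obstacle I anticipate is not conceptual but the careful identification of $\alpha$ and $\beta$ (equivalently $s_{\min}$, $s_{\max}$) in terms of the masses and then verifying that the exact value of $\int_{-1}^1 \psi_k(\alpha + \beta t)\,dt$ collapses to precisely the expression defining $C_k$ in $\eqref{Ck}$, including the appearance of $\max_{i,j}\tfrac{|m_i - m_j|}{m_i + m_j}$ (as opposed to the specific $i,j$): the $\max$ enters because one bounds the interval endpoints uniformly, and convexity of $\psi_k$ is what licenses replacing the actual $(i,j)$-dependent endpoints by the extremal ones. Matching constants and making sure the $l_{b_{ij}}/b_{ij}^\infty$ normalisation is consistent with the definition $\eqref{constantsbij}$ is where I would be most careful; everything else is the standard convexity-plus-explicit-integration Povzner argument adapted to the asymmetric frame.
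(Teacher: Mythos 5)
Your architecture is right for the first half — passing to kinetic energies $E_i'=m_i|v'|^2$, $E_j'=m_j|v'_*|^2$ with $E_i'+E_j'=\mathcal{E}=m_i|v|^2+m_j|v_*|^2$, observing $E_i'/\mathcal{E}$ is affine in a single inner product $\langle e,\sigma\rangle$, and then computing the $\S^2$ integral exactly by reducing to a one-dimensional integral over $[-1,1]$ — but the proposal contains a material error that causes you to misidentify where the work is. You assert that $E_i'/\mathcal{E}=\alpha+\beta\langle e,\sigma\rangle$ ``where $\alpha,\beta$ depend only on the masses.'' This is false: with the paper's notation $E_i'/\mathcal{E}=\tfrac{1}{2}(1+a_{ij}+b_{ij}\langle e,\sigma\rangle)$, both $a_{ij}$ and $b_{ij}$ depend on $v$ and $v_*$ (through $\langle v,v_*\rangle$, $|v-v_*|$ and $|m_iv+m_jv_*|$), not only on the masses. (You also have the roles reversed: it is the constant offset $a_{ij}$, not the slope, that vanishes when $m_i=m_j$; the slope $b_{ij}$ still depends on $v,v_*$ in the equal-mass case.) As a consequence, the step you label the ``main obstacle'' — plugging mass-only constants into an explicit one-dimensional integral — isn't the obstacle at all.

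The actual core of the proof, which your proposal omits, is to eliminate the $(v,v_*)$-dependence from the exact value of $\int_{-1}^1\psi_k(\tfrac{1+a_{ij}+b_{ij}z}{2})\,dz$. This requires two genuinely nontrivial steps: (i) monotonicity of $b\mapsto F_{k/2}(\pm|a|,b)$ on $[0,1\mp|a|]$ to replace $b_{ij}$ by its extremal value $1-|a_{ij}|$ (the constraints $|a_{ij}|\le1$, $|a_{ij}\pm b_{ij}|\le1$ coming from positivity of the post-collisional kinetic energies); and (ii) the uniform bound $|a_{ij}|\le\tfrac{|m_i-m_j|}{m_i+m_j}$ for all $v,v_*$, which the paper proves by decomposing $v_*=\lambda v+v^\bot$ and verifying an algebraic inequality. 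Only after (i) and (ii), together with the monotonicity in $|a|$ of the resulting expression, does one arrive at the mass-only constant $C_k$. Your appeal to ``convexity of $\psi_k$'' plays no role here — the paper never bounds $\psi_k$ by an endpoint value, it computes the integral exactly and bounds the resulting rational function of $(a_{ij},b_{ij})$. Finally, the factor $l_{b_{ij}}/b_{ij}^\infty$ is pure bookkeeping: it cancels the $\tfrac{4\pi b_{ij}^\infty}{l_{b_{ij}}}$ buried in the definition $\eqref{Ck}$ so that the inequality reads $\tfrac{8\pi}{k+2}\times[\cdots]$, which is what the exact spherical integration gives; it does not arise from relating $\int_{\S^2}d\sigma$ to the kernel constants via (H4).
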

\bigskip

\begin{proof}[Proof of Proposition \ref{prop:povzner}]
By definition of $v'$ and $v'_*$ we can expand $\abs{v'}^2$ and $\abs{v'_*}^2$ as follows
\begin{eqnarray*}
m_i\abs{v'}^2 &=& E \frac{1 + a_{ij}+b_{ij}\langle e,\sigma \rangle}{2}
\\m_i\abs{v'_*}^2 &=& E \frac{1 - a_{ij} - b_{ij}\langle e,\sigma \rangle}{2}
\end{eqnarray*}
where we denoted by $e$ the direction of the vector $m_iv+m_jv_*$ and we defined 
\begin{equation}\label{Eab}
\begin{split}
E &= m_i\abs{v}^2 + m_j\abs{v_*}^2,
\\ a_{ij} &= \frac{1}{E}\frac{m_i-m_j}{m_i+m_j}\cro{m_i\frac{m_i-m_j}{m_i+m_j}\abs{v}^2+m_j\frac{m_j-m_i}{m_i+m_j}\abs{v_*}^2+ 4\frac{m_im_j}{m_i+m_j}\langle v,v_*\rangle},
\\ b_{ij} &= \frac{1}{E}\frac{4m_im_j}{(m_i+m_j)^2}\abs{v-v_*}\abs{m_iv+m_jv_*}.
\end{split}
\end{equation}
We will drop the dependencies on $v$ and $v_*$
The first important property to notice is that for all $\sigma$ on $\S^2$, $m_i\abs{v'}^2$ and $m_j\abs{v'_*}^2$ are positive and this implies
\begin{equation}\label{absolute a b}
\abs{a_{ij}}\leq 1,\quad \abs{a_{ij}+b_{ij}} \leq 1 \quad\mbox{and}\quad \abs{a_{ij}-b_{ij}} \leq 1.
\end{equation}

\bigskip
Plugging these equalities inside the integral yields
\begin{eqnarray}
&&\int_{\S^2} \cro{m_i^{k/2}\abs{v'}^k + m_j^{k/2}\abs{v'_*}^k}\:d\sigma \nonumber
\\&&\quad\quad\quad= E^{k/2}\int_{\S^2}\cro{\pa{\frac{1 + a_{ij}+b_{ij}\langle e,\sigma \rangle}{2}}^{k/2} + \pa{\frac{1 - a_{ij} - b_{ij}\langle e,\sigma \rangle}{2}}^{k/2}} \:d\sigma \nonumber
\\&& \quad\quad\quad= 2\pi E^{k/2} \int_{-1}^1 \cro{\pa{\frac{1 + a_{ij}+b_{ij}z}{2}}^{k/2} + \pa{\frac{1 - a_{ij} - b_{ij}z}{2}}^{k/2}}\:dz \nonumber
\\&& \quad\quad\quad= \frac{8\pi}{k+2}E^{k/2}\cro{F_{k/2}(\abs{a_{ij}},b_{ij})+ F_{k/2}(-\abs{a_{ij}},b_{ij})} \label{Povnzer 1st}
\end{eqnarray}
where
$$F_p(a,b) = \frac{\pa{\frac{1+a+b}{2}}^{p+1}-\pa{\frac{1+a-b}{2}}^{p+1}}{b}.$$

When $\abs{a}\leq 1$, a mere study of the function $F_p(a,\cdot)$ shows that the latter function is increasing on $[0,1+a]$ if $p \geq 0$. Therefore, since $\abs{a_{ij}}\leq 1$, for $k\geq 2$ we can bound $F_{k/2}(\abs{a_{ij}},b_{ij})$ and $F_{k/2}(-\abs{a_{ij}},b_{ij})$ with their value at an upper bound on $b_{ij}$. Using $\eqref{absolute a b}$ we see that $0\leq b_{ij} \leq 1-\abs{a_{ij}}$. Bounding into $\eqref{Povnzer 1st}$, this gives
\begin{equation}\label{Povzner 2nd}
\int_{\S^2} \cro{m_i^{k/2}\abs{v'}^k + m_j^{k/2}\abs{v'_*}^k}\:d\sigma \leq \frac{8\pi}{k+2}E^{k/2}\frac{1-\abs{a_{ij}}^{k/2+1}+(1-\abs{a_{ij}})^{k/2+1}}{1-\abs{a_{ij}}}.
\end{equation}

\bigskip
To conclude the proof it suffices to see that the function
$$a \mapsto \frac{1-\abs{a}^{k/2+1}+(1-\abs{a})^{k/2+1}}{1-\abs{a}}$$
is increasing on $[0,1]$. Proposition \ref{prop:povzner} will follow if $\abs{a_{ij}}\leq \abs{m_i-m_j}/(m_i+m_j)$.
\par Going back to the definition of $a_{ij}$ and decomposing $v_*$ as $v_* = \lambda v + v^\bot$ with $v^\bot$ orthogonal to $v$ we see that
\begin{equation*}
\abs{a_{ij}} = \frac{1}{E}\frac{\abs{m_i-m_j}}{m_i+m_j}\abs{\pa{\frac{m_i^2+m_im_j(4\lambda - \lambda^2-1)+\lambda^2m_j^2}{m_i+m_j}}\abs{v}^2 + m_j\frac{m_i-m_j}{m_i+m_j}\abs{v^\bot}^2}.
\end{equation*}
But then, direct computations show first
$$\abs{m_j\frac{m_i-m_j}{m_i+m_j}} \leq m_j$$
and second
\begin{equation*}
\begin{split}
&\abs{m_i^2+m_im_j(4\lambda - \lambda^2-1)+\lambda^2m_j^2}^2-(m_i+m_j)^2(m_i+\lambda^2 m_j)^2 
\\&\quad\quad\quad= -4m_im_j(1-\lambda)^2(m_i +\lambda m_j)^2 \leq 0.
\end{split}
\end{equation*}
Hence
$$\abs{a_{ij}} \leq \frac{\abs{m_i-m_j}}{m_i+m_j} \frac{(m_i+\lambda^2m_j)\abs{v}^2+m_j\abs{v^\bot}^2}{E}$$
which terminates the proof of the proposition.
\end{proof}
\bigskip
Now we can prove the estimate on $\mathbf{B^{(\delta)}}$.
\begin{proof}[Proof of Lemma \ref{lem:controlB}]
We use the definition $\mathbf{\bar{w_k}} = (1+m_i^{k/2}\abs{v}^k)_{1\leq i \leq N}$. Moreover, as we will drastically bound $\mathbf{B^{(\delta)}}(\mathbf{f})$ by the absolute value inside the integral in $v$, it is enough to show Lemma \ref{lem:controlB} only for $f=f(v)$.
\par With the multi-species Povzner inequality (Proposition \ref{prop:povzner}, the proof follows closely the proof of \cite[Lemma 4.4]{GMM} with appropriate characteristic functions that fits the invariance of the elastic collisions $\eqref{elasticcollision}$.

\bigskip
First we bound the truncation function from above by cutting the integral in the following way
\begin{equation*}
\begin{split}
&\norm{\mathbf{B^{(\delta)}}(\mathbf{f})}_{L^1_v(\mathbf{\bar{w_k}})}
\\&\:\leq \sum_{i,j=1}^N C_{ij}^{\Phi} \int_{\R^6 \times\S^2}\left(1-\Theta_{\delta}\right)\left[\mu_j^{'^*}|f_i'| + \mu_i'|f_j{'^*}| + \mu_i|f_j^*|\right]b_{ij}(\cos \theta)|v -v_*|^\gamma\bar{w_k}_idv_*d\sigma
 \\&\:\leq \sum_{i,j=1}^N C_{ij}^{\Phi}\int_{\{|\cos\theta| \in [1-\delta, 1]\}} b_{ij}(\cos \theta)|v -v_*|^\gamma\mu_j^*|f_i|(\bar{w_k}_i' + \bar{w_k}_j^{'*} + \bar{w_k}_j^*) dvdv_*d\sigma 
\\&\:\quad+\sum_{i,j=1}^N C_{ij}^{\Phi}\int_{|v-v_*|\leq \delta} b_{ij}(\cos \theta)|v -v_*|^\gamma\mu_j^*|f_i|(\bar{w_k}_i' + \bar{w_k}_j^{'*} + \bar{w_k}_j^*) dvdv_*d\sigma
\\&\:\quad+\sum_{i,j=1}^N C_{ij}^{\Phi} \int_{\left\{|v|\geq \delta^{-1}~\textrm{or} ~|v-v_*|\geq \delta^{-1}\right\}} \left[\mu_j^{'^*}|f_i'| + \mu_i'|f_j^{'^*}| + \mu_i |f_j^*|\right]b_{ij}(\cos \theta)|v -v_*|^\gamma\bar{w_k}_i.
\end{split}
\end{equation*}
Note that we used the change of variables $(v,v_*, \sigma) \to (v',v'^*, v-v_*/\abs{v-v_*})$ for $\mu_j^{'*}f_i'$. Then for $\mu_i'f_j^{'*}$ we used first  $(v,v_*,\sigma) \to (v_*,v,-\sigma)$ which sends $(v'_{ij},v_{ij}^{'*})$ to $(v^{'*}_{ji},v_{ji}^{'})$ and then relabelling $i$ and $j$ we come back to the first term $\mu_j^{'*}f_i'$.
\par Defining the characteristic function $\chi_A$ on the set
$$A = \br{\sqrt{m_i|v|^2 + m_j|v_*|^2}\geq \min\br{\sqrt{m_i},\sqrt{m_j}}\delta^{-1} ~~\textrm{or}~~|v-v_*|\geq \delta^{-1}}$$
we can bound $b(\cos \theta)$ by its supremum $b_\infty$ and use the equivalence between $\nu_i$ and $1+\abs{v}^\gamma$ to get
\begin{equation}\label{control B start}
\begin{split}
&\norm{\mathbf{B^{(\delta)}}(\mathbf{f})}_{L^1_v(\mathbf{\bar{w_k}})} 
\\&\quad\leq \delta C(k)\norm{\mathbf{f}}_{L^1_v(\mathbf{\bar{w_k}\boldsymbol{\nu}})}
\\&\quad\quad+ \sum_{i,j=1}^N C_{ij}^{\Phi}\int_{\R^3 \times \R^3 \times \S^2}\chi_A\left[\mu_j'^*|f_i'| + \mu_i'|f_j^{'*}| + \mu_i|f_j^*|\right]b_{ij}(\cos \theta)|v-v_*|^\gamma\bar{w_k}_i\:dvdv_*d\sigma
\end{split}
\end{equation}
where $C(k)$ will denote any positive constant independent on $\delta$ and $\mathbf{f}$.
\bigskip

We shall deal with the second term on the right-hand side of $\eqref{control B start}$ thanks to the Povzner inequality. Indeed, the set $A$ is invariant by the changes of variables already mentioned (remember that when changing $v$ to $v_*$ we also change $i$ and $j$) and therefore

\begin{equation}\label{control chiA}
\begin{split}
&\sum_{i,j=1}^N C_{ij}^{\Phi} \int_{\R^3 \times \R^3 \times \S^2}\chi_A\left[\mu_j'^*|f_i'| + \mu_i'|f_j^{'*}| + \mu_i|f_j^*|\right]b_{ij}(\cos \theta)|v-v_*|^\gamma\bar{w_k}_i\:dvdv_*d\sigma 
\\&\quad\quad= \sum_{i,j=1}^N C_{ij}^{\Phi}\int_{\R^3 \times \R^3 \times \S^2}\chi_A b_{ij}(\cos \theta)|v -v_*|^\gamma\mu_j^*|f_i|(\bar{w_k}_i^{*'} + \bar{w_k}_i' + \bar{w_k}_i^*) \:dvdv_*d\sigma
\\&\quad\quad\leq \sum_{i,j=1}^N C_{ij}^{\Phi}b_{ij}^\infty \int_{\R^3 \times \R^3}\chi_A |v -v_*|^\gamma \mu_j^*|f_i|\pa{\int_{\S^2}\cro{\bar{w_k}_i' + \bar{w_k}_j^{'*} - \bar{w_k}_j^*-\bar{w_k}_i}d\sigma} dvdv_*
\\&\quad\quad\quad +8\pi  \sum_{i,j=1}^N C_{ij}^{\Phi}b_{ij}^\infty\int_{\R^3 \times \R^3}\chi_A |v -v_*|^\gamma\mu_j^*|f_i|\bar{w_k}_i^* \:dvdv_* 
\\&\quad\quad\quad +4\pi \sum_{i,j=1}^N C_{ij}^{\Phi}b_{ij}^\infty \int_{\R^3 \times \R^3}\chi_A |v -v_*|^\gamma\mu_j^*|f_i|\bar{w_k}_i \:dvdv_*
\end{split}
\end{equation}

We can use Proposition \ref{prop:povzner} for the first term on the right-hand side of the inequality. Indeed,
\begin{equation*}
\begin{split}
&\int_{\S^2}\cro{\bar{w_k}_j' + \bar{w_k}_j^{'*} - \bar{w_k}_j^*-\bar{w_k}_i}\:d\sigma
\\ &\quad\quad\quad\leq \frac{l_{b_{ij}}}{b_{ij}^\infty}C_k \pa{m_i\abs{v}^2+m_j\abs{v_*}^2}^{k/2} -4\pi m_i^{k/2}\abs{v}^k -4\pi m_j^{k/2}\abs{v_*}^k
\\&\quad\quad\quad\leq 2^{k/2}\frac{l_{b_{ij}}}{b_{ij}^\infty}C_k\cro{(m_i\abs{v}^2)^{k/2-1/2}(m_j\abs{v_*}^2)^{1/2}+(m_i\abs{v}^2)^{1/2}(m_j\abs{v_*}^2)^{k/2-1/2}} 
\\&\quad\quad\quad\quad - 4\pi\pa{1-\frac{l_{b_{ij}}}{4\pi b_{ij}^\infty}C_k}\cro{m_i^{k/2}\abs{v}^k + m_j^{k/2}\abs{v_*}^k}
\end{split}
\end{equation*}
For $k\geq k_0$ we have that $C_k<1$, hence $\frac{l_{b_{ij}}}{4\pi b_{ij}^\infty}C_k <1$.  We can thus plug this back into $\eqref{control chiA}$ we find, recalling that $\bar{w_k}_i = 1 +m_i^{k/2}\abs{v}^k$
\begin{equation*}
\begin{split}
&\sum_{i,j=1}^N C_{ij}^{\Phi} \int_{\R^3 \times \R^3 \times \S^2}\chi_A\left[\mu_j'^*|f_i'| + \mu_i'|f_j^{'*}| + \mu_i|f_j^*|\right]b_{ij}(\cos \theta)|v-v_*|^\gamma\bar{w_k}_i\:dvdv_*d\sigma
\\&\quad\quad\leq C(k)\sum_{i,j=1}^N \int_{\R^3 \times \R^3}\chi_A |v -v_*|^\gamma \mu_j^*|f_i|\cro{\abs{v}^{k-1}\abs{v_*}+\abs{v}\abs{v_*}^{k-1}} \:dvdv_*
\\&\quad\quad\quad + 12\pi \sum_{i,j=1}^N C_{ij}^{\Phi}b_{ij}^\infty\int_{\R^3 \times \R^3}\chi_A |v -v_*|^\gamma\mu_j^*|f_i|\:dvdv_* +  
\\&\quad\quad\quad +8\pi \sum_{i,j=1}^N C_{ij}^{\Phi}b_{ij}^\infty \int_{\R^3 \times \R^3}\chi_A |v -v_*|^\gamma\mu_j^*|f_i|m_j^{k/2}\abs{v_*}^k \:dvdv_* 
\\&\quad\quad\quad + C_k \sum_{i,j=1}^N C_{ij}^{\Phi} l_{b_{ij}}\int_{\R^3 \times \R^3}\chi_A |v -v_*|^\gamma\mu_j^*|f_i|m_i^{k/2}\abs{v}^k \:dvdv_*
\end{split}
\end{equation*}
From here we can use that
$$\chi_A(v,v_*) \leq 2\max_{i,j}\br{m_i,m_j}\delta(m_i\abs{v}^2+m_j\abs{v^*}^2)$$
and the fact that $\gamma +1<k_0\leq k$ to bound the first, second and third term on the right-hand side by $\delta C(k)\norm{\mathbf{f}}_{L^1_v(\mathbf{\bar{w_k}})}$. And finally, we exactly have the definition of $\nu_i(v)$ in the last term on the right-hand side. This gives
\begin{equation}\label{control B last}
\begin{split}
&\sum_{i,j=1}^N C_{ij}^{\Phi}\int_{\R^3 \times \R^3 \times \S^2}\chi_A\left[\mu_j'^*|f_i'| + \mu_i'|f_j^{'*}| + \mu_i|f_j^*|\right]b_{ij}(\cos \theta)|v-v_*|^\gamma\bar{w_k}_i\:dvdv_*d\sigma 
\\&\quad\quad\quad\leq C_k\sum\limits_{i=1}^N\norm{f_i}_{L^1_v(\bar{w_k}_i\nu_i)}+\delta C(k)\norm{\mathbf{f}}_{L^1_v(\mathbf{\bar{w_k}})}.
\end{split}
\end{equation}
Combining $\eqref{control B start}$ and $\eqref{control B last}$ yields the desired estimate.
\end{proof}
\bigskip

We conclude this subsection with a control on the nonlinear term. 
\bigskip
\begin{lemma}\label{lem:controlQ}
Define $\mathbf{\tilde{Q}}(\mathbf{f},\mathbf{g})$ by 
\begin{equation}\label{tildeQ}
\forall\: 1\leq i \leq N, \quad \tilde{Q}_i(\mathbf{f},\mathbf{g}) = \frac{1}{2}\sum\limits_{j=1}^N \left(Q_{ij}(f_i,g_j) + Q_{ij}(g_i,f_j)\right).
\end{equation}
Then for all $\mathbf{f}$, $\mathbf{g}$  such that $\mathbf{\tilde{Q}}(\mathbf{f},\mathbf{g})$ is well-defined, the latter belongs to $\cro{\mbox{Ker}(L)}^\bot$:
\begin{equation*}
\pi_\mathbf{L}\pa{\mathbf{\tilde{Q}}(\mathbf{f},\mathbf{g})}=0.
\end{equation*}
Moreover, there exists $C_Q >0$ such that for all $i$ in $\br{1,\dots,N}$ and every $\mathbf{f}$ and $\mathbf{g}$,
\begin{equation*}
\begin{split}
\norm{\tilde{Q}_i(\mathbf{f},\mathbf{g})}_{L^1_vL^\infty_x\pa{\langle v \rangle^k}} \leq C_Q \Big[&\norm{f_i}_{L^1_vL^\infty_x\pa{\langle v \rangle^k}}\norm{\mathbf{g}}_{L^1_vL^\infty_x\pa{\langle v \rangle^k\boldsymbol\nu}} 
\\&+ \norm{f_i}_{L^1_vL^\infty_x\pa{\nu_i\langle v \rangle^k}}\norm{\mathbf{g}}_{L^1_vL^\infty_x\pa{\langle v \rangle^k}}\Big],
\end{split}
\end{equation*}
The constant $C_Q$ is explicit and depends only on $k$, $N$ and the kernel of the collision operator.
\end{lemma}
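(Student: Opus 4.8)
\textbf{Proof plan for Lemma \ref{lem:controlQ}.}

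The plan is to prove the two assertions in turn, the first being essentially a symmetry/conservation statement and the second a weighted $L^1$ estimate on the bilinear operator. For the fact that $\pi_\mathbf{L}(\mathbf{\tilde{Q}}(\mathbf{f},\mathbf{g}))=0$, I would argue directly from the weak form \eqref{symmetry property Qij} of the collision operator. Writing the components of $\pi_\mathbf{L}$ using the basis $\eqref{piL}$, it suffices to check that $\sum_i\int_{\R^3}\tilde Q_i(\mathbf{f},\mathbf{g})\psi_i\,dv=0$ for every $\boldsymbol\psi\in\mbox{Span}\{\mathbf{e_1},\dots,\mathbf{e_N},v_1\mathbf{m},v_2\mathbf{m},v_3\mathbf{m},|v|^2\mathbf{m}\}$. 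Since $\tilde Q_i$ is the symmetrised bilinear operator, polarising the weak form \eqref{symmetry property Qij} around $\mathbf{F}=\boldsymbol\mu+\mathbf{f}$ (or more simply polarising the quadratic identity $\eqref{invariantsQij}$) produces for each such $\boldsymbol\psi$ a double integral whose integrand carries the factor $(\psi_i'+\psi_j'^*-\psi_i-\psi_j^*)$, which vanishes identically by the collision invariants; hence the sum is zero for all such $\boldsymbol\psi$, which is exactly $\pi_\mathbf{L}(\mathbf{\tilde{Q}}(\mathbf{f},\mathbf{g}))=0$.

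For the quantitative bound, I would start from the gain/loss splitting $Q_{ij}(f_i,g_j)=Q_{ij}^+(f_i,g_j)-Q_{ij}^-(f_i,g_j)$ with $Q_{ij}^-(f_i,g_j)(v)=f_i(v)\big(\int_{\R^3\times\S^2}B_{ij}g_j^*\,d\sigma dv_*\big)$. The loss term is immediate: by (H2)--(H4), $\int_{\S^2}b_{ij}\,d\sigma=l_{b_{ij}}$ and $\Phi_{ij}(|v-v_*|)\le C|v-v_*|^\gamma\le C\langle v\rangle^\gamma\langle v_*\rangle^\gamma$, so $Q_{ij}^-(f_i,g_j)$ is pointwise bounded by $C|f_i(v)|\langle v\rangle^\gamma\int_{\R^3}|g_j(v_*)|\langle v_*\rangle^\gamma dv_*$; taking $\sup_x$ and then the $L^1_v(\langle v\rangle^k)$ norm, and using $\langle v\rangle^\gamma\le\nu_i(v)/\nu_i^{(0)}$ from $\eqref{nu0nu1}$, gives a term controlled by $\|f_i\|_{L^1_vL^\infty_x(\nu_i\langle v\rangle^k)}\|\mathbf{g}\|_{L^1_vL^\infty_x(\langle v\rangle^k)}$ (a $\langle v\rangle^\gamma\le\langle v\rangle^k$ bound absorbs the $g$-side weight since $k\ge k_0>\gamma$). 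For the gain term, I would first bound $\sup_x$ inside, then use the pre-post collisional change of variables $(v,v_*)\mapsto(v',v'_*)$ together with $\langle v\rangle^k\le C_k(\langle v'\rangle^k+\langle v'_*\rangle^k)$ — this is the place where the multi-species Povzner-type control of Proposition \ref{prop:povzner} (or just the elementary inequality $|v'|^2+|v'_*|^2$ being comparable to a combination of $|v|^2,|v_*|^2$, via $\eqref{elasticcollision}$) is invoked — to transfer the weight onto the (relabelled) $f_i$ and $g_j$ arguments; the angular integral contributes the finite constants $b_{ij}^\infty,l_{b_{ij}}$ from $\eqref{constantsbij}$, and the kinetic factor $|v-v_*|^\gamma$ is split as before. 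After relabelling one recovers a product of an $L^1_v(\langle v\rangle^k)$ norm of one factor with an $L^1_v(\langle v\rangle^k\nu)$ norm of the other, and symmetrising in $\mathbf{f},\mathbf{g}$ (which is exactly the role of the $\tfrac12(Q_{ij}(f_i,g_j)+Q_{ij}(g_i,f_j))$ symmetrisation) yields the stated two-term bound. Summing over $j$ (finitely many species) produces the final constant $C_Q$ depending only on $k$, $N$ and the collision kernel.

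The main obstacle I anticipate is purely bookkeeping rather than conceptual: keeping track of which factor ($f$ or $g$) ends up carrying the extra collision-frequency weight $\nu$ after the change of variables and the weight transfer, and making sure that the asymmetry introduced by $m_i\neq m_j$ in the definitions of $v',v'_*$ does not spoil the Jacobian of $(v,v_*)\mapsto(v',v'_*)$ — but this Jacobian is still a constant depending only on $m_i,m_j$ (as already used in Lemma \ref{lem:kernelK}), so the estimate goes through. One should also be slightly careful that for the gain term the weight $\langle v\rangle^k$ one wants to estimate is at the \emph{post}-collisional velocity $v$ while the integration variables after the change of variables are the pre-collisional ones, so the inequality $\langle v\rangle^k\lesssim\langle v'\rangle^k+\langle v'_*\rangle^k$ must be applied in the correct direction; this is legitimate since $|v|^2+|v_*|^2$ and $|v'|^2+|v'_*|^2$ are comparable up to the mass-dependent constants by $\eqref{elasticcollision}$.
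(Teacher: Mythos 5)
Your proposal is correct and follows essentially the same route as the paper's proof: gain/loss splitting, the pre--post collisional change of variables to move the weight, $|v-v_*|^\gamma \leq \langle v\rangle^\gamma + \langle v_*\rangle^\gamma$ for the kinetic factor, and $\nu_i(v)\sim\langle v\rangle^\gamma$ to recognize the collision-frequency weight, with the orthogonality part deduced from the collision-invariant identities. Two small remarks: the paper actually estimates $L^1_vL^q_x$ norms via Minkowski's integral inequality and then sends $q\to\infty$, which does the same work as your ``take $\sup_x$ inside'' step; and you do not need the Povzner inequality here --- conservation of kinetic energy $\eqref{elasticcollision}$ already gives the elementary bound $\langle v'\rangle^k + \langle v\rangle^k \leq C_{ij}\langle v\rangle^k\langle v_*\rangle^k$ (with $C_{ij}$ depending only on $m_i/m_j$), which is exactly what the paper uses.
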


\bigskip
\begin{proof}[Proof of Lemma $\ref{lem:controlQ}$]
The orthogonality property is well-known for the single-species Boltzmann operator \cite[Appendix A.2]{Bri1} and \cite{BGS} and follows from the same methods as to prove $\eqref{symmetry property Qij}$ and $\eqref{invariantsQij}$.
\par The estimate also follows standard computations from the mono-species case, we adapt them to the case of multi-species for the sake of completeness. Since we are dealing with hard potential kernels, we can decompose the bilinear operator $Q_{ij}(f_i,g_j)$, for any $i$, $j$ in $\br{1,\dots,N}$, as
\begin{equation*}
\begin{split}
Q_{ij}(f_i,g_j) =&\int_{\R^3\times \mathbb{S}^{2}}B_{ij}\left(|v - v_*|,\mbox{cos}\:\theta\right)f_i'g_j^{'*}\:dv_*d\sigma 
\\&- \int_{\R^3\times \mathbb{S}^{2}}B_{ij}\left(|v - v_*|,\mbox{cos}\:\theta\right)f_ig_j^*\:dv_*d\sigma.
\end{split}
\end{equation*}
By Minkowski integral inequality we have for all $q$ in $[1,\infty)$,
\begin{equation*}
\begin{split}
\int_{\R^3}\langle v \rangle^k \cro{\int_{\T^3}\left|Q_{ij}(f_i,g_j)\right|^qdx}^{1/q}dv \leq& \int_{\S^2\times\R^3\times\R^3}\langle v \rangle^k\cro{\int_{\T^3}\abs{B_{ij}f_i'g_j^{'*}}^qdx}^{1/q}d\sigma dv_* dv
\\&+ \int_{\S^2\times\R^3\times\R^3}\langle v \rangle^k\cro{\int_{\T^3}\abs{B_{ij}f_ig_j^*}^qdx}^{1/q}d\sigma dv_* dv.
\end{split}
\end{equation*}

Since the function $(v,v_*) \mapsto (v',v'_*)$ is its own inverse and does not change the value of $B_{ij}\pa{\abs{v-v_*},\cos\theta}$, we make the latter change of variables in the first integral and we obtain
\begin{equation*}
\begin{split}
&\int_{\R^3}\langle v \rangle^k \cro{\int_{\T^3}\left|Q_{ij}(f_i,g_j)\right|^q dx}^{1/q}dv 
\\&\quad\quad\quad\leq \int_{\S^2\times\R^3\times\R^3}\pa{\langle v \rangle^k+\langle v' \rangle^k}\cro{\int_{\T^3}\abs{B_{ij}f_ig_j^*}^qdx}^{1/q}d\sigma dv_* dv
\\&\quad\quad\quad\leq C_{ij}\int_{\S^2\times\R^3\times\R^3}\langle v \rangle^k\langle v_* \rangle^k\abs{v-v_*}^\gamma\cro{\int_{\T^3}\abs{f_ig_j^*}^qdx}^{1/q}d\sigma dv_* dv.
\end{split}
\end{equation*}
The constant $C_{ij}>0$ will stand for any constant depending only on $m_i$, $m_j$, the integral over the sphere of $b_{ij}$ and $C^\Phi_{ij}$ (see assumptions on the kernel $B_{ij}$). Finally we use the fact that $\abs{v-v_*}^\gamma \leq \langle v \rangle^\gamma + \langle v^* \rangle^\gamma$.
\begin{equation*}
\begin{split}
&\int_{\R^3}\langle v \rangle^k \cro{\int_{\T^3}\left|Q_{ij}(f_i,g_j)\right|^qdx}^{1/q}dv 
\\&\quad\quad\quad\leq C_{ij}\int_{\S^2\times\R^3\times\R^3}\pa{\langle v \rangle^{k+\gamma}\langle v_* \rangle^{k}+\langle v \rangle^k\langle v_* \rangle^{k+\gamma}}\cro{\int_{\T^3}\abs{f_ig_j^*}^qdx}^{1/q}d\sigma dv_* dv.
\end{split}
\end{equation*}
We take the limit as $q$ tends to infinity and conclude
\begin{equation*}
\begin{split}
\norm{Q_{ij}(f_i,g_j)}_{L^1_vL^\infty_x\pa{\langle v \rangle^k}}\leq C_{ij}\Big[&\norm{f_i}_{L^1_vL^\infty_x\pa{\langle v \rangle^{k}}}\norm{g_j}_{L^1_vL^\infty_x\pa{\langle v \rangle^{k+\gamma}}}
\\&+ \norm{f_i}_{L^1_vL^\infty_x\pa{\langle v \rangle^{k+\gamma}}}\norm{g_j}_{L^1_vL^\infty_x\pa{\langle v \rangle^{k}}}\Big].
\end{split}
\end{equation*}
We remind $\eqref{nu0nu1}$ which states that $\nu_i(v)\sim \langle v \rangle^\gamma$ and the lemma follows after summing over $j$, $C_Q$ being the maximum of all the $C_{ij}$.
\end{proof}
\bigskip


\subsubsection{Study of the equations in $L^1_vL^\infty_x\pa{\langle v \rangle^k}$}\label{subsubsec:cauchyE}

We start with the well-posedness of the system $\eqref{f1}$ in $L^1_vL^\infty_x\pa{\langle v \rangle^k}$. 
\bigskip
\begin{prop}\label{prop:cauchyE}
Let $k>k_0$. Let $\mathbf{f_0}$ be in $L^1_vL^\infty_x\pa{\langle v \rangle^k}$ and $\mathbf{g}=\mathbf{g}(t,x,v)$ be in $L^\infty_tL^1_vL^\infty_x\pa{\boldsymbol\nu\langle v \rangle^k}$. There exist $\eta_1$, $\lambda_1 >0$ such that if
$$\norm{\mathbf{f_0}}_{L^1_vL^\infty_x\pa{\langle v \rangle^k}}\leq \eta_1 \quad\mbox{and}\quad \exists C,\:\lambda>0\quad \norm{\mathbf{g}(t)}_{L^1_vL^\infty_x\pa{\boldsymbol\nu\langle v \rangle^k}}\leq C\norm{\mathbf{f_0}}_{L^1_vL^\infty_x\pa{\langle v \rangle^k}}e^{-\lambda t}$$
then there exists a function $\mathbf{f_1}$ in $L^\infty_tL^1_vL^\infty_x\pa{\langle v \rangle^k}$ such that
$$\partial_t \mathbf{f_1} = \mathbf{G_1}\pa{\mathbf{f_1}} + \mathbf{Q}\pa{\mathbf{f_1}+\mathbf{g}} \quad\mbox{and}\quad \mathbf{f_1}(0,x,v)= \mathbf{f_0}(x,v).$$
Moreover, any solution $\mathbf{f_1}$ satisfies
$$\forall t\geq 0, \quad \norm{\mathbf{f_1}(t)}_{L^1_vL^\infty_x\pa{\langle v \rangle^k}} \leq C_1 e^{-\lambda_1 t}\norm{\mathbf{f_0}}_{L^1_vL^\infty_x\pa{\langle v \rangle^k}}.$$
The constants $C_1$, $\delta_1$, $\eta_1$ and $\lambda_1$ are independent of $\mathbf{f_0}$ and $\mathbf{g}$ and depends on $N$, $k$ and the collision kernel.
\end{prop}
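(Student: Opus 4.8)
The plan is to exploit the hypodissipativity of $\mathbf{G_1}$ in $L^1_vL^\infty_x\pa{\langle v \rangle^k}$ together with the gain of one power of $\boldsymbol\nu$ furnished by the bilinear estimate of Lemma \ref{lem:controlQ}, through an iterative scheme for existence and a continuation/Grönwall argument for the exponential decay. Throughout one works with the equivalent weight $\mathbf{\bar{w_k}}$ of Lemma \ref{lem:controlB}.

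\textbf{Hypodissipativity of $\mathbf{G_1}$.} Writing the equation $\partial_t\mathbf{h} + v\cdot\nabla_x\mathbf{h} = \mathbf{B}\pa{\mathbf{h}} - \boldsymbol\nu(v)\mathbf{h}$ in Duhamel form along the characteristics of the free transport (straight lines of constant speed), and using that the transport operator leaves $\norm{h_i(t,\cdot,v)}_{L^\infty_x}$ unchanged while $-\boldsymbol\nu$ only contributes the damping $e^{-\nu_i(v)t}$, one gets for each fixed $v$ a differential inequality for $\norm{h_i(t,\cdot,v)}_{L^\infty_x}$ whose source is bounded, after taking absolute values inside the collision integral, by $B_i$ evaluated at $\sup_x\abs{\mathbf{h}(t,x,\cdot)}$. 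Integrating against $\mathbf{\bar{w_k}}$, summing over $i$, and invoking Lemma \ref{lem:controlB} (with the choice $\delta_k$ of Remark \ref{rem:kq*}, so that $C_B < 1$) yields
$$\frac{d}{dt}\norm{\mathbf{h}(t)}_{L^1_vL^\infty_x\pa{\langle v \rangle^k}} \leq -(1-C_B)\norm{\mathbf{h}(t)}_{L^1_vL^\infty_x\pa{\langle v \rangle^k\boldsymbol\nu}}.$$
Since $\nu_i(v) \geq \nu_i^{(0)} > 0$ by $\eqref{nu0nu1}$, this gives both an exponential decay for $S_{\mathbf{G_1}}(t)$ at an explicit rate $\lambda_{G_1}>0$ and a time-integrated control of the $\langle v \rangle^k\boldsymbol\nu$-norm; the existence of $S_{\mathbf{G_1}}(t)$ itself comes from a Duhamel fixed point around the explicit semigroup of $\mathbf{G_\nu} = -\boldsymbol\nu - v\cdot\nabla_x$, the contraction being ensured by the factor $C_B<1$, exactly as in \cite{GMM}.

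\textbf{Iterative scheme and uniform bounds.} Set $\mathbf{f_1}^{(0)}=0$ and let $\mathbf{f_1}^{(n+1)}$ solve, via Duhamel against $S_{\mathbf{G_1}}$,
$$\partial_t\mathbf{f_1}^{(n+1)} = \mathbf{G_1}\pa{\mathbf{f_1}^{(n+1)}} + \mathbf{\tilde{Q}}\pa{\mathbf{f_1}^{(n)}+\mathbf{g},\mathbf{f_1}^{(n)}+\mathbf{g}}, \qquad \mathbf{f_1}^{(n+1)}(0)=\mathbf{f_0}.$$
Fix $0 < \lambda_1 < \min\{\lambda_{G_1},\lambda\}$ and work in the norm $\mathcal{N}(\mathbf{h}) = \sup_{t\geq 0}e^{\lambda_1 t}\norm{\mathbf{h}(t)}_{L^1_vL^\infty_x\pa{\langle v \rangle^k}}$. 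Combining the differential inequality above (with $\lambda_1$ absorbed into the negative multiplicative term, possible since $\lambda_1$ is below the threshold coming from $1-C_B$ and $\nu_i^{(0)}$) with Lemma \ref{lem:controlQ}, whose cost of exactly one power of $\boldsymbol\nu$ is matched by the gain from $\mathbf{G_1}$, and with the decay hypothesis $\norm{\mathbf{g}(t)}_{L^1_vL^\infty_x\pa{\boldsymbol\nu\langle v \rangle^k}} \leq C\norm{\mathbf{f_0}}e^{-\lambda t}$, a Grönwall argument produces $\mathcal{N}\pa{\mathbf{f_1}^{(n+1)}} \leq C_*\norm{\mathbf{f_0}} + C_*\eta_1\,\mathcal{N}\pa{\mathbf{f_1}^{(n)}}$; for $\eta_1$ small enough the ball $\{\mathcal{N}(\mathbf{h}) \leq 2C_*\norm{\mathbf{f_0}}\}$ is stable. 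Applying the same estimate to $\mathbf{f_1}^{(n+1)}-\mathbf{f_1}^{(n)}$ and using bilinearity, $\mathbf{\tilde{Q}}(\mathbf{h},\mathbf{h})-\mathbf{\tilde{Q}}(\mathbf{h}',\mathbf{h}') = \mathbf{\tilde{Q}}(\mathbf{h}-\mathbf{h}',\mathbf{h}) + \mathbf{\tilde{Q}}(\mathbf{h}',\mathbf{h}-\mathbf{h}')$, gives a contraction factor $C\eta_1<1$, so the scheme converges in $L^\infty_tL^1_vL^\infty_x\pa{\langle v \rangle^k}$ to a solution $\mathbf{f_1}$ of $\eqref{f1}$.

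\textbf{Exponential decay.} For any solution $\mathbf{f_1}$, a continuation argument keeps $\norm{\mathbf{f_1}(t)}_{L^1_vL^\infty_x\pa{\langle v \rangle^k}}$ below a small threshold; then in the differential inequality the nonlinear source $\mathbf{\tilde{Q}}(\mathbf{f_1}+\mathbf{g},\mathbf{f_1}+\mathbf{g})$ is bounded by $C\eta_1\pa{\norm{\mathbf{f_1}(t)}_{L^1_vL^\infty_x\pa{\langle v \rangle^k\boldsymbol\nu}} + \norm{\mathbf{f_0}}e^{-\lambda t}}$ using the decay of $\mathbf{g}$, and the first term is absorbed into $-(1-C_B)\norm{\mathbf{f_1}(t)}_{L^1_vL^\infty_x\pa{\langle v \rangle^k\boldsymbol\nu}}$. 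This leaves $\frac{d}{dt}\norm{\mathbf{f_1}(t)}_{L^1_vL^\infty_x\pa{\langle v \rangle^k}} \leq -c\,\norm{\mathbf{f_1}(t)}_{L^1_vL^\infty_x\pa{\langle v \rangle^k}} + C\norm{\mathbf{f_0}}e^{-\lambda t}$ with $c>0$, and Grönwall yields $\norm{\mathbf{f_1}(t)}_{L^1_vL^\infty_x\pa{\langle v \rangle^k}} \leq C_1 e^{-\lambda_1 t}\norm{\mathbf{f_0}}_{L^1_vL^\infty_x\pa{\langle v \rangle^k}}$. I expect the main obstacle to be essentially bookkeeping — keeping the weights $\langle v \rangle^k$ and $\langle v \rangle^k\boldsymbol\nu$ consistent, checking that the transport term is harmless in the $L^1_vL^\infty_x$ topology, and fixing the smallness thresholds uniformly — the one genuinely structural ingredient, the strict inequality $C_B<1$, being already guaranteed by Lemma \ref{lem:controlB} for $k>k_0$.
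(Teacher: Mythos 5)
Your proof is correct and follows essentially the same route as the paper: both rest on the hypodissipativity furnished by Lemma \ref{lem:controlB} (the strict inequality $C_B<1$), the bilinear estimate of Lemma \ref{lem:controlQ} which trades one power of $\boldsymbol\nu$, an iterative scheme for existence, and a differential inequality plus Gr\"onwall for the a priori exponential decay. The only real difference is one of packaging: the paper's iteration puts $\mathbf{B}$ on the right-hand side and Duhamel-expands against the explicit semigroup of $-\boldsymbol\nu - v\cdot\nabla_x$, tracking $\norm{\cdot}_{L^1_vL^\infty_x(\mathbf{\bar{w_k}})}$ and $\int_0^t\norm{\cdot}_{L^1_vL^\infty_x(\mathbf{\bar{w_k}}\boldsymbol\nu)}\,ds$ as two coupled quantities whose sum contracts with factor $C_B + C\eta_1 < 1$, whereas you absorb $\mathbf{B}$ into the semigroup $S_{\mathbf{G_1}}$ and work with a time-exponentially-weighted norm $\mathcal N$ — equivalent in substance, and your explicit continuation argument to keep $\norm{\mathbf{f_1}(t)}$ small uniformly in $t$ actually makes the a priori decay step a touch more careful than the paper's terse statement.
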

\bigskip

\begin{proof}[Proof of Proposition \ref{prop:cauchyE}]
We start by showing the exponential decay and then prove existence. As a matter of fact, we saw in Lemma \ref{lem:controlB} that the natural weight to estimate $\mathbf{B}$ is $\mathbf{\bar{w_k}} = 1+\mathbf{m}^{k/2}\abs{v}^k$ which is equivalent to $\langle v \rangle^k$. We will therefore rather work in $L^1_vL^\infty_x\pa{\mathbf{\bar{w_k}}}$ which just modifies the definition for $C_1$, $\delta_1$ and $\eta_1$.

\bigskip
\textbf{Step 1: \textit{a priori} exponential decay.}
Suppose that $\mathbf{f_1}$ is a solution to the differential equation in $L^1_vL^\infty_x\pa{\mathbf{\bar{w_k}}}$ with initial data $\mathbf{f_0}$.
\par We recall that for $q$ in $[1,\infty)$,
$$\norm{\mathbf{f_1}}_{L^1_vL^q_x\left(\mathbf{\bar{w_k}}\right)} = \sum_{i=1}^N \int_{\R^3}\pa{1+m_i^{k/2}\abs{v}^k} \left(\int_{\T^3}\abs{f_{1i}}^q \:dx\right)^{1/q}\:dv.$$
Therefore we can compute for all $i$ in $\br{1,\dots,N}$

\begin{equation*}
\begin{split}
&\frac{d}{dt}\norm{f_{1i}}_{L^1_vL^q_x\pa{1+m_i^{k/2}\abs{v}^k}}
\\&\quad\quad\quad=\int_{\R^3}\pa{1+m_i^{k/2}\abs{v}^k} \norm{f_{1i}}^{1-q}_{L^q_x}\left(\int_{\T^3}\mbox{sgn}(f_{i1})\abs{f_{1i}}^{q-1}\partial_t f_{1i} \:dx\right)\:dv.
\end{split}
\end{equation*}

Observing that
$$\partial_t f_{1i} = -v\cdot\nabla_xf_{1i} -\nu_i(v)f_{1i}+ B_{i}\pa{\mathbf{f_1}} + Q_i\pa{\mathbf{f_1}+\mathbf{g}},$$
that the transport gives null contribution
$$\int_{\T^3}\mbox{sgn}(f_{1i})\abs{f_{1i}}^{q-1} v\cdot\nabla_x f_{1i} \:dx = \frac{1}{q}v\cdot\int_{\T^3} \nabla_x \left(\abs{f_{1i}}^q\right) \:dx =0,$$
that the multiplicative part gives a negative contribution,
$$-\int_{\T^3}\nu_i(v)f_{1i}\:\mbox{sgn}(f_{1i})\abs{f_{1i}}^{q-1} \:dx \leq -\nu_i(v)\norm{f_{1i}}^{q}_{L^q_x}$$
and that by H\"older inequality with $q$ and $q/(q-1)$,
\begin{equation}\label{holder}
\abs{\int_{\T^3}\mbox{sgn}(f_{1i})\abs{f_{1i}}^{q-1}g_i\:dx} \leq \norm{f_{1i}}_{L^q_x}^{q-1}\norm{g_i}_{L^q_x},
\end{equation}
we deduce 
\begin{equation*}
\begin{split}
\frac{d}{dt}\norm{f_{i1}}_{L^1_vL^q_x\left(1+m_i^{k/2}\abs{v}^k\right)} \leq &-\norm{f_{1i}}_{L^1_vL^q_x\pa{\nu_i(1+m_i^{k/2}\abs{v}^k)}}+\norm{B_i\pa{\mathbf{f_1}}}_{L^1_vL^q_x\pa{1+m_i^{k/2}\abs{v}^k}} 
\\&+ \norm{Q_i\pa{\mathbf{f_1}+\mathbf{g}}}_{L^1_vL^q_x\pa{1+m_i^{k/2}\abs{v}^k)}}.
\end{split}
\end{equation*}

\par First sum over $i$ in $\br{1,\dots,N}$ and then let $q$ tend to infinity (on the torus the limit is thus the $L^\infty$-norm). This yields for all $t\geq 0$.
\begin{equation}\label{importanteq}
\begin{split}
\frac{d}{dt}\norm{\mathbf{f_1}}_{L^1_vL^\infty_x\left(\mathbf{\bar{w_k}}\right)} \leq &-\norm{\mathbf{f_1}}_{L^1_vL^\infty_x(\boldsymbol\nu\mathbf{\bar{w_k}})}+\norm{\mathbf{B}\pa{\mathbf{f_1}}}_{L^1_vL^\infty_x\pa{\mathbf{\bar{w_k}}}} 
\\&+ \norm{\mathbf{Q}\pa{\mathbf{f_1}+\mathbf{g}}}_{L^1_vL^\infty_x\pa{\mathbf{\bar{w_k}}}}.
\end{split}
\end{equation}
\par We use Lemma \ref{lem:controlB} to control $\mathbf{B}$, recalling that $0<C_B<1$, and the control of $\mathbf{Q}$ given in Lemma \ref{lem:controlQ} for $\mathbf{Q}$ (of course, since $\mathbf{\bar{w_k}} \sim \langle v \rangle^k$ the Lemma still holds with a different $C_Q$). We get that for all $t\geq 0$,
\begin{equation*}
\begin{split}
\frac{d}{dt}\norm{\mathbf{f_1}}_{L^1_vL^\infty_x\left(\mathbf{\bar{w_k}}\right)} \leq& -\cro{1-C_B-2C_Q\pa{\norm{\mathbf{f_1}}_{L^1_vL^\infty_x(\mathbf{\bar{w_k}})}+2\|\mathbf{g}\|_{L^1_vL^\infty_x(\mathbf{\bar{w_k}})}}}\norm{\mathbf{f_1}}_{L^1_vL^\infty_x(\boldsymbol\nu\mathbf{\bar{w_k}})} 
\\&+ C_Q\norm{\mathbf{g}(t)}_{L^1_vL^\infty_x(\boldsymbol\nu\mathbf{\bar{w_k}})}^2.
\end{split}
\end{equation*}
Since $C_B<1$, if $\norm{\mathbf{f_1}(0)}_{L^1_vL^\infty_x(\mathbf{\bar{w_k}})}$ is sufficiently small and thanks to the exponential decay of $\norm{\mathbf{g}(t)}_{L^\infty_tL^1_vL^\infty_x\pa{\boldsymbol\nu\mathbf{\bar{w_k}}}}$, a direct application of Gr\"onwall lemma yields the desired exponential decay.

\bigskip
\textbf{Step 2: existence.}
Let $\mathbf{f^{(0)}}=0$ and consider the following iterative scheme
$$\partial_t \mathbf{f^{(n+1)}} + v\cdot\nabla_x \mathbf{f^{(n+1)}} = -\boldsymbol\nu(v)\pa{\mathbf{f^{(n+1)}}} + \mathbf{B}\pa{\mathbf{f^{(n)}}} + \mathbf{\tilde{Q}}\pa{\mathbf{f^{(n)}}+\mathbf{g}}$$
with the initial data $\mathbf{f^{(n+1)}}(0,x,v)=\mathbf{f_0}$.
\par For each $n$ in $\N$, $\mathbf{f^{(n+1)}}$ is well-defined by induction since we have the explicit Duhamel formula along the characteristics for all $i$ in $\br{1,\dots,N}$
$$f^{(n+1)}_i(t,x,v) = e^{-\nu_i(v)t}f_{0i} +\int_0^t e^{-\nu_i(v)(t-s)}\cro{B_i\pa{\mathbf{f^{(n)}}} + Q_i\pa{\mathbf{f^{(n)}}+\mathbf{g}}}(x-sv,v)\:ds. $$
\par We are about to show that $\pa{\mathbf{f^{(n)}}}_{n\in\N}$ is a Cauchy sequence in $L^\infty_tL^1_vL^\infty_x\pa{\mathbf{\bar{w_k}}}$.

\bigskip
Direct computations on the nonlinear operator gives
\begin{equation*}
\begin{split}
\partial_t \pa{\mathbf{f^{(n+1)}}-\mathbf{f^{(n)}}} = &-\boldsymbol\nu(v) \pa{\mathbf{f^{(n+1)}}-\mathbf{f^{(n)}}} + \mathbf{B}\pa{\mathbf{f^{(n)}}-\mathbf{f^{(n-1)}}}
\\&+\mathbf{\tilde{Q}}\pa{\mathbf{f^{(n)}}-\mathbf{f^{(n-1)}},\mathbf{f^{(n-1)}}+\mathbf{g}} + \mathbf{\tilde{Q}}\pa{\mathbf{f^{(n)}}+\mathbf{g},\mathbf{f^{(n)}}-\mathbf{f^{(n-1)}}},
\end{split}
\end{equation*}
where we remind that $\mathbf{\tilde{Q}}$ was defined by $\eqref{tildeQ}$ and that $\mathbf{\tilde{Q}}(\mathbf{a},\mathbf{a}) - \mathbf{\tilde{Q}}(\mathbf{b},\mathbf{b})= \mathbf{\tilde{Q}}(\mathbf{a-b},\mathbf{b}) + \mathbf{\tilde{Q}}(\mathbf{a},\mathbf{a-b})$ .
\par Taking the $L^1_vL^\infty_x\pa{\mathbf{\bar{w_k}}}$-norm of $\pa{\mathbf{f^{(n+1)}}-\mathbf{f^{(n)}}}$ and summing over $i$ from $1$ to $N$ gives for all $t\geq 0$
\begin{equation*}
\begin{split}
&\norm{\mathbf{f^{(n+1)}}(t)-\mathbf{f^{(n)}}(t)}_{L^1_vL^\infty_x\pa{\mathbf{\bar{w_k}}}}
\\&\quad\quad\quad\leq\sum\limits_{i=1}^N \int_0^tds\int_{\R^3}dv\:  e^{-\nu_i(v)(t-s)}\pa{1+m_i^{k/2}\abs{v}^k}\norm{\Delta_{ni}\pa{\mathbf{f^{(n)}}-\mathbf{f^{(n-1)}}}}_{L^\infty_x}.
\end{split}
\end{equation*}
where we defined
\begin{equation*}
\begin{split}
&\boldsymbol\Delta_n\pa{\mathbf{f^{(n)}}-\mathbf{f^{(n-1)}}}
\\&\quad= \mathbf{B}\pa{\mathbf{f^{(n)}}-\mathbf{f^{(n-1)}}} + \mathbf{\tilde{Q}}\pa{\mathbf{f^{(n)}}-\mathbf{f^{(n-1)}},\mathbf{f^{(n-1)}}+\mathbf{g}} + \mathbf{\tilde{Q}}\pa{\mathbf{f^{(n)}}+\mathbf{g},\mathbf{f^{(n)}}-\mathbf{f^{(n-1)}}}.
\end{split}
\end{equation*}
As $\nu_i(v)\geq \nu_0$ for all $i$ and $v$ we further get
\begin{equation}\label{existence1}
\begin{split}
&\norm{\mathbf{f^{(n+1)}}(t)-\mathbf{f^{(n)}}(t)}_{L^1_vL^\infty_x\pa{\mathbf{\bar{w_k}}}}\leq \int_0^t e^{-\nu_0(t-s)}\norm{\boldsymbol\Delta_n\pa{\mathbf{f^{(n)}}-\mathbf{f^{(n-1)}}}}_{L^1_vL^\infty_x\pa{\mathbf{\bar{w_k}}}}ds
\\&\quad\leq \cro{C_B + C_Q\pa{\norm{\mathbf{f^{(n)}}}_{L^\infty_tL^1_vL^\infty_x\pa{\mathbf{\bar{w_k}}}}+\norm{\mathbf{f^{(n-1)}}}_{L^\infty_tL^1_vL^\infty_x\pa{\mathbf{\bar{w_k}}}}+ 2\norm{\mathbf{g}}_{L^\infty_tL^1_vL^\infty_x\pa{\mathbf{\bar{w_k}}}}}}
\\&\quad\quad\quad\times\int_0^t e^{-\nu_0(t-s)}\norm{\mathbf{f^{(n)}}(s)-\mathbf{f^{(n-1)}}(s)}_{L^1_vL^\infty_x\pa{\boldsymbol\nu\mathbf{\bar{w_k}}}}\:ds
\\&\quad\quad+C_Q\cro{\int_0^t e^{-\nu_0(t-s)}\pa{\norm{\mathbf{f^{(n)}}}_{L^1_vL^\infty_x\pa{\boldsymbol\nu\mathbf{\bar{w_k}}}}+\norm{\mathbf{f^{(n-1)}}}_{L^1_vL^\infty_x\pa{\boldsymbol\nu\mathbf{\bar{w_k}}}}}\:ds}
\\&\quad\quad\quad \times\sup\limits_{s\in [0,t]}\norm{\mathbf{f^{(n)}}(s)-\mathbf{f^{(n-1)}}(s)}_{L^1_vL^\infty_x\pa{\mathbf{\bar{w_k}}}}.
\end{split}
\end{equation}
where, as above, we used Lemma \ref{lem:controlB} and the estimate of Lemma \ref{lem:controlQ}.

\bigskip
Let us look at the terms inside the time integrals. To this end, we take the $L^1_tL^1_vL^\infty_x\pa{\boldsymbol\nu\mathbf{\bar{w_k}}}$-norm of $\pa{\mathbf{f^{(n+1)}}-\mathbf{f^{(n)}}}$ and we sum over $i$.
\begin{equation*}
\begin{split}
&\int_0^t\norm{\mathbf{f^{(n+1)}}(s)-\mathbf{f^{(n)}}(s)}_{L^1_vL^\infty_x\pa{\langle v \rangle^k\boldsymbol\nu}}\:ds
\\&\quad\quad\quad\leq\sum\limits_{i=1}^N \int_0^t\int_0^s\int_{\R^3}e^{-\nu_i(v)(s-s_1)}\nu_i(v)\bar{w}_{ki}(v)\norm{\boldsymbol\Delta_n\pa{\mathbf{f^{(n)}}-\mathbf{f^{(n-1)}}}}_{L^\infty_x}(s_1)\:ds_1ds.
\end{split}
\end{equation*}
We exchange the integration domains in $s$ and $s_1$, which implies
\begin{equation*}
\begin{split}
&\int_0^t\norm{\mathbf{f^{(n+1)}}(s)-\mathbf{f^{(n)}}(s)}_{L^1_vL^\infty_x\pa{\mathbf{\bar{w_k}}\boldsymbol\nu}}\:ds
\\&\leq\sum\limits_{i=1}^N \int_0^t\int_{\R^3}\pa{\int_{s_1}^te^{-\nu_i(v)(s-s_1)}\nu_i(v)\:ds}\bar{w}_{ki}(v)\norm{\boldsymbol\Delta_n\pa{\mathbf{f^{(n)}}-\mathbf{f^{(n-1)}}}}_{L^\infty_x}(s_1)\:ds_1.
\end{split}
\end{equation*}
Since the integral in $s$ is bounded by $1$, we use Lemma \ref{lem:controlB} and Lemma \ref{lem:controlQ} again and obtain
\begin{equation}\label{existence2}
\begin{split}
&\int_0^t\norm{\mathbf{f^{(n+1)}}(s)-\mathbf{f^{(n)}}(s)}_{L^1_vL^\infty_x\pa{\mathbf{\bar{w_k}}\boldsymbol\nu}}\:ds
\\&\quad\leq \cro{C_B + C_Q\pa{\norm{\mathbf{f^{(n)}}}_{L^\infty_tL^1_vL^\infty_x\pa{\mathbf{\bar{w_k}}}}+\norm{\mathbf{f^{(n-1)}}}_{L^\infty_tL^1_vL^\infty_x\pa{\mathbf{\bar{w_k}}}}+ 2\norm{\mathbf{g}}_{L^\infty_tL^1_vL^\infty_x\pa{\mathbf{\bar{w_k}}}}}}
\\&\quad\quad\times\int_0^t \norm{\mathbf{f^{(n)}}(s_1)-\mathbf{f^{(n-1)}}(s_1)}_{L^1_vL^\infty_x\pa{\mathbf{\bar{w_k}}\boldsymbol\nu}}\:ds_1
\\&\quad\quad+C_Q\cro{\int_0^t\pa{\norm{\mathbf{f^{(n)}}}_{L^1_vL^\infty_x\pa{\mathbf{\bar{w_k}}\boldsymbol\nu}}+\norm{\mathbf{f^{(n-1)}}}_{L^1_vL^\infty_x\pa{\mathbf{\bar{w_k}}\boldsymbol\nu}}}\:ds_1}
\\&\quad\quad\quad \times\sup\limits_{s\in [0,t]}\norm{\mathbf{f^{(n)}}(s)-\mathbf{f^{(n-1)}}(s)}_{L^1_vL^\infty_x\pa{\mathbf{\bar{w_k}}}}
\end{split}
\end{equation}

\bigskip
We now conclude the proof of existence. Indeed, exact same computations but subtracting $e^{-\boldsymbol\nu(v)t}\mathbf{f_0}$ instead of $\mathbf{f^{(n)}}$  lead to $\eqref{existence1}$ and $\eqref{existence2}$ with $\mathbf{f^{(n-1)}}$ replaced by $0$. Therefore, since $C_B <1$ it follows that for $\norm{\mathbf{f_0}}_{L^1_vL^\infty_x\pa{\mathbf{\bar{w_k}}}}$ and $\norm{\mathbf{g}}_{L^\infty_tL^1_vL^\infty_x\pa{\mathbf{\bar{w_k}}\boldsymbol\nu}}$ sufficiently small we have that there exists $C>0$ such that for all $n$ in $\N$ and all $t\geq 0$,
$$\norm{\mathbf{f^{(n)}}(t)}_{L^1_vL^\infty_x\pa{\mathbf{\bar{w_k}}}}\leq C\norm{\mathbf{f_0}}_{L^1_vL^\infty_x\pa{\mathbf{\bar{w_k}}}}$$
and
$$\int_0^t\norm{\mathbf{f^{(n)}}(s)}_{L^1_vL^\infty_x\pa{\mathbf{\bar{w_k}}\boldsymbol\nu}}\:ds\leq C\int_0^t\norm{\mathbf{f^{(1)}}}_{L^1_vL^\infty_x\pa{\mathbf{\bar{w_k}}\boldsymbol\nu}}\:ds \leq C \norm{\mathbf{f_0}}_{L^1_vL^\infty_x\pa{\mathbf{\bar{w_k}}}}. $$
\par Therefore, denoting by $C$ any positive constant independent of $\mathbf{f^{(n)}}$ and $\mathbf{g}$, adding $\eqref{existence1}$ and $\eqref{existence2}$ yields
\begin{equation*}
\begin{split}
&\norm{\mathbf{f^{(n+1)}}(t)-\mathbf{f^{(n)}}(t)}_{L^1_vL^\infty_x\pa{\mathbf{\bar{w_k}}}} + \int_0^t\norm{\mathbf{f^{(n+1)}}(s)-\mathbf{f^{(n)}}(s)}_{L^1_vL^\infty_x\pa{\mathbf{\bar{w_k}}\boldsymbol\nu}}\:ds 
\\&\quad\leq C\eta_1 \sup\limits_{s\in [0,t]}\norm{\mathbf{f^{(n)}}(s)-\mathbf{f^{(n-1)}}(s)}_{L^1_vL^\infty_x\pa{\mathbf{\bar{w_k}}}}
\\&\quad\quad + \cro{C_B + C\eta_1}\int_0^t\norm{\mathbf{f^{(n)}}(s)-\mathbf{f^{(n-1)}}(s)}_{L^1_vL^\infty_x\pa{\mathbf{\bar{w_k}}\boldsymbol\nu}}\:ds.
\end{split}
\end{equation*}

\par Since $C_B <1$, choosing $\eta_1$ such that $C_B + C\eta_1 <1$ implies that $\pa{\mathbf{f}^{(n)}}_{n\in\N}$ is a Cauchy sequence in $L^\infty_tL^1_vL^\infty_x\pa{\mathbf{\bar{w_k}}}$. Hence, $\pa{\mathbf{f}^{(n)}}_{n\in\N}$ converges to a function $\mathbf{f_1}$ in $L^\infty_tL^1_vL^\infty_x\pa{\mathbf{\bar{w_k}}}$ and since $k>k_0>\gamma$ we can take the limit inside the iterative scheme and $\mathbf{f_1}$ is thus a solution of our differential equation.
\end{proof}
\bigskip


\subsubsection{Study of the equations in $L^\infty_{x,v}\pa{\langle v \rangle^\beta\boldsymbol\mu^{-1/2}}$}

We turn to the system $\eqref{f2}$ in $L^\infty_{x,v}\pa{\langle v \rangle^\beta\boldsymbol\mu^{-1/2}}$ with $\beta >3/2$ so that Theorem \ref{theo:semigroupLinfty} holds.

\bigskip
\begin{prop}\label{prop:cauchyLinfty}
Let $k>k_0$, $\beta >3/2$ and let assumptions $(H1) - (H4)$ hold for the collision kernel. Let $\mathbf{g}=\mathbf{g}(t,x,v)$ be in $L^\infty_tL^1_vL^\infty_x\pa{\langle v \rangle^k}$. Then there exists a unique function $\mathbf{f_2}$ in $L^\infty_tL^\infty_{x,v}\pa{\langle v \rangle^\beta\boldsymbol\mu^{-1/2}}$ such that
$$\partial_t \mathbf{f_2} = \mathbf{G}\pa{\mathbf{f_2}} + \mathbf{A}\pa{\mathbf{g}} \quad\mbox{and}\quad \mathbf{f_2}(0,x,v)= 0.$$
Moreover, if $\:\Pi_\mathbf{G}\pa{\mathbf{f_2}+\mathbf{g}}=0$ and if
$$\exists\: \lambda_g,\:\eta_g>0,\:\forall t\geq 0, \: \norm{\mathbf{g}(t)}_{L^1_vL^\infty_x\pa{\langle v \rangle^k}}\leq \eta_ge^{-\lambda_g t},$$
then  for any $0<\lambda_2<\min\br{\lambda_g,\:\lambda_\infty}$, with $\lambda_\infty$ defined in Theorem \ref{theo:semigroupLinfty}, there exist $C_2>0$ such that
$$\forall t\geq 0, \quad \norm{\mathbf{f_2}(t)}_{L^\infty_{x,v}\pa{\langle v \rangle^\beta\boldsymbol\mu^{-1/2}}} \leq C_2\eta_g e^{-\lambda_2 t}.$$
The constant $C_2$ only depends on $\lambda_2$.
\end{prop}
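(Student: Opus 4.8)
The plan is to treat $\eqref{f2}$ via the Duhamel representation associated to the semigroup $S_{\mathbf{G}}(t)$ built in Theorem \ref{theo:semigroupLinfty}. Since $\mathbf{f_2}(0)=0$, a solution is a fixed point of
\[
\mathbf{f_2}(t) = \int_0^t S_{\mathbf{G}}(t-s)\,\mathbf{A}\pa{\mathbf{g}(s)}\:ds.
\]
First I would observe that Lemma \ref{lem:controlA} (with the fixed $\delta_k$, so $\mathbf{A}=\mathbf{A^{(\delta_k)}}$) gives $\mathbf{A}\pa{\mathbf{g}(s)} \in L^\infty_{x,v}\pa{\langle v \rangle^\beta\boldsymbol\mu^{-1/2}}$ with $\norm{\mathbf{A}\pa{\mathbf{g}(s)}}_{L^\infty_{x,v}\pa{\langle v \rangle^\beta\boldsymbol\mu^{-1/2}}} \leq C_A\norm{\mathbf{g}(s)}_{L^1_vL^\infty_x\pa{\langle v \rangle^k}} \leq C_A\eta_g e^{-\lambda_g s}$, so the integrand is well-defined in the right space and the integral converges. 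Existence and uniqueness in $L^\infty_tL^\infty_{x,v}\pa{\langle v \rangle^\beta\boldsymbol\mu^{-1/2}}$ then follow from the strong continuity of $S_{\mathbf{G}}(t)$ together with the boundedness of $\mathbf{A}$: on a finite interval $[0,T]$ a standard iteration (or direct estimate, since the equation is linear in $\mathbf{f_2}$) produces the fixed point, and global existence follows because the estimates below are uniform in $T$.

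The core of the proof is the decay estimate. Under the hypothesis $\Pi_\mathbf{G}\pa{\mathbf{f_2}+\mathbf{g}}=0$, I want to exploit that $\mathbf{A}\pa{\mathbf{g}(s)}$ has the correct projection properties so that $S_{\mathbf{G}}(t-s)$ acting on it decays. The point is to split $\mathbf{A}\pa{\mathbf{g}(s)} = \Pi_\mathbf{G}\pa{\mathbf{A}\pa{\mathbf{g}(s)}} + \pa{\mbox{Id}-\Pi_\mathbf{G}}\mathbf{A}\pa{\mathbf{g}(s)}$; on the orthogonal part Theorem \ref{theo:semigroupLinfty} gives $\norm{S_{\mathbf{G}}(t-s)\pa{\mbox{Id}-\Pi_\mathbf{G}}\mathbf{A}\pa{\mathbf{g}(s)}}_{L^\infty_{x,v}\pa{\langle v \rangle^\beta\boldsymbol\mu^{-1/2}}} \leq C_\infty e^{-\lambda_\infty(t-s)}\,C_A\eta_g e^{-\lambda_g s}$. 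The kernel part $\Pi_\mathbf{G}$ is where the conservation-law hypothesis enters: since $\mathbf{G}(\mathbf{f_2}) = \partial_t\mathbf{f_2} - \mathbf{A}\pa{\mathbf{g}}$ and $\Pi_\mathbf{G}$ commutes with the flow (recall $\mbox{Ker}(\mathbf{G})$ and its orthogonal are stable), testing the equation against the basis functions $\boldsymbol\phi_k$ of $\eqref{piL}$ shows that $\frac{d}{dt}\Pi_\mathbf{G}(\mathbf{f_2}) = \Pi_\mathbf{G}\mathbf{A}\pa{\mathbf{g}}$; but $\mathbf{A}$ inherits from $\mathbf{L}$ and $\mathbf{Q}$ the property that its range is orthogonal to $\mbox{Ker}(\mathbf{L})$ in the velocity variable (the truncation $\Theta_\delta$ preserves the collisional symmetries $\eqref{symmetry property Qij}$–$\eqref{invariantsQij}$), hence $\Pi_\mathbf{G}\mathbf{A}\pa{\mathbf{g}}=0$ and $\Pi_\mathbf{G}(\mathbf{f_2})$ is constant, equal to $\Pi_\mathbf{G}(\mathbf{f_2}(0))=0$. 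Combined with $\Pi_\mathbf{G}\pa{\mathbf{f_2}+\mathbf{g}}=0$ this forces $\Pi_\mathbf{G}(\mathbf{g})=0$ as well, so in fact only the orthogonal estimate above is needed. Integrating, for $0<\lambda_2<\min\br{\lambda_g,\lambda_\infty}$,
\[
\norm{\mathbf{f_2}(t)}_{L^\infty_{x,v}\pa{\langle v \rangle^\beta\boldsymbol\mu^{-1/2}}} \leq C_\infty C_A\eta_g \int_0^t e^{-\lambda_\infty(t-s)}e^{-\lambda_g s}\:ds \leq C_2\eta_g e^{-\lambda_2 t},
\]
where the last inequality is the elementary convolution bound $\int_0^t e^{-\lambda_\infty(t-s)}e^{-\lambda_g s}ds \leq C e^{-\lambda_2 t}$ valid for $\lambda_2 < \min\br{\lambda_g,\lambda_\infty}$, and $C_2$ depends only on $\lambda_2$ (through $C_\infty$, $C_A$ and the gap between $\lambda_2$ and the other two rates).

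The main obstacle I anticipate is the careful justification that $\Pi_\mathbf{G}\mathbf{A}\pa{\mathbf{g}}=0$, i.e. that the truncated gain/loss operator $\mathbf{A^{(\delta)}}$ still maps into $\mbox{Ker}(\mathbf{L})^\bot$ pointwise in $x$. This requires checking that the weak-form identity behind $\eqref{symmetry property Qij}$ survives multiplication by the symmetric cutoff $\Theta_\delta(v,v_*,\sigma)$; since $\Theta_\delta$ is built to be invariant under the pre/post-collisional changes of variables $(v,v_*,\sigma)\mapsto(v',v'_*,\ldots)$ and $(v,v_*)\mapsto(v_*,v)$, the usual cancellation against collision invariants goes through, but one must verify this explicitly for the multi-species (asymmetric mass) operators. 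A secondary, purely technical point is the well-posedness of the Duhamel fixed point at the level of regularity of $S_{\mathbf{G}}(t)$ on $L^\infty$; this is routine once Lemma \ref{lem:controlA} and Theorem \ref{theo:semigroupLinfty} are in hand, and uniqueness follows from the same contraction estimate restricted to short time and then iterated.
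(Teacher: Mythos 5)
Your Duhamel representation and the treatment of the orthogonal part via Theorem \ref{theo:semigroupLinfty} are correct and essentially match the paper, but the central claim that $\Pi_{\mathbf{G}}\mathbf{A}(\mathbf{g})=0$ is false, and you have correctly flagged it as the main risk. The truncation $\Theta_{\delta}(v,v_*,\sigma)$ contains the constraint $|v|\le 2\delta^{-1}$, which involves the post-collisional velocity $v$ alone and is therefore not invariant under either of the changes of variables $(v,v_*)\mapsto(v_*,v)$ or $(v,v_*)\mapsto(v',v'_*)$ that produce the cancellation in $\eqref{symmetry property Qij}$--$\eqref{invariantsQij}$. Only the full operator $\mathbf{L}=\mathbf{A^{(\delta)}}+\mathbf{B^{(\delta)}}-\boldsymbol\nu$ has range orthogonal to $\mbox{Ker}(\mathbf{L})$; the individual pieces $\mathbf{A^{(\delta)}}$ and $\mathbf{B^{(\delta)}}$ do not. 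Consequently $\frac{d}{dt}\Pi_{\mathbf{G}}(\mathbf{f_2})=\Pi_{\mathbf{G}}\mathbf{A}(\mathbf{g})\ne 0$ in general, $\Pi_{\mathbf{G}}(\mathbf{f_2})$ is not identically zero, and the deduction $\Pi_{\mathbf{G}}(\mathbf{g})=0$ does not follow. Indeed $\mathbf{g}$ stands for the component $\mathbf{f_1}$ solving $\eqref{f1}$, whose operator $\mathbf{G_1}=\mathbf{B^{(\delta)}}-\boldsymbol\nu-v\cdot\nabla_x$ does not preserve the conservation laws, so there is no reason for $\Pi_{\mathbf{G}}(\mathbf{g})$ to vanish.

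The fix, which is what the paper does, is to not attempt to show $\Pi_{\mathbf{G}}(\mathbf{f_2})=0$ but instead use the hypothesis $\Pi_{\mathbf{G}}(\mathbf{f_2}+\mathbf{g})=0$ directly: since the projection is a finite-rank operator onto the fixed basis $\boldsymbol\phi_k$, one has $\|\Pi_{\mathbf{G}}(\mathbf{f_2})(t)\|_{L^\infty_{x,v}(\langle v\rangle^\beta\boldsymbol\mu^{-1/2})}=\|\Pi_{\mathbf{G}}(\mathbf{g})(t)\|_{L^\infty_{x,v}(\langle v\rangle^\beta\boldsymbol\mu^{-1/2})}\le C_{\Pi_G}\|\mathbf{g}(t)\|_{L^1_vL^\infty_x(\langle v\rangle^k)}\le C_{\Pi_G}\eta_g e^{-\lambda_g t}$, which already has the required decay. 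For $\Pi_{\mathbf{G}}^\bot(\mathbf{f_2})$, apply $\Pi_{\mathbf{G}}^\bot$ to the equation (licit because $\Pi_{\mathbf{G}}^\bot$ commutes with $\mathbf{G}$, since $\mbox{Ker}(\mathbf{G})$ and its orthogonal complement are stable under the flow), use Duhamel, and the decay of $S_{\mathbf{G}}(t)\Pi_{\mathbf{G}}^\bot$ from Theorem \ref{theo:semigroupLinfty} together with your convolution estimate gives the remaining bound. Adding the two contributions gives the stated $C_2\eta_g e^{-\lambda_2 t}$.
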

\bigskip

\begin{proof}[Proof of Proposition \ref{prop:cauchyLinfty}]
Thanks to the regularising property of $\mathbf{A}$, Lemma \ref{lem:controlA}, $\mathbf{A}\pa{\mathbf{g}}$ belongs to $L^\infty_tL^\infty_{x,v}\pa{\langle v \rangle^\beta\boldsymbol\mu^{-1/2}}$. Theorem \ref{theo:semigroupLinfty} implies that there is indeed a unique $\mathbf{f_2}$ solution to the differential system, given by
$$\mathbf{f_2} = \int_0^tS_{\mathbf{G}}(t-s)\cro{\mathbf{A}\pa{\mathbf{g}}(s)}\:ds,$$
where $S_{\mathbf{G}}(t)$ is the semigroup generated by $\mathbf{G}$ in $L^\infty_{x,v}\pa{\langle v \rangle^\beta\boldsymbol\mu^{-1/2}}$.

\bigskip
Suppose now that $\Pi_\mathbf{G}\pa{\mathbf{f_2}+\mathbf{g}}=0$ and that there exists $ \eta_2>0$ such that $\norm{\mathbf{g}(t)}_{L^1_vL^\infty_x\pa{\langle v \rangle^k}}\leq \eta_2e^{-\lambda t}$.
\par Using the definition of $\Pi_{\mathbf{G}}$ $\eqref{PiG}$, the projection part of $\mathbf{f_2}$ is straightforwardly bounded for all $t\geq 0$:
\begin{equation}\label{PiGf2}
\begin{split}
\norm{\Pi_\mathbf{G}\pa{\mathbf{f_2}}(t)}_{L^\infty_{x,v}\pa{\langle v \rangle^\beta\boldsymbol\mu^{-1/2}}} &= \norm{\Pi_\mathbf{G}\pa{\mathbf{g}}(t)}_{L^\infty_{x,v}\pa{\langle v \rangle^\beta\boldsymbol\mu^{-1/2}}}\leq C_{\Pi_G} \norm{\mathbf{g}}_{L^1_vL^\infty_x\pa{\langle v \rangle^k}}
\\&\leq C_{\Pi_G}\eta_g\:e^{-\lambda_g t}.
\end{split}
\end{equation}

\par Applying $\Pi_{\mathbf{G}}^\bot = \mbox{Id}-\Pi_\mathbf{G}$ to the equation satisfied by $\mathbf{f_2}$ we get, thanks to the fact the definition of $\Pi_{\mathbf{G}}$ $\eqref{PiG}$ which is independent of $t$,
$$\partial_t \cro{\Pi_\mathbf{G}^\bot\pa{\mathbf{f_2}}} = \mathbf{G}\cro{\Pi_\mathbf{G}^\bot\pa{\mathbf{f_2}}} + \Pi_\mathbf{G}^\bot\pa{\mathbf{A}\pa{\mathbf{g}}}.$$
This yields
$$\Pi_\mathbf{G}^\bot\pa{\mathbf{f_2}} = \int_0^tS_{\mathbf{G}}(t-s)\cro{ \Pi_\mathbf{G}^\bot\pa{\mathbf{A}\pa{\mathbf{g}}}(s)}\:ds.$$
We now use the exponential decay of $S_{\mathbf{G}}(t)$ on $\pa{\mbox{Ker}(\mathbf{G})}^\bot$, see Theorem \ref{theo:semigroupLinfty}.
$$\norm{\Pi_\mathbf{G}^\bot\pa{\mathbf{f_2}}}_{L^\infty_{x,v}\pa{\langle v \rangle^\beta\boldsymbol\mu^{-1/2}}} \leq C_\infty \int_0^t e^{-\lambda_\infty (t-s)}\norm{\Pi_\mathbf{G}^\bot\pa{\mathbf{A}\pa{\mathbf{g}}}(s)}_{L^\infty_{x,v}\pa{\langle v \rangle^\beta\boldsymbol\mu^{-1/2}}}\:ds.$$
Using the definition of $\Pi_{\mathbf{G}}$ $\eqref{PiG}$ and then the regularising property of $\mathbf{A}$ Lemma \ref{lem:controlA} we further bound, for a fixed $\lambda_2 < \min\br{\lambda_\infty,\:\lambda_g}$,
\begin{eqnarray}
\norm{\Pi_\mathbf{G}^\bot\pa{\mathbf{f_2}}}_{L^\infty_{x,v}\pa{\langle v \rangle^\beta\boldsymbol\mu^{-1/2}}} &\leq& C_\infty C_{\Pi_G}C_AC_g\eta_g \int_0^t e^{-\lambda_\infty(t-s)}e^{-\lambda_g s}\:ds \nonumber
\\&\leq& C_GC_\infty C_{\Pi_G}C_AC_g\eta_g \:  te^{-\min\br{\lambda_g,\lambda_\infty} t}\nonumber
\\&\leq& C_2(\lambda_2)\eta_g e^{-\lambda_2 t}.\label{PiGbotf2}
\end{eqnarray}
\par Gathering $\eqref{PiGf2}$ and $\eqref{PiGbotf2}$ yields the desired exponential decay.
\end{proof}
\bigskip


\subsubsection{Proof of Proposition \ref{prop:existenceexpodecay}}

Take $\mathbf{f_0}$ in $L^1_vL^\infty_x\pa{\langle v \rangle^k}$ such that $\Pi_\mathbf{G}(\mathbf{f_0})=0$.
\par The existence will be proved by an iterative scheme. We start with $\mathbf{f^{(0)}_1}=\mathbf{f^{(0)}_2}=0$ and we approximate the system of equation $\eqref{f1}-\eqref{f2}$ as follows.
\begin{eqnarray*}
\partial_t \mathbf{f^{(n+1)}_1}&=& \mathbf{G_1} \pa{\mathbf{f^{(n+1)}_1}} + \mathbf{Q}\pa{\mathbf{f^{(n+1)}_1}+\mathbf{f^{(n)}_2}}
\\\partial_t \mathbf{f^{(n+1)}_2} &=& \mathbf{G}\pa{\mathbf{f^{(n+1)}_2}} + \mathbf{A^{(\delta)}}\pa{\mathbf{f^{(n+1)}_1}},
\end{eqnarray*}
with the following initial data
$$\mathbf{f^{(n+1)}_1}(0,x,v)=\mathbf{f_0}(x,v) \quad\mbox{and}\quad \mathbf{f^{(n+1)}_2}(0,x,v)=0.$$
\par Assume that $(1+C_1C_2)\norm{\mathbf{f_0}}\leq \eta_1$, where $C_1,\:\eta_1$ were defined in Proposition \ref{prop:cauchyE} and $C_2$ was defined in Proposition \ref{prop:cauchyLinfty}. Thanks to Proposition \ref{prop:cauchyE} and Proposition \ref{prop:cauchyLinfty}, an induction proves first that $\pa{\mathbf{f^{(n)}_1}}_{n\in\N}$ and $\pa{\mathbf{f^{(n)}_2}}_{n\in\N}$ are well-defined sequences and second that for all $n$ in $\N$ and all $t\geq 0$
\begin{eqnarray}
\norm{\mathbf{f^{(n)}_1}(t)}_{L^1_vL^\infty_x\pa{\langle v \rangle^k}} &\leq& e^{-\lambda_1 t}\norm{\mathbf{f_0}}_{L^1_vL^\infty_x\pa{\langle v \rangle^k}} \label{expodecayfn1}
\\\norm{\mathbf{f^{(n)}_2}(t)}_{L^\infty_{x,v}\pa{\langle v \rangle^\beta\boldsymbol\mu^{-1/2}}} &\leq& C_1C_2 e^{-\lambda_2 t}\norm{\mathbf{f_0}}_{L^1_vL^\infty_x\pa{\langle v \rangle^k}},\label{expodecayfn2}
\end{eqnarray}
with $\lambda_2 <\min\br{\lambda_1,\lambda_\infty}$. Indeed, if we constructed $\mathbf{f^{(n)}_1}$ and $\mathbf{f^{(n)}_2}$ satisfying the exponential decay above then we can construct $\mathbf{f^{(n+1)}_1}$ with Proposition \ref{prop:cauchyE} and $\mathbf{g} = \mathbf{f^{(n)}_2}$, which has the required exponential decay $\eqref{expodecayfn1}$, and then construct $\mathbf{f^{(n+1)}_2}$ with Proposition \ref{prop:cauchyLinfty} and $\mathbf{g} = \mathbf{f^{(n+1)}_1}$. Finally, we have the following equality
$$\partial_t\pa{\mathbf{f^{(n+1)}_1}+\mathbf{f^{(n+1)}_2}} = \mathbf{G}\pa{\mathbf{f^{(n+1)}_1}+\mathbf{f^{(n+1)}_2}} + \mathbf{Q}\pa{\mathbf{f^{(n+1)}_1}+\mathbf{f^{(n)}_2}}.$$
Thanks to orthogonality property of $\mathbf{Q}$ in Lemma \ref{lem:controlQ} and the definition of $\Pi_\mathbf{G}$ $\eqref{PiG}$ we obtain that the projection is constant with time and thus
$$\Pi_\mathbf{G}\pa{\mathbf{f^{(n+1)}_1}+\mathbf{f^{(n+1)}_2}} = \Pi_\mathbf{G}(\mathbf{f_0})=0.$$
Applying Proposition \ref{prop:cauchyLinfty} we obtain the exponential decay $\eqref{expodecayfn2}$ for $\mathbf{f^{(n+1)}_2}$.

\bigskip
We recognize exactly the same iterative scheme for $\mathbf{f^{n+1}_1}$ as in the proof of Proposition \ref{prop:cauchyE} with $\mathbf{g}$ replaced by $\mathbf{f_2^{(n)}}$. Moreover, the uniform bound $\eqref{expodecayfn2}$ allows us to derive the same estimates as in the latter proof independently of $\mathbf{f^{(n)}_2}$. As a conclusion, $\pa{\mathbf{f^{(n)}_1}}_{n\in\N}$ is a Cauchy sequence in $L^\infty_tL^1_vL^\infty_x\pa{\langle v \rangle^k}$ and therefore converges strongly towards a function $\mathbf{f_1}$.
\par By $\eqref{expodecayfn2}$, the sequence $\pa{\mathbf{f^{(n)}_2}}_{n\in\N}$ is bounded in $L^\infty_tL^\infty_{x,v}\pa{\langle v \rangle^\beta\boldsymbol\mu^{-1/2}}$ and is therefore weakly-* compact and therefore converges, up to a subsequence, weakly-* towards $\mathbf{f_2}$ in $L^\infty_tL^\infty_{x,v}\pa{\langle v \rangle^\beta\boldsymbol\mu^{-1/2}}$.
\par Since the function inside the collision operator behaves like $\abs{v-v_*}^\gamma$ and that in our weighted spaces $k>k_0>\gamma$, we can take the weak limit inside the iterative scheme. This implies that $(\mathbf{f_1},\mathbf{f_2})$ is solution to the system $\eqref{f1}-\eqref{f2}$ and thus $\mathbf{f} = \mathbf{f_1}+\mathbf{f_2}$ is solution to the perturbed multi-species equation $\eqref{perturbedmultiBEcauchy}$. Moreover, taking the limit inside the exponential decays $\eqref{expodecayfn1}$ and $\eqref{expodecayfn2}$ yields the expected exponential decay for $\mathbf{f}$.

\bigskip


\subsection{Uniqueness of solutions in the perturbative regime}\label{subsec:uniqueness}

As said in Remark \ref{rem:mainresults}, we are solely interested in the uniqueness of solutions to the multi-species Boltzmann equation $\eqref{multiBE}$ in the perturbative setting. In other terms, uniqueness of solutions of the form $\mathbf{F}=\boldsymbol\mu + \mathbf{f}$ as long as $\mathbf{F_0}$ is close enough to the global equilibrium $\boldsymbol\mu$. This is equivalent to proving the uniqueness of solutions to the perturbed multi-species equation
\begin{equation}\label{perturbeduniqueness}
\partial_t \mathbf{f}  = \mathbf{G}(\mathbf{f}) + \mathbf{Q}(\mathbf{f})
\end{equation}
for $\mathbf{f_0}$ small.

\bigskip
\begin{prop}\label{prop:uniqueness}
Let $k>k_0$ and let assumptions $(H1) - (H4)$ hold for the collision kernel. There exists $\eta_k>0$ such that for any $\mathbf{f_0}$ in $L^1_vL^\infty_x\pa{\langle v \rangle^k}$; if $\norm{\mathbf{f_0}}_{L^1_vL^\infty_x\pa{\langle v \rangle^k}} \leq \eta_k$ then there exists at most one solution to the perturbed multi-species equation $\eqref{perturbeduniqueness}$.
\\The constant $\eta_k$ only depends on $k$, $N$ and the collision kernels.
\end{prop}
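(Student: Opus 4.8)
The plan is to estimate the difference of two solutions by a Gr\"onwall argument built on the splitting $\mathbf{G}=\mathbf{A}+\mathbf{G_1}$ of Subsection \ref{subsec:existenceexpodecay}, mimicking the stability estimate of \cite{GMM}. Let $\mathbf{f}$ and $\mathbf{h}$ be two solutions of $\eqref{perturbeduniqueness}$ in $L^\infty_tL^1_vL^\infty_x\pa{\langle v \rangle^k}$ with the same initial datum $\mathbf{f_0}$, and set $\mathbf{d}=\mathbf{f}-\mathbf{h}$. Since $\mathbf{Q}(\mathbf{g})=\mathbf{\tilde{Q}}(\mathbf{g},\mathbf{g})$ with $\mathbf{\tilde{Q}}$ as in $\eqref{tildeQ}$, one has $\mathbf{d}(0)=0$ and
$$\partial_t\mathbf{d}+v\cdot\nabla_x\mathbf{d}=\mathbf{L}(\mathbf{d})+\mathbf{\tilde{Q}}(\mathbf{d},\mathbf{f}+\mathbf{h}).$$
Working in the perturbative regime I may assume $\norm{\mathbf{f}(t)}_{L^1_vL^\infty_x(\langle v \rangle^k)}+\norm{\mathbf{h}(t)}_{L^1_vL^\infty_x(\langle v \rangle^k)}\leq \eta_k$ for all $t$ (if needed, first on a maximal interval of smallness, and then globally by a continuation argument once the estimate below is closed). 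Moreover, since $\pi_{\mathbf{L}}(\mathbf{\tilde{Q}})=0$ (Lemma \ref{lem:controlQ}), $\Pi_{\mathbf{G}}(\mathbf{L}(\cdot))=0$ and $\int_{\T^3}v\cdot\nabla_x(\cdot)\,dx=0$, the projection $\Pi_{\mathbf{G}}(\mathbf{f}(t))$ is constant in time, so $\Pi_{\mathbf{G}}(\mathbf{d}(t))=0$ for all $t$.

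Next I would split $\mathbf{d}=\mathbf{d_1}+\mathbf{d_2}$ by letting $\mathbf{d_1}$ solve $\partial_t\mathbf{d_1}=\mathbf{G_1}(\mathbf{d_1})+\mathbf{\tilde{Q}}(\mathbf{d},\mathbf{f}+\mathbf{h})$ with $\mathbf{d_1}(0)=0$, so that $\mathbf{d_2}=\mathbf{d}-\mathbf{d_1}$ automatically solves $\partial_t\mathbf{d_2}=\mathbf{G}(\mathbf{d_2})+\mathbf{A}(\mathbf{d_1})$ with $\mathbf{d_2}(0)=0$. For $\mathbf{d_1}$ I would run the $L^1_vL^\infty_x\pa{\mathbf{\bar{w_k}}}$ energy estimate of Step 1 in the proof of Proposition \ref{prop:cauchyE}: the transport term drops, $-\boldsymbol\nu$ yields the dissipation $-\norm{\mathbf{d_1}}_{L^1_vL^\infty_x(\boldsymbol\nu\mathbf{\bar{w_k}})}$, Lemma \ref{lem:controlB} bounds $\mathbf{B}(\mathbf{d_1})$ by $C_B<1$ times that dissipation, and Lemma \ref{lem:controlQ} bounds $\mathbf{\tilde{Q}}(\mathbf{d},\mathbf{f}+\mathbf{h})$ by $C_Q\eta_k\pa{\norm{\mathbf{d}}_{L^1_vL^\infty_x(\mathbf{\bar{w_k}})}+\norm{\mathbf{d}}_{L^1_vL^\infty_x(\boldsymbol\nu\mathbf{\bar{w_k}})}}$, where the $\mathbf{d_2}$-part is converted via $\norm{\mathbf{d_2}}_{L^1_vL^\infty_x(\boldsymbol\nu\mathbf{\bar{w_k}})}\leq C\norm{\mathbf{d_2}}_{L^\infty_{x,v}(\langle v \rangle^\beta\boldsymbol\mu^{-1/2})}$ (a Gaussian weight beats any polynomial). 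For $\mathbf{d_2}$ I would copy the argument of Proposition \ref{prop:cauchyLinfty}: write $\mathbf{d_2}=\Pi_{\mathbf{G}}(\mathbf{d_2})+\Pi_{\mathbf{G}}^\bot(\mathbf{d_2})$, bound the finite-rank part by $C\int_0^t\norm{\mathbf{A}(\mathbf{d_1})}_{L^1_vL^\infty_x(\langle v \rangle^k)}\,ds$, and represent $\Pi_{\mathbf{G}}^\bot(\mathbf{d_2})(t)=\int_0^tS_{\mathbf{G}}(t-s)\Pi_{\mathbf{G}}^\bot\mathbf{A}(\mathbf{d_1})(s)\,ds$, using the exponential decay of $S_{\mathbf{G}}$ on $\pa{\mbox{Ker}(\mathbf{G})}^\bot$ in $L^\infty_{x,v}(\langle v \rangle^\beta\boldsymbol\mu^{-1/2})$ from Theorem \ref{theo:semigroupLinfty} together with the regularizing bound $\norm{\mathbf{A}(\mathbf{d_1})}_{L^\infty_{x,v}(\langle v \rangle^\beta\boldsymbol\mu^{-1/2})}\leq C_A\norm{\mathbf{d_1}}_{L^1_vL^\infty_x(\langle v \rangle^k)}$ of Lemma \ref{lem:controlA}.

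Finally I would combine the two estimates into one Gr\"onwall inequality for $\Phi(t)=\sup_{[0,t]}\pa{\norm{\mathbf{d_1}}_{L^1_vL^\infty_x(\mathbf{\bar{w_k}})}+\kappa\norm{\mathbf{d_2}}_{L^\infty_{x,v}(\langle v \rangle^\beta\boldsymbol\mu^{-1/2})}}+\int_0^t\norm{\mathbf{d_1}}_{L^1_vL^\infty_x(\boldsymbol\nu\mathbf{\bar{w_k}})}\,ds$, choosing $\kappa$ small so that the feedback of $\mathbf{A}(\mathbf{d_1})$ into $\mathbf{d_2}$ is a small multiple of the $\mathbf{d_1}$-dissipation, and then $\eta_k$ small enough that the $C_Q\eta_k$ terms are absorbed thanks to $1-C_B>0$; since $\Phi(0)=0$ this forces $\Phi\equiv0$, hence $\mathbf{d}\equiv0$. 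Running the very same scheme with $\mathbf{\tilde{Q}}\equiv0$ (i.e. on $\partial_t\mathbf{d}=\mathbf{G}(\mathbf{d})$) yields as a by-product that $S_{\mathbf{G}}(t)(\mbox{Id}-\Pi_{\mathbf{G}})$ decays exponentially on $L^1_vL^\infty_x(\langle v \rangle^k)$, as announced in Remark \ref{rem:mainresults}(3). I expect the main obstacle to be precisely the term $\mathbf{A}(\mathbf{d_1})$: it is not controlled by the $\boldsymbol\nu$-weighted dissipation inside $L^1_vL^\infty_x(\langle v \rangle^k)$, so one genuinely needs the two-norm bookkeeping — pushing $\mathbf{d_2}$ into the smaller Gaussian-weighted space where $\mathbf{G}$ is dissipative — and must tune $\kappa$ and $\eta_k$ so that the coupled system of inequalities closes; securing the a priori smallness of the two solutions (the perturbative class) and the fact that $\mathbf{d}$ remains in $\mbox{Ker}(\mathbf{G})^\bot$ are the remaining points of care.
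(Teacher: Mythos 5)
Your plan is a genuine alternative to the paper's argument: you split the \emph{difference} $\mathbf{d}=\mathbf{f}-\mathbf{h}$ into $\mathbf{d_1}+\mathbf{d_2}$ living in the two spaces and run a coupled Gr\"onwall estimate, i.e.\ you re-run the existence scheme on $\mathbf{d}$. The paper instead stays entirely in $L^1_vL^\infty_x(\langle v\rangle^k)$ and introduces the equivalent norm
$$\norm{\mathbf{f}}_{\alpha,k}=\alpha\norm{\mathbf{f}}_{L^1_vL^\infty_x(\langle v\rangle^k)}+\int_0^\infty\norm{S_{\mathbf{G}}(s)\mathbf{f}}_{L^1_vL^\infty_x(\langle v\rangle^k)}\,ds,$$
whose integral component absorbs the $\mathbf{A}$-contribution (this is Lemma~\ref{lem:estimateuniqueness}; it requires the preliminary Lemma~\ref{lem:semigroupuniqueness} that $\mathbf{G}$ generates an exponentially decaying semigroup on $L^1_vL^\infty_x(\langle v\rangle^k)$). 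Both devices encode the same mechanism, so your approach is architecturally sound, but the paper's is self-contained and slightly leaner because it never has to decompose the solution or the difference.

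There is, however, a concrete gap in your sketch. You write that Lemma~\ref{lem:controlQ} bounds $\mathbf{\tilde{Q}}(\mathbf{d},\mathbf{f}+\mathbf{h})$ by $C_Q\eta_k\pa{\norm{\mathbf{d}}_{L^1_vL^\infty_x(\mathbf{\bar{w_k}})}+\norm{\mathbf{d}}_{L^1_vL^\infty_x(\boldsymbol\nu\mathbf{\bar{w_k}})}}$; this is not what the Lemma gives. It gives
$$\norm{\mathbf{\tilde{Q}}(\mathbf{d},\mathbf{f}+\mathbf{h})}_{L^1_vL^\infty_x(\langle v\rangle^k)}\leq C_Q\cro{\norm{\mathbf{d}}\,\norm{\mathbf{f}+\mathbf{h}}_{\boldsymbol\nu}+\norm{\mathbf{d}}_{\boldsymbol\nu}\,\norm{\mathbf{f}+\mathbf{h}}},$$
and the assumption $\sup_t\norm{\mathbf{f}(t)}+\norm{\mathbf{h}(t)}\leq\eta_k$ controls only the \emph{unweighted} factor $\norm{\mathbf{f}+\mathbf{h}}$; it does \emph{not} make $\norm{\mathbf{f}+\mathbf{h}}_{\boldsymbol\nu}$ pointwise small (the $\nu$-weight is unbounded). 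So the first term cannot be absorbed into the dissipation, and when you integrate your coupled Gr\"onwall you are left needing a uniform-in-$t$ bound on $\int_0^t\bigl(\norm{\mathbf{f}(s)}_{\boldsymbol\nu}+\norm{\mathbf{h}(s)}_{\boldsymbol\nu}\bigr)\,ds$. This is precisely the a priori estimate the paper establishes as \eqref{boundnu} by applying the stability inequality of Lemma~\ref{lem:estimateuniqueness} to each solution $\mathbf{f}$, $\mathbf{h}$ separately: it yields both the pointwise decay \eqref{bound} and the $L^1_t$-bound \eqref{boundnu}, and \emph{both} are used in the final Gr\"onwall. Your sketch has no counterpart for this step: the continuation argument you invoke fixes only the unweighted $L^\infty_t$ bound, and the $\int_0^t\norm{\mathbf{d_1}}_{\boldsymbol\nu}\,ds$ component of your $\Phi(t)$ controls the dissipation of the \emph{difference}, not the $\nu$-weighted time-integral of the two solutions. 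To close the argument you would have to first prove the analogue of \eqref{boundnu} for each solution, either by mimicking Lemma~\ref{lem:estimateuniqueness} or by running your two-space splitting on $\mathbf{f}$ and $\mathbf{h}$ separately; once that is added, the rest of your plan (including the conversion $\norm{\mathbf{d_2}}_{L^1_vL^\infty_x(\boldsymbol\nu\mathbf{\bar{w_k}})}\lesssim\norm{\mathbf{d_2}}_{L^\infty_{x,v}(\langle v\rangle^\beta\boldsymbol\mu^{-1/2})}$ and the tuning of $\kappa$, $\eta_k$) goes through and indeed also reproduces the exponential decay of $S_{\mathbf{G}}$ on $L^1_vL^\infty_x(\langle v\rangle^k)$ noted in Remark~\ref{rem:mainresults}.
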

\bigskip

The uniqueness will follow from the study of the semigroup generated by $\mathbf{G}$ in a dissipative norm as well as a new \textit{a priori} stability estimate for solutions to $\eqref{perturbeduniqueness}$ in the latter norm. They are the purpose of the next two lemmas.

\bigskip
\begin{lemma}\label{lem:semigroupuniqueness}
Let $k>k_0$ and let assumptions $(H1) - (H4)$ hold for the collision kernel. The operator $\mathbf{G}$ generates a semigroup in $L^1_vL^\infty_x\pa{\langle v \rangle^k}$. Moreover, there exist $C_k$, $\lambda_k>0$ such that for all $\mathbf{f_0}$ in $L^1_vL^\infty_x\pa{\langle v \rangle^k}$ with $\Pi_\mathbf{G}(\mathbf{f_0})=0$
$$\forall t\geq 0, \quad \norm{S_\mathbf{G}(\mathbf{f})}_{L^1_vL^\infty_x\pa{\langle v \rangle^k}}\leq C_k e^{-\lambda_k t}\norm{\mathbf{f_0}}_{L^1_vL^\infty_x\pa{\langle v \rangle^k}}.$$
\end{lemma}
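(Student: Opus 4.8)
The plan is to exploit the splitting $\mathbf{G}=\mathbf{G_1}+\mathbf{A}$ fixed in Remark \ref{rem:kq*}, in which $\mathbf{G_1}=\mathbf{B}-\boldsymbol\nu-v\cdot\nabla_x$ satisfies $C_B=C_B(k,\delta_k)<1$ and $\mathbf{A}=\mathbf{A^{(\delta_k)}}$, together with the continuous embedding $E_\infty:=L^\infty_{x,v}\pa{\langle v\rangle^\beta\boldsymbol\mu^{-1/2}}\hookrightarrow X:=L^1_vL^\infty_x\pa{\langle v\rangle^k}$ for $\beta>3/2$ already recorded before Lemma \ref{lem:controlA} (it holds because $\int_{\R^3}\langle v\rangle^{k-\beta}\mu_i^{1/2}\,dv<\infty$). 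First I would record that $\mathbf{G_1}$ generates a strongly continuous semigroup $S_{\mathbf{G_1}}(t)$ on $X$ with $\norm{S_{\mathbf{G_1}}(t)\mathbf{f_0}}_X\le e^{-\sigma_0 t}\norm{\mathbf{f_0}}_X$, $\sigma_0:=(1-C_B)\nu_0>0$ with $\nu_0=\min_i\inf_v\nu_i(v)>0$. This is exactly the estimate of Step 1 in the proof of Proposition \ref{prop:cauchyE} specialised to $\mathbf{Q}\equiv 0$ and $\mathbf{g}\equiv 0$: using Lemma \ref{lem:controlB} and $\nu_i\ge\nu_0$ one gets $\frac{d}{dt}\norm{\mathbf{f_1}}_X\le-(1-C_B)\norm{\mathbf{f_1}}_{L^1_vL^\infty_x(\boldsymbol\nu\langle v\rangle^k)}\le-\sigma_0\norm{\mathbf{f_1}}_X$, the generation itself being obtained from the Duhamel fixed point along the characteristics of $-\boldsymbol\nu-v\cdot\nabla_x$ as in Step 2 of that proof (the linear case being strictly easier), and the weight $\mathbf{\bar{w_k}}$ being equivalent to $\langle v\rangle^k$.

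Next I would produce $S_{\mathbf{G}}$ itself. Since Lemma \ref{lem:controlA} makes $\mathbf{A}\colon X\to E_\infty$ bounded, composing with the embedding shows $\mathbf{A}$ is a bounded operator on $X$, so $\mathbf{G}=\mathbf{G_1}+\mathbf{A}$ generates a strongly continuous semigroup $S_{\mathbf{G}}(t)$ on $X$ by the bounded-perturbation theorem (see \cite{Ka}), with the Duhamel identity $S_{\mathbf{G}}(t)=S_{\mathbf{G_1}}(t)+\int_0^tS_{\mathbf{G_1}}(t-s)\mathbf{A}S_{\mathbf{G}}(s)\,ds$. I would also observe that this semigroup is consistent with the one of Theorem \ref{theo:semigroupLinfty} on $E_\infty$ (both give mild solutions of the same equation, and uniqueness forces agreement on the intersection), and that $\Pi_{\mathbf{G}}$ commutes with $S_{\mathbf{G}}(t)$ because $\mathbf{G}^*=\mathbf{L}+v\cdot\nabla_x$ annihilates $\mbox{Ker}(\mathbf{G})$ (whose elements are $x$-independent, by self-adjointness of $\mathbf{L}$ and skew-symmetry of the transport); hence $\Pi_{\mathbf{G}}(\mathbf{f_0})=0$ is propagated by the flow.

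For the decay I would take $\mathbf{f_0}\in X$ with $\Pi_{\mathbf{G}}(\mathbf{f_0})=0$, set $\mathbf{f}(t)=S_{\mathbf{G}}(t)\mathbf{f_0}$, $\mathbf{f_1}(t)=S_{\mathbf{G_1}}(t)\mathbf{f_0}$, and $\mathbf{f_2}=\mathbf{f}-\mathbf{f_1}$, so that $\mathbf{f_2}(0)=0$, $\partial_t\mathbf{f_2}=\mathbf{G}(\mathbf{f_2})+\mathbf{A}(\mathbf{f_1})$ and $\Pi_{\mathbf{G}}(\mathbf{f_1}+\mathbf{f_2})=\Pi_{\mathbf{G}}(\mathbf{f})=0$. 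By the first step $\norm{\mathbf{f_1}(t)}_X\le e^{-\sigma_0 t}\norm{\mathbf{f_0}}_X$, hence $\mathbf{g}:=\mathbf{f_1}$ meets the hypotheses of Proposition \ref{prop:cauchyLinfty} with $\eta_g=\norm{\mathbf{f_0}}_X$ and $\lambda_g=\sigma_0$; that proposition then yields, for any $0<\lambda_k<\min\{\sigma_0,\lambda_\infty\}$, the bound $\norm{\mathbf{f_2}(t)}_{E_\infty}\le C_2\norm{\mathbf{f_0}}_X\,e^{-\lambda_k t}$. Using the embedding $E_\infty\hookrightarrow X$ once more and adding the bound on $\mathbf{f_1}$ gives $\norm{S_{\mathbf{G}}(t)\mathbf{f_0}}_X\le C_k e^{-\lambda_k t}\norm{\mathbf{f_0}}_X$, which is the claim.

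The genuinely delicate point — everything else being a bookkeeping assembly of Lemmas \ref{lem:controlA}, \ref{lem:controlB} and Proposition \ref{prop:cauchyLinfty} — is the consistency/identification in the second step: one must be sure that the component $\mathbf{f_2}$, defined a priori only in $X$, coincides with the $E_\infty$-valued solution delivered by Proposition \ref{prop:cauchyLinfty}. I would resolve this by noting that $\int_0^tS_{\mathbf{G}}(t-s)\mathbf{A}(\mathbf{f_1}(s))\,ds$ is $E_\infty$-valued (its integrand lies in $E_\infty$ and there $S_{\mathbf{G}}$ restricts to the semigroup of Theorem \ref{theo:semigroupLinfty}), and that it equals $\mathbf{f_2}$ by uniqueness of the mild solution of the linear equation $\partial_t\mathbf{f_2}=\mathbf{G}(\mathbf{f_2})+\mathbf{A}(\mathbf{f_1})$, $\mathbf{f_2}(0)=0$, in $X$.
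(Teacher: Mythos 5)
Your proof is correct, and while it lands on the same structural core (the splitting $\mathbf{G}=\mathbf{G_1}+\mathbf{A}$, the decay of $S_{\mathbf{G_1}}$ on $L^1_vL^\infty_x$, the lift of the remainder to $L^\infty_{x,v}(\langle v\rangle^\beta\boldsymbol\mu^{-1/2})$ via Proposition~\ref{prop:cauchyLinfty}), the route to generation is genuinely different from the paper's. The paper treats this lemma as a corollary of the nonlinear Proposition~\ref{prop:existenceexpodecay} with $\mathbf{Q}\equiv 0$ (so existence and decay come from the iterative scheme of Subsection~\ref{subsubsec:cauchyE}), and then proves uniqueness separately by a Gr\"onwall estimate on $\norm{\mathbf{f}-\mathbf{g}}_{L^1_vL^\infty_x(\langle v\rangle^k)}$ using Lemmas~\ref{lem:controlA} and~\ref{lem:controlB}. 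You instead observe that the linear system $\eqref{f1}$--$\eqref{f2}$ decouples: $\mathbf{f_1}=S_{\mathbf{G_1}}(t)\mathbf{f_0}$ can be produced directly, $\mathbf{A}\in\mathcal{L}(X)$ because $\mathbf{A}\colon X\to E_\infty\hookrightarrow X$, and the bounded-perturbation theorem then hands you $S_{\mathbf{G}}$ on $X$ together with uniqueness, with no iteration and no smallness needed (which is the honest state of affairs, since the lemma has no smallness hypothesis). Your approach buys a cleaner separation of concerns --- abstract semigroup theory for generation, and the $\mathbf{f_1}+\mathbf{f_2}$ split only for the rate --- at the cost of having to spell out the consistency of $S_{\mathbf{G}}$ on $X$ with $S_{\mathbf{G}}$ on $E_\infty$ and the propagation of $\Pi_{\mathbf{G}}(\mathbf{f_0})=0$, both of which you handle correctly (the pairing $\langle\mathbf{f},\boldsymbol\phi_k\rangle_{L^2_{x,v}(\boldsymbol\mu^{-1/2})}$ is well defined on $X$ since $\phi_{k,i}\mu_i^{-1}$ are polynomials of degree at most $2<k$, the $\boldsymbol\phi_k$ are $x$-independent, $\mathbf{L}\boldsymbol\phi_k=0$, and $-v\cdot\nabla_x$ is skew-adjoint). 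The paper's approach is shorter to state but leans on the full machinery of Section~\ref{subsec:existenceexpodecay}; yours is more transparent about which pieces of that machinery are actually used.
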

\bigskip

\begin{proof}[Proof of Lemma \ref{lem:semigroupuniqueness}]
From Proposition \ref{prop:existenceexpodecay} with a collision operator $\mathbf{Q}=0$ we have the existence of a solution to the equation
$$\partial_t \mathbf{f} = \mathbf{G}\pa{\mathbf{f}}$$
with initial data $\mathbf{f_0}$ in $L^1_vL^\infty_x\pa{\langle v \rangle^k}$. Moreover, that solution satisfies $\Pi_\mathbf{G}(\mathbf{f})=0$ and it decays exponentially fast with rate $\lambda_k$.
\par Let $\mathbf{g}$ be another solution to the linear equation then
$$\partial_t \pa{\mathbf{f}-\mathbf{g}} = \cro{-v\cdot\nabla_x-\boldsymbol\nu + \mathbf{B}+\mathbf{A}}\pa{\mathbf{f}-\mathbf{g}} .$$
Similar computations as to obtain $\eqref{importanteq}$ yield
\begin{equation*}
\begin{split}
\frac{d}{dt}\norm{\mathbf{f}-\mathbf{g}}_{L^1_vL^\infty_x\pa{\langle v \rangle^k}} \leq& -\norm{\mathbf{f}-\mathbf{g}}_{L^1_vL^\infty_x\pa{\langle v \rangle^k\boldsymbol\nu}} + \norm{\mathbf{A}(\mathbf{f}-\mathbf{g})}_{L^1_vL^\infty_x\pa{\langle v \rangle^k}} 
\\&+ \norm{\mathbf{B}(\mathbf{f}-\mathbf{g})}_{L^1_vL^\infty_x\pa{\langle v \rangle^k}}.
\end{split}
\end{equation*}
Using Lemma \ref{lem:controlA} and Lemma \ref{lem:controlB}, there exists $0<C_B<1$ such that
\begin{equation}\label{semigroupuniquenesseq}
\frac{d}{dt}\norm{\mathbf{f}-\mathbf{g}}_{L^1_vL^\infty_x\pa{\langle v \rangle^k}} \leq -(1-C_B)\norm{\mathbf{f}-\mathbf{g}}_{L^1_vL^\infty_x\pa{\langle v \rangle^k \boldsymbol\nu}} + C_A \norm{\mathbf{f}-\mathbf{g}}_{L^1_vL^\infty_x\pa{\langle v \rangle^k}}.
\end{equation}
Since $(1-C_B)>0$ we can further bound
$$\frac{d}{dt}\norm{\mathbf{f}-\mathbf{g}}_{L^1_vL^\infty_x\pa{\langle v \rangle^k}} \leq \cro{C_A-(1-C_B)}\norm{\mathbf{f}-\mathbf{g}}_{L^1_vL^\infty_x\pa{\langle v \rangle^k}}$$
and a Gr\"onwall lemma therefore yields $\mathbf{f}=\mathbf{g}$ if $\mathbf{g_0}=\mathbf{f_0}$.
\par We thus obtain existence and uniqueness of solution to the linear equation which means that $\mathbf{G}$ generates a semigroup in $L^1_vL^\infty_x\pa{\langle v \rangle^k}$. Moreover it has an exponential decay of rate $\lambda_k>0$ for functions in $\pa{\mbox{Ker}(\mathbf{G})}^\bot$.
\end{proof}
\bigskip

We now derive a stability estimate in an equivalent norm that catches the dissipativity of the linear operator.

\bigskip
\begin{lemma}\label{lem:estimateuniqueness}
Let $k>k_0$ and let assumptions $(H1) - (H4)$ hold for the collision kernel. For $\alpha >0$, we define
$$\norm{\mathbf{f}}_{\alpha,k} = \alpha \norm{\mathbf{f}}_{L^1_vL^\infty_x\pa{\langle v \rangle^k}} + \int_0^{+\infty} \norm{S_{\mathbf{G}}(s)\pa{\mathbf{f}}}_{L^1_vL^\infty_x\pa{\langle v \rangle^k}}\:ds.$$
There exist $\eta$, $\alpha$, $C_1$, $C_2$ and $\lambda>0$ such that $\norm{\cdot}_{\alpha,k} \sim \norm{\cdot}_{L^1_vL^\infty_x\pa{\langle v \rangle^k}}$ and for all $\mathbf{f_0}$ in $L^1_vL^\infty_x\pa{\langle v \rangle^k}$ with $\Pi_\mathbf{G}(\mathbf{f_0})=0$ and such that
$$\norm{\mathbf{f_0}}_{L^1_vL^\infty_x\pa{\langle v \rangle^k}}\leq \eta;$$
if $\mathbf{f}$ in $L^1_vL^\infty_x\pa{\langle v \rangle^k}$ with $\Pi_\mathbf{G}(\mathbf{f})=0$ is solution to the perturbed equation $\eqref{perturbeduniqueness}$ with initial data $\mathbf{f_0}$ then
$$\frac{d}{dt}\norm{\mathbf{f}}_{\alpha,k} \leq -\pa{C_1 - C_2\norm{\mathbf{f}}_{\alpha,k}}\norm{\mathbf{f}}_{\alpha,k,\boldsymbol\nu},$$
where the subscript $\boldsymbol\nu$ refers to the fact that the weight is multiplied by $\nu_i(v)$ on each coordinate.
\end{lemma}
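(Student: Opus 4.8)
The plan is to differentiate $\norm{\mathbf{f}}_{\alpha,k}$ along the flow, handling the two pieces of the norm separately and then recombining. First I would record that the equivalence $\norm{\cdot}_{\alpha,k}\sim\norm{\cdot}_{L^1_vL^\infty_x(\langle v \rangle^k)}$ is immediate from Lemma \ref{lem:semigroupuniqueness}: the lower bound is $\alpha\norm{\mathbf{f}}_{L^1_vL^\infty_x(\langle v \rangle^k)}$, and the upper bound follows by integrating the exponential decay $\int_0^\infty C_ke^{-\lambda_k s}\,ds\norm{\mathbf{f}}_{L^1_vL^\infty_x(\langle v \rangle^k)}$ (here one uses that $\mathbf{f}$ stays in $\pa{\mbox{Ker}(\mathbf{G})}^\bot$ since $\Pi_\mathbf{G}(\mathbf{f})=0$, which is preserved because $\mathbf{Q}(\mathbf{f})\in\cro{\mbox{Ker}(\mathbf{L})}^\bot$ by Lemma \ref{lem:controlQ} and $\Pi_\mathbf{G}$ is constant in time along $\eqref{perturbeduniqueness}$).

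Next I would estimate $\frac{d}{dt}$ of the first term. Writing $\partial_t\mathbf{f} = \mathbf{G_1}(\mathbf{f})+\mathbf{A}(\mathbf{f})+\mathbf{Q}(\mathbf{f})$ and repeating the computation leading to $\eqref{importanteq}$ (differentiating the $L^1_vL^q_x$ norm, using that the transport term contributes nothing, the multiplicative part $-\boldsymbol\nu$ contributes a negative term, and $q\to\infty$), together with Lemma \ref{lem:controlB} for $\mathbf{B}$, Lemma \ref{lem:controlA} for $\mathbf{A}$, and Lemma \ref{lem:controlQ} for $\mathbf{Q}$, I get
\begin{equation*}
\frac{d}{dt}\norm{\mathbf{f}}_{L^1_vL^\infty_x(\langle v \rangle^k)} \leq -\pa{1-C_B-2C_Q\norm{\mathbf{f}}_{L^1_vL^\infty_x(\langle v \rangle^k)}}\norm{\mathbf{f}}_{L^1_vL^\infty_x(\langle v \rangle^k\boldsymbol\nu)} + C_A\norm{\mathbf{f}}_{L^1_vL^\infty_x(\langle v \rangle^k)}.
\end{equation*}
The crucial point is that the "bad" term $C_A\norm{\mathbf{f}}_{L^1_vL^\infty_x(\langle v \rangle^k)}$ — which has the wrong sign and prevents closing the estimate directly — is absorbed by the time derivative of the second term in $\norm{\mathbf{f}}_{\alpha,k}$.

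For the second term, I would use the Duhamel representation $\mathbf{f}(t) = S_\mathbf{G}(t)\mathbf{f_0} + \int_0^t S_\mathbf{G}(t-\tau)\mathbf{Q}(\mathbf{f}(\tau))\,d\tau$ to compute, formally,
\begin{equation*}
\frac{d}{dt}\int_0^{+\infty}\norm{S_\mathbf{G}(s)\mathbf{f}(t)}_{L^1_vL^\infty_x(\langle v \rangle^k)}\,ds = -\norm{\mathbf{f}(t)}_{L^1_vL^\infty_x(\langle v \rangle^k)} + \int_0^{+\infty}\frac{d}{dt}\norm{S_\mathbf{G}(s)\mathbf{f}(t)}_{L^1_vL^\infty_x(\langle v \rangle^k)}\,ds,
\end{equation*}
where the first term comes from the lower limit $s=0$ after a change of variables $s\mapsto s-$(something) on the semigroup, and the integral on the right is controlled, using $\partial_t[S_\mathbf{G}(s)\mathbf{f}] = S_\mathbf{G}(s)\mathbf{G}(\mathbf{f})+S_\mathbf{G}(s)\mathbf{Q}(\mathbf{f})$ together with the decay of $S_\mathbf{G}$ from Lemma \ref{lem:semigroupuniqueness} and Lemma \ref{lem:controlQ}, by $C\norm{\mathbf{f}}_{L^1_vL^\infty_x(\langle v \rangle^k)}\norm{\mathbf{f}}_{L^1_vL^\infty_x(\langle v \rangle^k\boldsymbol\nu)}$. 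The gain $-\norm{\mathbf{f}}_{L^1_vL^\infty_x(\langle v \rangle^k)}$ of unit size is what kills the term $C_A\norm{\mathbf{f}}_{L^1_vL^\infty_x(\langle v \rangle^k)}$ coming from the first piece, provided $\alpha$ is fixed large enough relative to $C_A$. Combining the two pieces with the weight $\alpha$, and using smallness of $\norm{\mathbf{f_0}}$ (hence of $\norm{\mathbf{f}(t)}$ for all $t$, by the exponential decay from Proposition \ref{prop:existenceexpodecay}) to absorb all the quadratic terms, yields
\begin{equation*}
\frac{d}{dt}\norm{\mathbf{f}}_{\alpha,k} \leq -\pa{C_1 - C_2\norm{\mathbf{f}}_{\alpha,k}}\norm{\mathbf{f}}_{\alpha,k,\boldsymbol\nu}
\end{equation*}
after translating $L^1_vL^\infty_x(\langle v \rangle^k)$-norms into $\norm{\cdot}_{\alpha,k}$-norms via the equivalence (this also forces the analogous equivalence $\norm{\cdot}_{\alpha,k,\boldsymbol\nu}\sim\norm{\cdot}_{L^1_vL^\infty_x(\langle v \rangle^k\boldsymbol\nu)}$, which holds by the same argument applied to the $\boldsymbol\nu$-weighted spaces).

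The main obstacle I anticipate is making rigorous the differentiation of the integral term $\int_0^\infty\norm{S_\mathbf{G}(s)\mathbf{f}(t)}\,ds$ in $t$ — one must justify exchanging $\frac{d}{dt}$ with $\int_0^\infty ds$, control the boundary contribution at $s=0$ correctly (this is where the favorable $-\norm{\mathbf{f}}$ term appears), and handle the fact that $S_\mathbf{G}(s)$ is only strongly continuous, not differentiable, on a general element — so the computation should really be done on the Duhamel form and via the semigroup property $S_\mathbf{G}(s)S_\mathbf{G}(\tau)=S_\mathbf{G}(s+\tau)$, differentiating $t\mapsto\int_0^\infty\norm{S_\mathbf{G}(s)\mathbf{f}(t)}\,ds$ by first writing $\mathbf{f}(t+h)$ in terms of $\mathbf{f}(t)$ through the semigroup plus a Duhamel remainder, then passing to the limit $h\to 0$. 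Once that bookkeeping is under control, the rest is a routine combination of the already-established Lemmas \ref{lem:controlA}, \ref{lem:controlB}, \ref{lem:controlQ} and \ref{lem:semigroupuniqueness}.
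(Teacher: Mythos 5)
Your approach is essentially the paper's: differentiate the two pieces of $\norm{\cdot}_{\alpha,k}$ separately, get
\begin{equation*}
\frac{d}{dt}\norm{\mathbf{f}}_{L^1_vL^\infty_x(\langle v \rangle^k)} \leq -\pa{1-C_B-C_Q\norm{\mathbf{f}}}\norm{\mathbf{f}}_{L^1_vL^\infty_x(\langle v \rangle^k\boldsymbol\nu)} + C_A\norm{\mathbf{f}}_{L^1_vL^\infty_x(\langle v \rangle^k)}
\end{equation*}
from the $L^1_vL^q_x$ computation plus Lemmas \ref{lem:controlA}, \ref{lem:controlB}, \ref{lem:controlQ}, and use the semigroup integral to convert the time-derivative of $\mathbf{G}(\mathbf{f})$ into an $s$-derivative, whose integral over $[0,\infty)$ produces the boundary gain $-\norm{\mathbf{f}(t)}_{L^1_vL^\infty_x(\langle v \rangle^k)}$. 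This is exactly the mechanism the paper employs (there phrased through the identity $\Phi_q(S_\mathbf{G}(s)\mathbf{f})S_\mathbf{G}(s)[\mathbf{G}(\mathbf{f})]=\frac{d}{ds}\abs{S_\mathbf{G}(s)\mathbf{f}}^q$), and the norm equivalence argument matches as well.

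One concrete slip: you say the bad term $C_A\norm{\mathbf{f}}$ is absorbed \emph{provided $\alpha$ is fixed large enough relative to $C_A$}. This has the direction reversed. The first piece of $\norm{\cdot}_{\alpha,k}$ is scaled by $\alpha$, so after combining you get a factor $(\alpha C_A - 1)$ multiplying $\norm{\mathbf{f}}_{L^1_vL^\infty_x(\langle v \rangle^k)}$; for this to be nonpositive you must take $\alpha$ \emph{small}, namely $\alpha<1/C_A$, which is precisely what the paper does. Taking $\alpha$ large would make that coefficient positive and the estimate would not close. Your worries about rigour in differentiating $t\mapsto\int_0^\infty\norm{S_\mathbf{G}(s)\mathbf{f}(t)}\,ds$ are reasonable but not where the paper spends effort; the $q\to\infty$ route sidesteps most of them and the dominated-convergence justification is routine given the exponential decay from Lemma \ref{lem:semigroupuniqueness}.
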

\bigskip

\begin{proof}[Proof of Lemma \ref{lem:estimateuniqueness}]
Start with the new norm. Lemma \ref{lem:semigroupuniqueness} proved that for all $\mathbf{f_0}$ such that $\Pi_{\mathbf{G}}(\mathbf{f_0})=0$ and all $s\geq 0$, 
$$\norm{S_{\mathbf{G}}(s)\pa{\mathbf{f_0}}}_{L^1_vL^\infty_x\pa{\langle v \rangle^k}} \leq C_k e^{-\lambda_k s}\norm{\mathbf{f_0}}_{L^1_vL^\infty_x\pa{\langle v \rangle^k}}$$
and hence
$$\alpha \norm{\mathbf{f_0}}_{L^1_vL^\infty_x\pa{\langle v \rangle^k}} \leq \norm{\mathbf{f_0}}_{\alpha,k} \leq \pa{\alpha +\frac{C_k}{\lambda_k}}\norm{\mathbf{f_0}}_{L^1_vL^\infty_x\pa{\langle v \rangle^k}}.$$

\bigskip
Suppose that $\mathbf{f}$ is the solution described in Lemma \ref{lem:estimateuniqueness}. Same computations as to obtain $\eqref{importanteq}$ and $\eqref{semigroupuniquenesseq}$ yields
\begin{equation*}
\frac{d}{dt}\norm{\mathbf{f}}_{L^1_vL^\infty_x\pa{\langle v \rangle^k}} \leq -(1-C_B)\norm{\mathbf{f}}_{L^1_vL^\infty_x\pa{\langle v \rangle^k \boldsymbol\nu}} + C_A \norm{\mathbf{f}}_{L^1_vL^\infty_x\pa{\langle v \rangle^k}} + \norm{\mathbf{Q}(\mathbf{f})}_{L^1_vL^\infty_x\pa{\langle v \rangle^k}}.
\end{equation*}
To which we can apply Lemma \ref{lem:controlQ}:
\begin{equation}\label{estimateuniquenesseq1}
\frac{d}{dt}\norm{\mathbf{f}}_{L^1_vL^\infty_x\pa{\langle v \rangle^k}} \leq -\pa{1-C_B-C_Q\norm{\mathbf{f}}_{L^1_vL^\infty_x\pa{\langle v \rangle^k}}}\norm{\mathbf{f}}_{L^1_vL^\infty_x\pa{\langle v \rangle^k \boldsymbol\nu}} + C_A \norm{\mathbf{f}}_{L^1_vL^\infty_x\pa{\langle v \rangle^k}}.
\end{equation}

\par We now turn to the second term in the $\norm{\cdot}_{\alpha,k}$ norm. For $q$ in $[1,\infty)$ we denote $\Phi_q(\mathbf{F}) = \mbox{sgn}(\mathbf{F})\abs{\mathbf{F}}^{q-1}$, where it has to be understood component by component. We thus have
\begin{equation*}
\begin{split}
&\frac{d}{dt}\int_0^{+\infty}\norm{S_{\mathbf{G}}(s)\pa{\mathbf{f}(t)}}_{L^1_vL^q_x\pa{\langle v \rangle^k}}\:ds 
\\&\quad\quad\quad= \int_0^{+\infty}\int_{\R^3}\langle v \rangle^k\norm{S_\mathbf{G}(s)(\mathbf{f})}_{L^q_x}^{1-q}\pa{\int_{\T^3}\Phi_q\pa{S_\mathbf{G}(s)(\mathbf{f})} S_\mathbf{G}(s)\cro{\mathbf{G}(\mathbf{f})}\:dx}\:dvds
\\&\quad\quad\quad\quad+ \int_0^{+\infty}\int_{\R^3}\langle v \rangle^k\norm{S_\mathbf{G}(s)(\mathbf{f})}_{L^q_x}^{1-q}\pa{\int_{\T^3}\Phi_q\pa{S_\mathbf{G}(s)(\mathbf{f})}S_\mathbf{G}(s)\cro{\mathbf{Q}(\mathbf{f})}\:dx}\:dvds
\end{split}
\end{equation*}
First, by definition of $S_\mathbf{G}(s)$ we have that 
$$\Phi_q\pa{S_\mathbf{G}(s)(\mathbf{f}(t))} S_\mathbf{G}(s)\cro{\mathbf{G}(\mathbf{f}(t))} = \frac{d}{ds}\abs{S_\mathbf{G}(s)(\mathbf{f}(t))}^q.$$
Second, by H\"older inequality with $q$ and $q/(q-1)$ (see $\eqref{holder}$):
$$\int_{\T^3}\Phi_q\pa{S_\mathbf{G}(s)(\mathbf{f}(t))}S_\mathbf{G}(s)\cro{\mathbf{Q}(\mathbf{f}(t))}\:dx \leq \norm{S_\mathbf{G}(s)(\mathbf{f}(t))}_{L^q_x}^{q-1}\norm{S_\mathbf{G}(s)(\mathbf{Q}(\mathbf{f}(t)))}_{L^q_x}.$$
We therefore get
\begin{equation*}
\begin{split}
\frac{d}{dt}\int_0^{+\infty}\norm{S_{\mathbf{G}}(s)\pa{\mathbf{f}(t)}}_{L^1_vL^q_x\pa{\langle v \rangle^k}}\:ds \leq& \int_0^{+\infty} \frac{d}{ds}\norm{S_\mathbf{G}(\mathbf{f}(t))}_{L^1_vL^q_x\pa{\langle v \rangle^k}}\:ds 
\\&+ \int_0^{+\infty} \norm{S_\mathbf{G}(s)(\mathbf{Q}(\mathbf{f}(t)))}_{L^1_vL^q_x\pa{\langle v \rangle^k}}\:ds.
\end{split}
\end{equation*}
We make $q$ tend to infinity. Then we have $\Pi_\mathbf{G}(\mathbf{Q}(\mathbf{f}(t)))=0$ by Lemma \ref{lem:controlQ} so we are able to use the exponential decay of $S_\mathbf{G}(s)$ Lemma \ref{lem:semigroupuniqueness}. This yields
\begin{equation*}
\begin{split}
\frac{d}{dt}\int_0^{+\infty}\norm{S_{\mathbf{G}}(s)\pa{\mathbf{f}(t)}}_{L^1_vL^\infty_x\pa{\langle v \rangle^k}}\:ds \leq& -\norm{\mathbf{f}(t)}_{L^1_vL^\infty_x\pa{\langle v \rangle^k}}
\\&+ C_k\pa{\int_0^{+\infty}e^{-\lambda_k s}\:ds} \norm{\mathbf{Q}(\mathbf{f}(t))}_{L^1_vL^\infty_x\pa{\langle v \rangle^k}}.
\end{split}
\end{equation*}
With Lemma \ref{lem:controlQ} we control $\mathbf{Q}(\mathbf{f})$:
\begin{equation}\label{estimateuniquenesseq2}
\begin{split}
\frac{d}{dt}\int_0^{+\infty}\norm{S_{\mathbf{G}}(s)\pa{\mathbf{f}(t)}}_{L^1_vL^\infty_x\pa{\langle v \rangle^k}}\:ds \leq& -\norm{\mathbf{f}(t)}_{L^1_vL^\infty_x\pa{\langle v \rangle^k}} 
\\&+ \frac{C_kC_Q}{\lambda_k}\norm{\mathbf{f}}_{L^1_vL^\infty_x\pa{\langle v \rangle^k}}\norm{\mathbf{f}}_{L^1_vL^\infty_x\pa{\langle v \rangle^k\boldsymbol\nu}}.
\end{split}
\end{equation}

\bigskip
To conclude we add $\alpha \times \eqref{estimateuniquenesseq1} + \eqref{estimateuniquenesseq2}$,
\begin{equation}\label{finalestimate}
\begin{split}
\frac{d}{dt}\norm{\mathbf{f}}_{k,\alpha} \leq& -\cro{\alpha(1-C_B)-\pa{\alpha C_Q+\frac{C_kC_Q}{\lambda_k}}\norm{\mathbf{f}}_{L^1_vL^\infty_x\pa{\langle v \rangle^k}}}\norm{\mathbf{f}}_{L^1_vL^\infty_x\pa{\langle v \rangle^k\boldsymbol\nu}}
\\&+ \cro{\alpha C_A - 1}\norm{\mathbf{f}}_{L^1_vL^\infty_x\pa{\langle v \rangle^k}}.
\end{split}
\end{equation}
Choosing $\alpha$ such that $(\alpha C_A -1) <0$ yields the desired estimate.
\end{proof}
\bigskip

We now prove the uniqueness proposition.

\bigskip
\begin{proof}[Proof of Proposition \ref{prop:uniqueness}]
Let $\mathbf{f}$ and $\mathbf{g}$ in $L^1_vL^\infty_x\pa{\langle v \rangle^k}$, $k>k_0$, be two solutions of the perturbed equation with initial datum $\mathbf{f_0}$.
\par Thanks to Lemma \ref{lem:estimateuniqueness}, if $\norm{\mathbf{f_0}}_{L^1_vL^\infty_x\pa{\langle v \rangle^k}}$ is small enough we can deduce from the differential inequality that for all $t\geq 0$,
$$\frac{d}{dt}\norm{\mathbf{f}}_{\alpha,k} \leq -C_k\norm{\mathbf{f}}_{\alpha,k,\boldsymbol\nu},$$
and the same holds for $\mathbf{g}$ with the same constant $C_k>0$. We therefore have two estimates on $\mathbf{f}$ and $\mathbf{g}$. Either by integrating from $0$ to $t$:
\begin{equation}\label{boundnu}
\forall t\geq 0,\quad \int_0^t \norm{\mathbf{f}(s)}_{\alpha,k,\boldsymbol\nu}\:ds \leq C_k^{-1}\norm{\mathbf{f_0}}_{\alpha,k};
\end{equation}
or by Gr\"onwall lemma:
\begin{equation}\label{bound}
\forall t\geq 0,\quad \norm{\mathbf{f}(t)}_{\alpha,k}\leq e^{-C_k t}\norm{\mathbf{f_0}}_{\alpha,k}.
\end{equation}
The same estimate holds for $\mathbf{g}$.

\bigskip
Recalling the definition $\eqref{tildeQ}$ of the operator $\mathbf{\tilde{Q}}$, we find the differential equation satisfied by $\mathbf{f}-\mathbf{g}$:
$$\partial_t \pa{\mathbf{f}-\mathbf{g}}  = \mathbf{G}\pa{\mathbf{f}-\mathbf{g}} + \mathbf{\tilde{Q}}\pa{\mathbf{f}-\mathbf{g},\mathbf{f}} + \mathbf{\tilde{Q}}\pa{\mathbf{g},\mathbf{f}-\mathbf{g}}.$$
\par Using controls on $\mathbf{B}$ (Lemma \ref{lem:controlB}), $\mathbf{A}$ (Lemma \ref{lem:controlA}), $\mathbf{\tilde{Q}}$ (Lemma \ref{lem:controlQ}) and the semigoup property (Lemma \ref{lem:semigroupuniqueness}), exact same computations as for $\eqref{finalestimate}$, gives
\begin{equation*}
\begin{split}
\frac{d}{dt}\norm{\mathbf{f}-\mathbf{g}}_{k,\alpha} \leq& -\cro{\alpha(1-C_B)-\pa{\alpha C_Q +\frac{C_k C_Q}{\lambda_k}}\pa{\norm{\mathbf{f}}_{\alpha,k}+\norm{\mathbf{g}}_{\alpha,k}}}\norm{\mathbf{f}-\mathbf{g}}_{\alpha,k,\boldsymbol\nu} 
\\&+\cro{\alpha C_A -1 + C_Q\pa{\norm{\mathbf{f}}_{\alpha,k,\boldsymbol\nu}+\norm{\mathbf{g}}_{\alpha,k,\boldsymbol\nu}}}\norm{\mathbf{f}-\mathbf{g}}_{\alpha,k}.
\end{split}
\end{equation*}
Note that we used the equivalence of the $\norm{\cdot}_{\alpha,k}$ norm and our usual norm (see Lemma \ref{lem:estimateuniqueness}).
\par First, by $\eqref{bound}$ and $C_B<1$, if $\mathbf{f_0}$ is small enough then for all $t\geq 0$,
$$\alpha(1-C_B)-\pa{\alpha C_Q +\frac{C_k C_Q}{\lambda_k}}\pa{\norm{\mathbf{f}}_{\alpha,k}+\norm{\mathbf{g}}_{\alpha,k}} \leq 0.$$
Second we take $\alpha$ small enough so that $(\alpha C_A - 1)<0$. Hence, integrating the differential inequality from $0$ to $t$:
$$\norm{\mathbf{f}(t)-\mathbf{g}(t)}_{k,\alpha} \leq C_Q\cro{\int_0^t\left(\norm{\mathbf{f}(s)}_{\alpha,k,\boldsymbol\nu}+\norm{\mathbf{g(s)}}_{\alpha,k,\boldsymbol\nu}\right)\:ds}\sup\limits_{s\in [0,t]}\norm{\mathbf{f}(s)-\mathbf{g}(s)}_{k,\alpha}.$$
To conclude we use $\eqref{boundnu}$ to obtain
$$\forall t\geq 0,\quad \norm{\mathbf{f}(t)-\mathbf{g}(t)}_{k,\alpha} \leq \frac{2C_Q}{C_k}\norm{\mathbf{f_0}}_{\alpha,k}\pa{\sup\limits_{s\in [0,t]}\norm{\mathbf{f}(s)-\mathbf{g}(s)}_{k,\alpha}},$$
which implies $\mathbf{f}=\mathbf{g}$ if $\norm{\mathbf{f_0}}_{\alpha,k}$ is small enough.
\end{proof}
\bigskip


\subsection{Positivity of solutions}\label{subsec:positivity}

This last subsection is dedicated to the positivity of the solution to the multi-species Boltzmann equation
\begin{equation}\label{multiBEpositivity}
\partial_t \mathbf{F} + v \cdot\nabla_x\mathbf{F} = \mathbf{Q}\pa{\mathbf{F}}
\end{equation}
in the perturbative setting studied above.

\bigskip
\begin{prop}\label{prop:positivity}
Let $k>k_0$, let assumptions $(H1) - (H4)$ hold for the collision kernel, and let $\mathbf{f_0}$ be in $L^1_vL^\infty_x\pa{\langle v \rangle^k}$ with $\Pi_\mathbf{G}(\mathbf{f_0})=0$ and
$$\norm{\mathbf{f_0}}_{L^1_vL^\infty_x\pa{\langle v \rangle^k}}\leq \eta_k,$$
where $\eta_k>0$ is chosen such that Proposition \ref{prop:existenceexpodecay} and Proposition \ref{prop:uniqueness} hold and denote $\mathbf{f}$ the unique solution of the perturbed multi-species equation associated to $\mathbf{f_0}$.
\\ Suppose that $\mathbf{F_0}= \boldsymbol\mu + \mathbf{f_0}\geq 0$ then $\mathbf{F}=\boldsymbol\mu + \mathbf{f} \geq 0$.
\end{prop}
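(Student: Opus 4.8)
The plan is to recover positivity from the Duhamel/mild formulation of the multi-species Boltzmann equation along characteristics, exactly as in the mono-species theory, by building an iterative scheme that manifestly preserves nonnegativity and whose limit coincides with the unique perturbative solution. First I would write the equation for $\mathbf{F}=\boldsymbol\mu+\mathbf{f}$ componentwise in the gain/loss form
\[
\partial_t F_i + v\cdot\nabla_x F_i + \tilde\nu_i(\mathbf{F})(t,x,v)\,F_i = \sum_{j=1}^N Q_{ij}^+(F_i,F_j),
\]
where $Q_{ij}^+$ is the gain part of $Q_{ij}$ and $\tilde\nu_i(\mathbf{F})(t,x,v)=\sum_j\int_{\R^3\times\S^2}B_{ij}F_j^*\,d\sigma dv_*$ is the (nonnegative, $\mathbf{F}$-dependent) loss frequency. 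Since $\mathbf{F}=\boldsymbol\mu+\mathbf{f}$ with $\mathbf{f}$ small in $L^1_vL^\infty_x(\langle v\rangle^k)$, one checks that $\tilde\nu_i(\mathbf{F})$ is finite and that $0\le \tilde\nu_i(\mathbf{F})\le C(1+|v|^\gamma)$ uniformly in $t,x$ (using $k>k_0>\gamma$ and the hard/Maxwellian potential assumption (H3)); this is the analogue of the bounds behind $\eqref{nu0nu1}$ but now for the full nonlinear frequency.

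Next I would set up the iteration $\mathbf{F}^{(0)}=\boldsymbol\mu$ (or $\mathbf{F}^{(0)}=0$), and
\[
\partial_t F_i^{(n+1)} + v\cdot\nabla_x F_i^{(n+1)} + \tilde\nu_i(\mathbf{F}^{(n)})\,F_i^{(n+1)} = \sum_{j=1}^N Q_{ij}^+(F_i^{(n)},F_j^{(n)}),\qquad F_i^{(n+1)}(0,\cdot)=F_{0,i},
\]
whose mild form along the free-transport characteristics reads
\[
F_i^{(n+1)}(t,x,v) = e^{-\int_0^t \tilde\nu_i(\mathbf{F}^{(n)})(\tau,x-(t-\tau)v,v)\,d\tau}F_{0,i}(x-tv,v) + \int_0^t e^{-\int_s^t \tilde\nu_i(\mathbf{F}^{(n)})\,d\tau}\Big(\sum_j Q_{ij}^+(F_i^{(n)},F_j^{(n)})\Big)(s,x-(t-s)v,v)\,ds.
\]
Because $Q_{ij}^+$ maps pairs of nonnegative functions to nonnegative functions, the exponential factors are always in $(0,1]$, and $F_{0,i}\ge 0$ by hypothesis, a trivial induction gives $F_i^{(n)}\ge 0$ for all $n$ and all $i$. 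I would then translate this scheme into the perturbation $\mathbf{f}^{(n)}=\mathbf{F}^{(n)}-\boldsymbol\mu$ and show, by the same kind of estimates as in the proof of Proposition \ref{prop:existenceexpodecay} (splitting $\mathbf{L}=\mathbf{A}+\mathbf{B}-\boldsymbol\nu-v\cdot\nabla_x$, using Lemma \ref{lem:controlA}, Lemma \ref{lem:controlB} with $C_B<1$, Lemma \ref{lem:controlQ}, and a Gr\"onwall argument for small $\eta_k$), that $(\mathbf{f}^{(n)})_n$ is a Cauchy sequence in $L^\infty_tL^1_vL^\infty_x(\langle v\rangle^k)$ with a uniform exponential-decay bound; hence $\mathbf{f}^{(n)}\to\mathbf{f}^\infty$ strongly and $\mathbf{F}^{(n)}\to\boldsymbol\mu+\mathbf{f}^\infty\ge 0$. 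The strong convergence lets me pass to the limit in the (quadratic) iterative scheme since $k>k_0>\gamma$ controls the collision integrand, so $\boldsymbol\mu+\mathbf{f}^\infty$ solves $\eqref{multiBEpositivity}$, equivalently $\mathbf{f}^\infty$ solves $\eqref{perturbeduniqueness}$ with datum $\mathbf{f_0}$.

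Finally I would invoke the uniqueness statement (Proposition \ref{prop:uniqueness}): since $\mathbf{f}^\infty$ and the solution $\mathbf{f}$ produced by Proposition \ref{prop:existenceexpodecay} both solve $\eqref{perturbeduniqueness}$ with the same small initial datum and both satisfy $\Pi_\mathbf{G}=0$, they coincide, so $\mathbf{F}=\boldsymbol\mu+\mathbf{f}=\boldsymbol\mu+\mathbf{f}^\infty\ge 0$. The main obstacle is not the positivity-preservation of the scheme, which is essentially free, but making the new iteration converge in the right space while keeping the nonnegativity structure: one must run the contraction/Cauchy estimate on $\mathbf{f}^{(n)}=\mathbf{F}^{(n)}-\boldsymbol\mu$ \emph{with the $\mathbf{F}^{(n)}$-dependent frequency $\tilde\nu_i(\mathbf{F}^{(n)})$ rather than the linear $\nu_i$}, so the decomposition $\mathbf{L}=\mathbf{A}+\mathbf{B}-\boldsymbol\nu-v\cdot\nabla_x$ must be applied carefully to the difference $\tilde\nu_i(\mathbf{F}^{(n)})-\nu_i=\tilde\nu_i(\mathbf{f}^{(n)})$, which is itself a term controlled (as in Lemma \ref{lem:controlQ}) by $\|\mathbf{f}^{(n)}\|$. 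Choosing $\eta_k$ small enough that $C_B+C\eta_k<1$ in the resulting Gr\"onwall inequality, just as in Proposition \ref{prop:cauchyE}, closes the argument.
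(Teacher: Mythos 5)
Your approach is essentially identical to the paper's: both linearize the loss term with the previous iterate (your $\tilde\nu_i(\mathbf{F}^{(n)})$ is exactly the paper's $q_1(\mathbf{F}^{(n)})$ arising from $\bar{\mathbf{Q}}_1(\mathbf{F}^{(n+1)},\mathbf{F}^{(n)})$), propagate nonnegativity through the Duhamel formula by induction, establish convergence of the shifted iterates $\mathbf{f}^{(n)}=\mathbf{F}^{(n)}-\boldsymbol\mu$ in $L^1_vL^\infty_x(\langle v\rangle^k)$ using the same toolbox as Proposition \ref{prop:cauchyE}, and conclude via the uniqueness result. Your writeup is somewhat more explicit about why the contraction estimate still closes with the $\mathbf{F}^{(n)}$-dependent frequency, but this is a matter of detail, not of method.
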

\bigskip

\begin{proof}[Proof of Proposition \ref{prop:positivity}]
Since we are working with the Grad's cutoff assumption we can decompose the nonlinear operator into
$$\mathbf{Q}(\mathbf{F}) = -\mathbf{Q_1}(\mathbf{F}) + \mathbf{Q_2}(\mathbf{F})$$
where
\begin{eqnarray*}
Q_{1i}(\mathbf{F}) &=& \sum\limits_{j=1}^N\int_{\R^3\times \mathbb{S}^{2}}B_{ij}\left(|v - v_*|,\mbox{cos}\:\theta\right) F_iF_j^*\:dv_*d\sigma
\\Q_{2i}(\mathbf{F})&=& \sum\limits_{j=1}^N\int_{\R^3\times \mathbb{S}^{2}}B_{ij}\left(|v - v_*|,\mbox{cos}\:\theta\right)F_i'F_j^{'*}\:dv_*d\sigma.
\end{eqnarray*}

\par Following the idea of \cite{Gu6}, we construct an interative scheme for the multi-species Boltzmann equation
$$\partial_t \mathbf{F^{(n+1)}} + v\cdot\nabla_x\mathbf{F^{(n+1)}} +\mathbf{\bar{Q}_1}(\mathbf{F^{(n+1)}},\mathbf{F^{(n)}}) = \mathbf{Q_2}(\mathbf{F^{(n)}}),$$
with the non-symmetriized bilinear form $\mathbf{\bar{Q}_1}$ defined as
\begin{eqnarray*}
\bar{Q}_{1i}(\mathbf{F},\mathbf{G}) &=& \sum\limits_{j=1}^N\int_{\R^3\times \mathbb{S}^{2}}B_{ij}\left(|v - v_*|,\mbox{cos}\:\theta\right) F_iG_j^*\:dv_*d\sigma
\\\bar{Q}_{2i}(\mathbf{F},\mathbf{G})&=& \sum\limits_{j=1}^N\int_{\R^3\times \mathbb{S}^{2}}B_{ij}\left(|v - v_*|,\mbox{cos}\:\theta\right)F_i'G_j^{'*}\:dv_*d\sigma.
\end{eqnarray*}

\par Defining $\mathbf{f^{(n)}}= \mathbf{F^{n}} -\boldsymbol\mu$ we have the following differential iterative scheme 
$$\partial_t \mathbf{f^{(n+1)}} + v\cdot\nabla_x\mathbf{f^{(n+1)}} = -\boldsymbol\nu(v)\pa{\mathbf{f^{(n+1)}}} + \mathbf{K}\pa{\mathbf{f^{(n)}}} + \mathbf{Q_2}\pa{\mathbf{f^{(n)}}}-\mathbf{\tilde{Q}_1}\pa{\mathbf{f^{(n+1)}},\mathbf{f^{(n)}}}.$$
As before, we can prove that $\pa{\mathbf{f^{(n)}}}_{n\in\N}$ is well-defined and converges in $L^1_vL^\infty_x\pa{\langle v \rangle^k}$ towards $\mathbf{f}$, the unique solution of the perturbed multi-species equation and thus the same holds for $\mathbf{F^{n}}$ converging towards $\mathbf{F}$ the unique perturbed solution of the original multi-species Boltzmann equation.

\bigskip
We prove that $\mathbf{F^{n}}\geq 0$ by an induction on $N$.
\\By definition we see that
$$\mathbf{\tilde{Q}_1}(\mathbf{F^{(n+1)}},\mathbf{F^{(n)}}) = q_1(\mathbf{F^{(n)}})\mathbf{F^{(n+1)}},$$
and thus applying the Duhamel formula along the characteristics gives
\begin{equation*}
\begin{split}
&\mathbf{F^{(n+1)}}(t,x,v)
\\&\: = \mbox{exp}\cro{-\int_0^tq_1(\mathbf{F^{(n)}})(s,x-(t-s)v,v)\:ds}\mathbf{F_0}(x-tv,v) 
\\&\:\quad+ \int_0^t\mbox{exp}\cro{-\int_s^tq_1(\mathbf{F^{(n)}})(s_1,x-(t-s_1)v,v)\:ds_1}\mathbf{Q_2}(\mathbf{F^{(n)}})(s,x-(t-s)v,v)\:ds.
\end{split}
\end{equation*}
By positivity of $\mathbf{F^{(n)}}$, all the terms on the right-hand side are positive and therefore $\mathbf{F^{n+1}}\geq 0$. Passing to the limit implies that $\mathbf{F}\geq 0$.
\end{proof}
\bigskip


\bigskip
\section{Ethical Statement}

Conflict of Interest: The authors declare that they have no conflict of interest.
\bigskip



%
\bibliographystyle{acm}
\bibliography{bibliography_multiBE}


\bigskip
\signmb
\signed

\end{document}